\documentclass[12pt]{amsart}

\usepackage{amsfonts,graphics,amsmath,amsthm,amsfonts,amscd,amssymb,amsmath,latexsym,multicol,mathrsfs}
\usepackage{epsfig,url}
\usepackage{flafter}
\usepackage{fancyhdr}
\usepackage{color}
\usepackage{hyperref}
\usepackage[all]{xy}
\hypersetup{colorlinks=true, linkcolor=black}

\pagestyle{plain}
\sloppy

\addtolength{\oddsidemargin}{-0.3in}
\addtolength{\evensidemargin}{-0.3in}
\addtolength{\textwidth}{0.6in}

\addtolength{\topmargin}{-0.4in}
\addtolength{\textheight}{0.7in}


\numberwithin{equation}{section}

\newtheorem{corollary}[subsection]{Corollary}
\newtheorem{lemma}[subsection]{Lemma}
\newtheorem{proposition}[subsection]{Proposition}
\newtheorem{theorem}[subsection]{Theorem}

\theoremstyle{definition}
\newtheorem{definition}[subsection]{Definition}

\newtheorem{example}[subsection]{Example}
\newtheorem{remark}[subsection]{Remark}

\newcommand{\mumu}{\boldsymbol{\mu}}
\newcommand{\WD}{\mathrm{W}(\mathrm{D}_5)}
\newcommand{\WE}{\mathrm{W}(\mathrm{E}_6)}
\newcommand{\PP}{\mathbb P}

\newcommand{\DD}{\mathrm D}

\newcommand{\HHH}{\mathscr{H}}

\DeclareMathOperator{\Cr}{Cr}
\DeclareMathOperator{\Aut}{Aut}
\DeclareMathOperator{\Out}{Out}

\DeclareMathOperator{\GL}{GL}
\DeclareMathOperator{\SL}{SL}
\DeclareMathOperator{\PGL}{PGL}
\DeclareMathOperator{\PSL}{PSL}
\DeclareMathOperator{\PU}{PU}
\DeclareMathOperator{\PSU}{PSU}

\def \ge {\geqslant}
\def \le {\leqslant}


\title{On $p$-Jordan constant of Cremona group of rank~$2$ in odd characteristic}
\author{Yifei Chen and Constantin Shramov}

\address{\emph{Yifei Chen}
\newline
\textnormal{ State Key Laboratory of Mathematical Sciences, Academy of Mathematics and Systems Science, Chinese Academy of Sciences, No. 55 Zhonguancun East Road, Haidian District, Beijing, 100190, P.R.China.
}
\newline
\textnormal{\texttt{yifeichen@amss.ac.cn}}
}

\address{\emph{Constantin Shramov}
\newline
\textnormal{Steklov Mathematical Institute of RAS,
8 Gubkina street, Moscow, 119991, Russia.
}
\newline
\textnormal{
National Research University Higher School of Economics,  6~Usacheva str., Moscow, 117312, Russia.
}
\newline
\textnormal{\texttt{costya.shramov@gmail.com}}}

\thanks{Yifei Chen was supported by the NSFC Grant (No. 12271384).
The work of Constantin Shramov was performed at the Steklov International Mathematical Center and supported by the Ministry of Science and Higher Education of the Russian Federation (agreement no.~\mbox{075-15-2022-265}) and by
the HSE University Basic Research Program.}

\begin{document}
\maketitle

\begin{abstract}
We bound the indices of normal abelian subgroups in finite
groups contained in the Cremona group of rank~$2$ over a field of odd
characteristic.
\end{abstract}

\tableofcontents

\section{Introduction}

A classical theorem of C.\,Jordan
asserts that finite subgroups
in a general linear group
over a field of characteristic zero contains normal abelian subgroups
of bounded index.

\begin{theorem}[{see e.g. \cite[\S40]{Jordan} or \cite[Theorem~9.9]{Serre-FiniteGroups}}]
\label{theorem:Jordan}
Let $\Bbbk$ be a field of characteristic zero,
and let $n$ be a positive integer.
Then there exists a constant~$J(n)$ such that
every finite subgroup of the group $\GL_n(\Bbbk)$
contains a normal abelian subgroup of index at most~$J(n)$.
\end{theorem}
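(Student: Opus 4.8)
The plan is to reduce the statement to the case $\Bbbk=\mathbb{C}$ and then reproduce C.\,Jordan's classical argument, whose engine is a contraction estimate for commutators near the identity of the unitary group. First I would make two standard reductions. Given a finite subgroup $G\subset\GL_n(\Bbbk)$, only finitely many elements of $\Bbbk$ occur as entries of the matrices in $G$, so they generate a finitely generated subfield $F\subseteq\Bbbk$; since $F$ has finite transcendence degree over $\mathbb{Q}$ and $\mathbb{C}$ is algebraically closed of infinite transcendence degree over $\mathbb{Q}$, there is an embedding $F\hookrightarrow\mathbb{C}$, hence an embedding $G\hookrightarrow\GL_n(\mathbb{C})$. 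Thus it suffices to produce the bound for finite subgroups of $\GL_n(\mathbb{C})$. Averaging an arbitrary Hermitian inner product on $\mathbb{C}^n$ over $G$ yields a $G$-invariant one, so after conjugation we may assume $G\subseteq\mathrm{U}(n)$; I equip $\mathrm{U}(n)$ with the operator norm $\|\cdot\|$, which is bi-invariant: $\|xgx^{-1}-1\|=\|g-1\|$ for $x\in\mathrm{U}(n)$, where $1$ denotes the identity matrix.

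The key computation is that for $x,y\in\mathrm{U}(m)$ one has $\|[x,y]-1\|=\|xy-yx\|\le 2\|x-1\|\,\|y-1\|$, which follows from the identity $xy-yx=(x-1)(y-1)-(y-1)(x-1)$ together with the fact that right multiplication by the unitary matrix $x^{-1}y^{-1}$ preserves the operator norm. Fix once and for all a constant $\epsilon$ with $0<\epsilon<1/2$, say $\epsilon=1/3$. The heart of the matter — essentially Jordan's lemma — is the claim that every finite subgroup $H\subseteq\mathrm{U}(m)$ generated by elements lying within distance $\epsilon$ of $1$ is abelian; I would prove this by induction on $m$, the case $m=1$ being immediate since $\mathrm{U}(1)$ is abelian. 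Granting the claim, one concludes: back in $\GL_n(\mathbb{C})$, put $S=\{g\in G:\|g-1\|\le\epsilon\}$ and $A=\langle S\rangle$; then $A$ is abelian by the claim applied with $m=n$, and $A$ is normal in $G$ because $S$ is invariant under conjugation by elements of $\mathrm{U}(n)$.

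For the inductive step, suppose $H\subseteq\mathrm{U}(m)$ is generated by a set of elements within $\epsilon$ of $1$ and is not abelian; enlarging the generating set we may assume it equals $T=\{h\in H:\|h-1\|\le\epsilon\}$. Then some pair $s,t\in T$ has $[s,t]\ne 1$; choose such a pair for which $\|[s,t]-1\|$ is as small as possible and set $c=[s,t]$. By the estimate $\|c-1\|\le 2\epsilon^2<\epsilon$, so $c\in T$, and for every $u\in T$ one has $\|[c,u]-1\|\le 2\epsilon\,\|c-1\|<\|c-1\|$, so minimality forces $[c,u]=1$; hence $c$ is a nontrivial element of the centre of $H$. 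If $c=\mu\cdot 1$ were a scalar matrix with $\mu\ne 1$, then $st=\mu ts$ would give $\operatorname{tr}(st)=\mu\operatorname{tr}(st)$, hence $\operatorname{tr}(st)=0$; but $\|st-1\|\le\|s-1\|+\|t-1\|\le 2\epsilon<1$ forces $\operatorname{Re}\lambda>1/2$ for every eigenvalue $\lambda$ of $st$, so $\operatorname{Re}\operatorname{tr}(st)>0$ — a contradiction. Thus $c$ is non-scalar, and $\mathbb{C}^m$ splits as an orthogonal direct sum of at least two eigenspaces of $c$. Since $H$ centralises $c$, it preserves each eigenspace $W$ and acts there unitarily, the image of $H$ in $\mathrm{U}(W)$ being generated by the operators $s|_W$ ($s\in T$), which still lie within $\epsilon$ of the identity; by the induction hypothesis each such image group is abelian, and as $H$ embeds into the product of its actions on the eigenspaces of $c$, it follows that $H$ is abelian — contradicting $[s,t]\ne 1$. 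This completes the induction.

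Finally I would bound $[G:A]$ by a constant depending only on $n$. Choosing coset representatives $g_1,\dots,g_r$ for $A$ in $G$, if $\|g_i-g_j\|=\|g_ig_j^{-1}-1\|\le\epsilon$ then $g_ig_j^{-1}\in S\subseteq A$, so $i=j$; hence $\{g_1,\dots,g_r\}$ is an $\epsilon$-separated subset of the compact metric space $\mathrm{U}(n)$. Since $\epsilon$ is an absolute constant, the cardinality of such a subset is bounded by a number $J(n)$ depending on $n$ alone: for instance the open balls of radius $\epsilon/2$ about the $g_i$ are pairwise disjoint and contained in $\mathrm{U}(n)$, so $r$ is at most the ratio of the volume of $\mathrm{U}(n)$ to that of one such ball. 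Thus $[G:A]\le J(n)$, which proves the theorem. The step I expect to be the real obstacle is Jordan's lemma, namely producing the normal abelian subgroup with a threshold $\epsilon$ independent of the dimension; the delicate points are securing the strict inequality in the minimality argument (which dictates the choice $\epsilon<1/2$) and excluding the possibility that the smallest nontrivial commutator is a nontrivial scalar matrix, which would break the dimension induction — the trace computation above is precisely what disposes of that case, and the remaining verifications are routine.
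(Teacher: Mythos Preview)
Your argument is correct and is precisely the classical proof of Jordan's theorem as presented in the references the paper cites (in particular Serre's account): reduce to $\mathbb{C}$, pass to the unitary group by averaging, use the commutator contraction $\|[x,y]-1\|\le 2\|x-1\|\,\|y-1\|$ together with a minimality/induction argument to show the subgroup generated by elements near the identity is abelian, and bound the index by a packing argument in $\mathrm{U}(n)$. The paper itself gives no independent proof of this statement, so there is nothing further to compare.
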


Motivated by Theorem~\ref{theorem:Jordan}, the following definition
was introduced in~\cite{Popov}.

\begin{definition}[{see \cite[Definition~2.1]{Popov}}]
\label{definition:Jordan}
A group~$\Gamma$ is called \emph{Jordan}
(alternatively, one says that~$\Gamma$ has \emph{Jordan property}),
if there exists a constant~\mbox{$J=J(\Gamma)$},
depending only on~$\Gamma$,
such that any finite subgroup of $\Gamma$
contains a normal abelian subgroup of index at most~$J$.
\end{definition}

Thus, Theorem~\ref{theorem:Jordan} claims that the group
$\GL_n(\Bbbk)$ (and thus also every linear algebraic group)
over a field $\Bbbk$ of characteristic zero is Jordan.
According to~\mbox{\cite[Theorem~1.6]{MengZhang}}, this property also holds
for automorphism groups of projective varieties over a field of characteristic zero.
Another important example of a Jordan group is the Cremona group
$\Cr_2(\Bbbk)$ of rank~$2$, that is,
the birational automorphism group of the projective plane,
over a field $\Bbbk$ of characteristic zero,
see~\mbox{\cite[Theorem~5.3]{Serre-2009}}. Furthermore,
birational automorphism groups of higher-dimensional projective spaces
over a field of characteristic zero
are also known to be Jordan, see~\mbox{\cite[Theorem~1.8]{ProkhorovShramov-Cr}} and~\cite{Birkar}.

The assertion of Theorem~\ref{theorem:Jordan} does not hold
if $\Bbbk$ is an algebraically closed field of positive characteristic,
see e.g.~\mbox{\cite[\S1]{ChenShramov}}.
However, the following weaker property
holds quite often for groups of geometric origin over such fields.
In the sequel, for a finite group $G$ and a prime number $p$, we will denote by $G_{(p)}$
the $p$-Sylow subgroup of~$G$.

\begin{definition}[{\cite[Definition~1.6]{Hu}}]
\label{D:g-Jordan-2}
Let $p$ be a prime number. A group $\Gamma$ is called
$p$-\emph{Jordan}, if there exist constants
$J(\Gamma)$ and~\mbox{$e(\Gamma)$}, depending only on $\Gamma$, such that every finite subgroup
$G$ of $\Gamma$ contains a normal abelian subgroup of order coprime
to $p$ and index at most~\mbox{$J(\Gamma)\cdot |G_{(p)}|^{e(\Gamma)}$}.
\end{definition}

\begin{theorem}[{see \cite[Theorem~0.4]{LarsenPink}, cf. \cite{BrauerFeit}, \cite{Weisfeiler}, \cite{BajpaiDona}}]
\label{theorem:BrauerFeit}
Let $n$ be a positive integer. Then there
exists a constant $J(n)$ such that for every prime $p$ and every
field $\Bbbk$ of characteristic $p$, every finite subgroup
$G$ of~\mbox{$\GL_n(\Bbbk)$}
contains a normal abelian subgroup of order coprime
to $p$ and index at most~\mbox{$J(n)\cdot |G_{(p)}|^{3}$}.
In particular, for every field $\Bbbk$ of characteristic $p>0$
the group~\mbox{$\GL_n(\Bbbk)$} is $p$-Jordan.
\end{theorem}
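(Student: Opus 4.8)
The plan is to derive the statement from the Larsen--Pink structure theorem for finite subgroups of general linear groups in positive characteristic (\cite[Theorem~0.2]{LarsenPink}); that structural result is the one genuinely deep ingredient, and everything else is elementary group theory together with a single estimate on the orders of finite groups of Lie type. Granting it, there is a constant $J(n)$, depending only on $n$, such that every finite subgroup $G\subseteq\GL_n(\Bbbk)$ over a field $\Bbbk$ of characteristic $p>0$ has a chain of subgroups
\[
G\supseteq\Gamma_1\supseteq\Gamma_2\supseteq\Gamma_3\supseteq\{1\},
\]
all normal in $G$, with $[G:\Gamma_1]\le J(n)$, with $\Gamma_1/\Gamma_2$ a (possibly trivial) direct product of finite simple groups of Lie type in characteristic $p$, with $\Gamma_2/\Gamma_3$ abelian of order prime to $p$, and with $\Gamma_3$ a $p$-group. (Cross-characteristic, alternating and sporadic simple constituents have order bounded in terms of $n$, by Landazuri--Seitz-type bounds on minimal faithful representation degrees, and are absorbed into the bound for $[G:\Gamma_1]$.)

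The first step is an arithmetic lemma: a finite simple group $S$ of Lie type in characteristic $p$ satisfies $|S|\le|S_{(p)}|^{3}$. Indeed, in the usual order formula over $\mathbb F_q$ with $q=p^f$ one has $|S_{(p)}|=q^{N}$, where $N$ is the number of positive roots of the ambient irreducible root system, while $|S|/|S_{(p)}|$ divides $\prod_i(q^{d_i}-1)<q^{\sum_i d_i}=q^{N+r}$, with $r$ the rank and $\sum_i d_i=N+r$; since $r\le N$ for every irreducible root system, this is at most $q^{2N}=|S_{(p)}|^{2}$, and the twisted families, the Suzuki and Ree groups included, satisfy the same and in fact weaker inequalities. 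Multiplying over the direct factors of $\Gamma_1/\Gamma_2$ then gives $[\Gamma_1:\Gamma_2]=|\Gamma_1/\Gamma_2|\le a^{3}$, where $a:=\bigl|(\Gamma_1/\Gamma_2)_{(p)}\bigr|$.

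The second step constructs the desired subgroup. As $\Gamma_2/\Gamma_3$ has order prime to $p$, the subgroup $\Gamma_3$ is the unique Sylow $p$-subgroup of $\Gamma_2$, hence characteristic in $\Gamma_2$ and therefore normal in $G$; by Schur--Zassenhaus, $\Gamma_2=\Gamma_3\rtimes A$ with $A$ abelian of order prime to $p$. Set $b:=|\Gamma_3|$ and let $N$ be the Hall $p'$-subgroup of $Z(\Gamma_2)$; it is characteristic in $\Gamma_2$, hence normal in $G$, and abelian of order prime to $p$. A direct computation shows that $C_A(\Gamma_3)$ centralises all of $\Gamma_2$, so $C_A(\Gamma_3)\subseteq N$; and $A/C_A(\Gamma_3)$, acting faithfully on the $p$-group $\Gamma_3$ with order prime to $p$, acts faithfully already on $\Gamma_3/\Phi(\Gamma_3)\cong\mathbb F_p^{d}$, and, being an abelian $p'$-group inside $\GL_d(\mathbb F_p)$, embeds into a product of multiplicative groups of finite fields of total $\mathbb F_p$-degree at most $d$, so has order less than $p^{d}\le b$. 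Hence
\[
[\Gamma_2:N]\le[\Gamma_2:C_A(\Gamma_3)]=|\Gamma_3|\cdot[A:C_A(\Gamma_3)]\le b^{2}.
\]

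Finally the pieces are assembled. In the series $G\supseteq\Gamma_1\supseteq\Gamma_2\supseteq\Gamma_3$ the $p$-parts of the successive quotients multiply to a divisor of $|G_{(p)}|$, and $\Gamma_2/\Gamma_3$ has order prime to $p$, so $ab$ divides $|G_{(p)}|$; in particular $a\le|G_{(p)}|$ and $ab\le|G_{(p)}|$. Therefore
\[
[G:N]=[G:\Gamma_1]\,[\Gamma_1:\Gamma_2]\,[\Gamma_2:N]\le J(n)\,a^{3}b^{2}=J(n)\,a\,(ab)^{2}\le J(n)\,|G_{(p)}|^{3},
\]
so $N$ is a normal abelian subgroup of $G$ of order coprime to $p$ and index at most $J(n)\,|G_{(p)}|^{3}$, which is the assertion, with the constant of the structure theorem serving as $J(n)$; the last sentence of the theorem then follows immediately from Definition~\ref{D:g-Jordan-2} (taking $e=3$). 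The only genuine obstacle is the Larsen--Pink structure theorem itself, which is precisely what makes the constant uniform in $p$ and $\Bbbk$ and independent of the classification of finite simple groups; once it is available, the one point needing care is the uniform verification of $|S|\le|S_{(p)}|^{2}\cdot|S_{(p)}|$ across all families of finite simple groups of Lie type, i.e. of the inequality $|S|/|S_{(p)}|\le|S_{(p)}|^{2}$.
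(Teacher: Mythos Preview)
The paper does not prove this theorem: it is stated as a known result, attributed to \cite[Theorem~0.4]{LarsenPink} (with related references to Brauer--Feit, Weisfeiler, and Bajpai--Dona), and is used only as background motivation for the notion of a $p$-Jordan group. So there is no ``paper's own proof'' to compare against.

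That said, your argument is a correct and well-organised derivation of the statement from the Larsen--Pink structure theorem \cite[Theorem~0.2]{LarsenPink}, which is indeed how \cite[Theorem~0.4]{LarsenPink} is obtained in the source. The three ingredients --- the order estimate $|S|\le|S_{(p)}|^{3}$ for finite simple groups of Lie type in characteristic $p$, the construction of the normal abelian $p'$-subgroup $N$ inside $\Gamma_2$ via the Hall $p'$-part of its centre together with the bound on a coprime abelian subgroup of $\GL_d(\mathbf{F}_p)$, and the final bookkeeping $a^3b^2=a(ab)^2\le|G_{(p)}|^3$ --- are all sound. One small point worth making explicit is that the bound $[A:C_A(\Gamma_3)]<p^d$ relies on Maschke's theorem to decompose $\Gamma_3/\Phi(\Gamma_3)$ into simple $\mathbf{F}_p[A]$-modules, each a finite field, so that $A/C_A(\Gamma_3)$ embeds into the product of their multiplicative groups; you state the conclusion correctly but the justification could be spelled out. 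Otherwise your reconstruction is faithful to the cited argument.
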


\begin{theorem}[{\cite[Theorem~1.10]{Hu}}]
\label{theorem:Hu}
Let $\Bbbk$ be a field of characteristic $p>0$, and
let $X$ be a projective variety over $\Bbbk$.
Then the automorphism group $\Aut(X)$ is $p$-Jordan.
\end{theorem}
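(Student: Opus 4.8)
The plan is to combine the structure theory of the automorphism group scheme with the bound of Theorem~\ref{theorem:BrauerFeit} and a descent of Jordan-type estimates along short exact sequences. Since the Jordan property of Definition~\ref{D:g-Jordan-2} passes to subgroups and $\Aut(X)$ embeds into $\Aut(X_{\bar\Bbbk})$ for an algebraic closure $\bar\Bbbk$ of $\Bbbk$, I may assume that $\Bbbk=\bar\Bbbk$ is algebraically closed. Write $\Aut^0(X)$ for the group of $\Bbbk$-points of the identity component $\mathrm{Aut}^0_X$ of the automorphism group scheme of $X$; it is a normal subgroup of $\Aut(X)$, being the $\Bbbk$-points of a normal subgroup scheme. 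I will bound finite subgroups of the component group $\Aut(X)/\Aut^0(X)$, then handle finite subgroups of $\Aut^0(X)$, and finally assemble the two.

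For the component group, note that $\Aut(X)$ acts on the Néron--Severi group $\mathrm{NS}(X)=\Pic(X)/\Pic^0(X)$, which is finitely generated of some rank $\rho$, and this action factors through $\Aut(X)/\Aut^0(X)$; modulo torsion one gets a homomorphism $\Aut(X)/\Aut^0(X)\to\GL_\rho(\ZZ)$. Its kernel is finite: an automorphism acting trivially on $\mathrm{NS}(X)$ fixes the numerical class of a chosen ample divisor, and automorphisms fixing the class of an ample divisor form a group scheme of finite type, so the kernel lies in its finitely many components. Since finite subgroups of $\GL_\rho(\ZZ)$ have order at most a constant $M(\rho)$ by Minkowski's theorem, every finite subgroup of $\Aut(X)/\Aut^0(X)$ has order at most some constant $n_0=n_0(X)$.

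Next I claim that there are constants $J_1=J_1(X)$ and $e_1=e_1(X)$ such that every finite subgroup $G'\le\Aut^0(X)$ has a normal abelian subgroup of order coprime to $p$ and index at most $J_1\cdot|G'_{(p)}|^{e_1}$. Let $H$ be the reduced identity component of the automorphism group scheme, a connected algebraic group over $\bar\Bbbk$ with $H(\bar\Bbbk)=\Aut^0(X)$. By Rosenlicht's decomposition $H=H_{\mathrm{ant}}\cdot H_{\mathrm{aff}}$, where the largest anti-affine subgroup $H_{\mathrm{ant}}$ is central in $H$ and, in characteristic $p$, is a semi-abelian variety, hence commutative, while $H/H_{\mathrm{ant}}$ is affine and so embeds into some $\GL_m$. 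For a finite $G'\le H(\bar\Bbbk)$ put $Z=G'\cap H_{\mathrm{ant}}(\bar\Bbbk)$, a central subgroup of $G'$, and $\bar G'=G'/Z\hookrightarrow\GL_m(\bar\Bbbk)$. By Theorem~\ref{theorem:BrauerFeit} the group $\bar G'$ has a normal abelian subgroup $\bar A$ of order coprime to $p$ with $[\bar G':\bar A]\le J(m)\cdot|\bar G'_{(p)}|^3$; let $\tilde A$ be its preimage in $G'$, a normal subgroup. The crucial structural point is that $[\tilde A,\tilde A]\subseteq H_{\mathrm{ant}}(\bar\Bbbk)\cap[H,H]$ is a \emph{finite} group of order bounded by a constant depending only on $X$: indeed $[H,H]=[H_{\mathrm{aff}},H_{\mathrm{aff}}]$ is affine and, being unipotent-by-semisimple, has no nontrivial torus quotient, hence no nontrivial central torus, whereas $H_{\mathrm{aff}}\cap H_{\mathrm{ant}}$ is a torus up to finite index. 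Thus $\tilde A$ has commutator subgroup of bounded order; replacing $\bar A$ by its prime-to-$p$ part (diagonalizable, hence generated by at most $m$ elements) and $\tilde A$ accordingly, the commutator pairing $\bar A\times\bar A\to[\tilde A,\tilde A]$ factors through a quotient of $\bar A$ of bounded order, so its radical $R$ has index at most a constant in $\bar A$; moreover $R$ is $G'$-invariant because $[\tilde A,\tilde A]$ is central, so the preimage of $R$ in $G'$ is an abelian normal subgroup. Passing once more to the prime-to-$p$ part, which is characteristic of index a $p$-power dividing $|G'_{(p)}|$, yields the asserted normal abelian subgroup of $G'$.

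Finally, given a finite subgroup $G\le\Aut(X)$, set $N=G\cap\Aut^0(X)$; then $N\trianglelefteq G$ with $[G:N]\le n_0$, and $N$ contains a normal abelian subgroup $A_0$ of order coprime to $p$ with $[N:A_0]\le J_1|G_{(p)}|^{e_1}$. Its normal core $A=\bigcap_{g\in G}gA_0g^{-1}$ is normal in $G$, abelian, of order coprime to $p$; since $N\le N_G(A_0)$ there are at most $n_0$ conjugates $gA_0g^{-1}$, each of index $[N:A_0]$ in $N$, whence $[G:A]\le n_0\cdot\big(J_1|G_{(p)}|^{e_1}\big)^{n_0}$, which is of the required form $J(X)\cdot|G_{(p)}|^{e(X)}$. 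I expect the main obstacle to be the third step, and within it the interaction of the affine part of $\Aut^0(X)$ with its abelian-variety part: finite subgroups of an abelian variety over $\overline{\mathbb F}_p$ are unbounded and carry $p$-torsion, so one cannot argue by bounded index there, and one must instead use that $H_{\mathrm{ant}}$ is central and meets $[H,H]$ in a bounded finite group in order to see that the preimage of the subgroup produced by Theorem~\ref{theorem:BrauerFeit} is abelian up to bounded index.
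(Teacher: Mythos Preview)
The paper does not give its own proof of this statement; Theorem~\ref{theorem:Hu} is quoted from \cite[Theorem~1.10]{Hu} as a piece of background, with no argument supplied. So there is nothing in the paper to compare your sketch against.

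For what it is worth, your overall strategy is the standard one and is essentially that of~\cite{Hu}: bound the component group via its action on $\mathrm{NS}(X)$ together with Minkowski's theorem, handle $\Aut^0(X)$ using the structure theory of connected algebraic groups combined with Theorem~\ref{theorem:BrauerFeit}, and then descend along the resulting short exact sequence by taking a normal core. The place where your write-up is thinnest is the claim that $H_{\mathrm{ant}}(\bar\Bbbk)\cap[H,H](\bar\Bbbk)$ has order bounded by a constant depending only on $X$. Your justification (``no nontrivial torus quotient, hence no nontrivial central torus'') is not quite an argument as stated, but the conclusion is correct: one has $[H,H]=[H_{\mathrm{aff}},H_{\mathrm{aff}}]\subseteq H_{\mathrm{aff}}$, its image in the reductive quotient $H_{\mathrm{aff}}/R_u(H_{\mathrm{aff}})$ is semisimple and hence has finite centre, and a torus meets the unipotent radical trivially on $\bar\Bbbk$-points; since $H_{\mathrm{ant}}\cap H_{\mathrm{aff}}$ is, up to a finite group, a central torus of $H_{\mathrm{aff}}$, its intersection with $[H,H]$ is finite of order bounded in terms of $H$. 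With that point made precise, the rest of your outline (the commutator-pairing argument producing an abelian normal subgroup of bounded index in $\tilde A$, and the normal-core step) goes through.
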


%

The $p$-Jordan property is  known for Cremona groups of rank $2$ over a field of positive characteristic~$p$.

\begin{theorem}[{\cite[Theorem~1.6]{ChenShramov}}]
\label{theorem:ChenShramov}
There exists a constant $J$ such that for every prime $p$ and every field $\Bbbk$ of characteristic $p$, every finite subgroup~$G$
of~\mbox{$\Cr_2(\Bbbk)$}
contains a normal abelian subgroup of order coprime
to $p$ and index at most~\mbox{$J\cdot |G_{(p)}|^{3}$}.
In particular, for every field $\Bbbk$ of characteristic $p>0$
the group~\mbox{$\Cr_2(\Bbbk)$} is $p$-Jordan.
\end{theorem}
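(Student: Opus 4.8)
The plan is to run the $G$-equivariant minimal model program in order to reduce a finite subgroup $G\subset\Cr_2(\Bbbk)$ to a group acting biregularly on a well-understood rational surface, and then, in each resulting case, to realize $G$ — or a piece of $G$ of controlled index — inside a general linear group, so that Theorem~\ref{theorem:BrauerFeit} applies with constants that are independent of $p$ and of~$\Bbbk$. First I would reduce to the case $\Bbbk=\overline{\Bbbk}$: base change gives an embedding $\Cr_2(\Bbbk)\hookrightarrow\Cr_2(\overline{\Bbbk})$ that preserves the order of a finite subgroup and of its $p$-Sylow. Then, resolving the indeterminacy of the finitely many birational maps that are the elements of $G$, passing to a smooth projective common model and to its minimal resolution, I realize $G$ inside $\Aut(Y)$ for a smooth projective rational surface~$Y$; running the $G$-equivariant MMP (valid in every characteristic for surfaces) I arrive at one of two cases: (a)~a del Pezzo surface $X$ with $\Pic(X)^G\cong\ZZ$, or (b)~a conic bundle $\pi\colon X\to\PP^1$ with $\Pic(X)^G\cong\ZZ^2$.

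In case (a) the degree $K_X^2$ lies in $\{1,\dots,9\}$, and there is an integer $m\in\{1,2,3\}$, depending only on the degree, such that $|{-}mK_X|$ is very ample and embeds $X$ into $\PP^N$ with $N$ bounded by an absolute constant. Since $K_X$ is $G$-invariant, $G$ acts linearly on $H^0(X,-mK_X)$, and the induced homomorphism $G\to\PGL_{N+1}(\Bbbk)$ is injective, because an element acting on $H^0$ by a scalar fixes $X$ pointwise. As $\PGL_{N+1}(\Bbbk)$ embeds into $\GL_{(N+1)^2}(\Bbbk)$ by its conjugation action on matrices, Theorem~\ref{theorem:BrauerFeit} gives the required bound with an absolute constant and exponent~$3$.

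In case (b) I would work with the exact sequence
\[
1\longrightarrow G_F\longrightarrow G\longrightarrow G_B\longrightarrow 1,
\]
where $G_B\subset\Aut(\PP^1)=\PGL_2(\Bbbk)$ is the image of the action of $G$ on the base and $G_F$ is the subgroup preserving every fibre of~$\pi$. By Tsen's theorem the Brauer group of $K:=\Bbbk(\PP^1)$ vanishes, so the generic fibre of $\pi$ is $\PP^1$ over~$K$; hence $G_F$ embeds, faithfully, into $\Aut(\PP^1_K)=\PGL_2(K)$. Both $\PGL_2(\Bbbk)$ and $\PGL_2(K)$ embed into a general linear group over the respective field, so Theorem~\ref{theorem:BrauerFeit} shows that each of $G_B$ and $G_F$ contains a normal abelian subgroup of order prime to $p$ and index at most $J_0\cdot|G_{(p)}|^3$ for an absolute constant~$J_0$.

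The main obstacle, and where I expect the real work to lie, is to combine the information about $G_F$ and $G_B$ into a statement about $G$: the $p$-Jordan property is not inherited by extensions for free, the normal abelian subgroups furnished above need not be characteristic, and the index $[G:G_F]=|G_B|$ is not polynomially bounded in~$|G_{(p)}|$. I would get around this using the geometry of~$\PGL_2$. A finite abelian subgroup of $\PGL_2$ of order prime to $p$ is cyclic or a Klein four-group, hence of rank at most~$2$; and by Dickson's classification a finite subgroup $H$ of $\PGL_2$ over a field of characteristic $p$ either contains a \emph{characteristic} abelian subgroup of order prime to $p$ and index bounded by an absolute constant, or satisfies $|H|\le 2\,|H_{(p)}|^{3}$ — the latter covering precisely the subgroups isomorphic to $(\ZZ/p)^{a}\rtimes\ZZ/m$, to $\PSL_2(p^{c})$, or to $\PGL_2(p^{c})$, in which the cubic exponent already absorbs all of~$H$. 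When $G_F$ is of the first type, its characteristic abelian subgroup $A$ is automatically normal in~$G$; I would then analyze the centralizer $C_G(A)$, using that $A$ has rank at most~$2$ — so that the commutator pairings deciding whether the relevant subgroups are abelian are forced to be essentially degenerate — and that $G/C_G(A)$ embeds into $\Aut(A)$ (abelian when $A$ is cyclic, and $A$ is bounded otherwise), to extract a genuine normal abelian subgroup of $G$ of order prime to $p$ and of controlled index. When $G_F$ is of the second type, the bound $|G_F|\le 2\,|(G_F)_{(p)}|^{3}$, together with the multiplicativity $|G_{(p)}|=|(G_F)_{(p)}|\cdot|(G_B)_{(p)}|$ and the bound for~$G_B$, already yields a normal abelian $p'$-subgroup of $G$ of index at most a constant times~$|G_{(p)}|^{3}$. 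Assembling the cases gives the theorem with a uniform constant $J$ and exponent~$3$; making this last extension argument genuinely effective, with all constants independent of $p$ and~$\Bbbk$, is the step that I expect to require by far the most care.
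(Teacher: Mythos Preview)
The statement is cited from \cite{ChenShramov} rather than proved in this paper; what the paper does prove is the refined Theorem~\ref{theorem:main} for odd~$p$, and its methods are what your proposal should be measured against.

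Your overall architecture --- MMP reduction to del Pezzo surfaces and conic bundles --- matches both the cited paper and this one. Your del Pezzo argument, embedding $X$ via $|{-}mK_X|$ into a projective space of bounded dimension and invoking Theorem~\ref{theorem:BrauerFeit}, is correct and in fact cleaner than the degree-by-degree analysis carried out here; the paper takes the longer route only because it is after the \emph{optimal} constants, which a black-box appeal to Brauer--Feit cannot deliver.

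The genuine gap is in the conic bundle case. Your ``second type'' claim is not right as stated: take $G_F\cong\PSL_2(\mathbf{F}_{p^k})$ and $G_B\cong\DD_{2n}$ with $n$ large and prime to~$p$. Then $|G_{(p)}|=p^k$ while $|G|$ is of order $n\cdot p^{3k}$, so the trivial subgroup does not suffice, and any normal abelian $p'$-subgroup of~$G_B$ pulls back to a subgroup of~$G$ containing the nonabelian~$G_F$. One must actually manufacture, inside the extension of a cyclic group by~$G_F$, a large abelian subgroup that is \emph{normal in all of~$G$}. Your centralizer sketch for the ``first type'' has the same softness: when $A\subset G_F$ is cyclic of large prime-to-$p$ order, $[G:C_G(A)]$ divides $|\Aut(A)|$, which is unbounded, so nothing comes for free.

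The missing ingredient is geometric, and you do not mention it: Lemma~\ref{lemma:Serre} (due to Serre, \cite[Lemma~5.2]{Serre-2009}) says that if $\lambda\in G_F$ has order prime to~$p$ and $\alpha\in G$ normalizes $\langle\lambda\rangle$, then $\alpha^2$ centralizes~$\lambda$. This is precisely the input driving the extension lemmas of Section~\ref{section:extensions} (Corollary~\ref{corollary:PSL-group-extension}, Lemma~\ref{lemma:Darafsheh-group-extension}), and it is what lets one build, inside the preimage of the cyclic part of~$G_B$, an abelian subgroup invariant under $\Aut(H;G_F)$ and hence normal in~$G$. The paper itself remarks (just after Proposition~\ref{proposition:CB}) that the original argument in \cite{ChenShramov} was careless about normality at exactly this point; your proposal has the same weak spot and does not yet supply the repair.
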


The minimal possible value of the constant $J(n)$ appearing in Theorem~\ref{theorem:Jordan}
was found in~\cite{Collins} for every positive integer~$n$.
Also, the following result is available.

\begin{theorem}[{\cite[Theorem~1.9]{Yasinsky}}]
\label{theorem:Yasinsky}
For every field $\Bbbk$ of characteristic zero, every finite subgroup
of~\mbox{$\Cr_2(\Bbbk)$}
contains a normal abelian subgroup of index at most~$7200$.
\end{theorem}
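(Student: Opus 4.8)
The plan is to pass from $\Cr_2$ to biregular geometry and run a case analysis over the classification of minimal rational surfaces carrying a group action. Since a finite subgroup of $\Cr_2(\Bbbk)$ is also a finite subgroup of $\Cr_2(\overline{\Bbbk})$, we may assume $\Bbbk$ algebraically closed. Regularizing the $G$-action and running the $G$-equivariant minimal model program, we obtain a smooth projective rational surface $X$ with a faithful biregular $G$-action which, by the theorems of Manin and Iskovskikh, is either \textup{(I)} a del Pezzo surface with $\rk\Pic(X)^{G}=1$, or \textup{(II)} equipped with a $G$-equivariant conic bundle $\pi\colon X\to\PP^{1}$ with $\rk\Pic(X)^{G}=2$. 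It suffices to produce, in each case, a normal abelian subgroup of $G$ of index at most $7200$; the Jordan property of $\Cr_2$ in characteristic zero already guarantees \emph{some} bound, so only the uniform value $7200$ is at stake.

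In case \textup{(I)} I would organize the argument by the degree $d=K_{X}^{2}\in\{1,\dots,9\}$; degree $7$ and the surface $\mathbb{F}_{1}$ in degree $8$ are discarded, since each carries a $G$-invariant curve and so is not $G$-minimal. The extremal case is $d=8$ with $X=\PP^{1}\times\PP^{1}$ and $\Aut(X)\cong\PGL_2(\Bbbk)\wr(\ZZ/2)$. Here $\rk\Pic(X)^{G}=1$ forces some element of $G$ to interchange the two rulings, so $G^{+}:=G\cap\PGL_2(\Bbbk)^{2}$ has index $2$ in $G$, and the conjugation action of such an element shows that the two coordinate projections of $G^{+}$ have conjugate images; after a coordinate change $G^{+}\le H\times H$ with $H\le\PGL_2(\Bbbk)$ finite. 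By the classification of finite subgroups of $\PGL_2(\Bbbk)$, the group $H$ has a characteristic abelian subgroup $A\trianglelefteq H$ with $[H:A]\le 6$ unless $H\cong A_{5}$, in which case $A=1$ and $[H:A]=60$. One then checks that $(A\times A)\cap G^{+}$ is a normal abelian subgroup of $G$ of index at most $2\,[H:A]^{2}\le 7200$, with equality exactly when $G\cong A_{5}\wr(\ZZ/2)$, a group whose maximal normal abelian subgroup is trivial — so the index is genuinely $7200$. For all other degrees one invokes the Dolgachev–Iskovskikh classification of automorphism groups of del Pezzo surfaces and checks that the index stays strictly below $7200$: $d=9$ gives at most $360$ (a copy of $A_{6}\subset\PGL_3(\Bbbk)$, the Valentiner group), $d=6$ at most $12$, $d=5$ at most $120$ ($\Aut(X)\cong S_{5}$), and $d\in\{1,2,3,4\}$ give smaller values (e.g. the Clebsch cubic with $\Aut\cong S_{5}$, and the degree-$2$ surfaces with automorphism group involving $\PSL_2(7)$).

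In case \textup{(II)} the conic bundle gives an exact sequence
\[
1\longrightarrow G_{F}\longrightarrow G\longrightarrow G_{C}\longrightarrow 1,
\]
with $G_{C}\le\Aut(\PP^{1})=\PGL_2(\Bbbk)$ and $G_{F}$ acting fibrewise. The group $G_{C}$ has a normal abelian subgroup of index at most $60$, while $G_{F}$ is very restricted: by the conic-bundle analysis of Dolgachev–Iskovskikh (cf. Bayle–Beauville) it possesses a characteristic abelian subgroup of absolutely bounded index (up to the number of degenerate fibres it is cyclic, dihedral, binary dihedral, or a subgroup of $(\ZZ/2)^{2}$). Combining the abelian part of $G_{C}$ with that of $G_{F}$ through this extension produces a normal abelian subgroup of $G$ of index bounded by an explicit constant well below $7200$, so the sharp overall bound is the $7200$ coming from case \textup{(I)}.

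The main obstacle is the breadth of the case analysis rather than a single hard step: one must correctly invoke and combine the classification of $G$-minimal rational surfaces with the complete classification of automorphism groups of del Pezzo surfaces of every degree, and then carry out the group-theoretic index computation in each case. The full constant $7200$ appears only in the degree-$8$ subcase $\PP^{1}\times\PP^{1}$. The delicate point in case \textup{(II)} is that $G_{C}$ may be an arbitrarily large cyclic or dihedral group, so that extracting a normal abelian subgroup of $G$ itself — not merely of $G_{F}$ or of a bounded-index subgroup of $G$ — requires genuinely exploiting the structure of the extension (equivalently, the torus-like part of the conic-bundle structure) rather than a crude index estimate.
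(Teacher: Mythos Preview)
The paper does not prove this statement: Theorem~\ref{theorem:Yasinsky} is quoted from \cite{Yasinsky} as background motivation for the main result, with no argument given. So there is no proof in the paper to compare your proposal against.

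That said, your outline follows exactly the strategy Yasinsky uses in \cite{Yasinsky}, and it is also the skeleton of the present paper's proof of Theorem~\ref{theorem:main} in positive characteristic (see Section~\ref{section:proof}): regularize, run the $G$-equivariant MMP, and split into del Pezzo surfaces and conic bundles. The identification of the extremal case as $(\mathfrak{A}_5\times\mathfrak{A}_5)\rtimes\mumu_2$ acting on $\PP^1\times\PP^1$ is correct and matches both Yasinsky's paper and the examples at the end of Section~\ref{section:proof} here.

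A couple of minor inaccuracies worth flagging. For $d=9$ you write ``the Valentiner group'' for $\mathfrak{A}_6\subset\PGL_3(\Bbbk)$; the Valentiner group is the central extension $3\cdot\mathfrak{A}_6\subset\SL_3$, not a subgroup of $\PGL_3$, though your numerical claim of $360$ is right. More substantively, your treatment of case~(II) is a sketch of a sketch: you correctly flag that the extension $1\to G_F\to G\to G_C\to 1$ is the delicate point, but ``combining the abelian part of $G_C$ with that of $G_F$ through this extension'' hides real work---one must produce a subgroup that is normal in all of $G$, not merely in a bounded-index subgroup, and this is precisely what occupies Sections~\ref{section:extensions} and~\ref{section:CB} of the present paper in the positive-characteristic setting. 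As an outline your proposal is sound; as a proof it is incomplete at exactly the place you yourself identify.
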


Recently, an analog of Theorem~\ref{theorem:Yasinsky}
with an explicit bounding constant for an arbitrary field of characteristic zero
(so that the constant depends on arithmetic properties of the field) was
obtained in~\cite{Zaitsev}. Note also
that while Jordan property does not hold for the group $\Cr_2(\Bbbk)$
for an algebraically closed field~$\Bbbk$ of positive characteristic, it does
hold when~$\Bbbk$ is a finite field by~\cite{ProkhorovShramov}.
Furthermore, the relevant constants are known in this case.
We will denote by~$\mathbf{F}_q$ the finite field of~$q$ elements.

\begin{theorem}[{see \cite[Theorem~1.2]{ProkhorovShramov} and \cite[Theorem~1]{Vikulova}}]
Every finite subgroup of~\mbox{$\Cr_2(\mathbf{F}_q)$}
contains a normal abelian subgroup of index at most
$$
J\big(\Cr_2(\mathbf{F}_q)\big)=
\begin{cases}
q^3(q^2-1)(q^3-1)=|\PGL_3(\mathbf{F}_q)|, &\text{if\ } q\ge 3,\\
720>|\PGL_3(\mathbf{F}_2)|, &\text{if\ } q=2.
\end{cases}
$$
\end{theorem}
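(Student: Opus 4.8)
The plan is to reduce the assertion to a statement about biregular automorphism groups of rational surfaces over $\mathbf{F}_q$ and then to run through the equivariant classification of minimal rational surfaces. Let $G\subset\Cr_2(\mathbf{F}_q)$ be a finite subgroup. By the standard regularization of birational actions, $G$ is conjugate inside $\Cr_2(\mathbf{F}_q)$ to a group of biregular automorphisms of a smooth projective geometrically rational surface, and by running a $G$-equivariant minimal model program over $\mathbf{F}_q$ one may assume $G\subseteq\Aut(X)$ for a $G$-minimal geometrically rational surface $X$ over $\mathbf{F}_q$. By the equivariant classification (due to Manin and Iskovskikh), either $X$ is a del Pezzo surface with $\rk\Pic(X)^G=1$, or $X$ carries a $G$-equivariant conic bundle $\pi\colon X\to C$ with $\rk\Pic(X)^G=2$; in the latter case $C$ is a smooth conic over $\mathbf{F}_q$, hence $C\cong\PP^1_{\mathbf{F}_q}$ since conics over finite fields have rational points. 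As a normal abelian subgroup $A\trianglelefteq\Aut(X)$ of index $m$ meets $G$ in a normal abelian subgroup of index at most $m$, it is enough to produce in each case a normal abelian subgroup of $\Aut(X)$ of controlled index and to exhibit an extremal surface.

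For the del Pezzo case set $d=K_X^2\in\{1,\dots,9\}$. When $d=9$ the surface is a Severi--Brauer surface, and since $\mathrm{Br}(\mathbf{F}_q)=0$ it is $\PP^2_{\mathbf{F}_q}$, with $\Aut(X)=\PGL_3(\mathbf{F}_q)$; as $\PSL_3(\mathbf{F}_q)$ is a nonabelian simple group the only normal abelian subgroup of $\PGL_3(\mathbf{F}_q)$ is trivial, so this surface already forces the constant to be at least $|\PGL_3(\mathbf{F}_q)|$. When $d=8$ the surface $X$ is a form of $\PP^1\times\PP^1$ (a smooth quadric surface), so $|\Aut(X)|\le 2\max\bigl(|\PGL_2(\mathbf{F}_q)|^2,\,|\PGL_2(\mathbf{F}_{q^2})|\bigr)$, which is $<|\PGL_3(\mathbf{F}_q)|$ for every $q\ge 3$. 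For $1\le d\le 6$ (a del Pezzo surface of degree $7$ is never $G$-minimal with $\rk\Pic^G=1$) the identity component $\Aut^\circ(X)$ is a torus of dimension at most $2$, so $\Aut^\circ(X)(\mathbf{F}_q)$ is a normal abelian subgroup of $\Aut(X)$ whose index equals the order of $\Aut(X)/\Aut^\circ(X)(\mathbf{F}_q)$; the latter embeds into the Weyl group $\mathrm{W}(\mathrm{E}_{9-d})$ acting on $K_X^\perp\subset\Pic(X_{\bar{\mathbf{F}}_q})$, and for a single del Pezzo surface in characteristic $\ge 5$ it is much smaller (at most $12,120,160,648$ for $d=6,5,4,3$, and bounded by a universal constant for $d=1,2$). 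In all these subcases the resulting index is $<|\PGL_3(\mathbf{F}_q)|$ for every $q\ge 3$, and is $\le 720$ in any event.

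For a $G$-equivariant conic bundle $\pi\colon X\to\PP^1_{\mathbf{F}_q}$ with $\rk\Pic(X)^G=2$ I would analyse the exact sequence $1\to G_\pi\to G\to G_C\to 1$, where $G_C\subseteq\Aut(\PP^1_{\mathbf{F}_q})=\PGL_2(\mathbf{F}_q)$ is the image of $G$ in the base and $G_\pi$ acts fibrewise, embedding into $\PGL_2(\mathbf{F}_q(t))$ via the generic fibre while preserving every degenerate fibre. Bounding $|G_\pi|$ — the delicate point being conic bundles with many degenerate fibres, where $G$-minimality forces $G$ to be small — together with $|G_C|\le|\PGL_2(\mathbf{F}_q)|$ produces a normal abelian subgroup of $G$ of index $<|\PGL_3(\mathbf{F}_q)|$ for every $q\ge 3$ and $\le 720$ in general; this is essentially the rank-$2$ input of Theorem~\ref{theorem:ChenShramov} made precise over $\mathbf{F}_q$. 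Assembling the cases, for $q\ge 3$ the minimal index of a normal abelian subgroup is at most $|\PGL_3(\mathbf{F}_q)|$, and it is at least that by the degree-$9$ example, so $J(\Cr_2(\mathbf{F}_q))=|\PGL_3(\mathbf{F}_q)|=q^3(q^2-1)(q^3-1)$.

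It remains to treat $q=2$, and here lies the main obstacle: in characteristic $2$ the automorphism groups of del Pezzo surfaces of small degree can be strictly larger than in characteristic zero, so the bounds above must be replaced by the characteristic-$2$ classification (of cubic surfaces, following Dolgachev and Duncan, and of del Pezzo surfaces of degrees $1$, $2$ and $4$). Carrying this out, one finds an extremal surface over $\mathbf{F}_2$ — for instance a smooth cubic surface $X$ with $\Aut(X)$ containing a copy of $\mathrm{Sp}_4(\mathbf{F}_2)\cong\mathrm{S}_6$ of index at most $2$ — whose normal abelian subgroups all have index $720$, while no conic bundle or other del Pezzo surface over $\mathbf{F}_2$ does better; since $720>168=|\PGL_3(\mathbf{F}_2)|$ this beats the degree-$9$ case and gives $J(\Cr_2(\mathbf{F}_2))=720$. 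Thus the real work throughout is the uniform, field-by-field control of automorphism groups of del Pezzo surfaces and conic bundles over $\mathbf{F}_q$, and in particular the characteristic-$2$ geometry of small-degree del Pezzo surfaces, which cannot be read off from characteristic zero.
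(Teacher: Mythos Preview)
The paper does not prove this theorem: it is quoted in the introduction with attribution to \cite{ProkhorovShramov} and \cite{Vikulova}, and no argument is given anywhere in the text. So there is no in-paper proof to compare your proposal against; I can only assess whether your sketch would stand on its own.

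Your overall architecture (regularize, run $G$-MMP, split into del Pezzo versus conic bundle, identify $\PP^2$ as the extremal case for $q\ge 3$) is the correct one and is indeed the strategy of the cited papers. However, what you have written is an outline with the actual work left undone. Three concrete gaps:

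\begin{itemize}
\item The conic bundle case is asserted, not proved. You write ``Bounding $|G_\pi|$ \dots\ together with $|G_C|\le |\PGL_2(\mathbf{F}_q)|$ produces a normal abelian subgroup of $G$ of index $<|\PGL_3(\mathbf{F}_q)|$'' and call it ``the delicate point'', but you never produce a bound. This is a substantial part of the argument in \cite{ProkhorovShramov}.
\item Your del Pezzo bounds ($12$, $120$, $160$, $648$ for $d=6,5,4,3$) are the characteristic-zero maxima, and you explicitly restrict to characteristic $\ge 5$. But the theorem is uniform in $q$, so you must handle $q$ a power of $2$ or $3$ for every degree, not only for the $q=2$ endgame. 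For instance, over $\mathbf{F}_4$ there is a smooth cubic with automorphism group $\PSU_4(\mathbf{F}_2)$ of order $25920$, far above $648$; one must check that the relevant surfaces are not defined over the smaller field, or else bound differently.
\item The $q=2$ case is only described in outline: you name the difficulty (automorphism groups of low-degree del Pezzo surfaces in characteristic~$2$ jump) and gesture at an $\mathfrak{S}_6$-cubic, but you neither verify that $720$ is attained nor that nothing larger occurs among conic bundles and the remaining del Pezzo degrees over $\mathbf{F}_2$. This is precisely the content of \cite{Vikulova}.
\end{itemize}

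In short, the skeleton is right, but the sketch defers exactly the computations that constitute the theorem.
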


The purpose of this paper is to prove the following result which makes
Theorem~\ref{theorem:ChenShramov} more precise in the case of fields of characteristic different from~$2$.

\begin{theorem}
\label{theorem:main}
For every field $\Bbbk$ of odd characteristic $p$,
every finite subgroup~$G$
of~\mbox{$\Cr_2(\Bbbk)$}
contains a normal abelian subgroup of order coprime
to $p$ and index at most~\mbox{$J_p(\Cr_2)\cdot |G_{(p)}|^{3}$},
where
$$
J_p(\Cr_2)=
\begin{cases}
7200, &\text{if\ } p\ge 7,\\
168, &\text{if\ } p=5,\\
10, &\text{if\ } p=3.
\end{cases}
$$
Moreover, this bound is sharp if the field $\Bbbk$ is algebraically closed.
In other words, for any constant $J'<J_p(\Cr_2)$
there exists a finite subgroup in $\Cr_2(\Bbbk)$ which does not
contain normal abelian subgroups of
order coprime to $p$ and index at most~\mbox{$J'\cdot |G_{(p)}|^{3}$};
and for any constants $J''$ and $e<3$
there exists a finite subgroup in $\Cr_2(\Bbbk)$ which does not
contain normal abelian subgroups of
order coprime to $p$ and index at most~\mbox{$J''\cdot |G_{(p)}|^{e}$}.
\end{theorem}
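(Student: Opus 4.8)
The plan is to run the scheme used in the proof of Theorem~\ref{theorem:ChenShramov}, this time keeping track of the constants. Since a finite subgroup of $\Cr_2(\Bbbk)$ is also a finite subgroup of $\Cr_2(\overline{\Bbbk})$, with the same order and with the same index of any of its subgroups, and since the sharpness assertions concern only algebraically closed fields, we may assume throughout that $\Bbbk$ is algebraically closed of characteristic $p\ge 3$. Given a finite subgroup $G\subset\Cr_2(\Bbbk)$ we regularize its action and run the $G$-equivariant minimal model program (available in characteristic $p\ge 3$, as in the proof of Theorem~\ref{theorem:ChenShramov}), reducing to the situation in which $G$ acts faithfully and biregularly on a smooth projective rational surface $X$ that is a $G$-Mori fibre space: either a del Pezzo surface of degree $d$ with $\rho^G(X)=1$ (which forces $d\in\{1,\dots,6,8,9\}$), or a conic bundle $X\to\PP^1$ with $\rho^G(X)=2$. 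For each of these one has to bound the minimal index of a normal abelian subgroup of order coprime to $p$ in $G\subseteq\Aut(X)$ in terms of $|G_{(p)}|$. The key case is $|G_{(p)}|=1$, where the bound must be sharp and is obtained by lifting to characteristic zero and using classical classifications; when $|G_{(p)}|>1$ the factor $|G_{(p)}|^{3}$ leaves enough room to conclude from the structure of $p$-subgroups of the relevant groups (cf. Theorem~\ref{theorem:BrauerFeit}, or argue directly by passing to the prime-to-$p$ part of a maximal torus, or by an order estimate when $G$ is of Lie type).

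Consider first the del Pezzo cases. If $X=\PP^2$ then $G\subseteq\PGL_3(\Bbbk)$; for $|G_{(p)}|=1$ the subgroup lifts to characteristic zero, and the classification of finite subgroups of $\PGL_3(\mathbb{C})$ gives a normal abelian subgroup of index at most $6$ unless $G$ is one of the primitive groups $\mathfrak{A}_5$, $\mathfrak{A}_6$, $\PSL_2(\mathbf{F}_7)$ (index $60$, $360$, $168$) or a subgroup of the Hessian group (index at most $24$); since all primitive subgroups of $\PGL_3(\mathbb{C})$ have order divisible by $3$ and only $\PSL_2(\mathbf{F}_7)$ among them has order coprime to $5$, the coprime-to-$p$ condition leaves index at most $6$ for $p=3$, at most $168$ for $p=5$, and at most $360<7200$ for $p\ge 7$. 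If $X=\PP^1\times\PP^1$ then $\rho^G(X)=1$ forces $G$ to be an index-two extension of a finite subgroup of $\PGL_2(\Bbbk)\times\PGL_2(\Bbbk)$ by the ruling-swap, and the list of finite subgroups of $\PGL_2(\Bbbk)$ of order coprime to $p$ (cyclic and dihedral groups, together with $\mathfrak{A}_4$ and $\mathfrak{S}_4$ when $p\ge 5$, and $\mathfrak{A}_5$ when $p\ge 7$) pins down the bound, and here the value $7200=|\mathfrak{A}_5\wr\ZZ/2|$ is attained for $p\ge 7$. For $d\le 6$ the connected component $\Aut(X)^\circ$ is a torus (possibly trivial), so $G\cap\Aut(X)^\circ$ is abelian and --- being a finite subgroup of a torus --- automatically of order coprime to $p$ in characteristic $p$, while $G/(G\cap\Aut(X)^\circ)$ embeds into a subgroup of the Weyl group $\mathrm{W}(\mathrm{E}_{9-d})$; for $d\le 4$ one must in addition invoke the classification of automorphism groups of del Pezzo surfaces of degree $\le 4$ over fields of characteristic $p\ge 3$. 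In every instance it is the restriction that the relevant quotient (or automorphism group) have order coprime to $p$ that cuts the index down to the required size.

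For a conic bundle $X\to\PP^1$ one uses the exact sequence $1\to G_F\to G\to G_B\to 1$, where $G_B\subseteq\PGL_2(\Bbbk)$ acts on the base and $G_F$ acts fibrewise: $G_B$ is controlled by the $\PGL_2$ analysis above, while $G_F$, acting faithfully on the generic (possibly non-split) conic fibre, is again one of the finite subgroups of $\PGL_2$ over $\overline{\Bbbk(t)}$, subject to the further constraints imposed by $G$-minimality and by the configuration of the singular fibres, and one recombines the two through the extension, keeping in mind that $G_F$ need not be abelian. The main obstacle, as always for a statement of this precision, is exactly this bookkeeping together with the degree-$\le 4$ del Pezzo case: one must verify that no configuration of either kind beats the asserted constant $J_p(\Cr_2)$. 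The sharpness assertions are then witnessed by explicit subgroups. For $p\ge 7$ the group $\mathfrak{A}_5\wr\ZZ/2\subset\Aut(\PP^1\times\PP^1)\subset\Cr_2(\Bbbk)$ has order $7200$ coprime to $p$ and no nontrivial normal abelian subgroup; for $p=5$ the simple group $\PSL_2(\mathbf{F}_7)\subset\PGL_3(\Bbbk)\subset\Cr_2(\Bbbk)$ has order $168$ coprime to $5$; and for $p=3$ the automorphism group $(\ZZ/2)^4\rtimes D_{10}$ (of order $160$, coprime to $3$) of the del Pezzo surface of degree $4$ obtained by blowing up $\PP^2$ at a generic orbit of a cyclic subgroup of order $5$ of $\PGL_3(\Bbbk)$ has no normal abelian subgroup of index less than $10$, the $D_{10}$-action on $(\ZZ/2)^4$ being irreducible over $\mathbb{F}_2$. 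This proves the first sharpness statement. For the second, $\GL_2(\mathbf{F}_{p^n})\subset\PGL_3(\Bbbk)\subset\Cr_2(\Bbbk)$ has $p$-Sylow subgroup of order $p^n$, while all of its abelian normal subgroups lie in the centre and hence have index $|\PGL_2(\mathbf{F}_{p^n})|=p^{3n}-p^n$; letting $n\to\infty$ shows that the exponent $3$ cannot be lowered.
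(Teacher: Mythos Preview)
Your global architecture coincides with the paper's: regularize, run the $G$-equivariant MMP, and split into del Pezzo surfaces and conic bundles; your sharpness witnesses ($\mathfrak{A}_5\wr\mumu_2$ for $p\ge 7$, $\PSL_2(\mathbf{F}_7)$ for $p=5$, $\mumu_2^4\rtimes\DD_{10}$ on a quartic del Pezzo for $p=3$, and $\PGL_2$/$\GL_2$ over growing finite fields for the exponent) are exactly the ones the paper uses. So the plan is sound.

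The gap is in the upper bound. Your dichotomy ``$|G_{(p)}|=1$: lift to characteristic zero; $|G_{(p)}|>1$: there is enough room'' does not close. Invoking Theorem~\ref{theorem:BrauerFeit} yields only an unspecified constant $J(n)$, not $J_p(\Cr_2)$; ``passing to the prime-to-$p$ part of a maximal torus'' does not produce a subgroup that is normal in $G$; and ``order estimate when $G$ is of Lie type'' only covers the exceptional finite simple pieces, not the generic cases (extensions in conic bundles, semi-direct products $\mumu_p^m\rtimes\mumu_n$, etc.). Likewise, for conic bundles you note the exact sequence $1\to G_F\to G\to G_B\to 1$ but do not explain how to glue a normal abelian subgroup of $G_F$ with one coming from $G_B$ inside $G$; this is precisely where most of the paper's work lies. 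The paper does \emph{not} split on whether $p$ divides $|G|$. Instead it runs uniformly through Dickson's list (Theorem~\ref{theorem:ADE}) for $\PGL_2$ and Mitchell's list (Theorem~\ref{theorem:Mitchell}) for $\PGL_3$, and for conic bundles uses the constraint of Lemma~\ref{lemma:Serre} (every element normalizing a prime-to-$p$ cyclic subgroup of $G_F$ has square centralizing it) together with the tailor-made extension lemmas of Section~\ref{section:extensions} to build, inside the preimage of a characteristic cyclic subgroup of $G_B$, an abelian subgroup that is $\Aut(H;G_F)$-stable and hence normal in $G$. Without these ingredients the conic-bundle case and the $\PP^2$ case with $p\mid |G|$ are genuinely open in your write-up.

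Two smaller points. First, you propose to ``invoke the classification of automorphism groups of del Pezzo surfaces of degree $\le 4$ in characteristic $p\ge 3$''; the paper deliberately avoids this, arguing instead inside the Weyl groups $\WD$ and $\WE$ via Lemma~\ref{lemma:auxiliary-subgroups} together with a handful of direct exclusions (Lemmas~\ref{lemma:dP4-forbidden} and~\ref{lemma:dP3-forbidden}) obtained from Theorem~\ref{theorem:Serre-lift}. Second, your $p=3$ sharpness construction (``blow up a generic orbit of a cyclic group of order $5$ in $\PGL_3$'') does not visibly carry a $\DD_{10}$-symmetry: a generic $\mumu_5$-orbit in $\PP^2$ has no reason to be $\DD_{10}$-invariant, and the resulting quartic del Pezzo would then only have $\mumu_2^4\rtimes\mumu_5$, whose minimal index of a normal abelian subgroup is $5$, not $10$. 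The paper instead takes a $\DD_{10}$-orbit of length $5$ on $\PP^1$, Veronese-embeds it into a conic in $\PP^2$, and blows up; this guarantees the full $\DD_{10}$.
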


Let us explain the structure of our proof of Theorem~\ref{theorem:main}.
As usual, the question about finite subgroups of Cremona group
can be reduced to a similar question about finite groups acting
on conic bundles and del Pezzo surfaces. Groups acting on conic bundles
can be understood once we know which finite groups act on~$\PP^1$; the latter
groups were classified long ago, see Theorem~\ref{theorem:ADE}.
Among del Pezzo surfaces of high degree there is the projective plane,
whose automorphism group $\PGL_3(\Bbbk)$ is well studied. We rely on the classification
of finite subgroups of $\PGL_3(\Bbbk)$ in odd characteristic, see Theorem~\ref{theorem:Mitchell}.
However, in characteristic $2$ a similar classification is not available: only the list of \emph{maximal}
finite subgroups is known. We point out here that, unlike the bounds in the case of the usual
Jordan property, the bounds in the case of $p$-Jordan groups do not behave well
under passing to subgroups, see Examples~\ref{example:constant-subgroup-1} and~\ref{example:constant-subgroup-2}. Thus, if one wants to study such bounds
for finite subgroups of $\PGL_3(\Bbbk)$ starting from the list of maximal finite subgroups,
further group-theoretic work is needed, which we did not manage to do yet.
Anyway, returning to del Pezzo surfaces, one can deal with them using the recent
works~\cite{DD}, \cite{DM-odd}, and~\cite{DM-2}
of I.\,Dolgachev, A.\,Duncan, and G.\,Martin, which
provide a complete classification of automorphism groups for degrees at most~$5$.
However, we take a slightly longer way to avoid
using the main results of these works, which are rather lengthy and
rely on complicated computations.
Instead, we analyze the groups acting on
surfaces of low degree ($1$ and~$2$) using the properties of their anticanonical
linear systems. For del Pezzo surfaces
of degree $3$ and $4$ we do not use geometric properties of the surfaces at all,
bounding the indices of normal abelian subgroups in the subgroups of the corresponding
Weyl groups which satisfy only the simplest restrictions one can obtain from geometry.

It would be interesting to find a bound similar to those in Theorem~\ref{theorem:main}
in the case of characteristic~$2$; see Remark~\ref{remark:future} below
for a more detailed discussion of this case.
Also, it would be interesting to find the values of the constants~$J(n)$ appearing in
Theorem~\ref{theorem:BrauerFeit}. Some constants related to them were computed
in~\cite{Collins-modular}.

\smallskip

The plan of the paper is as follows.
In Section~\ref{section:examples}
we find the bounds for the indices of normal abelian
subgroups in several finite groups that will be used later.
In Section~\ref{section:preliminaries}
we collect several auxiliary group-theoretic results.
In Section~\ref{section:PSL-PGL}
we collect information about projective linear groups.
In Section~\ref{section:semi-direct}
we discuss certain semi-direct products
and their subgroups. In Section~\ref{section:extensions}
we bound the indices of normal abelian subgroups in certain extensions of cyclic groups.
In Section~\ref{section:P1}
we bound the indices of normal abelian subgroups of finite
groups acting on~$\PP^1$ and~$\PP^1\times\PP^1$.
In Section~\ref{section:P2}
we give preliminary bounds for the indices of normal abelian subgroups of finite
groups acting on~$\PP^2$
over fields of odd characteristic.
In Section~\ref{section:CB}
we bound the indices of normal abelian subgroups of finite
groups acting on conic bundles, and use these bounds to make final conclusions
about finite groups acting on~$\PP^2$ in odd characteristic.
In Section~\ref{section:dP}
we bound the indices of normal abelian subgroups of finite
groups acting on del Pezzo surfaces.
In Section~\ref{section:proof}
we bring everything together and
complete the proof of Theorem~\ref{theorem:main}; in particular, we produce examples
showing that our bounds are attained.

Throughout the paper we use the following terminology and notation.
A del Pezzo surface is a smooth geometrically irreducible projective
surface $S$ such that its anticanonical divisor $-K_S$ is ample; the degree of $S$
is the self-intersection~$K_S^2$ of its (anti)canonical divisor.
We denote by $\mumu_n$ the cyclic group of order $n$.
The symmetric and alternating groups of degree $n$ are denoted by~$\mathfrak{S}_n$
and~$\mathfrak{A}_n$, respectively.
By $\DD_{2n}$, $n\ge 2$, we denote the dihedral group of order~$2n$,
so that in particular $\DD_4\cong\mumu_2^2$.
By~$\HHH_3$ we denote the Heisenberg group of order~$27$,
i.e. the unique non-abelian group of order~$27$ and exponent~$3$.
By~$\PU_n(\mathbf{F}_q)$ and $\PSU_n(\mathbf{F}_q)$ we denote the images in $\PGL_n(\mathbf{F}_{q^2})$ of the subgroup of all unitary matrices and the subgroup of unitary matrices with determinant~$1$
in~\mbox{$\GL_n(\mathbf{F}_{q^2})$}.
Given a group~$G$, by~$\Out(G)$ we denote the group of outer automorphisms
of $G$, i.e. the quotient of~$\Aut(G)$ by the subgroup formed by inner automorphisms.
If $H$ is a subgroup of $G$, we denote by $\Aut(G;H)$ the group of automorphisms
of $G$ which preserve $H$, i.e. map $H$ isomorphically on itself.
By $C_G(H)$ we denote the centralizer of $H$ in $G$.

We thank Yucong Du and Andrey Vasilyev for numerous useful discussions.
We also thank the referees for valuable comments.

\section{Examples}
\label{section:examples}

In this section we find the bounds for the indices of normal abelian
subgroups in several finite groups that will be used later.

\begin{example}
\label{example:PGL}
Let $p$ be a prime number, let $k$ be a positive integer, and let~\mbox{$G=\PGL_2(\mathbf{F}_{p^k})$}.
Then the index of the trivial subgroup in $G$ equals
$$
|G|=p^k(p^{2k}-1)<p^{3k}=|G_{(p)}|^3.
$$
On the other hand, one cannot choose a constant $J$ and a constant $e<3$
such that $J$ and $e$ do not depend on $k$, and there exists a normal abelian
subgroup in $G$ whose index is at most $J\cdot |G_{(p)}|^e$. Indeed, if $k\ge 2$, then
the group $G$ does not contain non-trivial normal abelian subgroups at all, see
for instance~\mbox{\cite[Lemma 2.4]{ProkhorovShramov}}.
Furthermore, the index of the trivial subgroup is larger than $J\cdot |G_{(p)}|^e$
for any fixed $J$ and $e<3$ provided that $k$ is large enough.
\end{example}

\begin{example}
\label{example:PSL}
Let $p$ be a prime number, let $k$ be a positive integer, and let~\mbox{$G=\PSL_2(\mathbf{F}_{p^k})$}.
If $p\neq 2$, then
the trivial subgroup of $G$ has index
$$
|G|=\frac{|\SL_2(\mathbf{F}_{p^k})|}{2}=\frac{p^k(p^{2k}-1)}{2}
<p^{3k}=|G_{(p)}|^3.
$$
If $p=2$, then
the trivial subgroup of $G$ has index
$$
|G|=|\SL_2(\mathbf{F}_{2^k})|=2^k(2^{2k}-1)<2^{3k}=|G_{(2)}|^3.
$$
We will see later in Theorem~\ref{theorem:PSL-PGL-basic}(i)
that $G$ does not contain non-trivial normal subgroups
unless $k=1$ and $p\in\{2,3\}$.
\end{example}

\begin{example}
\label{example:A5}
Let $p$ be a prime number. The group $G=\mathfrak{A}_5$
is simple, and thus does not contain non-trivial normal abelian subgroups.
The index of the trivial subgroup of $G$ can be written
as~\mbox{$|G|\le J\cdot |G_{(p)}|^3$}, where
\begin{equation*}
J=\begin{cases}
60, &\text{if\ } p\ge 7,\\
\frac{60}{|G_{(5)}|^3}=\frac{12}{25}, &\text{if\ } p=5,\\
\frac{60}{|G_{(3)}|^3}=\frac{20}{9}, &\text{if\ } p=3,\\
\frac{60}{|G_{(2)}|^3}=\frac{15}{16}, &\text{if\ } p=2.
\end{cases}
\end{equation*}
\end{example}

Note that if $G$ is a finite group which contains a normal abelian subgroup
of order coprime to $p$ and index at most $J\cdot |G_{(p)}|^e$, then the same
does not necessarily hold for the subgroups of $G$.
Thus, to find the constants $J$ and $e$ as in Definition~\ref{D:g-Jordan-2}
for a $p$-Jordan group $\Gamma$, one has to compute the relevant constants
for \emph{all} finite subgroups of $\Gamma$, unlike the case of the constant involved in
Definition~\ref{definition:Jordan}, where it is enough to do the computations only for
maximal finite subgroups. Similarly, the bound that is valid for $G$ may not hold for its
quotient group.

\begin{example}\label{example:constant-subgroup-1}
The group $G=\mathfrak{A}_5$ contains a
(trivial) normal abelian subgroup of order coprime to $5$ and index at most
$\frac{12}{25}\cdot |G_{(5)}|^3$. On the other hand, its subgroup
$H=\mumu_3$ cannot contain subgroups of index at most
$J\cdot |H_{(5)}|^3=J$ for~\mbox{$J<1$}.
\end{example}

\begin{example}\label{example:constant-subgroup-2}
Let $H=\mumu_7\rtimes \mumu_3$ be the (unique) non-trivial
semi-direct product, and set $G=\mumu_2\times H$.
Then $G$ contains a normal abelian subgroup $\mumu_7$ of order coprime to $2$
and index
$$
6=\frac{3}{4}\cdot |G_{(2)}|^3.
$$
On the other hand, the group
$H$, which can be viewed both as a subgroup and a quotient group
of $G$, cannot contain subgroups of index at most
$J\cdot |H_{(2)}|^3=J$ for $J<1$.
\end{example}

Recall that a subgroup of a group $\Gamma$ is said to be \emph{characteristic},
if it is preserved by all automorphisms of~$\Gamma$.

\begin{example}
\label{example:S4}
Let $p$ be a prime number.
For the symmetric group $G=\mathfrak{S}_4$,
all the proper nontrivial normal subgroups are $\mathfrak{A}_4$ and the abelian subgroup
$$
V_4=\{\mathrm{id},(12)(34),(13)(24),(14)(23)\}.
$$
In particular, there exists a characteristic subgroup isomorphic to $\mumu_2^2$ in~$G$.
Thus, the group $G$ contains a normal abelian
subgroup of order coprime to $p$ and index at most $J\cdot |G_{(p)}|^3$, where
\begin{equation*}
J=\begin{cases}
6, &\text{if\ } p\ge 5,\\
\frac{2}{9}, &\text{if\ } p=3,\\
\frac{3}{256}, &\text{if\ } p=2.
\end{cases}
\end{equation*}

Observe that $G$ does not contain non-trivial cyclic characteristic subgroups.
The index of the trivial subgroup (which is, of course, characteristic) can be written as
$|G|\le J'\cdot |G_{(p)}|^3$, where
\begin{equation*}
J'=\begin{cases}
24, &\text{if\ } p\ge 5,\\
\frac{8}{9}, &\text{if\ } p=3,\\
\frac{3}{64}, &\text{if\ } p=2.
\end{cases}
\end{equation*}
\end{example}

\begin{example}
\label{example:A4}
Let $p$ be a prime number. The only characteristic cyclic subgroup
in the group $G=\mathfrak{A}_4$ is the trivial subgroup.
Its index can be written as~\mbox{$|G|\le J'\cdot |G_{(p)}|^3$}, where
\begin{equation*}
J'=\begin{cases}
12, &\text{if\ } p\ge 5,\\
\frac{4}{9}, &\text{if\ } p=3,\\
\frac{3}{16}, &\text{if\ } p=2.
\end{cases}
\end{equation*}
\end{example}

\section{Preliminaries}
\label{section:preliminaries}

In this section we collect several auxiliary group-theoretic results.
First, recall the following simple fact concerning dihedral groups.

\begin{lemma}\label{lemma:dihedral-group-basic}
Let $G\cong\DD_{2n}$ be the dihedral group of order $2n$ with $n\ge 3$.
Then the cyclic subgroup $G'$ of order $n$ is characteristic in $G$.
Furthermore, the centralizer of $G'$ in $G$ coincides with $G'$.
\end{lemma}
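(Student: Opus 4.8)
The plan is to fix the standard presentation $G=\langle r,s\mid r^{n}=s^{2}=1,\ srs^{-1}=r^{-1}\rangle$, so that $G'=\langle r\rangle$ is the rotation subgroup and the remaining $n$ elements are the involutions $sr^{j}$. The proof then splits into the two assertions, each of which reduces to a short order computation in $G$.

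For the claim that $G'$ is characteristic, the key point I would isolate is that, because $n\ge 3$, an element of $G$ has order exactly $n$ if and only if it generates $G'$: each reflection $sr^{j}$ has order $2\ne n$, while $r^{k}$ has order $n/\gcd(n,k)$, which equals $n$ precisely when $\gcd(n,k)=1$. Hence the set of order-$n$ elements of $G$ is nonempty (it contains $r$) and is contained in $G'$, and it generates $G'$. Since this set is visibly invariant under every $\phi\in\Aut(G)$, so is the subgroup it generates, whence $\phi(G')=G'$ and $G'$ is characteristic. I would note in passing that the hypothesis $n\ge 3$ is essential here, since $\DD_{4}\cong\mumu_{2}^{2}$ has no characteristic subgroup of order $2$.

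For the centralizer, the inclusion $G'\subseteq C_{G}(G')$ is immediate, $G'$ being cyclic hence abelian. Conversely, any element of $G\setminus G'$ has the form $sr^{j}$, and conjugation by it carries $r$ to $sr^{j}\,r\,r^{-j}s^{-1}=srs^{-1}=r^{-1}$; since $n\ge 3$ we have $r^{-1}\ne r$, so no element outside $G'$ centralizes $r$, and therefore $C_{G}(G')\subseteq G'$. Combining the two inclusions gives $C_{G}(G')=G'$. Honestly there is no substantial obstacle in this argument; the only thing demanding a little attention is making sure the role of the hypothesis $n\ge 3$ is correctly invoked at both places (distinguishing the involutions $sr^{j}$ from the generators of $G'$ by order, and ensuring $r$ is not fixed under inversion).
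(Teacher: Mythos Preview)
Your proof is correct and follows essentially the same approach as the paper's: both arguments use the standard presentation, show that every reflection has order $2\ne n$ so that the generator of $G'$ must be sent to a power of itself under any automorphism, and then compute that conjugation by any reflection inverts $r$ to conclude $C_G(G')=G'$.
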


\begin{proof}
The group $G$ is generated by a rotation $\rho$ of order $n$ and a reflection $\sigma$ satisfying the relations
$$
\rho^n=\sigma^2=1, \quad  \rho\sigma=\sigma\rho^{-1}.
$$
We claim that every automorphism $\varphi$ of $G$ maps the cyclic subgroup
$G'$ generated by $\rho$ to itself. Indeed,
observe that all the elements of $G$ can be written either in the form
$\rho^j$ or in the form $\sigma\rho^j$ for some $0\le j\le n-1$.
For every~\mbox{$1\le j\le n-1$}, the element~$\sigma\rho^j$ has order $2$, because
$$
(\sigma\rho^j)^2=\sigma\rho^j\sigma\rho^j=\sigma\sigma\rho^{-j}\rho^j=\sigma^2=1.
$$
On the other hand, the element $\varphi(\rho)$ has order $n\ge 3$,
so that one must have~\mbox{$\varphi(\rho)=\rho^j$} for some $1\le j\le n-1$. This
means that $\varphi(G')=G'$.

Let us show that the centralizer of $G'$ in $G$ coincides with $G'$.
Indeed, if
$$
(\sigma\rho^j)\rho=\rho(\sigma\rho^j)
$$
for some $1\le j\le n-1$, then we have
$$
\sigma\rho^{j+1}=(\sigma\rho^{-1})\rho^j=\sigma\rho^{j-1}.
$$
Cancelling $\sigma\rho^{j-1}$, one obtains $\rho^2=1$, which gives a contradiction, because
the order of $\rho$ equals $n\ge 3$.
\end{proof}

The next result is known as the theorem of Chermak and Delgado.

\begin{theorem}[{see e.g. \cite[Theorem~1.41]{Isaacs}}]
\label{theorem:Chermak-Delgado}
Let $G$ be a finite group. Suppose that~$G$ contains an abelian subgroup
of index~$I$. Then~$G$ contains a characteristic abelian
subgroup of index at most~$I^2$.
\end{theorem}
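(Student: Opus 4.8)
The plan is to prove this via the \emph{Chermak--Delgado measure}. For a subgroup $H\le G$, set $m_G(H)=|H|\cdot|C_G(H)|$. First I would record two elementary inequalities. The ``duality'' inequality asserts that $m_G(C_G(H))\ge m_G(H)$ for every subgroup $H\le G$, which is immediate from the inclusion $H\le C_G(C_G(H))$. The second inequality, which is the technical heart of the argument, asserts that
$$
m_G(H)\cdot m_G(K)\ \le\ m_G\big(\langle H,K\rangle\big)\cdot m_G(H\cap K)
$$
for all subgroups $H,K\le G$. To prove it I would combine the inequality $|H|\,|K|=|HK|\cdot|H\cap K|\le|\langle H,K\rangle|\cdot|H\cap K|$ with the containment $C_G(H)\,C_G(K)\subseteq C_G(H\cap K)$, the identity $C_G(H)\cap C_G(K)=C_G(\langle H,K\rangle)$, and the product formula $|XY|=|X|\,|Y|/|X\cap Y|$ applied to $X=C_G(H)$ and $Y=C_G(K)$.

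Next I would look at the nonempty (and finite, since $G$ is finite) collection $\mathcal{M}$ of all subgroups of $G$ on which $m_G$ attains its maximal value $m^{*}$. Applying the displayed inequality to two members $H,K\in\mathcal{M}$ and using maximality of $m^{*}$ forces $m_G(\langle H,K\rangle)=m_G(H\cap K)=m^{*}$, so $\mathcal{M}$ is closed under taking joins and intersections; in particular it has a unique minimal element $M=\bigcap_{H\in\mathcal{M}}H$. Since $|H|$ and $|C_G(H)|$ are unchanged when $H$ is replaced by $\varphi(H)$ for $\varphi\in\Aut(G)$, the automorphism group of $G$ permutes $\mathcal{M}$ and therefore fixes its minimal element; hence $M$ is a characteristic subgroup of $G$.

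Then I would verify that $M$ is abelian: by the duality inequality $m_G(C_G(M))\ge m_G(M)=m^{*}$, so $C_G(M)\in\mathcal{M}$, and therefore $M\cap C_G(M)\in\mathcal{M}$ as well; by minimality of $M$ this gives $M\cap C_G(M)=M$, i.e. $M\le C_G(M)$, which means precisely that $M$ is abelian. Finally, for the index estimate, let $A\le G$ be the given abelian subgroup with $[G:A]=I$. From $A\le C_G(A)$ we get $m^{*}\ge m_G(A)\ge|A|^{2}$, while $m^{*}=m_G(M)=|M|\cdot|C_G(M)|\le|M|\cdot|G|$. Combining the two bounds yields $|M|\ge|A|^{2}/|G|$, hence
$$
[G:M]=\frac{|G|}{|M|}\ \le\ \frac{|G|^{2}}{|A|^{2}}=[G:A]^{2}=I^{2},
$$
and $M$ is the desired characteristic abelian subgroup.

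The only point demanding genuine care is the product inequality for $m_G$ and the ensuing closure of $\mathcal{M}$ under intersections; the rest of the argument is formal manipulation with orders of subgroups and centralizers once that lemma is established.
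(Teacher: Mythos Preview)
Your proof is correct and is precisely the standard Chermak--Delgado lattice argument. The paper does not give its own proof of this statement; it simply cites \cite[Theorem~1.41]{Isaacs}, whose proof is exactly the one you have written out.
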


\begin{corollary}
\label{corollary:Chermak-Delgado}
Let $G$ be a finite group, and let $p$ be a prime number.
Suppose that~$G$ contains an abelian subgroup
of index at most~$J\cdot |G_{(p)}|^e$. Then~$G$ contains a characteristic abelian
subgroup of order coprime to $p$ and index at most~\mbox{$J^2\cdot |G_{(p)}|^{2e+1}$}.
\end{corollary}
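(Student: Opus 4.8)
The plan is to combine the theorem of Chermak and Delgado (Theorem~\ref{theorem:Chermak-Delgado}) with an elementary observation about finite abelian groups. First I would apply Theorem~\ref{theorem:Chermak-Delgado} to the given abelian subgroup of $G$, whose index $I$ satisfies $I\le J\cdot |G_{(p)}|^e$. This yields a characteristic abelian subgroup $A\subseteq G$ of index at most
$$
I^2\le J^2\cdot |G_{(p)}|^{2e}.
$$

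Next, since $A$ is a finite abelian group, it decomposes canonically as a direct product $A=A_{(p)}\times B$, where $A_{(p)}$ is the $p$-Sylow subgroup of $A$ and $B$ is the Hall $p'$-subgroup, that is, the subgroup of all elements of $A$ of order coprime to $p$. Both factors are the unique subgroups of $A$ of their respective orders, hence are characteristic in $A$; since $A$ is characteristic in $G$ and the relation ``is a characteristic subgroup of'' is transitive, the subgroup $B$ is characteristic in $G$. By construction $B$ is abelian and has order coprime to $p$.

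Finally I would estimate the index of $B$ in $G$. We have
$$
[G:B]=[G:A]\cdot [A:B]=[G:A]\cdot |A_{(p)}|.
$$
Since $A_{(p)}$ is a $p$-subgroup of $G$, it is contained in a Sylow $p$-subgroup of $G$, so $|A_{(p)}|\le |G_{(p)}|$; combining this with $[G:A]\le J^2\cdot |G_{(p)}|^{2e}$ gives $[G:B]\le J^2\cdot |G_{(p)}|^{2e+1}$, which is exactly the claimed bound. There is no serious obstacle in this argument; the only point that requires a small amount of care is the transitivity of characteristicity, which is what guarantees that the $p'$-part of $A$ remains characteristic not merely in $A$ but in the whole group $G$.
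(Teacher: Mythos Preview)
Your proof is correct and follows essentially the same approach as the paper's: apply Theorem~\ref{theorem:Chermak-Delgado} to obtain a characteristic abelian subgroup $A$ of index at most $J^2\cdot|G_{(p)}|^{2e}$, then pass to its Hall $p'$-part, which is characteristic in $A$ and hence in $G$, and bound its index using $|A_{(p)}|\le|G_{(p)}|$.
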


\begin{proof}
We know from Theorem~\ref{theorem:Chermak-Delgado} that $G$ contains
a characteristic abelian subgroup $A$ of index at most
$J^2\cdot |G_{(p)}|^{2e}$. Write $A\cong A_{(p)}\times A'$,
where $A'$ is the subgroup of $A$ which consists of all elements whose
order is coprime to~$p$. So, $A'$ is characteristic in $A$, and thus also in~$G$.
The index of $A'$ in $G$ equals
$$
\frac{|G|}{|A'|}=\frac{|G|}{|A|}\cdot |A_{(p)}|\le J^2\cdot |G_{(p)}|^{2e}\cdot |A_{(p)}|
\le J^2\cdot |G_{(p)}|^{2e+1}.
$$
\end{proof}

The next lemma allows to bound the indices of normal abelian subgroups in a group
which is an extension of a group of small order by an abelian group.

\begin{lemma}\label{lemma:large-abelian}
Let $p$ be a prime number, and let $e\ge 1$ be a constant.
Let $G$ be a finite group which fits into an exact sequence
$$
1\to H\to G\to \bar{G}\to 1,
$$
where $H$ is abelian. Set
$$
J=\frac{|\bar{G}|}{|\bar{G}_{(p)}|^e}.
$$
Then $G$ contains a normal abelian subgroup of order coprime to $p$ and index
at most $J\cdot |G_{(p)}|^e$.
\end{lemma}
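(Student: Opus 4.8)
The plan is to produce a normal abelian subgroup of $G$ directly from the abelian kernel $H$, refining it only to remove $p$-torsion. First I would write $H \cong H_{(p)} \times H'$, where $H'$ consists of the elements of $H$ of order coprime to $p$; since $H$ is abelian, both factors are characteristic in $H$. Because $H$ is normal in $G$ and $H'$ is characteristic in $H$, the subgroup $H'$ is normal in $G$. It is abelian (being a subgroup of $H$) and of order coprime to $p$ by construction, so $H'$ is the candidate normal abelian subgroup; it remains only to bound its index.

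Next I would estimate $[G : H']$. We have $[G : H'] = [G : H] \cdot [H : H'] = |\bar{G}| \cdot |H_{(p)}|$. The first factor I rewrite using the definition of $J$ as $|\bar{G}| = J \cdot |\bar{G}_{(p)}|^e$. For the second factor I need $|H_{(p)}| \le |G_{(p)}|^{?}$ in a way that combines with $|\bar{G}_{(p)}|^e$ to give $|G_{(p)}|^e$. The key observation is that the $p$-parts multiply nicely across the extension: from $1 \to H \to G \to \bar G \to 1$ one gets $|G_{(p)}| = |H_{(p)}| \cdot |\bar{G}_{(p)}|$, since $|G| = |H|\cdot|\bar G|$ and the $p$-adic valuation is additive. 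In particular $|H_{(p)}| \le |G_{(p)}|$ and $|\bar{G}_{(p)}| \le |G_{(p)}|$. Using $e \ge 1$, these give
$$
[G : H'] = J \cdot |\bar{G}_{(p)}|^e \cdot |H_{(p)}| \le J \cdot |G_{(p)}|^e \cdot |G_{(p)}| \cdot \frac{1}{|\bar{G}_{(p)}|} \cdot \frac{|\bar{G}_{(p)}|^e}{|G_{(p)}|^{e-1}}.
$$
Rather than juggle this, the cleaner route is the exact identity: $[G : H'] = |\bar G|\cdot|H_{(p)}| = J\cdot|\bar G_{(p)}|^e\cdot|H_{(p)}|$, and since $|\bar G_{(p)}|\cdot|H_{(p)}| = |G_{(p)}|$ while $|H_{(p)}| \le |G_{(p)}|$ and $e \ge 1$, we have $|\bar G_{(p)}|^e \cdot |H_{(p)}| = |\bar G_{(p)}|^{e-1}\cdot(|\bar G_{(p)}|\cdot|H_{(p)}|) = |\bar G_{(p)}|^{e-1}\cdot|G_{(p)}| \le |G_{(p)}|^{e-1}\cdot|G_{(p)}| = |G_{(p)}|^e$. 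Hence $[G : H'] \le J \cdot |G_{(p)}|^e$, as required.

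There is no real obstacle here: the only point that needs the hypothesis $e \ge 1$ is the final inequality $|\bar G_{(p)}|^{e-1} \le |G_{(p)}|^{e-1}$, and the only structural inputs are that subgroups and characteristic subgroups of abelian/normal subgroups behave as expected and that $p$-adic valuations are additive in short exact sequences. I would write the argument in the order above: define $H'$, check it is normal, abelian, and of order coprime to $p$, then compute its index via the chain $[G:H'] = [G:H][H:H']$ and the multiplicativity of $p$-Sylow orders. The only care needed is to state clearly why $|G_{(p)}| = |H_{(p)}|\cdot|\bar G_{(p)}|$, which follows immediately from $|G| = |H|\cdot|\bar G|$ by comparing $p$-parts.
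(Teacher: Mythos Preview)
Your proposal is correct and follows essentially the same approach as the paper: both take $H'$ to be the $p'$-part of the abelian kernel $H$, observe it is characteristic in $H$ hence normal in $G$, and bound $[G:H']=|\bar G|\cdot|H_{(p)}|$ using the multiplicativity $|G_{(p)}|=|H_{(p)}|\cdot|\bar G_{(p)}|$ together with $e\ge 1$. The only cosmetic difference is that the paper arranges the final inequality via $|H_{(p)}|\le |H_{(p)}|^e$ rather than your $|\bar G_{(p)}|^{e-1}\le |G_{(p)}|^{e-1}$, which is equivalent.
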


\begin{proof}
Since $H$ is abelian, it can be written as a product
$H\cong H_{(p)}\times H'$, where~$H'$ is the product of all $q$-Sylow subgroups
of $H$ for $q\neq p$. The group~$H'$ is abelian and has order coprime to $p$.
Furthermore, it consists of all the elements of~$H$ whose orders are coprime to $p$.
This means that it is characteristic in~$H$, and hence normal in $G$.
Its index in $G$ equals
$$
\frac{|G|}{|H'|}=\frac{|H|}{|H'|}\cdot |\bar{G}|=
|H_{(p)}|\cdot |\bar{G}_{(p)}|^e\cdot\frac{|\bar{G}|}{|\bar{G}_{(p)}|^e}\le
|H_{(p)}|^e\cdot |\bar{G}_{(p)}|^e\cdot\frac{|\bar{G}|}{|\bar{G}_{(p)}|^e}=
J\cdot |G_{(p)}|^e.
$$
\end{proof}

The following lemma will be used in Section~\ref{section:P1}
to bound the indices of normal abelian subgroups
acting on~$\PP^1\times\PP^1$.

\begin{lemma}
\label{lemma:product}
Let $p$ be a prime number, and let~$\Gamma_1$ and~$\Gamma_2$ be groups.
Suppose that every finite subgroup
$E$ of $\Gamma_1$ contains a normal abelian subgroup of order coprime to $p$
and index at most $J_1\cdot |E_{(p)}|^e$. Suppose also that every finite
subgroup~$F$ of~$\Gamma_2$ contains a characteristic
abelian subgroup of order coprime to $p$
and index at most $J_2\cdot |F_{(p)}|^e$.
Let $G\subset \Gamma_1\times\Gamma_2$ be a finite subgroup.
Then $G$ contains a normal abelian subgroup of order coprime to $p$
and index at most~\mbox{$J_1\cdot J_2\cdot |G_{(p)}|^e$}.
\end{lemma}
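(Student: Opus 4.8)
The plan is to produce the desired normal abelian subgroup of $G$ by intersecting $G$ with a suitable product of abelian subgroups coming from the two factors, being careful that one factor only supplies a \emph{characteristic} abelian subgroup (so that it survives conjugation by elements of $G$ that mix the factors). First I would let $p_i\colon \Gamma_1\times\Gamma_2\to\Gamma_i$ be the projections and set $G_i=p_i(G)$, a finite subgroup of $\Gamma_i$. By hypothesis, $G_1$ contains a normal abelian subgroup $A_1$ of order coprime to $p$ with $[G_1:A_1]\le J_1\cdot |(G_1)_{(p)}|^e$, and $G_2$ contains a \emph{characteristic} abelian subgroup $A_2$ of order coprime to $p$ with $[G_2:A_2]\le J_2\cdot |(G_2)_{(p)}|^e$.

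Next I would form $N=G\cap(A_1\times A_2)$. This subgroup is abelian, being contained in $A_1\times A_2$, and has order coprime to $p$. The key point is normality: for $g\in G$, conjugation by $g$ acts on $G_1$ as conjugation by $p_1(g)$, hence preserves $A_1$ since $A_1\trianglelefteq G_1$; and conjugation by $g$ restricted to the second coordinate need not be inner in $G_2$, but it \emph{is} an automorphism of $G_2$ (induced by the automorphism of $\Gamma_1\times\Gamma_2$ that is conjugation by $g$, which maps $\{1\}\times\Gamma_2$ — the kernel of $p_1$ — into itself, hence restricts to $\Gamma_2$ and carries $G_2$ to $G_2$), so it preserves $A_2$ because $A_2$ is characteristic in $G_2$. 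Therefore $g(A_1\times A_2)g^{-1}\subseteq A_1\times A_2$ inside $\Gamma_1\times\Gamma_2$, and intersecting with the $g$-stable subgroup $G$ shows $gNg^{-1}\subseteq N$; thus $N\trianglelefteq G$.

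It remains to bound the index. I would use the elementary inequality
$$
[G:N]=[G:G\cap(A_1\times A_2)]\le [G_1\times G_2:A_1\times A_2]=[G_1:A_1]\cdot[G_2:A_2],
$$
which holds because the map $G/(G\cap(A_1\times A_2))\to (G_1\times G_2)/(A_1\times A_2)$ is injective. Combining the two hypotheses,
$$
[G:N]\le J_1\cdot |(G_1)_{(p)}|^e\cdot J_2\cdot |(G_2)_{(p)}|^e
= J_1 J_2\cdot\bigl(|(G_1)_{(p)}|\cdot|(G_2)_{(p)}|\bigr)^e.
$$
Finally, $G_{(p)}$ maps into $(G_1)_{(p)}\times(G_2)_{(p)}$ under the inclusion $G\hookrightarrow G_1\times G_2$ (the image of a $p$-group under either projection is a $p$-subgroup of $G_i$, hence lies in a $p$-Sylow of order $|(G_i)_{(p)}|$), so $|G_{(p)}|\le |(G_1)_{(p)}|\cdot|(G_2)_{(p)}|$, and hence $[G:N]\le J_1 J_2\cdot |G_{(p)}|^e$, as required. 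The only subtlety — the place where a naive argument would break — is the normality step, which is exactly why the statement demands a \emph{characteristic} abelian subgroup in $\Gamma_2$ rather than merely a normal one; everything else is bookkeeping with indices and $p$-Sylow orders.
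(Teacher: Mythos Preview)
Your final inequality goes the wrong way. You correctly establish
\[
[G:N]\le J_1J_2\cdot\bigl(|(G_1)_{(p)}|\cdot|(G_2)_{(p)}|\bigr)^{e}
\quad\text{and}\quad
|G_{(p)}|\le |(G_1)_{(p)}|\cdot|(G_2)_{(p)}|,
\]
but from these two facts one \emph{cannot} conclude $[G:N]\le J_1J_2\cdot|G_{(p)}|^{e}$; you would need the reverse inequality $|(G_1)_{(p)}|\cdot|(G_2)_{(p)}|\le |G_{(p)}|$, which is false in general. For a concrete failure of your intermediate bound, take $G$ to be the diagonal copy of $\PSL_2(\mathbf{F}_{p^k})$ inside $\Gamma_1\times\Gamma_2$ with $\Gamma_i=\PGL_2(\bar{\mathbf{F}}_p)$: then $|(G_1)_{(p)}|\cdot|(G_2)_{(p)}|=p^{2k}$ while $|G_{(p)}|=p^{k}$, so your upper bound on $[G:N]$ is of order $p^{2ke}$, not $p^{ke}$.

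The paper repairs this by applying the $\Gamma_2$-hypothesis not to the full projection $G_2=p_2(G)$ but to the smaller group $F=p_2\bigl(p_1^{-1}(A_1)\cap G\bigr)$. Since $A_1$ has order coprime to $p$, the $p$-Sylow of $F$ has the same order as that of $H=\ker(p_1|_G)$, and then one uses the exact multiplicativity $|G_{(p)}|=|(G_1)_{(p)}|\cdot|H_{(p)}|$ coming from the short exact sequence $1\to H\to G\to G_1\to 1$. Incidentally, this is precisely where the characteristic hypothesis is genuinely needed: $F$ is a \emph{normal} subgroup of $G_2$, so conjugation by elements of $G_2$ gives automorphisms of $F$ that need not be inner, and one needs the chosen abelian subgroup of $F$ to be characteristic to survive. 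In your argument, by contrast, conjugation by $g\in G$ acts on $G_2$ as the \emph{inner} automorphism by $p_2(g)\in G_2$, so your normality step would already go through with $A_2$ merely normal in $G_2$; your explanation of the role of the characteristic assumption is therefore misplaced (cf.\ the paper's remark that it is open whether the lemma holds under only normality assumptions).
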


\begin{proof}
Denote by $\pi_i\colon G\to \Gamma_i$ the projections to the factors.
Denote
$$
E=\pi_1(G)\subset\Gamma_1,
$$
and let $H\subset G$ be the kernel of $\pi_1$.
By assumption, the group $E$ contains a normal abelian subgroup~$B$ of order coprime to $p$
and index at most $J_1\cdot |E_{(p)}|^e$.

Set~\mbox{$\tilde{B}=\pi_1^{-1}(B)$}
and
$$
F=\pi_2(\tilde{B})\subset \Gamma_2.
$$
Observe that the kernel of $\pi_2$ has trivial
intersection with $H$; in other words, $H$
embeds as a subgroup into $F$. Thus, there is a diagram of homomorphisms, where
we denote by $\tilde{\pi}_i$ the restriction of $\pi_i$ to $\tilde{B}$.
$$
\xymatrix{
H\ar@{^{(}->}[r]\ar@{_{(}->}[rd] &
\tilde{B}\ar@{->>}[r]^{\tilde{\pi}_1}\ar@{->>}[d]^{\tilde{\pi}_2} & B\\
& F &\\
}
$$
Since the order of $B\cong \tilde{B}/H$ is coprime to $p$, we see that
$$
|H_{(p)}|=|\tilde{B}_{(p)}|=|F_{(p)}|.
$$
By assumption, the group $F$ contains a characteristic abelian subgroup~$C$ of order coprime to $p$
and index at most
$$
J_2\cdot |F_{(p)}|^e=J_2\cdot |H_{(p)}|^e.
$$

Set $\tilde{C}=\pi_2^{-1}(C)\subset G$
and
$$
A=\tilde{\pi}_2^{-1}(C)=\tilde{B}\cap \tilde{C}.
$$
Then $\pi_1(A)\subset B$ and $\pi_2(A)\subset C$,
so that $A\subset B\times C$, which means that $A$ is abelian, and the order of $A$ is coprime to~$p$. Furthermore, the index of $A$ in $G$ is
$$
\frac{|G|}{|A|}=\frac{|G|}{|\tilde{B}|}\cdot\frac{|\tilde{B}|}{|A|}=
\frac{|E|}{|B|}\cdot \frac{|F|}{|C|}\le
J_1\cdot |E_{(p)}|^e\cdot J_2\cdot |H_{(p)}|^e=
J_1\cdot J_2\cdot |G_{(p)}|^e.
$$

It remains to check that $A$ is normal in $G$. Since $B$ is normal in $E=\pi_1(G)$,
we see that $\tilde{B}$ is normal in $G$. Thus $F$ is normal in $\pi_2(G)$.
Since $C$ is characteristic in $F$, it is normal in $\pi_2(G)$, and hence $\tilde{C}$
is normal in $G$. Therefore, $A$ is normal as an intersection of the normal
subgroups $\tilde{B}$ and $\tilde{C}$ of $G$.
\end{proof}

\begin{remark}
An analog of Lemma~\ref{lemma:product} with the assumption only about the indices of \emph{normal}
(not necessarily characteristic)  abelian subgroups in finite subgroups of $\Gamma_1$ and $\Gamma_2$ was proved in~\mbox{\cite[Lemma~2.1(2)]{Hu}}. However, the bound which is proved in~\cite{Hu} is also weaker
than that of Lemma~\ref{lemma:product}. It would be
interesting to figure out whether the assertion of Lemma~\ref{lemma:product}
holds with assumptions only about normal abelian subgroups.
\end{remark}

The next result can be found, for instance,
in Corollary~2 of Theorem~2.1.17 in~\cite{Suzuki}.

\begin{theorem}
\label{theorem:normal-in-a-p-group}
Let $p$ be a prime number, and let $G$ be a group of order~$p^n$.
Then~$G$ contains a normal abelian subgroup of order $p^m$ for some
positive integer~$m$ such that $m(m+1)\ge 2n$.
\end{theorem}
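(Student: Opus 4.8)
The plan is the classical argument built around a maximal normal abelian subgroup. Let $A\trianglelefteq G$ be a normal abelian subgroup of maximal order, say $|A|=p^{m}$; since $A\supseteq Z(G)\neq 1$ we have $m\ge 1$, so it suffices to prove $m(m+1)\ge 2n$, equivalently $n-m\le\tfrac12 m(m-1)$. The two ingredients are: (i) $C_{G}(A)=A$; and (ii) a Sylow $p$-subgroup of $\Aut(A)$ has order at most $p^{m(m-1)/2}$. Granting these, $G/A=G/C_{G}(A)$ embeds into $\Aut(A)$, and since $G/A$ is a $p$-group of order $p^{n-m}$, ingredient (ii) forces $n-m\le\tfrac12 m(m-1)$, which is what we want.

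For (i) I would argue as follows. The inclusion $A\subseteq C_{G}(A)$ holds because $A$ is abelian, and $C_{G}(A)\trianglelefteq G$ because $A\trianglelefteq G$. If $A\subsetneq C_{G}(A)$, then $C_{G}(A)/A$ is a nontrivial normal subgroup of the $p$-group $G/A$, hence meets $Z(G/A)$ nontrivially; choosing $x\in C_{G}(A)$ whose image in $G/A$ is a nontrivial element of this intersection, the subgroup $\langle A,x\rangle$ is abelian (as $x$ centralizes $A$), is normal in $G$ (as its image in $G/A$ is central), and strictly contains $A$, contradicting maximality. Hence $C_{G}(A)=A$.

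For (ii) I would exhibit a Sylow $p$-subgroup of $\Aut(A)$ concretely. First choose a chain $1=A_{0}\subsetneq A_{1}\subsetneq\cdots\subsetneq A_{m}=A$ with each $A_{i}$ normal in $G$ and $|A_{i}/A_{i-1}|=p$; this exists because any nontrivial $G$-invariant subgroup of the $p$-group $A$ contains a minimal one, which is elementary abelian with $G$ acting irreducibly, hence of order $p$, and one iterates. Since $\Aut(\mathbb{Z}/p\mathbb{Z})$ has order $p-1$, any $p$-subgroup of $\Aut(A)$ stabilizing this flag acts trivially on every quotient $A_{i}/A_{i-1}$; such a $p$-subgroup therefore lies in the group $U$ of all automorphisms of $A$ that preserve the flag and act trivially on every successive quotient, and it is enough to show $|U|\le p^{m(m-1)/2}$. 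I would prove this by induction on $m$: restriction to $A_{m-1}$ sends $U$ into the analogous group $U'$ for the flag $A_{0}\subset\cdots\subset A_{m-1}$ inside $A_{m-1}$, which has order at most $p^{(m-1)(m-2)/2}$ by induction, while the kernel of this restriction consists of automorphisms that fix $A_{m-1}$ pointwise and act trivially on $A/A_{m-1}\cong\mathbb{Z}/p\mathbb{Z}$, so such an automorphism is determined by the image of one fixed lift $a_{0}\in A$ of a generator of $A/A_{m-1}$, and that image lies in the coset $a_{0}A_{m-1}$, leaving at most $p^{m-1}$ possibilities. Hence $|U|\le p^{(m-1)(m-2)/2}\cdot p^{m-1}=p^{m(m-1)/2}$.

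The one place asking for a little care — and the main obstacle to a fully rigorous write-up — is the inductive step in (ii): one must check that restriction to $A_{m-1}$ genuinely lands in $U'$ (the flag and the triviality on quotients are inherited) and that an element of the kernel is indeed pinned down by its value on $a_{0}$, using $A=\langle A_{m-1},a_{0}\rangle$ together with the facts that it fixes $A_{m-1}$ pointwise and acts trivially on $A/A_{m-1}$. The remaining inputs are routine facts about $p$-groups: a nontrivial normal subgroup of a $p$-group meets the centre, the centralizer of a normal subgroup is normal, and every chief factor of a $p$-group has order $p$.
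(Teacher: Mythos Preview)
Your argument is correct and is the standard one. The paper does not prove this statement at all: it simply records it as a known fact with a reference to Suzuki's textbook (Corollary~2 of Theorem~2.1.17 there). The proof in Suzuki follows exactly the line you give --- take a maximal normal abelian subgroup, show it is self-centralizing, embed the quotient in its automorphism group, and bound the $p$-part of the latter by $p^{m(m-1)/2}$ --- so your proposal matches the cited source.
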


We conclude this section by an auxiliary lemma that will be used
in Section~\ref{section:dP} to analyze automorphism groups of cubic surfaces.
Recall that~$\HHH_3$ denotes the Heisenberg group of order~$27$, i.e. the unique non-abelian group of order~$27$ and exponent~$3$..

\begin{lemma}\label{lemma:auxiliary-subgroups}
Let $p$ be a prime number, and let $G$ be a finite group.
Set
\begin{equation}\label{eq:auxiliary-subgroups}
J=\begin{cases}
720, &\text{if\ } p\ge 7,\\
144, &\text{if\ } p=5,\\
10, &\text{if\ } p=3,\\
3, &\text{if\ } p=2.
\end{cases}
\end{equation}
The following assertions hold.
\begin{itemize}
\item[(i)] Suppose that $G\subset\mathfrak{S}_5$. Then $G$ contains a normal abelian subgroup
of order coprime to $p$ and index at most $J\cdot |G_{(p)}|^3$.
Moreover, one has~\mbox{$|G|\le J\cdot |G_{(p)}|^3$}, unless either $p=3$ and $G\cong\mumu_5\rtimes\mumu_4$, or $p=2$ and~\mbox{$G\cong\mumu_5$}.

\item[(ii)] Suppose that $G\subset \mumu_2^4\rtimes\mathfrak{S}_5$, where
the action of $\mathfrak{S}_5$ on $\mumu_2^4$ comes from the permutation representation.
Then $G$ contains a normal abelian subgroup
of order coprime to $p$ and index at most $J\cdot |G_{(p)}|^3$,
unless $p=3$ and~\mbox{$G\cong\mumu_2^4\rtimes (\mumu_5\rtimes\mumu_4)$}.

\item[(iii)] Suppose that $G\subset \mumu_2^4\rtimes\mathfrak{A}_5$, where
the action of $\mathfrak{A}_5$ on $\mumu_2^4$ comes from the permutation representation.
Then $G$ contains a normal abelian subgroup
of order coprime to $p$ and index at most $J\cdot |G_{(p)}|^3$.

\item[(iv)] Suppose that $G\subset\mathfrak{S}_6$.
Then $G$ contains a normal abelian subgroup
of order coprime to $p$ and index at most $J\cdot |G_{(p)}|^3$.

\item[(v)] Suppose that $G\subset\Gamma=\HHH_3\rtimes\SL_2(\mathbf{F}_3)$.
Then $G$ contains a normal abelian subgroup
of order coprime to $p$ and index at most $J\cdot |G_{(p)}|^3$,
unless $p=5$ and either $G=\Gamma$, or $|G|=162$.

\item[(vi)] Suppose that $G\subset\mumu_3^3\rtimes \mathfrak{S}_4$.
Then $G$ contains a normal abelian subgroup
of order coprime to $p$ and index at most $J\cdot |G_{(p)}|^3$.

\item[(vii)] Suppose that $G\subset\Gamma$, where $\Gamma$ is a group of order $576$.
Then $G$ contains a normal abelian subgroup
of order coprime to $p$ and index at most~\mbox{$J\cdot |G_{(p)}|^3$},
unless $p=5$ and $|G|\in\{192,288,576\}$.
\end{itemize}
\end{lemma}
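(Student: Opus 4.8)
The plan is to prove each of the seven assertions of Lemma~\ref{lemma:auxiliary-subgroups} by a finite case analysis on the finite group~$G$, using the preparatory results of Sections~\ref{section:examples} and~\ref{section:preliminaries} to handle the bulk of the cases uniformly and reserving direct computation only for the few exceptional subgroups. The common strategy is: for a fixed ambient group~$\Gamma$ and a prime~$p$, list (up to conjugacy) the subgroups $G\subseteq\Gamma$; for each, produce a normal abelian subgroup~$A\trianglelefteq G$ and estimate $[G:A]$ against $J\cdot|G_{(p)}|^3$. Since $|G_{(p)}|^3$ is at least~$1$ always, and is large whenever $p\mid|G|$ to a high power, the inequality $[G:A]\le J\cdot|G_{(p)}|^3$ is easiest exactly when the $p$-part of~$|G|$ is big; the genuinely constraining situation is $p\nmid|G|$ (or $p$ dividing $|G|$ only to the first power), where one simply needs $[G:A]\le J$ for the constant~$J$ of~\eqref{eq:auxiliary-subgroups}. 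Because $|\mathfrak{S}_5|=120$, $|\mathfrak{S}_6|=720$, $|\HHH_3\rtimes\SL_2(\mathbf{F}_3)|=648$, $|\mumu_3^3\rtimes\mathfrak{S}_4|=648$, and $|\mumu_2^4\rtimes\mathfrak{S}_5|=1920$, in all items the index of the \emph{trivial} subgroup of~$G$ is already bounded by the relevant $J$ once $p\ge 7$; so the substance of the lemma is concentrated in the small primes $p\in\{2,3,5\}$, and for a fixed such~$p$ only finitely many subgroups~$G$ need individual attention.

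For item~(i) I would invoke the classification of subgroups of~$\mathfrak{S}_5$: up to conjugacy these are the subgroups of $\mathfrak{S}_4$, $\mathfrak{A}_5$, the Frobenius group $\mumu_5\rtimes\mumu_4$ and its subgroups $\mumu_5\rtimes\mumu_2=\DD_{10}$ and $\mumu_5$, together with $\mathfrak{A}_5$, $\mathfrak{S}_5$ themselves. For the abelian, dihedral, $\mathfrak{A}_4$, $\mathfrak{S}_4$, $\mathfrak{A}_5$ cases I would quote Examples~\ref{example:A5}, \ref{example:S4}, \ref{example:A4} and Lemma~\ref{lemma:dihedral-group-basic}; for $\mathfrak{S}_5$ and $\mathfrak{A}_5$ directly one uses that $\mathfrak{A}_5$ is simple so the only normal abelian subgroup is trivial, giving index $|G|$, which for $p=5$ is $\tfrac{120}{125}|G_{(5)}|^3<J$ and for $p=3$ is $\tfrac{120}{27}|G_{(3)}|^3>J$ — hence the stated exception at $p=3$ for $G\cong\mumu_5\rtimes\mumu_4$ (index $20>10$) and at $p=2$ for $G\cong\mumu_5$ (index $5>3$), while for $G\cong\mathfrak{S}_5$ one still finds the trivial subgroup works since $\tfrac{120}{27}>10$ is the relevant comparison — here I must be careful and instead use that $\mathfrak{S}_5$ has no nontrivial normal abelian subgroup but $|\mathfrak{S}_5|=120\le J\cdot|G_{(p)}|^3$ holds for $p\ge 5$ and for $p=3$ gives $120$ versus $10\cdot 27=270$, so it is fine; the only genuine failures of $|G|\le J\cdot|G_{(p)}|^3$ are the two listed. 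Items~(ii) and~(iii) reduce to~(i): writing $N=G\cap\mumu_2^4$, which is abelian and normal in~$G$, and $\bar G=G/N\subseteq\mathfrak{S}_5$ (resp.\ $\mathfrak{A}_5$), I apply Lemma~\ref{lemma:large-abelian} with the constant $J=|\bar G|/|\bar G_{(p)}|^3$ coming from~(i); the point is that for all $\bar G\subseteq\mathfrak{S}_5$ other than the exceptional $\mumu_5\rtimes\mumu_4$ at $p=3$ one has $|\bar G|\le J\cdot|\bar G_{(p)}|^3$, and Lemma~\ref{lemma:large-abelian} then yields a normal abelian subgroup of~$G$ of order coprime to~$p$ and index $\le J\cdot|G_{(p)}|^3$, which is exactly the assertion (the stated exception in~(ii) is inherited from~(i), and for~(iii) no exception survives because $\mathfrak{A}_5$ contains no subgroup of order divisible by~$4$ that would make the comparison fail).

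Item~(iv) uses the classification of transitive and intransitive subgroups of~$\mathfrak{S}_6$: an intransitive $G$ lies in $\mathfrak{S}_1\times\mathfrak{S}_5$, $\mathfrak{S}_2\times\mathfrak{S}_4$, or $\mathfrak{S}_3\times\mathfrak{S}_3$, handled by item~(i), Example~\ref{example:S4}, Lemma~\ref{lemma:product} (combined with Corollary~\ref{corollary:Chermak-Delgado} to upgrade normal to characteristic in one factor), and direct inspection; an imprimitive $G$ lies in $\mathfrak{S}_3\wr\mathfrak{S}_2$ or $\mathfrak{S}_2\wr\mathfrak{S}_3$, and a primitive $G$ is one of $\mathfrak{A}_6$, $\mathfrak{S}_6$, $\PSL_2(\mathbf{F}_5)\cong\mathfrak{A}_5$, $\PGL_2(\mathbf{F}_5)\cong\mathfrak{S}_5$ — all of which either have $|G|\le 720=J$ (so $p\ge 7$ is trivial and $p\le 5$ is a short check: e.g.\ $|\mathfrak{S}_6|=720$, $|\mathfrak{A}_6|=360$, both $\le J\cdot|G_{(p)}|^3$ since $J\ge 10$ for $p=3$ and $J\ge144$ for $p=5$ make the needed inequalities $360\le 10\cdot|G_{(3)}|^3$ with $|\mathfrak{A}_6|_{(3)}=9$ giving $360\le 7290$, etc.), so no exceptions arise. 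Items~(v), (vi), (vii) are the hardest and where the bulk of the work — and the main obstacle — lies: here the ambient group is a specific group of order $648$, $648$, or $576$, the exceptional primes $p=5$ (for (v),(vii)) genuinely occur because $5\nmid|\Gamma|$ so $|G_{(5)}|=1$ and one needs $[G:A]\le 144$, which fails precisely for the large subgroups listed. I would attack these by first identifying, inside each~$\Gamma$, a canonical normal abelian subgroup (the Heisenberg center $\mumu_3\subset\HHH_3$ in (v), the normal $\mumu_3^3$ in (vi), and for (vii) whatever normal abelian series the order-$576$ group affords — one must pin down which group of order $576$ this is, presumably $\mumu_2^4\rtimes\mathfrak{S}_3$-type or a $\mumu_4^2\rtimes$-something coming from the degree-$2$ del Pezzo analysis) and then running the quotient argument via Lemma~\ref{lemma:large-abelian} plus the small-group analysis of $\SL_2(\mathbf{F}_3)$ (which has $\mumu_2$ center and quotient $\mathfrak{A}_4$, handled by Example~\ref{example:A4}) and of $\mathfrak{S}_4$ (Example~\ref{example:S4}). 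The exceptions are then exactly the subgroups for which the resulting constant $|\bar G|/|\bar G_{(p)}|^3$ exceeds~$J$; verifying that the list of such exceptions is precisely $\{G=\Gamma,\ |G|=162\}$ in~(v) and $\{|G|\in\{192,288,576\}\}$ in~(vii) will require going through the subgroup lattices of these two order-$648$/$576$ groups fairly explicitly, and ensuring no smaller subgroup slips through — that bookkeeping is the principal difficulty, and I expect to organize it by the value of $|G|$ and the structure of $G\cap(\text{the distinguished normal abelian subgroup})$, reducing in each branch to an already-settled item.
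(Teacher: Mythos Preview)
Your overall strategy---case analysis, reducing via the normal abelian kernel and Lemma~\ref{lemma:large-abelian}---is the same as the paper's, and for items~(i), (v), (vi) your plan is essentially what the paper does. However, there are a few genuine gaps and one place where you are making things harder than necessary.

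First, in reducing~(ii) to~(i) you only track the $p=3$ exception $\bar G\cong\mumu_5\rtimes\mumu_4$, but item~(i) also has a $p=2$ exception $\bar G\cong\mumu_5$, and in that case Lemma~\ref{lemma:large-abelian} gives only the constant $|\bar G|/|\bar G_{(2)}|^3=5>3=J$. The paper handles this by observing that $\mumu_2^4$ has no proper nontrivial $\mumu_5$-invariant subgroup, so either $G\cong\mumu_5$ (abelian of odd order) or $G\cong\mumu_2^4\rtimes\mumu_5$, whence $|G|=40<3\cdot 2^{12}=J\cdot|G_{(2)}|^3$. You need this step. Second, your reason for~(iii) having no exception is wrong: $\mathfrak{A}_5$ certainly contains $\mumu_2^2$. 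The correct point is that $\mumu_5\rtimes\mumu_4$ is not a subgroup of $\mathfrak{A}_5$, so the $p=3$ exception from~(ii) cannot arise.

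Third, for~(vii) you are worrying about which group of order $576$ is meant and what its normal abelian series looks like; the paper sidesteps this entirely. Since $576=2^6\cdot 3^2$, every subgroup has order $2^a3^b$, and one checks by pure arithmetic when $|G|\le J\cdot|G_{(p)}|^3$ fails: for $p=3$ this happens only when $b=0$ and $a\ge 4$, i.e.\ $G$ is a $2$-group of order $\ge 16$, and then Theorem~\ref{theorem:normal-in-a-p-group} supplies a normal abelian subgroup of index at most $8<J$; for $p=2$ it fails only when $a\le 1$ and $b=2$, so $|G|\in\{9,18\}$, both trivial to handle. No structural information about $\Gamma$ is needed. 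Similarly, for~(iv) the paper does not classify subgroups of $\mathfrak{S}_6$ by transitivity but simply lists the possible subgroup orders and checks the arithmetic, identifying the few failures ($|G|\in\{16,20\}$ for $p=3$, $|G|\in\{5,9\}$ for $p=2$) by hand; this is considerably less work than your proposed route through Lemma~\ref{lemma:product}.
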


\begin{proof}
Suppose that $G\subset \mathfrak{S}_5$. If $p\ge 5$, then $|G|<J$, so that
there is nothing to prove. Thus, we have to consider only the cases $p=2$ and $p=3$.
One has
$$
|G|\in\{1,2,3,4,5,6,8,10,12,20,24,60,120\}.
$$
Checking the orders of the $2$- and $3$-Sylow subgroups case by case, we see that the
inequality $|G|\le J\cdot |G_{(p)}|^3$ fails only if $p=3$ and $|G|=20$, and if
$p=2$ and~\mbox{$|G|=5$}. In the former case one has $G\cong\mumu_5\rtimes\mumu_4$,
and in the latter case~\mbox{$G\cong\mumu_5$}. This proves the second part of assertion~(i).
Now observe that if the bound~\mbox{$|G|\le J\cdot |G_{(p)}|^3$} holds, then the first part of assertion~(i)
holds as well. On the other hand, if $p=3$ and $G\cong\mumu_5\rtimes\mumu_4$, then
$G$ contains a normal abelian subgroup $\mumu_5$ of index $4<J$.
This completes the proof of assertion~(i).

Suppose that $G$ satisfies the assumptions of assertion (ii). Then $G$ fits into
an exact sequence
$$
1\to H\to G\to\bar{G}\to 1,
$$
where $H\subset \mumu_2^4$ and $\bar{G}\subset\mathfrak{S}_5$.
If $p\ge 3$, then it follows from assertion~(i) that
$$
\frac{|G|}{|H|}=|\bar{G}|\le J\cdot |\bar{G}_{(p)}|^3= J\cdot |G_{(p)}|^3,
$$
unless $p=3$ and $\bar{G}\cong\mumu_5\rtimes \mumu_4$. Observe that
$\mumu_2^4$ does not contain proper non-trivial $\mumu_5$-invariant subgroups.
This means that if $\bar{G}\cong\mumu_5\rtimes \mumu_4$, then we have either $G\cong\mumu_5\rtimes\mumu_4$, so that $G$ contains a normal subgroup $\mumu_5$
of index~\mbox{$4<J$}, or $G\cong\mumu_2^4\rtimes (\mumu_5\rtimes\mumu_4)$.
Thus, we are left with the case $p=2$.
In this case assertion~(i) implies that
$$
|G|=|H|\cdot |\bar{G}|=|H_{(2)}|\cdot |\bar{G}|\le |H_{(2)}|^3\cdot  J\cdot |\bar{G}_{(2)}|^3=
J\cdot |G_{(2)}|^3,
$$
unless $\bar{G}\cong \mumu_5$.
Since $\mumu_2^4$ does not contain proper non-trivial $\mumu_5$-invariant subgroups,
we see that in the latter case we have either $G\cong\mumu_5$, so that
$G$ is abelian and has order coprime to~$2$, or
$G\cong\mumu_2^4\rtimes \mumu_5$, so that
$$
|G|=40 < 3\cdot 2^{12}=J\cdot |G_{(2)}|^3.
$$
This proves assertion~(ii). Assertion~(iii) is a direct consequence of assertion~(ii).

Suppose that $G\subset\mathfrak{S}_6$.
If $p\ge 7$, then $|G|\le J$. If $p=5$ and $G$ is either $\mathfrak{S}_6$ or $\mathfrak{A}_6$,
then $|G|<J\cdot |G_{(5)}|^3$. If $G$ is neither $\mathfrak{S}_6$ nor $\mathfrak{A}_6$,
then $|G|\le 120$; in this case we have $|G|< J$ for $p=5$.
Thus, we have to consider only the cases $p=2$ and $p=3$.
One has
$$
|G|\in\{1,2,3,4,5,6,8,9,10,12,16,18,20,24,36,48,60,72,120,360,720\}.
$$
Checking the orders of the $2$- and $3$-Sylow subgroups case by case, we see that the
inequality $|G|\le J\cdot |G_{(p)}|^3$ fails only if $p=3$ and $|G|\in\{16,20\}$,
and if~\mbox{$p=2$} and $|G|\in\{5,9\}$. In the former case one has either $G\cong \mumu_2\times\DD_8$,
so that~$G$ contains a normal abelian subgroup $\mumu_2\times\mumu_4$ of index $2<J$,
or $G\cong \mumu_5\rtimes\mumu_4$, so that $G$ contains a normal abelian subgroup $\mumu_5$ of index $4<J$. In the latter case one has either $G\cong \mumu_5$, or $G\cong \mumu_3^2$;
in both cases $G$ is abelian and has order coprime to $2$.
This proves assertion~(iv).

Suppose that $G\subset \Gamma\cong\HHH_3\rtimes\SL_2(\mathbf{F}_3)$.
If $p\ge 7$, then $|G|<J$, so that there is nothing to prove.
The group $G$ fits into
an exact sequence
$$
1\to H\to G\to\bar{G}\to 1,
$$
where $H\subset \HHH_3$ and $\bar{G}\subset\SL_2(\mathbf{F}_3)$.
Suppose that $p=5$ and $G\neq \Gamma$; then~\mbox{$|G|\le 324$}.
If $H$ contains the center $Z\cong\mumu_3$ of $\HHH_3$, then $Z$ is a normal subgroup
of $G$, and its index in $G$ is
$$
\frac{|G|}{|Z|}\le \frac{324}{3}=108<J.
$$
If $H$ does not contain $Z$, then $H$ is abelian, and its index in $G$
is at most~\mbox{$|\bar{G}|=24<J$}. Thus, we have to consider only the cases $p=2$ and $p=3$.
One has
$$
|\bar{G}|\in\{1,2,3,4,6,8,24\}.
$$
If $p=3$, we conclude that $|\bar{G}|<J\cdot |\bar{G}_{(3)}|^3$, which means that
$$
|G|=|H|\cdot |\bar{G}|=|H_{(3)}|\cdot |\bar{G}|<|H_{(3)}|^3\cdot J\cdot |\bar{G}_{(3)}|^3=
J\cdot |G_{(3)}|^3.
$$

Thus, to prove assertion~(v) one can suppose that $p=2$. In this case we see that
$|\bar{G}|\le J\cdot |\bar{G}_{(2)}|^3$. Hence, if $H$ is abelian, then
we are done. Suppose that~$H$ is not abelian.
Then~\mbox{$H=\HHH_3$}, and $Z$ is a normal abelian subgroup of~$G$ of index
$$
9\cdot |\bar{G}|<J\cdot |\bar{G}_{(2)}|^3,
$$
unless $|\bar{G}|\in\{1,3,6\}$. If $|\bar{G}|=1$, then $G\cong \HHH_3$ contains a normal
abelian subgroup of index $3=J$. If $|\bar{G}|=3$, then $|G|=81$, and hence
$G$ contains a normal abelian subgroup of index $3$ by Theorem~\ref{theorem:normal-in-a-p-group}.
This completes the proof of assertion~(v).

Suppose that $G\subset\mumu_3^3\rtimes \mathfrak{S}_4$.
Then $G$ fits into
an exact sequence
$$
1\to H\to G\to\bar{G}\to 1,
$$
where $H\subset \mumu_3^3$ and $\bar{G}\subset\mathfrak{S}_4$.
If $p\neq 3$, then the index of the normal abelian subgroup $H$ in $G$
is at most
$$
|\bar{G}|\le J\cdot |\bar{G}_{(p)}|^3=J\cdot |G_{(p)}|^3
$$
by assertion~(i).
Thus, we have to consider only the case $p=3$.
One has
$$
|\bar{G}|\in\{1,2,3,4,6,8,12,24\}.
$$
In each of these cases we have $|\bar{G}|< J\cdot |\bar{G}_{(3)}|^3$,
so that
$$
|G|=|H|\cdot |\bar{G}|=|H_{(3)}|\cdot |\bar{G}|< |H_{(3)}|^3\cdot J\cdot |\bar{G}_{(3)}|^3=
J\cdot |G_{(3)}|^3.
$$
This proves assertion~(vi).

Suppose that $G\subset\Gamma$, where $\Gamma$ is a group of order $576$.
If $p\ge 7$, then $|G|<J$, so that there is nothing to prove.
If $p=5$ and the index of $G$ in $\Gamma$ is at least $4$, then $|G|\le 144=J$;
if the index of $G$ in $\Gamma$ is at most $3$, then
$$
|G|\in\{192,288,576\}.
$$
Thus, we have to consider only the cases $p=2$ and $p=3$.
Write $|G|=2^a3^b$, where $a\le 6$ and $b\le 2$. If
$p=3$, then
$$
|G|=10\cdot \frac{2^a}{10\cdot 3^{2b}}\cdot 3^{3b}=
J\cdot \frac{2^{a-1}}{5\cdot 3^{2b}}\cdot |G_{(3)}|^3\le J\cdot |G_{(3)}|^3,
$$
unless $2^{a-1}>5\cdot 3^{2b}$. The latter is possible only if $a\ge 4$ and $b=0$, which
gives~\mbox{$|G|\in\{16,32,64\}$}. In each of these cases $G$ contains a normal abelian subgroup of
order at least $8$ by Theorem~\ref{theorem:normal-in-a-p-group}, so that this
subgroup has index at most~\mbox{$8<J$} in $G$.
If $p=2$, then
$$
|G|=3\cdot \frac{3^{b-1}}{2^{2a}}\cdot 2^{3a}=
J\cdot \frac{3^{b-1}}{2^{2a}}\cdot |G_{(2)}|^3\le J\cdot |G_{(2)}|^3,
$$
unless $3^{b-1}>2^{2a}$. The latter is possible only if $b=2$ and $a\le 1$, which
gives~\mbox{$|G|\in\{9,18\}$}. In the former case $G$ is abelian, while in the latter
case the $3$-Sylow subgroup of $G$ is abelian, has index $2<J$ in $G$, and is normal
in~$G$. This proves assertion~(vii).
\end{proof}

\section{Projective linear groups}
\label{section:PSL-PGL}

In this section we recall basic information about projective
linear groups.

\begin{theorem}
\label{theorem:PSL-PGL-basic}
Let $p$ be a prime number. The following assertions hold.
\begin{itemize}
\item[(i)]
The group $\PSL_2(\mathbf{F}_{p^k})$ is simple,
unless $k=1$ and $p\in\{2,3\}$.

\item[(ii)] There are isomorphisms
\begin{multline*}
\PSL_2(\mathbf{F}_2)\cong\mathfrak{S}_3, \quad
\PSL_2(\mathbf{F}_3)\cong\mathfrak{A}_4, \\
\PSL_2(\mathbf{F}_4)\cong \PSL_2(\mathbf{F}_5)\cong\mathfrak{A}_5, \quad
\mathfrak{A}_6\cong\PSL_2(\mathbf{F}_9).
\end{multline*}

\item[(iii)]
There are isomorphisms
$$
\PGL_2(\mathbf{F}_{2})\cong \mathfrak{S}_3, \quad
\PGL_2(\mathbf{F}_{3})\cong \mathfrak{S}_4.
$$

\item[(iv)]
The center of the group $\PSL_2(\mathbf{F}_{p^k})$, the center of the group
$\PGL_2(\mathbf{F}_{p^k})$, and the centralizer of the subgroup
$\PSL_2(\mathbf{F}_{p^k})$ in $\PGL_2(\mathbf{F}_{p^k})$
are trivial.

\item[(v)] The subgroup $\PSL_2(\mathbf{F}_{p^k})$ is characteristic
in $\PGL_2(\mathbf{F}_{p^k})$.
\end{itemize}
\end{theorem}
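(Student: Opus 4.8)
The plan is to treat the five assertions mostly as a compilation of well-known facts about two-dimensional projective linear groups over finite fields, organized so that later sections can cite them freely. For assertion~(i), I would invoke the classical simplicity theorem for $\PSL_n$ over finite fields (e.g.\ as in Dickson or any standard text such as \cite{Suzuki}): $\PSL_2(\mathbf{F}_q)$ is simple for all prime powers $q\ge 4$, while $\PSL_2(\mathbf{F}_2)\cong\mathfrak{S}_3$ and $\PSL_2(\mathbf{F}_3)\cong\mathfrak{A}_4$ are solvable, hence not simple; this gives exactly the stated exceptions $k=1$, $p\in\{2,3\}$. For assertion~(ii), the isomorphisms $\PSL_2(\mathbf{F}_2)\cong\mathfrak{S}_3$ and $\PSL_2(\mathbf{F}_3)\cong\mathfrak{A}_4$ follow from the action of $\PGL_2$ on the $q+1$ points of $\PP^1(\mathbf{F}_q)$ together with an order count, and $\PSL_2(\mathbf{F}_4)\cong\PSL_2(\mathbf{F}_5)\cong\mathfrak{A}_5$, $\PSL_2(\mathbf{F}_9)\cong\mathfrak{A}_6$ are the classical exceptional isomorphisms (again available in \cite{Suzuki} or any reference on finite simple groups); one checks orders ($|\PSL_2(\mathbf{F}_4)|=|\PSL_2(\mathbf{F}_5)|=60$, $|\PSL_2(\mathbf{F}_9)|=360$) and uses simplicity plus the permutation action on $\PP^1$ to pin down the isomorphism type. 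Assertion~(iii) is immediate: $\PGL_2(\mathbf{F}_2)=\PSL_2(\mathbf{F}_2)$ since every nonzero scalar in $\mathbf{F}_2$ is $1$, and $\PGL_2(\mathbf{F}_3)$ has order $24$ and acts faithfully on the $4$ points of $\PP^1(\mathbf{F}_3)$, hence is $\mathfrak{S}_4$.

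For assertion~(iv), the centre of $\PGL_2(\mathbf{F}_q)$ is trivial because it is the image of the group of matrices commuting with all of $\GL_2$ modulo scalars, i.e.\ scalars modulo scalars; for the centre of $\PSL_2(\mathbf{F}_q)$ I would argue directly, lifting a central element to $\SL_2$ and observing that a matrix whose image is central must normalize every cyclic subgroup, forcing it to be scalar (so $\pm I$), and $\pm I$ already lie in the kernel $\SL_2\to\PSL_2$ precisely when they are accounted for — in any case the quotient is centreless, with the small cases $q\in\{2,3\}$ handled by the identifications in~(ii). The centralizer of $\PSL_2(\mathbf{F}_q)$ in $\PGL_2(\mathbf{F}_q)$ is trivial by the same mechanism: an element of $\PGL_2$ centralizing all of $\PSL_2$ centralizes, in particular, enough diagonalizable and unipotent elements to force a scalar lift. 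I would phrase this uniformly using the fact that $\PSL_2(\mathbf{F}_q)$ acts $2$-transitively (in fact sharply $3$-transitively when we pass to $\PGL_2$, for $q$ not too small) on $\PP^1(\mathbf{F}_q)$, so its centralizer in $\PGL_2$ must fix at least three points and hence be trivial.

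Finally, assertion~(v) — that $\PSL_2(\mathbf{F}_q)$ is characteristic in $\PGL_2(\mathbf{F}_q)$ — I expect to be the only point needing a genuine (if short) argument rather than a citation. When $\PSL_2(\mathbf{F}_q)$ is simple and has index $\gcd(2,q-1)\le 2$ in $\PGL_2(\mathbf{F}_q)$, it is either the whole group or the unique subgroup of index $2$; a subgroup of index $2$ is automatically normal, and since there is only one it is characteristic. For the exceptional small cases one checks by hand: in $\PGL_2(\mathbf{F}_3)\cong\mathfrak{S}_4$ the subgroup $\PSL_2(\mathbf{F}_3)\cong\mathfrak{A}_4$ is the derived subgroup, hence characteristic; in $\PGL_2(\mathbf{F}_2)=\PSL_2(\mathbf{F}_2)$ there is nothing to prove. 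The only mild subtlety is ruling out a second index-two subgroup when $q$ is odd, which one settles by noting $\PGL_2(\mathbf{F}_q)/\PSL_2(\mathbf{F}_q)\cong\mathbf{F}_q^\ast/(\mathbf{F}_q^\ast)^2\cong\mumu_2$, so index-two subgroups correspond to index-two subgroups of this $\mumu_2$, of which there is exactly one. Assembling these pieces gives all five assertions.
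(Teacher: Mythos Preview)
Your handling of (i)--(iii) matches the paper's: both simply cite standard references for these classical facts. The substantive differences are in (iv) and (v), where your routes are viable but diverge from the paper and each carries a small slip.

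For (iv), the paper argues by a direct matrix computation: lift a centralizing $\bar A\in\PGL_2(\mathbf{F}_q)$ to $A\in\GL_2(\mathbf{F}_q)$, choose $v$ with $v,Av$ linearly independent, and test against the unipotent $B$ with $Bv=v$, $B(Av)=v+Av$ to force $A$ scalar. Your proposed argument via fixed points on $\PP^1(\mathbf{F}_q)$ has a gap: an element of $\PGL_2(\mathbf{F}_q)$ with irreducible characteristic polynomial has no $\mathbf{F}_q$-rational fixed point, so ``must fix at least three points'' does not follow from transitivity alone. A clean repair in your spirit: the centralizer $C$ of $\PSL_2$ in $\PGL_2$ is normal (as $\PSL_2$ is), meets $\PSL_2$ only in its (trivial) center, hence $|C|\le[\PGL_2:\PSL_2]\le 2$; but a normal subgroup of order $2$ is central, contradicting the triviality of the center of $\PGL_2$ you have already noted.

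For (v), the paper identifies $\PSL_2(\mathbf{F}_q)$ with the commutator subgroup of $\PGL_2(\mathbf{F}_q)$: simplicity gives $[\PSL_2,\PSL_2]=\PSL_2$ (with $q=3$ checked by hand), and the determinant gives $[\PGL_2,\PGL_2]\subset\PSL_2$; commutator subgroups are automatically characteristic. Your ``unique subgroup of index $2$'' idea also works, but the stated justification is circular: index-$2$ subgroups of $G$ correspond to those of $G/[G,G]$, not of $G/N$ for an arbitrary normal $N$, so the sentence about ``index-two subgroups of this $\mumu_2$'' presupposes exactly what the paper proves. The easy fix is to use simplicity directly: any index-$2$ subgroup $N\trianglelefteq\PGL_2$ has $N\cap\PSL_2\trianglelefteq\PSL_2$, which by simplicity is trivial or all of $\PSL_2$; an order count rules out the former once $q\ge 4$, giving $N=\PSL_2$, and you already handle $q\in\{2,3\}$ separately. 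Both approaches are short; the paper's has the mild advantage that ``derived subgroup'' names the characteristic subgroup intrinsically rather than via a uniqueness argument.
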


\begin{proof}
For assertion~(i), see~\mbox{\cite[\S3.3.2]{Wilson}}.
For assertion~(ii), see~\mbox{\cite[\S3.3.5]{Wilson}}
or~\mbox{\cite[\S10.1]{Serre-FiniteGroups}}.
For assertion~(iii), see~\mbox{\cite[\S10.1]{Serre-FiniteGroups}}.

Let us show that the centralizer of $\PSL_2(\mathbf{F}_{p^k})$ in $\PGL_2(\mathbf{F}_{p^k})$
is trivial. Suppose that $\bar{A}\in\PGL_2(\mathbf{F}_{p^k})$ commutes with every element of $\PSL_2(\mathbf{F}_{p^k})$. This means that for every $B\in\SL_2(\mathbf{F}_{p^k})$, there exists $\lambda\in\mathbf{F}_{p^k}^*$ depending on $B$ such that
$$
AB=\lambda BA,
$$
where $A\in \GL_2(\mathbf{F}_{p^k})$ is a preimage of
$\bar{A}$ under the natural projection.
We claim that $A$ must be a scalar matrix. Indeed, suppose that $A$ is not scalar. Then there exists a vector $v\in \mathbf{F}_{p^k}^2$ such that $v$ and $Av$ are linearly independent.
Let~\mbox{$B\in \SL_2(\mathbf{F}_{p^k})$} be the matrix that fixes $v$ and maps $Av$ to $v+Av$;
in other words, in the basis $\{v,Av\}$ the matrix $B$ takes the form
$$
B'=\left(
\begin{array}{cc}
1 & 1\\
0 & 1
\end{array}
\right).
$$
Then
$$
(AB-\lambda BA)v =ABv-\lambda BAv=Av-\lambda(v+Av)=(1-\lambda)Av-\lambda v\neq 0
$$
for any $\lambda\in\mathbf{F}_{p^k}^*$.
So
$$
AB\neq \lambda BA
$$
for any $\lambda\in\mathbf{F}_{p^k}^*$, which gives a contradiction.

Therefore, we see that the centralizer of $\PSL_2(\mathbf{F}_{p^k})$ in $\PGL_2(\mathbf{F}_{p^k})$ is trivial. This implies that the center of
$\PSL_2(\mathbf{F}_{p^k})$ and the center of $\PGL_2(\mathbf{F}_{p^k})$
is trivial, and proves assertion~(iv).

Observe that $\PSL_2(\mathbf{F}_{2^k})=\PGL_2(\mathbf{F}_{2^k})$, so the former is trivially a characteristic subgroup of the latter.
For $p\ge 3$, we claim that $\PSL_2(\mathbf{F}_{p^k})$ is the commutator subgroup of  $\PGL_2(\mathbf{F}_{p^k})$. Indeed, for
$p=3$ and $k=1$, we have~\mbox{$\PSL_2(\mathbf{F}_3)\cong\mathfrak{A}_4$} and $\PGL_2(\mathbf{F_3})\cong\mathfrak{S}_4$ by assertions~(ii) and~(iii);
in this case the claim is straightforward to check.
If $p\ge 5$, or $p=3$ and $k\ge 2$, then~\mbox{$\PSL_2(\mathbf{F}_{p^k})$} is a non-abelian simple group by assertion~(i).
Since the commutator subgroup of $\PSL_2(\mathbf{F}_{p^k})$ is normal in $\PSL_2(\mathbf{F}_{p^k})$, this implies that
$$
[\PSL_2(\mathbf{F}_{p^k}),\PSL_2(\mathbf{F}_{p^k})]=\PSL_2(\mathbf{F}_{p^k}).
$$
On the other hand, the commutator of any two matrices
from $\GL_2(\mathbf{F}_{p^k})$ has determinant equal to~$1$, which means that
$$
[\PGL_2(\mathbf{F}_{p^k}),\PGL_2(\mathbf{F}_{p^k})]\subset\PSL_2(\mathbf{F}_{p^k}),
$$
and the claim follows. Since the commutator subgroup is always
characteristic, this proves assertion~(v).
\end{proof}

The next theorem describes the outer automorphism group of the
group~\mbox{$\PSL_2(\mathbf{F}_{p^k})$}.

\begin{theorem}[{see e.g. \cite[Theorem~3.2(ii)]{Wilson}}]
\label{theorem:Out-PSL}
If $p\ge 3$, one has
$$
\Out(\PSL_2(\mathbf{F}_{p^k}))\cong \mumu_2\times \mumu_k.
$$
Furthermore, one has
$$
\Out(\PSL_2(\mathbf{F}_{2^k}))\cong \mumu_k.
$$
\end{theorem}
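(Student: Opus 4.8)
The plan is to determine $\Aut(\PSL_2(\mathbf{F}_q))$ with $q=p^k$ explicitly and then divide by the inner automorphisms. Since the centre of $\PSL_2(\mathbf{F}_q)$ is trivial by Theorem~\ref{theorem:PSL-PGL-basic}(iv), the inner automorphism group is isomorphic to $\PSL_2(\mathbf{F}_q)$ itself, so $\Out(\PSL_2(\mathbf{F}_q))=\Aut(\PSL_2(\mathbf{F}_q))/\PSL_2(\mathbf{F}_q)$. Two families of automorphisms are immediately available. First, because $\PSL_2(\mathbf{F}_q)$ is characteristic in $\PGL_2(\mathbf{F}_q)$ (Theorem~\ref{theorem:PSL-PGL-basic}(v)) and its centralizer there is trivial (Theorem~\ref{theorem:PSL-PGL-basic}(iv)), conjugation embeds $\PGL_2(\mathbf{F}_q)$ into $\Aut(\PSL_2(\mathbf{F}_q))$, extending the inner automorphisms. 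Second, the Frobenius $x\mapsto x^p$ of $\mathbf{F}_q$ acts entry-wise on $\GL_2(\mathbf{F}_q)$, preserves $\SL_2(\mathbf{F}_q)$ and the scalars, hence descends to an automorphism of $\PGL_2(\mathbf{F}_q)$ preserving $\PSL_2(\mathbf{F}_q)$, and thus restricts to an automorphism of $\PSL_2(\mathbf{F}_q)$; these generate a copy of $\Gal(\mathbf{F}_q/\mathbf{F}_p)\cong\mumu_k$ normalising the image of $\PGL_2(\mathbf{F}_q)$.

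The main step — and the one I expect to be the genuine obstacle — is to show that these generate everything, precisely: $\Aut(\PSL_2(\mathbf{F}_q))\cong\mathrm{P}\Gamma\mathrm{L}_2(\mathbf{F}_q):=\PGL_2(\mathbf{F}_q)\rtimes\Gal(\mathbf{F}_q/\mathbf{F}_p)$. This is the classical theorem of Schreier--van der Waerden and Dieudonn\'e; one may simply cite \cite[Theorem~3.2]{Wilson}, or reprove it as follows. For $q\le 3$ use the exceptional isomorphisms of Theorem~\ref{theorem:PSL-PGL-basic}(ii),(iii) and check $\Aut(\mathfrak{S}_3)=\mathfrak{S}_3$ and $\Aut(\mathfrak{A}_4)=\mathfrak{S}_4$ directly. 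For $q\ge 4$, the group $G=\PSL_2(\mathbf{F}_q)$ is simple and acts faithfully and $2$-transitively on the set $\Omega$ of its $q+1$ Sylow $p$-subgroups — equivalently on $\PP^1(\mathbf{F}_q)$, with the Borel subgroups as point stabilizers. An automorphism of $G$ permutes $\Omega$; the resulting homomorphism $\Aut(G)\to\mathrm{Sym}(\Omega)$ is injective, because $G$ acts faithfully on $\Omega$ by simplicity and conjugation-equivariance forces any automorphism in the kernel to be the identity. Its image normalizes $G$ inside $\mathrm{Sym}(\Omega)$, so one is reduced to identifying the normalizer of $G$ in $\mathrm{Sym}(\Omega)$: such a permutation also normalizes $\PGL_2(\mathbf{F}_q)$, and then the fundamental theorem of projective geometry for the projective line applies — the sharply $3$-transitive action together with the harmonic-conjugate relation reconstructs the field $\mathbf{F}_q$, so the permutation is semilinear, i.e. lies in $\mathrm{P}\Gamma\mathrm{L}_2(\mathbf{F}_q)$. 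This geometric reconstruction is the hard input; everything else is bookkeeping.

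Granting $\Aut(\PSL_2(\mathbf{F}_q))=\mathrm{P}\Gamma\mathrm{L}_2(\mathbf{F}_q)$, it remains to compute the quotient by $\PSL_2(\mathbf{F}_q)$, which is a short exercise. The determinant descends to a surjection $\PGL_2(\mathbf{F}_q)\to\mathbf{F}_q^*/(\mathbf{F}_q^*)^2$ with kernel exactly $\PSL_2(\mathbf{F}_q)$, so $\PGL_2(\mathbf{F}_q)/\PSL_2(\mathbf{F}_q)$ is cyclic of order $\gcd(2,q-1)$ — namely $\mumu_2$ when $p$ is odd and trivial when $p=2$. As $\PSL_2(\mathbf{F}_q)$ is normal in $\mathrm{P}\Gamma\mathrm{L}_2(\mathbf{F}_q)$ and the Frobenius subgroup $\mumu_k$ meets $\PGL_2(\mathbf{F}_q)$ trivially, dividing the split extension $\mathrm{P}\Gamma\mathrm{L}_2(\mathbf{F}_q)=\PGL_2(\mathbf{F}_q)\rtimes\mumu_k$ by $\PSL_2(\mathbf{F}_q)$ yields a split extension $1\to\mumu_{\gcd(2,q-1)}\to\Out(\PSL_2(\mathbf{F}_q))\to\mumu_k\to 1$. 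The Frobenius acts trivially on $\mathbf{F}_q^*/(\mathbf{F}_q^*)^2$, since $x^p=x\cdot x^{p-1}$ and $x^{p-1}$ is a square (the exponent $p-1$ is even for odd $p$, and the group is trivial for $p=2$). Hence the extension is a direct product, and $\Out(\PSL_2(\mathbf{F}_q))\cong\mumu_{\gcd(2,q-1)}\times\mumu_k$, which is $\mumu_2\times\mumu_k$ for $p\ge 3$ and $\mumu_k$ for $p=2$, as asserted.
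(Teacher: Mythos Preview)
The paper does not prove this statement at all: it is stated with the citation to \cite[Theorem~3.2(ii)]{Wilson} and used as a black box. Your proposal therefore goes well beyond what the paper does, and your argument is correct. The route you take --- first identifying $\Aut(\PSL_2(\mathbf{F}_q))$ with $\mathrm{P}\Gamma\mathrm{L}_2(\mathbf{F}_q)$ and then computing the quotient by the inner automorphisms --- is the standard one, and your computation of the quotient (in particular the observation that Frobenius acts trivially on $\mathbf{F}_q^*/(\mathbf{F}_q^*)^2$ because $x^{p-1}$ is a square for odd $p$) is clean and correct.

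It is worth noting that the paper runs the logic in the opposite order: it quotes $\Out(\PSL_2(\mathbf{F}_{p^k}))$ from Wilson here, and only afterwards, in Proposition~\ref{proposition:Aut-PSL-PGL}, deduces the structure of $\Aut(\PSL_2(\mathbf{F}_{p^k}))$ by exhibiting $\PGL_2(\mathbf{F}_{p^k})\rtimes\mumu_k$ inside it and using the cited size of $\Out$ to conclude there is nothing more. Your approach is more self-contained but relies on the harder input (the Schreier--van der Waerden / Dieudonn\'e theorem, or equivalently the projective-line version of the fundamental theorem of projective geometry), which you rightly flag as the genuine obstacle; the paper sidesteps this by citing the final answer directly.
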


We will also need the following general auxiliary fact.
Recall that~$C_G(H)$ denotes the centralizer of a subgroup~$H$ in a group~$G$.

\begin{lemma}\label{lemma:restriction-Aut}
Let $G$ be a group and $H$ a characteristic subgroup of $G$.
Suppose that the centralizer $C_G(H)$ is trivial.
Then the restriction homomorphism
$$
\Aut(G)\to\Aut(H), \quad \sigma\mapsto \sigma\vert_H,
$$
is injective.
\end{lemma}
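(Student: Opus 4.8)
The plan is to show that any automorphism $\sigma$ of $G$ which restricts to the identity on $H$ must be the identity on all of $G$; since $\ker(\Aut(G)\to\Aut(H))$ consists precisely of such automorphisms, this gives injectivity. So suppose $\sigma\in\Aut(G)$ satisfies $\sigma\vert_H=\mathrm{id}_H$, and fix an arbitrary $g\in G$. I want to compare $g$ and $\sigma(g)$ by testing them against elements of $H$.

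For each $h\in H$, since $H$ is characteristic we have $\sigma(h)\in H$, and in fact $\sigma(h)=h$ by hypothesis. Now compute $\sigma(ghg^{-1})$ in two ways: on one hand $ghg^{-1}\in H$ (as $H$ is normal, being characteristic), so $\sigma(ghg^{-1})=ghg^{-1}$; on the other hand $\sigma(ghg^{-1})=\sigma(g)\sigma(h)\sigma(g)^{-1}=\sigma(g)h\sigma(g)^{-1}$. Therefore $ghg^{-1}=\sigma(g)h\sigma(g)^{-1}$ for every $h\in H$, which rearranges to
$$
\big(\sigma(g)^{-1}g\big)\,h\,\big(\sigma(g)^{-1}g\big)^{-1}=h
$$
for all $h\in H$. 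Hence the element $\sigma(g)^{-1}g$ lies in the centralizer $C_G(H)$, which is trivial by assumption. Thus $\sigma(g)^{-1}g=1$, i.e. $\sigma(g)=g$. Since $g\in G$ was arbitrary, $\sigma=\mathrm{id}_G$, and the restriction homomorphism is injective.

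This argument is entirely elementary; there is essentially no obstacle, the only point requiring a moment's care being the observation that a characteristic subgroup is in particular normal, so that conjugates $ghg^{-1}$ stay inside $H$ and the hypothesis $\sigma\vert_H=\mathrm{id}_H$ applies to them. No deeper input is needed.
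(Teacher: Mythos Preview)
Your proof is correct and follows essentially the same approach as the paper's: both show that if $\sigma\vert_H=\mathrm{id}_H$ then conjugation by $g$ and by $\sigma(g)$ agree on $H$, forcing $\sigma(g)g^{-1}$ (equivalently $\sigma(g)^{-1}g$) into the trivial centralizer $C_G(H)$. The only cosmetic difference is that the paper writes the conjugate as $g^{-1}hg$ rather than $ghg^{-1}$.
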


\begin{proof}
Suppose that $\sigma \in\Aut(G)$ is contained in the kernel of the restriction
homomorphism, that is,
the automorphism $\sigma$ restricts to the trivial automorphism of $H$.
Since $H$ is normal in $G$, we have $g^{-1}hg\in H$ for any $g\in G$ and~\mbox{$h\in H$}.
Therefore, one has
$$
g^{-1}hg=\sigma\big(g^{-1}hg\big)=\sigma(g)^{-1}\sigma(h)\sigma(g)=\sigma(g)^{-1}h\sigma(g),
$$
because $\sigma$ is trivial on $H$.
Thus, we obtain
$$
h=(g\sigma(g)^{-1})h(\sigma(g)g^{-1})=
(\sigma(g)g^{-1})^{-1}h(\sigma(g)g^{-1})
$$
for all $h\in H$. This means that
$$
\sigma(g)g^{-1}\in C_G(H)
$$
and hence $g=\sigma(g)$ for all~\mbox{$g\in G$}. In other words, $\sigma$ is the trivial automorphism of~$G$.
\end{proof}

Now we are able to give a detailed description of the automorphism groups of the
groups~\mbox{$\PSL_2(\mathbf{F}_{p^k})$} and~\mbox{$\PGL_2(\mathbf{F}_{p^k})$}.

\begin{proposition}
\label{proposition:Aut-PSL-PGL}
Let $p$ be a prime number. Then
$$
\Aut(\PSL_2(\mathbf{F}_{p^k}))\cong
\Aut(\PGL_2(\mathbf{F}_{p^k}))\cong
\PGL_2(\mathbf{F}_{p^k})\rtimes T,
$$
where $\PGL_2(\mathbf{F}_{p^k})$ acts on $\PGL_2(\mathbf{F}_{p^k})$ and $\PSL_2(\mathbf{F}_{p^k})$ by conjugations, and the
group~\mbox{$T\cong\mumu_k$} can be chosen so that it is generated by the Frobenius
automorphism of the field~$\mathbf{F}_{p^k}$.
\end{proposition}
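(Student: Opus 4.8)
Write $q=p^k$, let $L=\PSL_2(\mathbf{F}_q)$ and $P=\PGL_2(\mathbf{F}_q)$, and let $\phi$ be the automorphism of $L$, and of $P$, induced by the Frobenius $x\mapsto x^p$ of $\mathbf{F}_q$. The plan is to exhibit the group $P\rtimes T$ of the statement as an explicit subgroup of $\Aut(L)$ and then to force it to be all of $\Aut(L)$ by an order count based on Theorem~\ref{theorem:Out-PSL}; the assertion for $\Aut(P)$ will then follow formally.

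First I would reduce the computation of $\Aut(P)$ to that of $\Aut(L)$. By Theorem~\ref{theorem:PSL-PGL-basic}(v) the subgroup $L$ is characteristic in $P$, and by Theorem~\ref{theorem:PSL-PGL-basic}(iv) the centralizer $C_P(L)$ is trivial, so Lemma~\ref{lemma:restriction-Aut} provides an injective restriction homomorphism $\Aut(P)\hookrightarrow\Aut(L)$. To build the relevant subgroup of $\Aut(L)$, observe that $P$ normalises $L$, so conjugation defines a homomorphism $c\colon P\to\Aut(L)$; it is injective because $C_P(L)$ is trivial, and since the centre of $L$ is trivial (Theorem~\ref{theorem:PSL-PGL-basic}(iv)) its image $c(L)$ is precisely the group of inner automorphisms of $L$, so that $c(L)=\ker\big(\Aut(L)\to\Out(L)\big)$. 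The Frobenius spans a cyclic subgroup $T=\langle\phi\rangle$ of order dividing $k$, and a direct check gives $\phi\,c(g)\,\phi^{-1}=c(\phi(g))$ for all $g\in P$; hence $\phi$ normalises $c(P)$ and $c(P)\cdot T$ is a subgroup of $\Aut(L)$.

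The heart of the argument is to describe the images of $c(P)$ and $T$ in $\Out(L)$. By Theorem~\ref{theorem:Out-PSL}, together with the classical identification of the direct factors of $\Out(L)$ with its diagonal and field automorphism subgroups (cf.~\cite{Wilson}), the image of $T$ in $\Out(L)$ is the ``field'' cyclic subgroup of order $k$; for $p\ge 3$ the image of $c(P)$ is the complementary ``diagonal'' subgroup $c(P)/c(L)\cong P/L\cong\mumu_2$, meeting the image of $T$ trivially, whereas for $p=2$ one has $P=L$ and $\Out(L)\cong\mumu_k$ is generated by the image of $T$. Consequently $T$ has order exactly $k$ (already as a subgroup of $\Aut(L)$, since $\phi$ has order $k$ on $\mathbf{F}_q$), $c(P)\cap T=\{1\}$, so $c(P)\cdot T$ is an internal semidirect product isomorphic to $P\rtimes T$ with $T$ acting on $P$ through $\phi$, and it surjects onto $\Out(L)$ with kernel $c(L)$. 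Comparing with the exact sequence $1\to c(L)\to\Aut(L)\to\Out(L)\to 1$ yields $|c(P)\rtimes T|=|c(L)|\cdot|\Out(L)|=|\Aut(L)|$, whence $c(P)\rtimes T=\Aut(L)$. Finally $c(P)\rtimes T$ lies in the image of $\Aut(P)\hookrightarrow\Aut(L)$, since it is generated by the restrictions to $L$ of the inner automorphisms of $P$ and of $\phi$; thus that injection is onto, and we obtain $\Aut(P)\cong\Aut(L)\cong P\rtimes T$ with $P$ acting by conjugation and $T$ by the Frobenius, as claimed.

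The step I expect to be the main obstacle is controlling the image of $T$ in $\Out(L)$ --- equivalently, proving that no nontrivial power of the Frobenius on $L$ is a composition of an inner and a diagonal automorphism. This does not follow from the bare isomorphism type $\Out(L)\cong\mumu_2\times\mumu_k$ of Theorem~\ref{theorem:Out-PSL}; it rests on the finer classical description of $\Aut(\PSL_2(\mathbf{F}_q))$ as the projective semilinear group, i.e.\ on the identification of the two factors of $\Out(L)$ with the diagonal and field automorphism subgroups. Granting that, the remaining arithmetic is routine, using $|P|=q(q^2-1)$ together with $|P|=2|L|$ for odd $p$ and $P=L$ for $p=2$ (the latter recalled in the proof of Theorem~\ref{theorem:PSL-PGL-basic}(v)), so that $|c(P)\rtimes T|=q(q^2-1)\,k=|L|\cdot|\Out(L)|=|\Aut(L)|$ in either parity.
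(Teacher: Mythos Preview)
Your proposal is correct and follows essentially the same route as the paper: embed $P\rtimes T$ into $\Aut(L)$ via conjugation and the Frobenius, compare orders using Theorem~\ref{theorem:Out-PSL}, and then pass to $\Aut(P)$ via Lemma~\ref{lemma:restriction-Aut}. The only difference is one of scruple: the paper simply asserts that $\Aut(L)$ ``contains the group $\PGL_2(\mathbf{F}_{p^k})\rtimes T$'' and then invokes the order of $\Out(L)$, whereas you explicitly isolate the point $c(P)\cap T=\{1\}$ and observe that it needs the identification of the diagonal and field factors of $\Out(L)$ rather than the bare isomorphism type --- a fair remark, and one that is implicit in the reference to~\cite{Wilson} behind Theorem~\ref{theorem:Out-PSL}.
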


\begin{proof}
By Theorem~\ref{theorem:PSL-PGL-basic}(iv), the group $\PGL_2(\mathbf{F}_{p^k})$ acts faithfully on~\mbox{$\PSL_2(\mathbf{F}_{p^k})$} by conjugations.
Furthermore, the automorphism group $T\cong\mumu_k$ of the field~$\mathbf{F}_{p^k}$ acts by automorphisms of $\PSL_2(\mathbf{F}_{p^k})$.
Thus, the automorphism group of~\mbox{$\PSL_2(\mathbf{F}_{p^k})$} contains the group
$\PGL_2(\mathbf{F}_{p^k})\rtimes T$.
On the other hand, one has~\mbox{$\PGL_2(\mathbf{F}_{2^k})=\PSL_2(\mathbf{F}_{2^k})$}, and
$$
\PGL_2(\mathbf{F}_{p^k})/\PSL_2(\mathbf{F}_{p^k})\cong\mumu_2
$$
for $p\ge 3$. Now Theorem~\ref{theorem:Out-PSL}
tells us that the automorphism group of $\PSL_2(\mathbf{F}_{p^k})$
cannot be larger than $\PGL_2(\mathbf{F}_{p^k})\rtimes T$.
Thus, we have
$$
\Aut(\PSL_2(\mathbf{F}_{p^k}))\cong
\PGL_2(\mathbf{F}_{p^k})\rtimes T.
$$

To describe the automorphism group of $\PGL_2(\mathbf{F}_{p^k})$, recall from
Theorem~\ref{theorem:PSL-PGL-basic}(iv),(v) that Lemma~\ref{lemma:restriction-Aut}
applies to the group $\PGL_2(\mathbf{F}_{p^k})$ and its subgroup $\PSL_2(\mathbf{F}_{p^k})$.
Therefore, we have
$$
\Aut(\PGL_2(\mathbf{F}_{p^k}))\cong \Aut(\PSL_2(\mathbf{F}_{p^k})).
$$
\end{proof}

As a by-product of Proposition~\ref{proposition:Aut-PSL-PGL}, one obtains a description of
the outer automorphism group of the group~\mbox{$\PGL_2(\mathbf{F}_{p^k})$}.

\begin{corollary}
\label{corollary:Out-PGL}
Let $p$ be a prime number. The following assertions hold.
\begin{itemize}
\item[(i)] One has
$$
\Out(\PGL_2(\mathbf{F}_{p^k}))\cong \mumu_k.
$$

\item[(ii)] One has
$$
\Out(\mathfrak{A}_4)\cong \mumu_2,\quad
\Out(\mathfrak{S}_4)=1,\quad
\Out(\mathfrak{A}_5)\cong \mumu_2.
$$
\end{itemize}
\end{corollary}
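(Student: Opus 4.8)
The plan is to read both assertions directly off Proposition~\ref{proposition:Aut-PSL-PGL}, using only elementary facts about inner automorphisms. For assertion~(i), recall that by Proposition~\ref{proposition:Aut-PSL-PGL} we have $\Aut(\PGL_2(\mathbf{F}_{p^k}))\cong \PGL_2(\mathbf{F}_{p^k})\rtimes T$ with $T\cong\mumu_k$ generated by Frobenius, where the $\PGL_2(\mathbf{F}_{p^k})$-factor acts by conjugation. Since the center of $\PGL_2(\mathbf{F}_{p^k})$ is trivial by Theorem~\ref{theorem:PSL-PGL-basic}(iv), the conjugation map identifies $\PGL_2(\mathbf{F}_{p^k})$ with the inner automorphism group $\Inn(\PGL_2(\mathbf{F}_{p^k}))$, which sits inside $\Aut(\PGL_2(\mathbf{F}_{p^k}))$ precisely as the first factor of the semidirect product. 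Hence $\Out(\PGL_2(\mathbf{F}_{p^k}))=\Aut/\Inn\cong T\cong\mumu_k$, which is assertion~(i).

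For assertion~(ii) I would combine the isomorphisms $\PSL_2(\mathbf{F}_3)\cong\mathfrak{A}_4$, $\PGL_2(\mathbf{F}_3)\cong\mathfrak{S}_4$, and $\PSL_2(\mathbf{F}_5)\cong\mathfrak{A}_5$ from Theorem~\ref{theorem:PSL-PGL-basic}(ii),(iii) with the formulas already established. Applying Corollary~\ref{corollary:Out-PGL}(i) with $p=3$, $k=1$ gives $\Out(\mathfrak{S}_4)\cong\Out(\PGL_2(\mathbf{F}_3))\cong\mumu_1=1$. Applying Theorem~\ref{theorem:Out-PSL} with $p=3$, $k=1$ gives $\Out(\mathfrak{A}_4)\cong\Out(\PSL_2(\mathbf{F}_3))\cong\mumu_2\times\mumu_1\cong\mumu_2$, and with $p=5$, $k=1$ gives $\Out(\mathfrak{A}_5)\cong\Out(\PSL_2(\mathbf{F}_5))\cong\mumu_2\times\mumu_1\cong\mumu_2$.

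There is no serious obstacle here; the statement is a formal consequence of the preceding proposition and theorems. The only point requiring a word of care is the identification of the $\PGL_2(\mathbf{F}_{p^k})$-factor in the semidirect product decomposition of Proposition~\ref{proposition:Aut-PSL-PGL} with the full inner automorphism group of $\PGL_2(\mathbf{F}_{p^k})$ — this uses the triviality of the center (so no inner automorphisms are lost) together with the observation that the $T$-factor consists of field automorphisms, which are visibly not inner since they fail to preserve, say, traces of elements of order $p$. Once that identification is in place, both parts are immediate.
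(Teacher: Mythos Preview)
Your proof is correct and follows essentially the same route as the paper: assertion~(i) is read off from Proposition~\ref{proposition:Aut-PSL-PGL} using that the $\PGL_2(\mathbf{F}_{p^k})$-factor there is by definition the subgroup of inner automorphisms, and assertion~(ii) is obtained by combining assertion~(i) and Theorem~\ref{theorem:Out-PSL} with the exceptional isomorphisms of Theorem~\ref{theorem:PSL-PGL-basic}(ii),(iii). Your closing remark about field automorphisms not being inner is unnecessary (the identification of $\Inn$ with the first factor is already built into the statement of Proposition~\ref{proposition:Aut-PSL-PGL}), and the trace heuristic is a bit loose in $\PGL_2$, but this does not affect the argument.
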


\begin{proof}
Assertion~(i) is immediate from Proposition~\ref{proposition:Aut-PSL-PGL}.
Assertion~(ii) follows from assertion~(i) and Theorem~\ref{theorem:Out-PSL}
together with Theorem~\ref{theorem:PSL-PGL-basic}(ii),(iii).
\end{proof}

\section{Semi-direct products}
\label{section:semi-direct}

In this section we discuss certain semi-direct products and their subgroups.

\begin{lemma}[{see e.g. \cite[Lemma~2.11]{ChenShramov} or \cite[Corollary~2]{Darafsheh}}]
\label{lemma:Darafsheh}
Let $p$ be a prime number, and let $m$ be a non-negative integer. Let $R$ be a group containing a
normal subgroup~\mbox{$R'\cong (\mathbb{Z}/p\mathbb{Z})^m$}, $m\ge 1$, and let $g$ be an element of~$R$. Then for some positive integer~\mbox{$t\le p^m-1$}, the element $g^t$ commutes with~$R'$.
\end{lemma}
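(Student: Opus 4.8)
The plan is to reduce the statement to the well-known fact that for a prime~$p$, a $p$-group acting on $(\mathbb{Z}/p\mathbb{Z})^m$ has nontrivial fixed points (equivalently, the action of a finite $p$-group on a nonzero $\mathbb{F}_p$-vector space is never fixed-point-free), and then to bound the relevant index. Concretely, consider the action of $R$ on $R'$ by conjugation; this gives a homomorphism $R\to\Aut(R')\cong\GL_m(\mathbf{F}_p)$. Let $\bar g$ be the image of $g$, and let $d$ be the order of $\bar g$. Since $g^d$ maps to the identity of $\GL_m(\mathbf{F}_p)$, the element $g^d$ centralizes $R'$, so it already suffices to show that one may take $t$ to be a divisor of $d$ with $d\mid p^m-1$ up to a $p$-part; more precisely I would show that the largest order of an element of $\GL_m(\mathbf{F}_p)$ of order prime to $p$ divides $p^m-1$, and handle the $p$-part separately.

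First I would split $d$ as $d = d_p\cdot d'$ with $d_p$ a power of $p$ and $\gcd(d',p)=1$. Then $\bar g^{d'}$ is an element of $\GL_m(\mathbf{F}_p)$ of $p$-power order, hence unipotent; a unipotent element of $\GL_m(\mathbf{F}_p)$ has order at most $p^{\lceil \log_p m\rceil}\le p^{m-1}\le p^m-1$ (for $m\ge 1$), so in fact $\bar g^{(p^m-1)}$ would be trivial if we also knew $d'\mid p^m-1$. So the key point is to bound the prime-to-$p$ part: I claim that any element of $\GL_m(\mathbf{F}_p)$ of order coprime to $p$ has order dividing $\prod_{i=1}^m(p^i-1)$, or more sharply dividing $p^m-1$ if it is irreducible, and in general dividing $\mathrm{lcm}(p^{i}-1 : 1\le i\le m)$, which divides $p^m-1$ when one is slightly careful, or at any rate divides $p^m-1$ times a bounded factor. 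The cleanest route avoiding case analysis is: the order of $\bar g$ divides $\exp(\GL_m(\mathbf{F}_p))$, but that is too large; instead, decompose $\mathbf{F}_p^m$ into $\langle\bar g\rangle$-indecomposable (hence, by coprimality, irreducible) summands, so $\bar g$ acts on each as multiplication by a root of unity living in $\mathbf{F}_{p^{i}}$ for $i\le m$, hence of order dividing $p^i-1\mid p^m-1$ once $i\mid m$...

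The actual obstacle — and the reason the bound in the statement is the clean $t\le p^m-1$ rather than something larger — is that one cannot always split off the irreducible pieces into dimensions dividing $m$. The right fix, and the step I expect to be the crux, is to avoid the semisimple decomposition entirely and argue directly: take $t$ to be the order of $\bar g$ in $\GL_m(\mathbf{F}_p)$; then $g^t$ commutes with $R'$ by construction, and it remains only to see $t\le p^m-1$. Here I would use that $\langle\bar g\rangle$ is a cyclic subgroup of $\GL_m(\mathbf{F}_p)$, hence acts faithfully on $\mathbf{F}_p^m$, so $\mathbf{F}_p[\bar g]$ is a commutative $\mathbf{F}_p$-algebra of dimension at most $m$ containing the cyclic group $\langle \bar g\rangle$ in its unit group; since $|\mathbf{F}_p[\bar g]|\le p^m$ and $\bar g\ne 0$ is a unit, $\langle\bar g\rangle\subseteq \mathbf{F}_p[\bar g]^\times\setminus\{0\}$... no, rather $\langle\bar g\rangle\subseteq \mathbf{F}_p[\bar g]^\times$, a group of order at most $p^m-1$. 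This gives $t=|\langle\bar g\rangle|\mid |\mathbf{F}_p[\bar g]^\times|\le p^m-1$, hence $t\le p^m-1$, which is exactly what is needed. Since this is essentially a one-line argument via the subalgebra generated by a single matrix, I would present it that way and cite \cite[Corollary~2]{Darafsheh} for the bookkeeping, noting it was already used in \cite[Lemma~2.11]{ChenShramov}.
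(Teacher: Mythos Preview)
The paper does not supply its own proof of this lemma; it is stated with a citation to \cite[Lemma~2.11]{ChenShramov} and \cite[Corollary~2]{Darafsheh} and then used as a black box. So there is nothing in the paper to compare against beyond the references themselves.

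Your final argument is correct and is in fact the standard one: conjugation gives a homomorphism $R\to\GL_m(\mathbf{F}_p)$; if $t$ is the order of the image $\bar g$, then $g^t$ centralizes $R'$; and $t\le p^m-1$ because $\langle\bar g\rangle$ sits inside the unit group of the commutative subalgebra $\mathbf{F}_p[\bar g]\subset\mathrm{Mat}_m(\mathbf{F}_p)$, which has $\mathbf{F}_p$-dimension at most $m$ (the degree of the minimal polynomial of $\bar g$), hence at most $p^m$ elements and at most $p^m-1$ units. This is exactly the content of the cited result of Darafsheh.

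The first two-thirds of your proposal, however, is a detour: the attempted decomposition into $p$-part and prime-to-$p$ part, and the discussion of irreducible summands and whether their dimensions divide $m$, are unnecessary and, as you yourself notice, do not close cleanly. You should simply present the last paragraph as the proof and discard the exploratory material before it.
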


The next assertion is a simple corollary of Lemma~\ref{lemma:Darafsheh}.

\begin{corollary}
\label{corollary:Darafsheh}
Let $p$ be a prime number, and let $R=\mumu_p^m\rtimes L$,
where~\mbox{$L\cong \mumu_n$} with $n$ coprime to $p$.
Set $C=C_R(R_{(p)})$ and  $L'=L\cap C$.
The following assertions hold.
\begin{enumerate}
\item[(i)] The group $C$ is generated by $R_{(p)}$ and $L'$, one has $C\cong R_{(p)}\times L'$,
and~$C$ is a characteristic subgroup of $R$. The group $L'$ consists of all the elements of $C$ whose orders are coprime to~$p$.

\item[(ii)] One has $C_R(C)=C$.

\item[(iii)]  The subgroup $L'$ is characteristic in $R$,
the index of $L'$ in $L$ does not exceed~$p^m$,
and the index of $L'$ in $R$ does not exceed~$p^{2m}$.
\end{enumerate}
\end{corollary}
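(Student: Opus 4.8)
The plan is to prove Corollary~\ref{corollary:Darafsheh} by directly analyzing the structure of $R=\mumu_p^m\rtimes L$ with $L\cong\mumu_n$, $\gcd(n,p)=1$. First I would observe that $R_{(p)}=\mumu_p^m$ is the unique $p$-Sylow subgroup of $R$ (it is normal of index $n$ coprime to $p$), hence $R_{(p)}$ is characteristic in $R$, and therefore $C=C_R(R_{(p)})$ is characteristic in $R$ as well. For assertion~(i): since $R_{(p)}$ is abelian, it is contained in $C$; and since $L$ is cyclic generated by some element $g$ of order $n$, the intersection $L\cap C=L'$ is exactly the subgroup of $L$ generated by the smallest power of $g$ commuting with $R_{(p)}$, which is cyclic. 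I would then show $C=R_{(p)}\cdot L'$: any element of $C$ has the form $h\ell$ with $h\in R_{(p)}$, $\ell\in L$, and $h\ell$ commutes with $R_{(p)}$ iff $\ell$ does (because $h$ already does), so $\ell\in L'$. Since $R_{(p)}\cap L=1$ and both $R_{(p)}$ and $L'$ commute with $R_{(p)}$ (and $L'$ is abelian, and $R_{(p)}$ centralizes $L'\subset C$ by definition of $C$), the product is direct: $C\cong R_{(p)}\times L'$. The final sentence of~(i) follows since in $R_{(p)}\times L'$ the elements of order coprime to $p$ form precisely the factor $L'$ (as $R_{(p)}$ is a $p$-group and $|L'|$ divides $n$), so $L'$ is the set of $p'$-elements of $C$.

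For assertion~(ii), I would argue that $C_R(C)\subset C_R(R_{(p)})=C$ since $R_{(p)}\subset C$; conversely $C\subset C_R(C)$ because $C\cong R_{(p)}\times L'$ is abelian. Hence $C_R(C)=C$.

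For assertion~(iii): $L'$ is the set of all $p'$-elements of the characteristic subgroup $C$, hence $L'$ is itself characteristic in $C$, and therefore characteristic in $R$. The bound on the index of $L'$ in $L$ is exactly the content of Lemma~\ref{lemma:Darafsheh}: applying it to the group $R$, its normal subgroup $R'=R_{(p)}\cong(\ZZ/p\ZZ)^m$, and a generator $g$ of $L$, there is a positive integer $t\le p^m-1$ with $g^t$ commuting with $R_{(p)}$, i.e. $g^t\in L'$; thus the index of $L'$ in $L\cong\mumu_n$ is at most $t\le p^m-1<p^m$. Finally, the index of $L'$ in $R$ is the product of the index of $L'$ in $L$ and the index of $L$ in $R$, which is $|R_{(p)}|=p^m$, giving $[R:L']\le p^m\cdot p^m=p^{2m}$ (in fact strictly less than $p^{2m}$, but the stated weak inequality is what we need).

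I do not expect any serious obstacle here; the proof is essentially bookkeeping around the facts that $R_{(p)}$ is the unique $p$-Sylow subgroup and that $L$ is cyclic, together with a single invocation of Lemma~\ref{lemma:Darafsheh}. The only point requiring a little care is verifying that the product $R_{(p)}\cdot L'$ is genuinely \emph{direct} (rather than merely an internal product), which follows from $R_{(p)}\cap L'\subset R_{(p)}\cap L=1$ and from the fact that $R_{(p)}$ and $L'$ commute elementwise --- the latter being immediate since $L'\subset C=C_R(R_{(p)})$.
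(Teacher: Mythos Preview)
Your proof is correct and follows essentially the same approach as the paper's: both arguments identify $R_{(p)}$ as characteristic (you via uniqueness of the normal Sylow, the paper via it being the set of elements of order dividing~$p$), deduce that $C$ is characteristic, decompose $C$ as the direct product $R_{(p)}\times L'$, and invoke Lemma~\ref{lemma:Darafsheh} for the index bound. The paper additionally singles out the trivial case $m=0$ at the start (where Lemma~\ref{lemma:Darafsheh} is not needed since $L'=L$), which you might mention for completeness, but your argument is otherwise the same in substance and slightly more detailed in spelling out why $C=R_{(p)}\cdot L'$.
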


\begin{proof}
If $m=0$, there is nothing to prove. Thus, we assume that $m\ge 1$.

The subgroup $R_{(p)}$ is characteristic in~$R$,
because it consists of all the elements
of $R$ whose orders equal either $1$ or~$p$.
Thus its centralizer~$C_R(R_{(p)})$
is also a characteristic subgroup of $R$.
Furthermore, $R_{(p)}$ is abelian, and $L'$ is the centralizer of $R_{(p)}$ in $L$.
Hence $C$ is generated by $R_{(p)}$ and $L'$, and one has~\mbox{$C\cong R_{(p)}\times L'$}.
This proves assertion~(i).

Observe that $C_R(C)\subset C_R(R_{(p)})=C$. On the other hand, $C$ is abelian by assertion~(i),
so that $C\subset C_R(C)$.
This gives assertion~(ii).

By Lemma~\ref{lemma:Darafsheh}, the index of~$L'$ in~$L$
is at most~\mbox{$p^m-1<p^m$}; thus, the index of $L'$ in $R$
does not exceed~$p^{2m}$. Moreover, $L'$ is characteristic in $C$: indeed, by assertion~(i)
it consists of all the elements of $C$ whose orders are coprime to $p$.
Since $C$ is characteristic in $R$, we conclude that $L'$ is also characteristic in~$R$. This proves assertion~(iii).
\end{proof}

Let us make an observation on the subgroups of the product of two copies
of a group described in Corollary~\ref{corollary:Darafsheh}.

\begin{lemma}\label{lemma:Darafsheh-product}
Let $p$ be a prime number, and let $R\cong\mumu_p^m\rtimes L$,
where~\mbox{$L\cong \mumu_n$} with $n$ coprime to $p$.
Let $H$ be a subgroup of $R\times R$.
Then $H$ contains
a characteristic abelian subgroup of order coprime to $p$
and index at most~\mbox{$|H_{(p)}|^{3}$}.
\end{lemma}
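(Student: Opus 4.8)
The plan is to exploit the characteristic subgroups supplied by Corollary~\ref{corollary:Darafsheh} in each factor and then apply Lemma~\ref{lemma:product}. Write $R=\mumu_p^m\rtimes L$ and, following Corollary~\ref{corollary:Darafsheh}, let $C=C_R(R_{(p)})\cong R_{(p)}\times L'$ be the characteristic abelian subgroup of $R$; its index in $R$ is at most $p^{2m}=|R_{(p)}|^2$. Since $C$ is abelian of this index, and $|R_{(p)}|=|E_{(p)}|$ for any subgroup $E\supseteq R_{(p)}$ need not hold in general — so instead I will argue directly at the level of subgroups of $R$. First I would check that \emph{every} subgroup $E$ of $R$ contains a characteristic abelian subgroup of order coprime to $p$ and index at most $|E_{(p)}|^{3}$: indeed $E\cap R_{(p)}$ is normal in $E$ (being the intersection of $E$ with the characteristic subgroup $R_{(p)}$ of $R$) and is an elementary abelian $p$-group, and $E/(E\cap R_{(p)})$ embeds into $L\cong\mumu_n$, hence is cyclic of order coprime to $p$; thus $E$ fits into $1\to E\cap R_{(p)}\to E\to \bar E\to 1$ with abelian kernel, and by Lemma~\ref{lemma:large-abelian} (with $e=3$, and $\bar E$ cyclic of order coprime to $p$, so $|\bar E_{(p)}|=1$ and $J=|\bar E|\le n$)... but $n$ is not bounded, so I need the sharper structure.

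The sharper point is this: for a subgroup $E\le R$, set $P=E\cap R_{(p)}$ and let $E'=C_E(P)$. Then $P\le E'$, $E'$ is the preimage in $E$ of $C_{\bar E}(P)$, and by Lemma~\ref{lemma:Darafsheh} applied inside $E$ (with $R'=P\cong(\ZZ/p\ZZ)^{m'}$, $m'\le m$) the index of $E'$ in $E$ is at most $p^{m'}-1<p^{m'}=|P|\le|E_{(p)}|$. Moreover $E'\cong P\times (E'\cap L)$ is abelian since $E'/P$ is cyclic of order coprime to $p$ and centralizes $P$; and $E'\cap L$, being the set of elements of $E'$ of order coprime to $p$, is characteristic in $E'$, while $E'$ itself — being $C_E(P)$ with $P$ characteristic in $E$ — is characteristic in $E$. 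Hence $E'\cap L$ is a characteristic abelian subgroup of $E$ of order coprime to $p$, and its index in $E$ is at most $|P|\cdot|E_{(p)}|\le|E_{(p)}|^{2}$. So in fact every subgroup $E$ of $R$ contains a characteristic abelian subgroup of order coprime to $p$ and index at most $|E_{(p)}|^{2}$; in particular this holds with exponent $3$ as well.

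Now I would apply Lemma~\ref{lemma:product} with $\Gamma_1=\Gamma_2=R$, $e=3$, $J_1=J_2=1$: the previous paragraph shows every finite subgroup of $\Gamma_1$ contains a \emph{normal} (indeed characteristic) abelian subgroup of order coprime to $p$ and index at most $1\cdot|E_{(p)}|^{3}$, and every finite subgroup of $\Gamma_2$ contains a \emph{characteristic} abelian subgroup of order coprime to $p$ and index at most $1\cdot|F_{(p)}|^{3}$. The conclusion of Lemma~\ref{lemma:product} then gives that $H\le R\times R$ contains a \emph{normal} abelian subgroup of order coprime to $p$ and index at most $|H_{(p)}|^{3}$. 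To upgrade "normal" to "characteristic", I would invoke Corollary~\ref{corollary:Chermak-Delgado} — but that worsens the exponent from $3$ to $2\cdot3+1=7$, which is too lossy. Instead I expect the right move is to feed the \emph{better} exponent $2$ (from the second paragraph, valid for every subgroup of $R$, hence for $\Gamma_1=\Gamma_2=R$) into Lemma~\ref{lemma:product} to get a normal abelian subgroup of index at most $|H_{(p)}|^{2}$... no — Lemma~\ref{lemma:product} only yields normality. So the genuine obstacle is obtaining a \emph{characteristic} subgroup: I would prove a bespoke variant of the Lemma~\ref{lemma:product} argument in which, because both input subgroups ($E'\cap L$ in each factor) are \emph{characteristic} and the kernel $H\cap(1\times R)$ etc. behave well, the resulting intersection $A=\tilde B\cap\tilde C$ is characteristic in $H$ — tracking through that every subgroup constructed ($\tilde B$, $\tilde C$, their preimages) is characteristic rather than merely normal, using that $C_R(R_{(p)})$-type subgroups are characteristic and that the projections and their kernels are canonical. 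Carrying out that characteristicity bookkeeping, while keeping the exponent at $3$ and the constant at $1$, is the step I expect to require the most care.
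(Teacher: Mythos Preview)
Your analysis of subgroups $E\le R$ is correct and yields a characteristic abelian subgroup of order coprime to $p$ and index at most $|E_{(p)}|^{2}$. Feeding this into Lemma~\ref{lemma:product} with $e=2$ is also fine and produces a \emph{normal} abelian subgroup of $H$ of order coprime to $p$ and index at most $|H_{(p)}|^{2}$. The genuine gap is the last step: the ``characteristicity bookkeeping'' you propose cannot be carried out. The projections $\pi_1,\pi_2$ and their kernels $H\cap(1\times R)$, $H\cap(R\times 1)$ are features of the embedding $H\hookrightarrow R\times R$, not of the abstract group $H$; an automorphism of $H$ has no reason to preserve them, so the subgroups $\tilde B,\tilde C$ built in the proof of Lemma~\ref{lemma:product} are only normal in $H$, not characteristic. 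And falling back on Corollary~\ref{corollary:Chermak-Delgado} applied to your abelian subgroup of index $\le|H_{(p)}|^{2}$ gives exponent $2\cdot 2+1=5$, which overshoots the target.

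The paper's argument bypasses Lemma~\ref{lemma:product} entirely and is much shorter. Since $R_{(p)}$ is normal in $R$, the group $(R\times R)_{(p)}=R_{(p)}\times R_{(p)}$ is normal in $R\times R$, so $H_{(p)}=H\cap(R\times R)_{(p)}$ is normal in $H$. The quotient $H/H_{(p)}$ has order coprime to $p$ and embeds into $(R\times R)/(R\times R)_{(p)}\cong L\times L$, hence is abelian; by Schur--Zassenhaus one writes $H\cong H_{(p)}\rtimes T$ with $T$ abelian of index exactly $|H_{(p)}|$. Now Corollary~\ref{corollary:Chermak-Delgado} with $J=1$ and $e=1$ gives a characteristic abelian subgroup of order coprime to $p$ and index at most $|H_{(p)}|^{2\cdot 1+1}=|H_{(p)}|^{3}$. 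The key difference from your route is that working directly with $H$ (rather than factor by factor) produces an abelian subgroup of index $|H_{(p)}|^{1}$ instead of $|H_{(p)}|^{2}$, and that single saved exponent is exactly what makes Chermak--Delgado land on $3$ rather than $5$.
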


\begin{proof}
The $p$-Sylow subgroup $R_{(p)}$ of $R$ is normal. Hence the
$p$-Sylow subgroup~\mbox{$(R\times R)_{(p)}$} of $R\times R$ is normal,
which implies that the $p$-Sylow subgroup
$$
H_{(p)}=H\cap (R\times R)_{(p)}
$$
of $H$ is normal as well. Set $T=H/H_{(p)}$.
Then the order of $T$ is coprime to $p$. Therefore,
one has
$$
H\cong H_{(p)}\rtimes T
$$
by Schur--Zassenhaus theorem, see e.g.~\mbox{\cite[Theorem~3.8]{Isaacs}}.

Let $\pi\colon T\to L\times L$ be the composition
of the embedding $T\hookrightarrow H\hookrightarrow R\times R$
with the projection
$$
R\times R\to R\times R/(R\times R)_{(p)}\cong R/R_{(p)}\times R/R_{(p)}\cong L\times L.
$$
Since the order of $T$ is coprime to $p$, we see that the kernel of $\pi$ is trivial.
In other words, $\pi$ embeds $T$ into the group $L\times L$. Therefore, the group $T$ is abelian.

We see that $H$ contains an abelian subgroup $T$ of index equal to $|H_{(p)}|$.
According to Corollary~\ref{corollary:Chermak-Delgado},
this implies that $H$ contains a characteristic abelian subgroup of order coprime to $p$
and index at most~\mbox{$|H_{(p)}|^{3}$}.
\end{proof}

The following consequence of Corollary~\ref{corollary:Darafsheh} will be used in Section~\ref{section:P1}.

\begin{corollary}
\label{corollary:Darafsheh-product}
Let $p$ be a prime number, and let $R\cong\mumu_p^m\rtimes L$,
where~\mbox{$L\cong \mumu_n$} with $n$ coprime to $p$.
Let $G$ be a finite group.
Suppose that $G$ contains a subgroup~$G'$ of index~$2$ such that
$G'\subset R\times R$,
and an element of $G\setminus G'$ acts on~\mbox{$R\times R$} by a composition
of conjugation by some element of $G'$ with the non-trivial
permutation of the factors so that this action preserves
the subgroup~$G'$. Then $G$ contains a normal abelian subgroup of order coprime to $p$
and index at most~\mbox{$J\cdot |G_{(p)}|^{3}$},
where
\begin{equation*}
J=\begin{cases}
2, &\text{if\ } p\ge 3,\\
1, &\text{if\ } p=2.
\end{cases}
\end{equation*}
\end{corollary}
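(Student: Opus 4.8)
The plan is to reduce the statement to the situation handled by Lemma~\ref{lemma:Darafsheh-product} together with a mild extra argument for the extra $\mumu_2$. First I would set $R_{(p)}$ to be the normal $p$-Sylow subgroup $\mumu_p^m$ of $R$, so that $(R\times R)_{(p)}=R_{(p)}\times R_{(p)}$ is normal in $R\times R$. Since $G'$ is an index-$2$ subgroup of $G$ contained in $R\times R$, and the prescribed action of an element of $G\setminus G'$ on $R\times R$ preserves $G'$, I would argue that the subgroup
$$
P=G\cap\big((R\times R)_{(p)}\cdot\langle\text{the distinguished involution}\rangle\big)
$$
is not quite what we want; instead, more cleanly, I would look at $H'_{(p)}:=G'\cap(R\times R)_{(p)}$, which is the $p$-Sylow subgroup of $G'$ and is normal in $G'$. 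Because conjugation by $G'$ and swapping the two factors both preserve $(R\times R)_{(p)}$, this subgroup is in fact normalized by all of $G$; hence the $p$-Sylow subgroup $G_{(p)}$ of $G$ has $G_{(p)}\cap G'=H'_{(p)}$ of index $1$ or $2$ in $G_{(p)}$, and in any case $G_{(p)}$ is normal in $G$ when $p$ is odd (index $2$ forces normality), while for $p=2$ one has $|G_{(2)}|=2|H'_{(2)}|$.

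Next I would apply Lemma~\ref{lemma:Darafsheh-product} to the subgroup $G'\subset R\times R$: it yields a characteristic abelian subgroup $A'\subset G'$ of order coprime to $p$ with $[G':A']\le |G'_{(p)}|^{3}=|H'_{(p)}|^{3}$. Being characteristic in $G'$, and $G'$ being normal in $G$, the subgroup $A'$ is normal in $G$. Its index in $G$ is $[G:A']=2\,[G':A']\le 2\,|H'_{(p)}|^{3}$. For $p$ odd we have $|H'_{(p)}|=|G_{(p)}|$ (since $2\nmid|G_{(p)}|$ forces $G_{(p)}\subset G'$), so $[G:A']\le 2\,|G_{(p)}|^{3}$, which is the desired bound with $J=2$. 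For $p=2$ we have $|G_{(2)}|=2|H'_{(2)}|$, hence $2\,|H'_{(2)}|^{3}=2\cdot\frac{1}{8}|G_{(2)}|^{3}=\frac14\,|G_{(2)}|^{3}\le 1\cdot|G_{(2)}|^{3}$, giving $J=1$. This is exactly the claimed value of $J$.

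The one subtlety — which I expect to be the main point requiring care rather than a genuine obstacle — is checking that $A'$ is genuinely normal in $G$ and that the index bookkeeping with $|G_{(p)}|$ versus $|G'_{(p)}|$ is done correctly, in particular the claim that the $p$-Sylow of $G'$ is normalized by the swap-and-conjugate element of $G\setminus G'$. That follows because that element, by hypothesis, maps $R\times R$ to itself and preserves $G'$; since $(R\times R)_{(p)}$ is characteristic in $R\times R$ (it is the set of elements of order dividing $p^{\,?}$ — more precisely, the unique $p$-Sylow, as $R_{(p)}\lhd R$), the element preserves $(R\times R)_{(p)}$, hence preserves $G'\cap(R\times R)_{(p)}=H'_{(p)}$. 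Therefore $H'_{(p)}\lhd G$, and everything above goes through. I would also note explicitly that for $p$ odd the inclusion $G_{(p)}\subset G'$ holds because $[G:G']=2$ is coprime to $|G_{(p)}|$, so a Sylow $p$-subgroup of $G$ is already a Sylow $p$-subgroup of $G'$; thus $|G_{(p)}|=|G'_{(p)}|=|H'_{(p)}|$.
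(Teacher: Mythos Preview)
Your proof is correct and follows essentially the same route as the paper: apply Lemma~\ref{lemma:Darafsheh-product} to $G'$ to obtain a characteristic abelian subgroup $A'$ of order coprime to $p$ and index at most $|G'_{(p)}|^3$, observe that $A'$ is normal in $G$ because $G'$ is normal (index $2$) and $A'$ is characteristic in $G'$, and then compare $|G_{(p)}|$ with $|G'_{(p)}|$ according to whether $p$ is odd or $p=2$. Your side discussion of $H'_{(p)}\lhd G$ is harmless but unnecessary, since normality of $A'$ follows directly from the characteristic-in-normal argument without any appeal to the Sylow subgroup of $G'$.
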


\begin{proof}
According to Lemma~\ref{lemma:Darafsheh-product}, the group
$G'$ contains a characteristic abelian subgroup~$A$ of order coprime to $p$ and
index at most $|G'_{(p)}|^3$.
Since the index of~$G'$ in~$G$ equals $2$, we see that $G'$ is normal in $G$.
Thus, $A$ is a normal subgroup of~$G$.
Observe that $|G_{(p)}|=|G'_{(p)}|$ if $p\ge 3$, and
$|G_{(p)}|=2\cdot |G'_{(p)}|$ if~\mbox{$p=2$}.
Therefore, the index of $A$ in $G$ is
$$
\frac{|G|}{|A|}=2\cdot \frac{|G'|}{|A|}\le
2\cdot |G'_{(p)}|^3\le J\cdot |G_{(p)}|^{3}.
$$
\end{proof}

\section{Extensions}
\label{section:extensions}

In this section we bound the indices of normal abelian subgroups in extensions of cyclic groups by groups of several certain types. These results will be used in
Section~\ref{section:P1}.

We start with extensions of cyclic groups by dihedral groups.
Recall that for a group~$H$ and its subgroup~$F$ we denote by~$\Aut(H;F)$ the group of
automorphisms of~$H$ which preserve~$F$.

\begin{lemma}\label{lemma:small-dihedral-group-extension}
Let $p\ge 3$ be a prime number, let $\bar{H}$ be a finite cyclic group of order coprime to~$p$,
and let~\mbox{$F=\mumu_2^2$}.
Let $H$ be a group which fits into an exact sequence
$$
1\to F\to H\to \bar{H}\to 1.
$$
Then~$H$ contains an abelian subgroup of order coprime to $p$
and index at most~$3$ preserved by~\mbox{$\Aut(H;F)$}.
\end{lemma}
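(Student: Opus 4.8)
The plan is to exploit the structure of $H$ as an extension of a cyclic group by $F = \mumu_2^2$, together with the fact that $\operatorname{Aut}(F) \cong \mathfrak{S}_3$, which is small. First I would note that $F = \mumu_2^2$ is characteristic in $H$: it consists precisely of the elements of $H$ whose order divides $2$, because the quotient $\bar H$ is cyclic of odd order (hence has no $2$-torsion), so every element of order $2$ in $H$ lies in $F$, and conversely. Consequently every automorphism in $\operatorname{Aut}(H;F)$ — indeed every automorphism of $H$ — already preserves $F$, but what we actually need is the induced action on the \emph{preimages} of subgroups of $F$.

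The key step is to produce the abelian subgroup. Consider the action of $H$ on $F$ by conjugation; this gives a homomorphism $H \to \operatorname{Aut}(F) \cong \mathfrak{S}_3$, whose kernel $K = C_H(F)$ contains $F$ (as $F$ is abelian) and has index dividing $6$ in the image's preimage. Since $\bar H$ is cyclic, the image of $H$ in $\mathfrak{S}_3$ is cyclic, hence has order $1$, $2$, or $3$. If the image has order $1$ or $2$, then $K = C_H(F)$ has index at most $2$ in $H$; if the image has order $3$, then $K$ has index $3$. In either case $C_H(F)$ is a normal subgroup of $H$ of index at most $3$ that centralizes $F$. Now $C_H(F)$ fits into an exact sequence $1 \to F \to C_H(F) \to \bar H' \to 1$ with $F$ central and $\bar H' \subset \bar H$ cyclic of odd order. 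A central extension of a cyclic group is abelian (lift a generator of $\bar H'$; it commutes with the central $F$ and with itself, hence generates an abelian complement-type subgroup, and together with $F$ generates all of $C_H(F)$). Therefore $C_H(F)$ is abelian. Writing $C_H(F) \cong F \times A'$ where $A'$ is the subgroup of elements of odd order, $A'$ has order coprime to $p$ (as $p \ge 3$ and $A'$ has odd order coprime to... one must be slightly careful: $p$ could be $3$ and $\bar H$ could have order divisible by $3$). So I would instead take $A'$ to be the Hall subgroup of $C_H(F)$ of order coprime to $p$: since $C_H(F)$ is abelian, this is characteristic in $C_H(F)$, and its index in $C_H(F)$ equals $|C_H(F)_{(p)}|$.

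The remaining point is the index bound and the invariance under $\operatorname{Aut}(H;F)$. Since $\bar H$ has order coprime to $p$ and $|F| = 4$ is coprime to $p$ (as $p \ge 3$), in fact $|H|$ is coprime to $p$, so $C_H(F)$ already has order coprime to $p$ and we may take $A' = C_H(F)$ directly, abelian of index at most $3$ in $H$; this sidesteps the Hall-subgroup subtlety entirely. For invariance: $C_H(F) = C_H(F)$ is defined purely in terms of $F$, so any automorphism $\varphi \in \operatorname{Aut}(H;F)$, which by hypothesis maps $F$ isomorphically to itself, sends $C_H(F)$ to $C_H(\varphi(F)) = C_H(F)$. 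Hence $C_H(F)$ is preserved by $\operatorname{Aut}(H;F)$, and it is the desired abelian subgroup of order coprime to $p$ and index at most $3$.

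I do not expect a genuine obstacle here; the only thing requiring care is the bookkeeping around the prime $p$, which dissolves once one observes that $p \ge 3$ forces $p \nmid |H|$ in this situation (both $|F|=4$ and $|\bar H|$ being coprime to $p$). The one case worth double-checking is when the conjugation image in $\mathfrak{S}_3$ has order $3$: then $H/F$ surjects onto $\mumu_3$, which is consistent with $\bar H$ cyclic, and the index-$3$ subgroup $C_H(F)$ is the right one; one should confirm that $3 \mid |\bar H|$ is allowed (it is, since $\bar H$ need only be coprime to $p$, and $p$ might equal $5$ or $7$). If instead $p = 3$ this order-$3$ image cannot occur because $|H|$ would then need a factor of $3$ while being coprime to $3$ — so in characteristic $3$ one even gets index at most $2$, though the stated bound of $3$ suffices uniformly.
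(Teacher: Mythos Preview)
Your proof is correct and follows essentially the same line as the paper: both use that the conjugation map $H\to\operatorname{Aut}(F)\cong\mathfrak{S}_3$ factors through the cyclic quotient $\bar H$ and hence has image of order at most $3$, yielding an abelian subgroup of index at most $3$ (you take $C_H(F)$ directly, while the paper builds $\langle F,\alpha^t\rangle$ for a lift $\alpha$ of a generator of $\bar H$ and suitable $t\in\{2,3\}$). One harmless slip: $\bar H$ need not have odd order (e.g.\ $p=5$ and $\bar H=\mumu_4$), so your claim that $F$ is characteristic in $H$ can fail --- but you never actually use this, since $\operatorname{Aut}(H;F)$ preserves $F$ by definition and therefore preserves $C_H(F)$.
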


\begin{proof}
Let $\alpha$ be an element of $H$ whose image in $\bar{H}$ generates~$\bar{H}$.
Since $|\bar{H}|$ is coprime to $p$, we may replace $\alpha$ by its appropriate power and assume that its order is coprime to $p$.
Conjugation by $\alpha$ gives an automorphism of $F$.
Observe that $\Aut(F)\cong\mathfrak{S}_3$.
Therefore, there exists $t\in\{2,3\}$ such that
the element $\alpha^t$ commutes with $F$.

Let $A\subset H$ be the subgroup generated by $F$ and $\alpha^t$. Then $A$ is abelian, and its order is coprime to $p$.
Let $\bar{A}$ be the image of~$A$ in $\bar{H}$. Then the index of $\bar{A}$
in $\bar{H}$ is at most $t$. On the other hand, $A$ is the preimage
of $\bar{A}$ in $H$, so that
$$
\frac{|H|}{|A|}=\frac{|\bar{H}|}{|\bar{A}|}\le t.
$$
Finally, it remains to notice that $A$ is preserved by $\Aut(H;F)$,
because $\bar{A}$ is preserved by the action of $\Aut(H;F)$
on~$\bar{H}$.
\end{proof}

\begin{lemma}\label{lemma:dihedral-group-extension}
Let $p$ be a prime number, let $\bar{H}$ be a finite cyclic group,
and let~\mbox{$F\cong\DD_{2n}$} be the dihedral group of order $2n$ with $n\ge 3$.
Suppose that $n$ and~$|\bar{H}|$ are coprime to~$p$.
Let $F'\subset F$ be the characteristic
cyclic subgroup of order~$n$.
Let $H$ be a group which fits into an exact sequence
$$
1\to F\to H\to \bar{H}\to 1.
$$
Suppose that for every element $\alpha\in H$
the element $\alpha^2$ commutes with~$F'$.
Then~$H$ contains an abelian subgroup of order coprime to $p$
and index at most~$4$
preserved by~\mbox{$\Aut(H;F)$}.
\end{lemma}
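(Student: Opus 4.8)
The plan is to take the promised abelian subgroup to be the centralizer $A=C_H(F')$ of the characteristic cyclic subgroup $F'\subset F$. Since $F'$ is characteristic in $F$ and $F$ is normal in $H$, the subgroup $F'$ is normal in $H$, hence $A=C_H(F')$ is normal in $H$ as the centralizer of a normal subgroup. Moreover, any automorphism in $\Aut(H;F)$ sends the characteristic subgroup $F'$ of $F$ to itself, and therefore sends $C_H(F')$ to itself; so $A$ is automatically preserved by $\Aut(H;F)$, and this part of the statement requires no further work.

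Next I would pin down the structure of $A$. By Lemma~\ref{lemma:dihedral-group-basic} the centralizer of $F'$ in $F$ equals $F'$, so $A\cap F=F'$, and consequently $A/F'\cong AF/F$ is isomorphic to a subgroup of the cyclic group $\bar{H}$; in particular $A/F'$ is cyclic of order coprime to $p$. By the very definition of $A$ one has $F'\subset Z(A)$, and since $A/F'$ is cyclic, lifting a generator of $A/F'$ exhibits $A$ as generated by $F'$ together with a single element that commutes with $F'$; hence $A$ is abelian. Its order is $n\cdot|A/F'|$, which is coprime to $p$ because both $n$ and $|\bar{H}|$ are coprime to $p$.

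Finally I would bound the index. The hypothesis says $\alpha^2\in C_H(F')=A$ for every $\alpha\in H$, so $H/A$ has exponent dividing $2$. Consider the normal subgroup $AF\supseteq A$ of $H$: the quotient $H/AF$ is a quotient of the cyclic group $H/F=\bar{H}$, hence cyclic, and at the same time it is a quotient of $H/A$, hence of exponent dividing $2$; a cyclic group of exponent at most $2$ has order at most $2$, so $[H:AF]\le 2$. On the other hand $AF/A\cong F/(F\cap A)=F/F'\cong\mumu_2$, so $[H:A]=[H:AF]\cdot[AF:A]\le 4$. Putting everything together, $A$ is an abelian subgroup of $H$ of order coprime to $p$ and index at most $4$ preserved by $\Aut(H;F)$, as required. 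I do not expect a genuine obstacle here: the only two steps that need a little care are the identification $A\cap F=F'$, which is exactly where the dihedral structure (via Lemma~\ref{lemma:dihedral-group-basic}) enters, and the observation that $H/AF$ is simultaneously a cyclic group and a group of exponent two.
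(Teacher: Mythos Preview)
Your proof is correct, and in fact streamlines the paper's argument. The paper constructs the abelian subgroup explicitly as $A=\langle F',\alpha^2\rangle$ for a chosen lift $\alpha$ of a generator of $\bar{H}$, and then, in order to verify $\Aut(H;F)$-invariance, has to identify this $A$ with the intersection $\tilde{A}\cap C_H(F')$ of the preimage of a characteristic subgroup of $\bar{H}$ with the centralizer of $F'$. You instead take $A=C_H(F')$ from the outset, which makes $\Aut(H;F)$-invariance automatic; abelianness follows from the standard ``central subgroup with cyclic quotient'' argument, and the index bound comes from the neat observation that $H/A$ has exponent dividing~$2$ while $H/AF$ is simultaneously cyclic (as a quotient of $\bar{H}$) and of exponent dividing~$2$, forcing $[H:AF]\le 2$. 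Your subgroup contains the paper's and may be strictly larger, so the index bound is at least as good. The paper's approach has the mild advantage of exhibiting generators of $A$ explicitly, but your route is shorter and avoids the element-chasing needed in the paper to prove $A=\tilde{A}\cap C_H(F')$.
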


\begin{proof}
Let $\alpha$ be an element of $H$ whose image in $\bar{H}$ generates~$\bar{H}$.
Since $|\bar{H}|$ is coprime to $p$, we may replace $\alpha$ by its appropriate power and assume that its order is coprime to $p$.
By assumption, the element $\alpha^2$ commutes with $F'$.

Let $A\subset H$ be the subgroup generated by $F'$ and $\alpha^2$. Then $A$ is abelian, and its order is coprime to $p$.
Let $\bar{A}$ be the image of~$A$ in $\bar{H}$.
Then $\bar{A}$ is a subgroup of index at most $2$ in $\bar{H}$.
Thus the index of~$A$ in~$H$ equals
$$
\frac{|H|}{|A|}=\frac{|\bar{H}|}{|\bar{A}|}\cdot \frac{|F|}{|F\cap A|}\le 2\cdot \frac{|F|}{|F'|}=4.
$$

It remains to show that $A$ is preserved by $\Aut(H;F)$.
Let $\tilde{A}\subset H$ denote the preimage of $\bar{A}$. Then
$\tilde{A}$ is preserved by $\Aut(H;F)$.
Furthermore, the centralizer~$C_H(F')$
is preserved by $\Aut(H;F)$, because $F'$ is a characteristic
subgroup of~$F$. One has
$$
A\subset \tilde{A}\cap C_H(F')
$$
by construction. On the other hand, if $\beta$ is an element
of $\tilde{A}\cap C_H(F')$, then, multiplying it by an appropriate power of $\alpha^2$, we may assume that $\beta\in F$. Thus, $\beta$ is contained in
$$
F\cap C_H(F')=C_F(F').
$$
The latter intersection coincides with $F'$ by Lemma~\ref{lemma:dihedral-group-basic}.
Hence we have~\mbox{$\beta\in A$}, which implies that
$A=\tilde{A}\cap C_H(F')$. Therefore, $A$ is preserved by $\Aut(H;F)$.
\end{proof}

We will need the following auxiliary result.

\begin{lemma}\label{lemma:extension-auxiliary}
Let $p$ be a prime number, let $\bar{H}$ be a finite cyclic group,
and let~$F$ be a finite group with trivial center. Let $H$ be a group which fits into an exact sequence
$$
1\to F\to H\to \bar{H}\to 1.
$$
Let $\alpha\in H$ be an element of order coprime to $p$, let $B$ be a subgroup
of $H$ generated by $F$ and $\alpha$, and let $\gamma$ be an element of $B$
which commutes with~$F$.
Then
\begin{itemize}
\item[(i)] the order of $\gamma$ is coprime to $p$;

\item[(ii)] the cyclic group $A$ generated by $\gamma$ is
preserved by $\Aut(H;F)$.
\end{itemize}
\end{lemma}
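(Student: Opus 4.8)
The plan is to analyze the subgroup $B = \langle F, \alpha\rangle$ directly, using the fact that $F$ is normal in $H$ (hence in $B$) with quotient cyclic, together with the hypothesis that $Z(F)$ is trivial. First I would observe that $B/F$ is cyclic, being a quotient of $\bar H$, and that $\alpha$ maps to a generator (or at least to an element whose powers generate $B/F$). The element $\gamma$ commutes with $F$, so it lies in $C_B(F)$; I want to show $C_B(F)$ is a cyclic group of order coprime to $p$ that is, moreover, canonically attached to the pair $(H,F)$.

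For part (i): since $\gamma \in C_B(F)$ and $C_B(F) \cap F = C_F(F) = Z(F) = 1$, the projection $B \to B/F$ restricts to an \emph{injection} on $C_B(F)$. Thus $C_B(F)$ embeds into the cyclic group $B/F$, so $C_B(F)$ is itself cyclic. Moreover $B/F$ is a quotient of $\bar H$ hence has order coprime to $p$ — wait, I must be careful: $\bar H$ need not have order coprime to $p$ under the bare hypotheses of this lemma. Here is where the hypothesis that $\alpha$ has order coprime to $p$ enters: $B/F$ is generated by the image of $\alpha$, which has order dividing the order of $\alpha$, hence coprime to $p$; so $B/F$ has order coprime to $p$. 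Since $C_B(F) \hookrightarrow B/F$, the order of $C_B(F)$ — and in particular the order of $\gamma$ — is coprime to $p$. This proves (i).

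For part (ii): I would show that the cyclic group $A = \langle\gamma\rangle$ is in fact equal to $C_B(F)$, or at least is a characteristic subgroup of something canonical. The cleanest route: let $\sigma \in \Aut(H;F)$. Then $\sigma$ preserves $F$, hence preserves $C_H(F)$ (the centralizer of a preserved subgroup is preserved). Now $C_H(F) \cap F = Z(F) = 1$, so $C_H(F)$ injects into $H/F = \bar H$, which is cyclic; therefore $C_H(F)$ is cyclic, and all its subgroups are characteristic \emph{in it}. In particular $A \subseteq C_H(F)$ is the unique subgroup of $C_H(F)$ of its order, so $\sigma(A) = A$ as soon as $\sigma(C_H(F)) = C_H(F)$, which we have just established. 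Hence $A$ is preserved by $\Aut(H;F)$, proving (ii).

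\textbf{The main subtlety} I expect is bookkeeping around which group ($B$ versus $H$) the relevant objects live in and making sure the coprimality passes through correctly — specifically, recognizing that one should work inside $C_H(F)$ (which is visibly $\Aut(H;F)$-stable) rather than $C_B(F)$, and that the containment $C_B(F) \subseteq C_H(F)$ together with $\gamma \in C_B(F)$ suffices. The role of $B$ and $\alpha$ in the statement is really just to supply, via part (i), the guarantee that $\gamma$ has $p'$-order; once $C_H(F)$ is known to be cyclic and $\Aut(H;F)$-invariant, part (ii) is immediate from uniqueness of subgroups of given order in a cyclic group. No serious computation is needed; the only thing to verify carefully is the chain $C_H(F) \cap F = C_F(F) = Z(F) = 1$, which is exactly the hypothesis that $F$ has trivial center.
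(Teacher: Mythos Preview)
Your proof is correct. Part~(i) is essentially identical to the paper's argument: both use that $C_B(F)\cap F=Z(F)=1$ so that $\gamma$ has the same order as its image in $B/F$, and that $B/F$ is generated by the image of $\alpha$ and hence has order coprime to~$p$.

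For part~(ii) you take a slightly more direct route than the paper. The paper expresses $A$ as the intersection $\tilde A\cap C_H(F)$, where $\tilde A$ is the preimage in $H$ of the image $\bar A\subset\bar H$, and then checks that both $\tilde A$ and $C_H(F)$ are $\Aut(H;F)$-invariant. You instead observe outright that $C_H(F)$ embeds into the cyclic group $\bar H$ and is therefore itself cyclic, so every subgroup of $C_H(F)$ (in particular $A$) is characteristic in it and hence preserved by $\Aut(H;F)$. This is a genuine simplification: it avoids introducing $\tilde A$ and the verification that $\tilde A\cap C_H(F)=A$, at the cost of nothing. Both arguments rest on the same key fact $C_H(F)\cap F=Z(F)=1$.
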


\begin{proof}
Let $m$ be the order of~$\alpha$. Then $m$ is coprime to $p$ by assumption.
Denote by $\bar{\alpha}$ and $\bar{\gamma}$ the images of $\alpha$ and $\gamma$ in $\bar{H}$, and let $\bar{m}$ and $\bar{n}$ be the orders of $\bar{\alpha}$
and $\bar{\gamma}$. Thus, $\bar{m}$ divides $m$, and so is coprime to $p$. Observe that $\bar{\gamma}$ is a power of $\bar{\alpha}$, so that $\bar{n}$ divides $\bar{m}$, which
means that $\bar{n}$ is also coprime to~$p$.

The element
$$
\theta=\gamma^{\bar{n}}
$$
is contained in $F$ and commutes with $F$. Since the center of $F$ is trivial, we conclude that $\theta=1$, and thus the order of $\gamma$ divides
$\bar{n}$. On the other hand, $\bar{n}$ divides the order of $\gamma$ by construction,
so that the order of $\gamma$ actually equals $\bar{n}$. This proves assertion~(i).

To show that the group $A$ is preserved by $\Aut(H;F)$, denote by $\bar{A}$ the image of $A$ in $\bar{H}$, and by $\tilde{A}$
the preimage of $\bar{A}$ in $H$. In other words, $\tilde{A}$ is the subgroup
of $H$ generated by $F$ and $\gamma$. Observe that the homomorphism $H\to \bar{H}$
is $\Aut(H;F)$-equivariant. The group $\bar{A}$ is preserved by the
action of $\Aut(H;F)$ on~$\bar{H}$, because every subgroup of a cyclic group
is characteristic. Hence $\tilde{A}$ is also preserved by $\Aut(H;F)$.
Furthermore, the centralizer $C_H(F)$ is preserved by $\Aut(H;F)$. One has
$$
A\subset \tilde{A}\cap C_H(F)
$$
by construction. If $\beta$ is an element
of $\tilde{A}\cap C_H(F)$, then, multiplying it by an appropriate power of $\gamma$, we may assume
that $\beta\in F$. Thus, $\beta$ is contained in the intersection~\mbox{$F\cap C_H(F)$}. The latter group is
just the center of $F$, which is trivial by assumption.
Therefore, one has $A=\tilde{A}\cap C_H(F)$, so that $A$
is preserved by~\mbox{$\Aut(H;F)$}.
This proves assertion~(ii).
\end{proof}

The next lemma will be used to analyze the extensions of cyclic groups
by the groups~$\mathfrak{A}_4$, $\mathfrak{S}_4$, and~$\mathfrak{A}_5$.

\begin{lemma}\label{lemma:An-Sn-extension}
Let $p$ be a prime number, let $\bar{H}$ be a finite cyclic group of order coprime to $p$,
and let $F$ be a finite group with trivial center. Suppose that every element of the group
$\Out(F)$ has order at most $d$. Let $H$ be a group which fits into an exact sequence
$$
1\to F\to H\to \bar{H}\to 1.
$$
Then $H$ contains a cyclic subgroup of order coprime to $p$ and index at most~\mbox{$d\cdot |F|$}
which is preserved by $\Aut(H;F)$.
\end{lemma}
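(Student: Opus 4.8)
The strategy is to reduce to the situation handled by Lemma~\ref{lemma:extension-auxiliary} by first passing to an element that induces an \emph{inner} automorphism of $F$. Start with an element $\alpha\in H$ whose image generates $\bar{H}$; since $|\bar{H}|$ is coprime to $p$, replace $\alpha$ by an appropriate power so that its order $m$ is coprime to $p$. Conjugation by $\alpha$ gives an automorphism $c_\alpha\in\Aut(F)$; its image in $\Out(F)$ has order at most $d$, so for some $t\le d$ the automorphism $c_{\alpha^t}=c_\alpha^t$ is inner, i.e. $c_{\alpha^t}$ agrees with conjugation by some $\varphi\in F$. Then $\gamma=\varphi^{-1}\alpha^t$ lies in the subgroup $B=\langle F,\alpha^t\rangle\subset\langle F,\alpha\rangle$ and commutes with $F$ (because for all $f\in F$ we have $\gamma f\gamma^{-1}=\varphi^{-1}(\alpha^t f\alpha^{-t})\varphi=\varphi^{-1}(\varphi f\varphi^{-1})\varphi=f$).

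Now invoke Lemma~\ref{lemma:extension-auxiliary} with the element $\alpha^t$ (which again has order coprime to $p$) in place of $\alpha$: it tells us that $\gamma$ has order coprime to $p$, and that the cyclic group $A=\langle\gamma\rangle$ is preserved by $\Aut(H;F)$. It remains to bound the index of $A$ in $H$. Let $\bar{A}$ be the image of $A$ in $\bar{H}$; since $\bar{\gamma}=\overline{\alpha^t}=\bar{\alpha}^t$ and $\bar{\alpha}$ generates $\bar{H}$, the subgroup $\bar{A}$ has index at most $t\le d$ in $\bar{H}$. On the other hand the kernel of $A\to\bar{A}$ is $A\cap F$, which is contained in $C_F(F)=1$ since $\gamma$ centralizes $F$; hence $A\to\bar{A}$ is injective and
$$
\frac{|H|}{|A|}=\frac{|F|\cdot|\bar{H}|}{|A|}=|F|\cdot\frac{|\bar{H}|}{|\bar{A}|}\le d\cdot|F|.
$$
This gives the desired cyclic subgroup of order coprime to $p$ and index at most $d\cdot|F|$ preserved by $\Aut(H;F)$.

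The main thing to be careful about is the bookkeeping on orders and the coprimality to $p$: one must make sure that after passing from $\alpha$ to a power of order coprime to $p$, and then to the power $\alpha^t$, the resulting element still generates enough of $\bar{H}$ (it does, because $t\le d$ is fixed and we only lose a bounded factor), and that $\gamma$, defined via the element $\varphi\in F$, genuinely lies in $B$ and centralizes $F$ rather than merely inducing an inner automorphism — this is where the triviality of the center of $F$ is used, both to identify $c_{\alpha^t}$ with conjugation by a \emph{unique} element up to nothing (so $\gamma$ is well defined as $\varphi^{-1}\alpha^t$) and, through Lemma~\ref{lemma:extension-auxiliary}, to pin down the order of $\gamma$ and the $\Aut(H;F)$-invariance of $A$. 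No serious obstacle is expected; the proof is essentially an assembly of Lemma~\ref{lemma:extension-auxiliary} with the elementary observation that $\Out(F)$ has bounded exponent.
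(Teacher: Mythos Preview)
Your proposal is correct and follows essentially the same route as the paper's proof: choose a preimage $\alpha$ of a generator of $\bar{H}$ with order coprime to $p$, pass to a power $\alpha^t$ (the paper writes $\alpha'=\alpha^r$) inducing an inner automorphism of $F$, correct by the corresponding element of $F$ to obtain a centralizing element $\gamma$ (the paper's $\alpha''$), and then invoke Lemma~\ref{lemma:extension-auxiliary} for both the coprimality of $|\gamma|$ to $p$ and the $\Aut(H;F)$-invariance of $\langle\gamma\rangle$. Your index computation is slightly more explicit in noting $A\cap F\subset Z(F)=1$, but this is the same bound the paper records as $|F|/|F\cap A|\le |F|$.
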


\begin{proof}
Let $\alpha$ be an element of $H$ whose image in $\bar{H}$ generates the cyclic group~$\bar{H}$.
Since the order of $\bar{H}$ is coprime to $p$, the image of the element $\alpha^p$
generates~$\bar{H}$ as well. Thus, we may replace $\alpha$ by its appropriate power and assume that
its order is coprime to $p$.

Conjugation by $\alpha$ induces an automorphism of the normal subgroup~\mbox{$F\subset H$}.
By assumption, there exists a positive integer $r\le d$ such that
conjugation by~\mbox{$\alpha'=\alpha^r$} induces an inner automorphism on $F$, i.e. coincides with
conjugation by some element $f\in F$. Set
$$
\alpha''=f^{-1}\alpha'.
$$
Then for every $g\in F$ one has
$$
\alpha''g(\alpha'')^{-1}=f^{-1}\alpha'g(\alpha')^{-1}f=f^{-1}fgf^{-1}f=g.
$$
Thus, conjugation by $\alpha''$ gives a trivial automorphism of $F$. In other words,
the element~$\alpha''$ commutes with $F$. The order of~$\alpha''$ is coprime to $p$
by Lemma~\ref{lemma:extension-auxiliary}(i).

Let $A$ be the cyclic group generated by $\alpha''$.
Then the order of $A$ is coprime to $p$.
Let $\bar{A}$ be the image of~$A$ in
$\bar{H}$. Then
$\bar{A}$ is a subgroup of index at most $d$ in $\bar{H}$.
Thus the index of $A$ in $H$ equals
$$
\frac{|H|}{|A|}=\frac{|\bar{H}|}{|\bar{A}|}\cdot \frac{|F|}{|F\cap A|}\le d\cdot |F|.
$$
Finally, the group $A$ is preserved by $\Aut(H;F)$
according to Lemma~\ref{lemma:extension-auxiliary}(ii).
\end{proof}

\begin{corollary}\label{corollary:An-Sn-extension}
Let $p$ be a prime number, let $\bar{H}$ be a finite cyclic group of order coprime to $p$,
and let $F$ be one of the groups
$\mathfrak{A}_4$, $\mathfrak{S}_4$, or $\mathfrak{A}_5$.
Let $H$ be a group which fits into an exact sequence
$$
1\to F\to H\to \bar{H}\to 1.
$$
Then $H$ contains a cyclic subgroup of order coprime to $p$ and index at most~\mbox{$J\cdot |F_{(p)}|^3$}
which is preserved by $\Aut(H;F)$, where
$$
J=\begin{cases}
120, &\text{if\ } p\ge 7,\\
48, &\text{if\ } p=5,\\
\frac{40}{9}, &\text{if\ } p=3,\\
2, &\text{if\ } p=2.
\end{cases}
$$
\end{corollary}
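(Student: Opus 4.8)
The plan is to obtain the corollary as a direct specialization of Lemma~\ref{lemma:An-Sn-extension}, so essentially all that is needed is to verify the two hypotheses of that lemma for each $F\in\{\mathfrak{A}_4,\mathfrak{S}_4,\mathfrak{A}_5\}$ and then run a short numerical check. First I would record that each of these three groups has trivial center. The cleanest way is to invoke Theorem~\ref{theorem:PSL-PGL-basic}(ii),(iii), which gives $\mathfrak{A}_4\cong\PSL_2(\mathbf{F}_3)$, $\mathfrak{A}_5\cong\PSL_2(\mathbf{F}_5)$ and $\mathfrak{S}_4\cong\PGL_2(\mathbf{F}_3)$, together with Theorem~\ref{theorem:PSL-PGL-basic}(iv), by which all of these groups have trivial center. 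Next I would bound the exponent of the outer automorphism group: Corollary~\ref{corollary:Out-PGL}(ii) gives $\Out(\mathfrak{A}_4)\cong\mumu_2$, $\Out(\mathfrak{S}_4)=1$ and $\Out(\mathfrak{A}_5)\cong\mumu_2$, so in every case each element of $\Out(F)$ has order at most $2$, and Lemma~\ref{lemma:An-Sn-extension} applies with $d=2$.

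Applying Lemma~\ref{lemma:An-Sn-extension} then yields a cyclic subgroup $A\subset H$ of order coprime to $p$, preserved by $\Aut(H;F)$, of index at most $2\cdot|F|$ in $H$. Since $|\mathfrak{A}_4|=12$, $|\mathfrak{S}_4|=24$ and $|\mathfrak{A}_5|=60$, this index is bounded by $24$, $48$ and $120$ respectively. The remaining step is to check, for each prime $p$, the inequality $2\cdot|F|\le J\cdot|F_{(p)}|^3$ with $J$ as in the statement. This is a finite case analysis: for $p$ not dividing $|F|$ we have $|F_{(p)}|=1$; otherwise, reading off from $|\mathfrak{A}_4|=2^2\cdot 3$, $|\mathfrak{S}_4|=2^3\cdot 3$, $|\mathfrak{A}_5|=2^2\cdot 3\cdot 5$, the order $|F_{(p)}|$ equals, for $F=\mathfrak{A}_4,\mathfrak{S}_4,\mathfrak{A}_5$, respectively $4,8,4$ when $p=2$, and $3,3,3$ when $p=3$, and $1,1,5$ when $p=5$. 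Going through the four ranges of $p$ one sees that the inequality holds in all cases, the extremal ones being $F=\mathfrak{A}_5$ for $p\ge 7$ (giving $120\le 120$), $F=\mathfrak{S}_4$ for $p=5$ (giving $48\le 48$), $F=\mathfrak{A}_5$ for $p=3$ (giving $120\le\frac{40}{9}\cdot 27=120$), and $F=\mathfrak{A}_5$ for $p=2$ (giving $120\le 2\cdot 4^3=128$).

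I do not expect any genuine difficulty: the corollary is a packaging of Lemma~\ref{lemma:An-Sn-extension}. The one point to be careful about is to use the uniform value $d=2$ in Lemma~\ref{lemma:An-Sn-extension} for all three groups — in particular also for $\mathfrak{S}_4$, where the sharper $d=1$ would be available — since it is precisely the bound $2\cdot|\mathfrak{S}_4|=48$ that dictates the constant $J=48$ in the case $p=5$; with $d=1$ for $\mathfrak{S}_4$ one would instead land on a smaller constant there, which is consistent but would not match the stated value. The only other thing to double-check is the elementary tabulation of the Sylow subgroup orders above, which is immediate.
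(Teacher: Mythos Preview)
Your proposal is correct and follows essentially the same route as the paper: verify trivial center and $\Out(F)$ of exponent at most $2$ via Corollary~\ref{corollary:Out-PGL}(ii), apply Lemma~\ref{lemma:An-Sn-extension} with $d=2$ to get index at most $2|F|$, then check $2|F|\le J\cdot|F_{(p)}|^3$. The paper compresses the last numerical check into a single reference to Examples~\ref{example:A4}, \ref{example:S4}, \ref{example:A5}, whereas you spell out the Sylow orders and the extremal cases, but the argument is the same.
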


\begin{proof}
According to Corollary~\ref{corollary:Out-PGL}(ii)
every element of the outer automorphism group $\Out(F)$ has order at most~$2$.
Also, the center of $F$ is trivial. Therefore, by Lemma~\ref{lemma:An-Sn-extension}
the group $H$ contains a cyclic subgroup of order coprime to $p$ and index at most~\mbox{$2\cdot |F|$} which is preserved by $\Aut(H;F)$.
On the other hand, we have $2\cdot |F|\le J$, cf. Examples~\ref{example:A4},
\ref{example:S4}, and~\ref{example:A5}.
\end{proof}

The following ugly lemma will be used to analyze the extensions of cyclic groups
by the groups~\mbox{$\PSL_2(\mathbf{F}_{p^k})$}
and $\PGL_2(\mathbf{F}_{p^k})$.

\begin{lemma}\label{lemma:PSL-group-extension}
Let $p$ be a prime number, and let $F$ be a finite group with trivial
center. Suppose that
there exist an element $\delta\in F$ of order coprime to~$p$ and automorphisms $\sigma, \tau\in\Aut(F)$ such that the following properties hold:
\begin{itemize}
\item
every element of $\Aut(F)$ can be decomposed
as $f\sigma^s\tau^t$ for some element~\mbox{$f\in F$} acting by inner automorphism on~$F$ and some integers~$s$ and~$t$;

\item the automorphism $\sigma$ fixes $\delta$;

\item  the automorphism $\tau$ preserves the cyclic group generated by $\delta$;

\item one has $\tau\sigma\tau^{-1}=\sigma^p$;

\item if for some integer $r$ the automorphism $\tau^r$ fixes $\delta$, then
$\tau^r$ is the trivial automorphism of $F$;

\item if $p=2$, then $\sigma$ is an inner automorphism of $F$,
and if $p\ge 3$, then $\sigma^2$ is an inner automorphism of $F$.
\end{itemize}
Let $\bar{H}$ be a finite cyclic group of order coprime to $p$.
Let~$H$ be a group which fits into an exact sequence
$$
1\to F\to H\to \bar{H}\to 1.
$$
Suppose that for every element
$\lambda\in F$ of order coprime to $p$, and every element~\mbox{$\alpha\in H$}
which normalizes the cyclic group generated by $\lambda$, the element $\alpha^2$
commutes with~$\lambda$.
Then~$H$ contains a cyclic subgroup of order coprime to $p$ and index at
most~$2\cdot |F|$ which is preserved by $\Aut(H;F)$.
\end{lemma}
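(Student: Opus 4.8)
\noindent\emph{Proof strategy.}
The plan is to reduce the whole statement to the construction of a single element $\gamma\in H$ of order coprime to $p$ which centralizes all of $F$ and whose image $\bar\gamma$ in $\bar H$ generates a subgroup of index at most~$2$. Once such a $\gamma$ is available, I would invoke Lemma~\ref{lemma:extension-auxiliary} with its ``$\alpha$'' taken to be $\gamma$ and its ``$B$'' taken to be $\langle F,\gamma\rangle$: this yields that the cyclic group $A=\langle\gamma\rangle$ has order coprime to $p$ and is preserved by $\Aut(H;F)$. For the index bound I would note that, since $\gamma$ centralizes $F$ and $F$ has trivial center, every power of $\gamma$ lying in $F$ is trivial, so $A\cap F=\{1\}$ and $A$ maps isomorphically onto $\langle\bar\gamma\rangle$; hence $[H:A]=|F|\cdot[\bar H:\langle\bar\gamma\rangle]\le 2|F|$, which is exactly what is claimed.

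To build $\gamma$, I would first normalize the choice of a generator modulo $F$. Pick $\alpha\in H$ whose image generates $\bar H$; since $|\bar H|$ is coprime to $p$, replacing $\alpha$ by its prime-to-$p$ part keeps its image a generator, so one may assume the order of $\alpha$ is coprime to $p$. Conjugation by $\alpha$ is an automorphism of $F$, hence by the first bulleted hypothesis of the form $\iota_f\sigma^s\tau^t$ with $f\in F$; multiplying $\alpha$ by a suitable element of $F$ arranges that conjugation by it equals $\sigma^s\tau^t$ exactly, and then passing once more to the prime-to-$p$ part of this new element preserves all three conditions at once: the order is coprime to $p$, the image still generates $\bar H$, and conjugation still lies in $\langle\sigma,\tau\rangle$. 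The last point is where the relation $\tau\sigma\tau^{-1}=\sigma^p$ enters: it forces $(\sigma^s\tau^t)^n=\sigma^{s'}\tau^{nt}$ for a suitable exponent $s'$, so a power of an element of that form again has that form.

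The heart of the argument will be the claim that conjugation by $\alpha^2$ is inner on $F$. Because $\sigma$ fixes $\delta$ and $\tau$ normalizes $\langle\delta\rangle$, conjugation by $\alpha=\sigma^s\tau^t$ sends $\delta$ into $\langle\delta\rangle$, so $\alpha$ normalizes $\langle\delta\rangle$; by the last hypothesis of the lemma $\alpha^2$ then commutes with $\delta$. Using $\tau\sigma\tau^{-1}=\sigma^p$ I would compute $(\sigma^s\tau^t)^2=\sigma^{s(1+p^t)}\tau^{2t}$ and apply it to $\delta$; since $\sigma$ fixes every power of $\delta$, conjugation by $\alpha^2$ sends $\delta$ to $\tau^{2t}(\delta)$, which must equal $\delta$, so $\tau^{2t}=1$ by the hypothesis that no nontrivial power of $\tau$ fixes $\delta$. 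Hence conjugation by $\alpha^2$ is $\sigma^{s(1+p^t)}$, and this is inner: for $p\ge3$ the exponent $1+p^t$ is even and $\sigma^2$ is inner, while for $p=2$ already $\sigma$ is inner. Writing this inner automorphism as conjugation by $h\in F$ and putting $\gamma=\alpha^2h^{-1}$ produces an element of $\langle F,\alpha^2\rangle$ centralizing $F$ with $\bar\gamma=\bar\alpha^2$ of index at most $2$ in $\bar H$; this is the $\gamma$ needed in the first paragraph.

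The step I expect to be the main obstacle is the normalization in the second paragraph: one has to be careful that passing to prime-to-$p$ parts does not spoil either ``the image generates $\bar H$'' or ``conjugation lies in $\langle\sigma,\tau\rangle$'', and the latter relies on the commutation relation and not on any abelianness of $\langle\sigma,\tau\rangle$. The only other point requiring care is the parity observation $2\mid 1+p^t$ in odd characteristic, which is precisely what lets one use the hypothesis on $\sigma^2$ rather than on $\sigma$; the rest is a direct application of Lemma~\ref{lemma:extension-auxiliary} and of the listed properties of $\sigma$, $\tau$, and $\delta$.
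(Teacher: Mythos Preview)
Your approach is essentially identical to the paper's: choose a lift $\alpha$ of a generator of $\bar H$ with order coprime to $p$, adjust by an element of $F$ so that conjugation becomes $\sigma^s\tau^t$, square, use the hypothesis on $\delta$ to kill $\tau^{2t}$, observe that $\sigma^{s(1+p^t)}$ is inner, and correct by an element of $F$ to obtain a centralizer of $F$; then invoke Lemma~\ref{lemma:extension-auxiliary}. Two small points deserve comment.

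First, your second ``pass to the prime-to-$p$ part'' is unnecessary. The paper simply works with $\alpha'=f^{-1}\alpha$ (whose order may well be divisible by $p$) and never needs $\alpha'$ itself to have order coprime to $p$; only the \emph{original} $\alpha$ needs that, because that is the element that plays the role of ``$\alpha$'' in Lemma~\ref{lemma:extension-auxiliary}. Your extra normalization is harmless but superfluous, and the step you single out as ``the main obstacle'' can in fact be skipped entirely.

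Second, your invocation of Lemma~\ref{lemma:extension-auxiliary} is mis-specified. Taking the lemma's ``$\alpha$'' equal to your $\gamma$ is circular: the hypothesis of that lemma is precisely that ``$\alpha$'' has order coprime to $p$, which is part of what you are trying to extract. The correct instantiation (and what the paper does) is to take the lemma's ``$\alpha$'' to be your prime-to-$p$ lift of a generator of $\bar H$ and the lemma's ``$\gamma$'' to be your $\gamma$; since $\gamma\in\langle F,\alpha\rangle$, both conclusions then follow. In fact you have already given the direct argument for part~(i) yourself: your observation that $A\cap F=\{1\}$ (because $F$ has trivial center) shows $|A|$ divides $|\bar H|$ and hence is coprime to $p$; after that, part~(ii) applies with ``$\alpha$''$=\gamma$ as you wrote. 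Either fix works.
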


\begin{proof}
Let $\alpha$ be an element of $H$ whose image in $\bar{H}$ generates~$\bar{H}$.
Since $|\bar{H}|$ is coprime to $p$, we may replace $\alpha$ by its appropriate power and assume that its order is coprime to $p$.

Conjugation by $\alpha$ induces an automorphism of the normal subgroup $F$.
By assumption on the structure of the automorphism group $\Aut(F)$,
this automorphism is a composition of conjugation by some $f\in F$ with an automorphism
of the form $\sigma^s\tau^t$.
Thus, conjugation by the element
$$
\alpha'=f^{-1}\alpha
$$
induces the automorphism $\sigma^s\tau^t$ of $F$.
In particular, conjugation by $\alpha'$ preserves the cyclic subgroup
generated by $\delta$. Hence, by the last assumption of the lemma the element
$$
\alpha''=(\alpha')^2
$$
commutes with $\delta$. Observe that conjugation by $\alpha''$
induces the automorphism
$$
\varphi=\sigma^s\tau^t\sigma^s\tau^t=\sigma^{s(1+p^t)}\tau^{2t}
$$
of $F$. Our assumptions imply that
$$
\delta=\varphi(\delta)=\sigma^{s(1+p^t)}\tau^{2t}(\delta)=\tau^{2t}(\delta).
$$
Hence $\tau^{2t}$ is the trivial automorphism of $F$, and
$\varphi=\sigma^{s(1+p^t)}$. Note that if $p$ is odd, then
$s(1+p^t)$ is even. Thus, for every $p$ the automorphism $\varphi$ is
an inner automorphism of $F$, i.e. a conjugation by some element $g\in F$.
Set
$$
\alpha'''=g^{-1}\alpha''.
$$
Then $\alpha'''$ commutes with $F$.
The order of $\alpha'''$ is coprime to $p$ by Lemma~\ref{lemma:extension-auxiliary}(i).

Let $A$ be the cyclic group generated by $\alpha'''$.
Then the order of $A$ is coprime to $p$.
Let $\bar{A}$ be the image of~$A$ in
$\bar{H}$. Then
$\bar{A}$ is a subgroup of index at most $2$ in $\bar{H}$.
Thus the index of $A$ in $H$ equals
$$
\frac{|H|}{|A|}=\frac{|\bar{H}|}{|\bar{A}|}\cdot \frac{|F|}{|F\cap A|}\le 2\cdot |F|.
$$
Finally, the group $A$ is preserved by $\Aut(H;F)$
according to Lemma~\ref{lemma:extension-auxiliary}(ii).
\end{proof}

\begin{corollary}\label{corollary:PSL-group-extension}
Let $p$ be a prime number, and let $F$ be one of the groups~\mbox{$\PSL_2(\mathbf{F}_{p^k})$}
or $\PGL_2(\mathbf{F}_{p^k})$, where $k$ is a positive integer.
Let $\bar{H}$ be a finite cyclic group of order coprime to $p$.
Let~$H$ be a group which fits into an exact sequence
$$
1\to F\to H\to \bar{H}\to 1.
$$
Suppose that for every element
$\lambda\in F$ of order coprime to $p$, and every element~\mbox{$\alpha\in H$}
which normalizes the cyclic group generated by $\lambda$, the element $\alpha^2$
commutes with~$\lambda$.
Then~$H$ contains a cyclic subgroup of order coprime to $p$ and index at
most~$2\cdot |F_{(p)}|^3$ which is preserved by $\Aut(H;F)$.
\end{corollary}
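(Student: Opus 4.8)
The plan is to deduce the corollary directly from Lemma~\ref{lemma:PSL-group-extension} applied to the group $F$, so that the real content is the verification of the six hypotheses of that lemma. Note first that $F$ has trivial center by Theorem~\ref{theorem:PSL-PGL-basic}(iv), and that the hypothesis of the present corollary about squares of elements normalizing cyclic subgroups generated by $p$-regular elements is verbatim the last hypothesis of Lemma~\ref{lemma:PSL-group-extension}; thus these pass through for free.

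First I would fix a generator $a$ of $\mathbf{F}_{p^k}^{*}$ and take: $\delta$ to be the image in $F\subseteq\PGL_2(\mathbf{F}_{p^k})$ of the matrix $\mathrm{diag}(a,a^{-1})$, which has determinant $1$ and so lies in $\PSL_2(\mathbf{F}_{p^k})$, hence in either choice of $F$; $\sigma$ to be conjugation by $\mathrm{diag}(a,1)$; and $\tau$ to be the Frobenius automorphism $x\mapsto x^{p}$ of $F$. The order of $\delta$ divides $p^{k}-1$, so it is coprime to $p$. By Proposition~\ref{proposition:Aut-PSL-PGL} one has $\Aut(F)\cong\PGL_2(\mathbf{F}_{p^k})\rtimes T$ with $T=\langle\tau\rangle\cong\mumu_k$ and with $\PGL_2(\mathbf{F}_{p^k})$ acting by conjugation; since $\mathrm{diag}(a,1)\in\PGL_2(\mathbf{F}_{p^k})$, every automorphism of $F$ decomposes as $f\sigma^{s}\tau^{t}$ with $f$ inner, which is the first bullet. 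Because diagonal matrices commute, $\sigma$ fixes $\delta$; because $\tau(\mathrm{diag}(a,a^{-1}))=\mathrm{diag}(a^{p},a^{-p})$ represents $\delta^{p}$, the automorphism $\tau$ preserves $\langle\delta\rangle$. The relation $\tau\sigma\tau^{-1}=\sigma^{p}$ holds since $\tau$ carries conjugation by $g$ to conjugation by $\tau(g)$ and $\tau(\mathrm{diag}(a,1))=\mathrm{diag}(a^{p},1)=\mathrm{diag}(a,1)^{p}$. Finally, $\sigma^{2}$ is conjugation by $\mathrm{diag}(a^{2},1)$, which represents the same element of $\PGL_2(\mathbf{F}_{p^k})$ as $\mathrm{diag}(a,a^{-1})\in\PSL_2(\mathbf{F}_{p^k})$ and is therefore an inner automorphism of $F$ in all cases, while if $p=2$ then $\PGL_2(\mathbf{F}_{p^k})=\PSL_2(\mathbf{F}_{p^k})$ and $\sigma$ itself is inner; this gives the last bullet.

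The one remaining hypothesis is the rigidity of $\tau$ at $\delta$: if $\tau^{r}$ fixes $\delta$ then $\tau^{r}$ is trivial. Since $\tau$ has order $k$, one may assume $0\le r\le k-1$; then $\tau^{r}(\delta)=\delta$ forces $\mathrm{diag}(a^{p^{r}},a^{-p^{r}})$ to be a scalar multiple of $\mathrm{diag}(a,a^{-1})$, i.e. $(p^{k}-1)\mid 2(p^{r}-1)$, and the elementary estimates $2(p^{r}-1)<p^{k}-1$ for $0<r<k$ when $p$ is odd, and $2^{k}-1\mid 2^{r}-1\Leftrightarrow k\mid r$ when $p=2$, force $r=0$. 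This little arithmetic point, together with the bookkeeping of the four combinations of $F\in\{\PSL_2(\mathbf{F}_{p^k}),\PGL_2(\mathbf{F}_{p^k})\}$ and the parity of $p$, is the only mildly delicate step; everything else is a direct matrix computation.

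With all hypotheses verified, Lemma~\ref{lemma:PSL-group-extension} produces a cyclic subgroup of $H$ of order coprime to $p$ and index at most $2\cdot|F|$, preserved by $\Aut(H;F)$. Since $|F|<p^{3k}=|F_{(p)}|^{3}$ by Examples~\ref{example:PGL} and~\ref{example:PSL}, this index is at most $2\cdot|F_{(p)}|^{3}$, as required.
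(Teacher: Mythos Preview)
Your proof is correct and follows essentially the same approach as the paper's: verify the hypotheses of Lemma~\ref{lemma:PSL-group-extension} with the same choices of $\delta$, $\sigma$, and $\tau$, then invoke the order bounds from Examples~\ref{example:PGL} and~\ref{example:PSL}. The only cosmetic difference is your more explicit arithmetic handling of the rigidity condition (that $\tau^{r}(\delta)=\delta$ forces $\tau^{r}$ to be trivial), where the paper argues more tersely that $\tau^{r}$ must then fix the generator $\zeta$ of $\mathbf{F}_{p^k}^{*}$.
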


\begin{proof}
According to Theorem~\ref{theorem:PSL-PGL-basic}(iv),
the center of~$F$ is trivial.
By Proposition~\ref{proposition:Aut-PSL-PGL}
there is an isomorphism
$$
\Aut(F)\cong
\PGL_2(\mathbf{F}_{p^k})\rtimes T,
$$
where $\PGL_2(\mathbf{F}_{p^k})$ acts on $F$ by conjugations, and the
group~\mbox{$T\cong\mumu_k$} is generated by the Frobenius
automorphism $\tau$ of the field~$\mathbf{F}_{p^k}$.

Let $\sigma\in\PGL_2(\mathbf{F}_{p^k})$ be
the diagonal matrix with entries
$\zeta$ and $1$, where~$\zeta$ is a generator of the
multiplicative group of the field~$\mathbf{F}_{p^k}$, and denote
the inner automorphism of $F$ induced by this matrix also by $\sigma$.
Since the group~$\PGL_2(\mathbf{F}_{p^k})$ is generated by $\PSL_2(\mathbf{F}_{p^k})$
and $\sigma$, we see that every element of~$\Aut(F)$ can be decomposed
as $f\sigma^s\tau^t$ for some element~\mbox{$f\in F$} acting by inner automorphism on~$F$ and some integers $s$ and $t$.
One has
$$
\tau\sigma\tau^{-1}=\sigma^p.
$$
If $F=\PGL_2(\mathbf{F}_{p^k})$, then $\sigma$ is an inner automorphism of $F$.
In particular, this holds for $p=2$, because
$\PGL_2(\mathbf{F}_{2^k})=\PSL_2(\mathbf{F}_{2^k})$.
If $p\ge 3$ and $F=\PSL_2(\mathbf{F}_{p^k})$, then $F$ has index~$2$ in $\PGL_2(\mathbf{F}_{p^k})$, and so $\sigma^2$ is an inner automorphism of $F$.

Let $\delta\in\PSL_2(\mathbf{F}_{p^k})$ be
the diagonal matrix with entries
$\zeta$ and $\zeta^{-1}$.
Then the automorphism $\sigma$ fixes $\delta$, and
the automorphism $\tau$ preserves the cyclic group generated by $\delta$.
Furthermore, if for some integer $r$ the automorphism $\tau^r$ fixes~$\delta$, then
$\tau^r$ fixes $\zeta$, which means that $\tau^r$ is the trivial automorphism of
the field~$\mathbf{F}_{p^k}$ and the group $F$.

We see that the assumptions of Lemma~\ref{lemma:PSL-group-extension}
are satisfied.
Therefore, by Lemma~\ref{lemma:PSL-group-extension}
the group $H$ contains a cyclic subgroup of order coprime to $p$ and index at most~\mbox{$2\cdot |F|$} which is preserved by $\Aut(H;F)$.
On the other hand, we have
$$
|F|<|F_{(p)}|^3,
$$
see Examples~\ref{example:PSL}
and~\ref{example:PGL}.
\end{proof}

Finally, we consider extensions of cyclic groups by the groups
described in Corollary~\ref{corollary:Darafsheh}.

\begin{lemma}\label{lemma:Darafsheh-group-extension}
Let $p$ be a prime number, and let $F\cong\mumu_p^m\rtimes L$,
where~\mbox{$L\cong \mumu_n$} with $n$ coprime to $p$.
Let $\bar{H}$ be a finite cyclic group of order coprime to $p$.
Let~$H$ be a group which fits into an exact sequence
$$
1\to F\to H\to \bar{H}\to 1.
$$
Suppose that for every element
$\lambda\in L$ and every element $\alpha\in H$
which normalizes the cyclic group generated by $\lambda$, the element $\alpha^2$
commutes with~$\lambda$.
Then~$H$ contains an abelian subgroup of order coprime to $p$ and index at
most~$2\cdot |F_{(p)}|^3$ which is preserved by $\Aut(H;F)$.
\end{lemma}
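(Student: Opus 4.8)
The plan is to take the desired abelian subgroup to be the Hall $p'$-subgroup of the centralizer $D:=C_H(C)$, where $C=C_F(F_{(p)})$, and to control its index by combining Lemma~\ref{lemma:Darafsheh} (applied to the normal subgroup $F_{(p)}$ of $H$) with the bound $[L:L']\le p^m$ coming from Corollary~\ref{corollary:Darafsheh}(iii).

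First I would record the structure of $D$. By Corollary~\ref{corollary:Darafsheh}(i),(ii) we have $C=F_{(p)}\times L'$ with $L'$ cyclic of order coprime to $p$, and $C_F(C)=C$; moreover $C$ is characteristic in $F$ (since $F_{(p)}$ is), hence preserved by $\Aut(H;F)$, and therefore so is $D$. Now $D\cap F=C_F(C)=C$, so $D/C$ embeds into $\bar{H}$ and has order coprime to $p$; thus the $p$-part of $|D|$ equals $|F_{(p)}|=p^m$. Since $F_{(p)}\subseteq C$ is centralized by $D$, it is a central normal Sylow $p$-subgroup of $D$, so by the Schur--Zassenhaus theorem $D=F_{(p)}\times Q$, where $Q$ is the set of elements of $D$ of order coprime to $p$; this is a subgroup, it is characteristic in $D$, and hence it is preserved by $\Aut(H;F)$. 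One checks that $Q\cap F=L'$: indeed $Q\cap F$ is a $p'$-subgroup of $C=F_{(p)}\times L'$, so $Q\cap F\subseteq L'$, while $L'\subseteq C\subseteq D$ has order coprime to $p$, so $L'\subseteq Q$. Since $Q\subseteq D\subseteq C_H(L')$, the subgroup $L'$ is central in $Q$, and $Q/L'$ embeds into the cyclic group $\bar{H}$; hence $Q$ is generated by $L'$ together with one more element and is therefore abelian (of order coprime to $p$).

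It remains to bound $[H:Q]$. Pick $\alpha\in H$ mapping to a generator of $\bar{H}$; since $|\bar{H}|$ is coprime to $p$ we may replace $\alpha$ by a power and assume its order is coprime to $p$. As $L'$ is characteristic in $F$, it is normal in $H$, so $\alpha$ normalizes $L'$ and hence the cyclic subgroup generated by every $\lambda\in L'$; by the hypothesis of the lemma this forces $\alpha^2\in C_H(L')$. Applying Lemma~\ref{lemma:Darafsheh} to $H$, the normal subgroup $F_{(p)}$ and the element $\alpha$ (for $m=0$ simply take $t=1$), we obtain an integer $t$ with $1\le t\le p^m$ such that $\alpha^t\in C_H(F_{(p)})$. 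Then $\alpha^{2t}\in C_H(F_{(p)})\cap C_H(L')=C_H(C)=D$, and as its order is coprime to $p$ we get $\alpha^{2t}\in Q$. Hence $Q$ contains the subgroup $B$ generated by $L'$ and $\alpha^{2t}$; since $B\cap F\supseteq L'$ and the image of $B$ in $\bar{H}$ is $\langle\bar{\alpha}^{2t}\rangle$, whose order is at least $|\bar{H}|/(2t)\ge|\bar{H}|/(2p^m)$, while $|L'|\ge|L|/p^m$ by Corollary~\ref{corollary:Darafsheh}(iii), we conclude
$$
[H:Q]\le\frac{|H|}{|B|}\le\frac{p^m\cdot|L|\cdot|\bar{H}|}{|L'|\cdot|\bar{H}|/(2p^m)}\le\frac{p^m\cdot|L|\cdot|\bar{H}|}{(|L|/p^m)\cdot|\bar{H}|/(2p^m)}=2p^{3m}=2\cdot|F_{(p)}|^3,
$$
as required.

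Most of this is bookkeeping with centralizers and Sylow subgroups, and I do not expect any genuine obstacle. The one point that must be arranged correctly is that the element $\alpha$ carries two independent pieces of information — $\alpha^2$ centralizes $L'$ by the hypothesis of the lemma, and a power $\alpha^t$ with $t\le p^m$ centralizes $F_{(p)}$ by Lemma~\ref{lemma:Darafsheh} — whose combination $\alpha^{2t}$ therefore lands inside $Q$. This is precisely what makes $Q$ large enough to yield the index bound $2\cdot|F_{(p)}|^3$ with the required cubic dependence on $|F_{(p)}|$, while the description of $Q$ as the unique Hall $p'$-subgroup of $C_H(C)$ keeps it canonical enough to be $\Aut(H;F)$-invariant.
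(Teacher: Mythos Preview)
Your proof is correct and follows the same underlying strategy as the paper: both arguments revolve around the centralizer $D=C_H(C)$ with $C=C_F(F_{(p)})=F_{(p)}\times L'$, and both combine the hypothesis ($\alpha^2$ centralizes $L'$) with Lemma~\ref{lemma:Darafsheh} ($\alpha^t$ centralizes $F_{(p)}$ for some $t\le p^m$) to place $\alpha^{2t}$ inside $D$ and obtain the bound $2\,|F_{(p)}|^3$.

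The difference is in the choice of the abelian subgroup and the resulting invariance argument. The paper takes $A=\langle L',\alpha^{2t}\rangle$ and then, to show $A$ is $\Aut(H;F)$-stable, characterizes it as $\tilde{A}\cap\Sigma$, where $\tilde{A}$ is the preimage of the image of $A$ in $\bar{H}$ and $\Sigma$ is the \emph{subset} of $p'$-elements in $C_H(C)$; the paper explicitly notes it does not know at that point whether $\Sigma$ is a subgroup, and verifies the equality $A=\tilde{A}\cap\Sigma$ by a short element-chasing argument. You instead observe that $F_{(p)}$ is a \emph{central} Sylow $p$-subgroup of $D$, so Schur--Zassenhaus gives $D=F_{(p)}\times Q$ with $Q$ exactly the set of $p'$-elements of $D$; thus $\Sigma=Q$ is already a subgroup, canonically defined, and hence automatically $\Aut(H;F)$-stable. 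Your $Q$ contains the paper's $A$ (your $B$), which is all you need for the index bound. This buys you a cleaner invariance argument at the cost of invoking Schur--Zassenhaus; the paper's route is slightly more hands-on but avoids that extra input.
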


\begin{proof}
Let $\alpha$ be an element of $H$ whose image in $\bar{H}$ generates~$\bar{H}$.
Since $|\bar{H}|$ is coprime to $p$, we may replace $\alpha$ by its appropriate power and assume that its order is coprime to $p$.

Set $C=C_F(F_{(p)})$ and $L'=L\cap C$.
Then~$L'$ is characteristic in $F$, and its index in $F$ does not exceed
$|F_{(p)}|^2$, see Corollary~\ref{corollary:Darafsheh}(iii).
Furthermore, the group~$L'$ is cyclic, and its order is coprime to $p$.
By assumption, the element~$\alpha^2$ commutes with $L'$.
Furthermore, conjugation by $\alpha$ preserves
the subgroup~\mbox{$F_{(p)}\cong\mumu_p^m$}, because this subgroup is
characteristic in $F$. By Lemma~\ref{lemma:Darafsheh},
there exists a positive integer $t<p^m$ such that the element
$\alpha^t$ commutes with~$F_{(p)}$. Thus, the element $\alpha'=\alpha^{2t}$
commutes with $C$, because $C$ is generated by~$L'$ and~$F_{(p)}$, see Corollary~\ref{corollary:Darafsheh}(i).
Also, we know from Corollary~\ref{corollary:Darafsheh}(i) that
$C$ is a characteristic subgroup of $F$.

Let $A\subset H$ be the subgroup generated by $L'$ and $\alpha'$.
Then $A$ is abelian, and its order is coprime to $p$.
Let $\bar{A}$ be the image of~$A$ in
$\bar{H}$. Then
$\bar{A}$ is a subgroup of index at most $2t$
in $\bar{H}$.
Thus the index of $A$ in $H$ equals
$$
\frac{|H|}{|A|}=\frac{|\bar{H}|}{|\bar{A}|}\cdot \frac{|F|}{|F\cap A|}\le 2t\cdot \frac{|F|}{|L'|}\le 2\cdot |F_{(p)}|\cdot |F_{(p)}|^2=2\cdot |F_{(p)}|^3.
$$

It remains to show that $A$ is preserved by $\Aut(H;F)$.
Let $\tilde{A}\subset H$ denote the preimage of $\bar{A}$. Then
$\tilde{A}$ is preserved by $\Aut(H;F)$.
Furthermore, the centralizer~$C_H(C)$ is preserved by $\Aut(H;F)$,
because $C$ is a characteristic
subgroup of~$F$. Denote by $\Sigma\subset C_H(C)$ the \emph{subset}
of elements of order coprime to $p$ (we do not know at the moment whether $\Sigma$
is a subgroup of $C_H(C)$ or not).
Then $\Sigma$ is also preserved by $\Aut(H;F)$.

We claim that
$$
A=\tilde{A}\cap \Sigma.
$$
Indeed, one has $A\subset \tilde{A}\cap \Sigma$
by construction.
On the other hand, if $\beta$ is an element
of $\tilde{A}\cap \Sigma$, then for an appropriate non-negative integer $k$ the element
$$
\theta=\beta\cdot (\alpha')^k
$$
is contained in $F$. Since $\theta$ and $\alpha'$ are contained in $C_H(C)$, one has
$$
\theta \in F\cap C_H(C)=C_F(C).
$$
The latter intersection coincides with $C$ by Corollary~\ref{corollary:Darafsheh}(ii).
In particular, $\alpha'$ commutes with $\theta$ and
$$
\beta=\theta\cdot (\alpha')^{-k}.
$$
Since the orders of $\alpha'$ and  $\beta$
are coprime to $p$, we conclude that the order of~$\theta$
is coprime to $p$ as well. This means that
$\theta\in L'\subset A$ by Corollary~\ref{corollary:Darafsheh}(i).
Hence~\mbox{$\beta\in A$},
which implies that $A=\tilde{A}\cap \Sigma$. Therefore, $A$ is preserved by~$\Aut(H;F)$.
\end{proof}

\section{Projective line}
\label{section:P1}

In this section we bound the indices of normal abelian subgroups of finite
groups acting on~$\PP^1$ and~$\PP^1\times\PP^1$.

\begin{theorem}[{see e.g. \cite[Chapter~XII]{Dickson}, or \cite[Theorem~2.1]{King},
or~\mbox{\cite[Theorem~2.1]{DD}}}]
\label{theorem:ADE}
Let $\Bbbk$ be a field of characteristic~\mbox{$p>0$}, and let
$$
G\subset\PGL_2(\Bbbk)
$$
be a finite group.
Then $G$ is isomorphic to one of the following groups:
\begin{itemize}
\item[(1)] a dihedral group $\DD_{2n}$, where $n\ge 2$ is coprime to~$p$;

\item[(2)] one of the groups $\mathfrak{A}_4$, $\mathfrak{S}_4$, or
$\mathfrak{A}_5$;

\item[(3)] the group $\PSL_2(\mathbf{F}_{p^k})$ for some $k\ge 1$;

\item[(4)] the group $\PGL_2(\mathbf{F}_{p^k})$ for some $k\ge 1$;

\item[(5)] a group of the form $\mumu_p^m\rtimes \mumu_n$, where $n\ge 1$
is coprime to~$p$.
\end{itemize}
\end{theorem}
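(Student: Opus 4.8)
The plan is to pass to an algebraic closure and then argue according to whether the prime $p$ divides $|G|$. Since the isomorphism class of $G$ as an abstract group is unchanged if we view it inside $\PGL_2(\overline{\Bbbk})$, I would first assume $\Bbbk=\overline{\Bbbk}$, so that $G$ acts faithfully on $\PP^1=\PP^1_{\Bbbk}$. The \emph{tame} case $p\nmid|G|$ is then handled exactly as over a field of characteristic zero, while the \emph{wild} case $p\mid|G|$ splits into two subcases according to whether $G$ has a nontrivial normal $p$-subgroup, and the second of these is where the real work lies.

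\emph{Tame case.} Here $\PP^1/G$ is a rational curve and the quotient map $\PP^1\to\PP^1/G$ is tamely ramified, as all point stabilizers have order prime to $p$. The Riemann--Hurwitz formula reads
$$
2-\frac{2}{|G|}=\sum_{i=1}^{r}\Bigl(1-\frac{1}{m_i}\Bigr),
$$
where $m_1,\dots,m_r$ are the ramification indices over the branch points. Each summand is at least $\tfrac12$ and the left-hand side is smaller than $2$, so $r\le 3$, and an elementary discussion of the possible $(m_1,\dots,m_r)$ leaves only $r=2$ with $m_1=m_2=|G|$, or $r=3$ with $(m_1,m_2,m_3)$ equal to $(2,2,n)$, $(2,3,3)$, $(2,3,4)$, or $(2,3,5)$. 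In the first case $G$ fixes two points of $\PP^1$, hence lies in a maximal torus and is cyclic of order coprime to $p$. In the remaining cases $G$ is a quotient of the corresponding triangle group (using that the prime-to-$p$ part of the tame fundamental group of a punctured $\PP^1$ is the expected one), and a comparison of orders identifies $G$ with $\DD_{2n}$, $\mathfrak{A}_4$, $\mathfrak{S}_4$, or $\mathfrak{A}_5$. This yields items~(1), (2), and the case $m=0$ of item~(5).

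\emph{Wild case, $O_p(G)\ne 1$.} A nontrivial $p$-subgroup of $\PGL_2(\Bbbk)$ is conjugate to a group of unipotent upper-triangular matrices, hence has a unique fixed point on $\PP^1$; since $O_p(G)$ is normal, this point is $G$-invariant, so $G$ is contained in a Borel subgroup. Modulo scalars this Borel is $\Bbbk\rtimes\Bbbk^{*}$ with $\Bbbk^{*}$ acting by scaling, and a finite subgroup of it has cyclic image $\mumu_n$ of order coprime to $p$ in $\Bbbk^{*}$ and elementary abelian intersection $\mumu_p^{m}$ with $\Bbbk$; since these orders are coprime the extension splits, giving $G\cong\mumu_p^{m}\rtimes\mumu_n$, i.e. item~(5).

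\emph{Wild case, $O_p(G)=1$.} This is the main obstacle, and here I would follow Dickson. One first checks that distinct Sylow $p$-subgroups of $G$ intersect trivially: a shared nontrivial $p$-element would force them to have the same unique fixed point, hence to lie in the common abelian group of unipotents fixing that point, hence to coincide, being maximal $p$-subgroups. Let $H\trianglelefteq G$ be generated by all Sylow $p$-subgroups; since $O_p(G)=1$ while $p\mid|G|$, it is generated by at least two distinct ones. Normalizing two of them to fix $[1:0]$ and $[0:1]$, I would write them as unipotent triangular groups with entries ranging over $\mathbb{F}_p$-subspaces $V,W\subseteq\Bbbk$, and analyze the diagonal torus elements arising as their products: this shows that $V$ and $W$ are vector spaces over the finite subfield $\mathbf{F}=\mathbf{F}_{p^{k}}$ generated by the relevant products of entries, and that the two Sylows already generate $\PSL_2(\mathbf{F})$ (the image of $\SL_2(\mathbf{F})$ in $\PGL_2$). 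A further bookkeeping argument, comparing sizes of Sylow-orbits, shows that all Sylow $p$-subgroups of $G$ lie in a single such subgroup, so $H\cong\PSL_2(\mathbf{F}_{p^{k}})$. Finally $G$ normalizes $H$; inside $\PGL_2(\overline{\Bbbk})$ the normalizer of $\PSL_2(\mathbf{F}_{p^{k}})$ is $\PGL_2(\mathbf{F}_{p^{k}})$ (field automorphisms do not act $\overline{\Bbbk}$-linearly on $\PP^1$), and $[\PGL_2(\mathbf{F}_{p^{k}}):\PSL_2(\mathbf{F}_{p^{k}})]\le 2$ by Theorem~\ref{theorem:PSL-PGL-basic}, so $G$ equals $\PSL_2(\mathbf{F}_{p^{k}})$ or $\PGL_2(\mathbf{F}_{p^{k}})$ — items~(3) and~(4). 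The delicate point throughout is the explicit identification of the group generated by two Sylow $p$-subgroups together with the reduction of all of them to a single $\PSL_2$; everything else is routine.
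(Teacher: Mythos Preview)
The paper does not prove this theorem; it is quoted with references to Dickson, King, and Dolgachev--Duncan. So there is no argument in the paper to compare against, and the question is whether your sketch stands on its own.

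Your tame case and the case $O_p(G)\neq 1$ are fine. The gap is in the case $O_p(G)=1$: your claim that this forces $G\cong\PSL_2(\mathbf{F}_{p^k})$ or $\PGL_2(\mathbf{F}_{p^k})$ is false. Take $p=2$ and $G\cong\DD_{2n}$ with $n\ge 5$ odd, embedded via a diagonal rotation of order~$n$ and the involution $\left(\begin{smallmatrix}0&1\\1&0\end{smallmatrix}\right)$; then $O_2(G)=1$, any two Sylow $2$-subgroups already generate all of $\DD_{2n}$, and $\DD_{2n}$ is not isomorphic to any $\PSL_2(\mathbf{F}_{2^j})$. Take $p=3$ and $G\cong\mathfrak{A}_5$, which embeds in $\PGL_2(\Bbbk)$ in characteristic~$3$; then $O_3(G)=1$, yet the ten Sylow $3$-subgroups of $\mathfrak{A}_5$ do not lie in a single copy of $\PSL_2(\mathbf{F}_3)\cong\mathfrak{A}_4$, and $\mathfrak{A}_5$ is not $\PSL_2(\mathbf{F}_{3^k})$ or $\PGL_2(\mathbf{F}_{3^k})$ for any~$k$. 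In short, items~(1) and~(2) are \emph{not} exhausted by the tame case: dihedral groups (for $p=2$) and $\mathfrak{A}_5$ (for $p=3$) reappear in the wild case with $O_p(G)=1$, and Dickson's actual argument there is a more careful count of Sylow normalizers and fixed points that produces these extra possibilities alongside $\PSL_2$ and $\PGL_2$.
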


\begin{remark}
If $G\subset\PGL_2(\mathbb{C})$ is a finite group, and $p$ is a prime number which
is coprime with the order of $G$, then $G$ is contained in the group $\PGL_2(\Bbbk)$
for an algebraically closed field of characteristic~$p$: the embedding can be constructed using
the same matrices as in the case of $\PGL_2(\mathbb{C})$. However,
in certain cases there exists an embedding of $G$ into $\PGL_2(\Bbbk)$ even if the characteristic of $\Bbbk$ divides~$|G|$.
For instance, there are isomorphisms
$$
\mathfrak{A}_5\cong\PSL_2(\mathbf{F}_4)\cong \PSL_2(\mathbf{F}_5)
$$
and $\mathfrak{A}_6\cong\PSL_2(\mathbf{F}_9)$, see
Theorem~\ref{theorem:PSL-PGL-basic}(ii).
Therefore, $\mathfrak{A}_5$ admits an embedding into~\mbox{$\PGL_2(\Bbbk)$} for an algebraically closed field $\Bbbk$ of \emph{any} characteristic.
\end{remark}

The next lemma is a consequence of the classification provided
by Theorem~\ref{theorem:ADE}. It gives a more precise version
of~\mbox{\cite[Lemma~8.5]{ChenShramov}}.

\begin{lemma}\label{lemma:P1}
Let $\Bbbk$ be a field of characteristic $p>0$,
and let
$$
G\subset \Aut(\PP^1)\cong \PGL_2(\Bbbk)
$$
be a finite group. Then $G$
contains a characteristic cyclic subgroup
of order coprime
to $p$ and index at most~\mbox{$J_p(\PP^1)\cdot |G_{(p)}|^{3}$},
where
\begin{equation}\label{eq:constant-PGL2}
J_p(\PP^1)=\begin{cases}
60, &\text{if\ } p\ge 7,\\
24, &\text{if\ } p=5,\\
4, &\text{if\ } p=3,\\
1, &\text{if\ } p=2.
\end{cases}
\end{equation}
Moreover, one can take $J_5(\PP^1)=2$ unless
$G$ is one of the groups~$\mathfrak{S}_4$, $\mathfrak{A}_4$, or~$\mumu_2^2$, in which case
$G$ contains a characteristic abelian subgroup of order~$4$ and index at
most~\mbox{$6=6\cdot |G_{(5)}|^3$};
and one can take $J_3(\PP^1)=\frac{20}{9}$ unless
$G\cong\mumu_2^2$, in which case $G$ is abelian itself.
\end{lemma}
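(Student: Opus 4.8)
The plan is to go through the five cases provided by Theorem~\ref{theorem:ADE} and in each case exhibit a characteristic cyclic (or, in the stated exceptional cases, abelian) subgroup of the required order and index. Throughout, recall that for a finite $G\subset\PGL_2(\Bbbk)$ the $p$-Sylow subgroup $G_{(p)}$ is either trivial (in cases (1)--(4) except when $F$ in case (3)/(4) literally involves the characteristic, i.e.\ $G\cong\PSL_2(\mathbf{F}_{p^k})$ or $\PGL_2(\mathbf{F}_{p^k})$) or, in case (5), equals $\mumu_p^m$. So for cases (1) and (2) the claim reduces to finding a characteristic cyclic subgroup of index at most $J_p(\PP^1)$, with the small refinements for $p\in\{3,5\}$ handled separately.

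First I would dispatch case (1): if $G\cong\DD_{2n}$ with $n\ge 3$ coprime to $p$, then by Lemma~\ref{lemma:dihedral-group-basic} the cyclic subgroup of order $n$ is characteristic, and it has index $2\le J_p(\PP^1)$ for every $p$; if $n=2$ then $G\cong\mumu_2^2$ is abelian and characteristic in itself, which also takes care of the stated exceptions ``$G\cong\mumu_2^2$'' for $p=3$ and $p=5$. For case (2), the groups $\mathfrak{A}_4$, $\mathfrak{S}_4$, $\mathfrak{A}_5$: here I would invoke Examples~\ref{example:A4}, \ref{example:S4}, \ref{example:A5} for the numerics and the structural observations made there --- $\mathfrak{A}_5$ is simple so its trivial subgroup is characteristic of index $60$ (and $12$ when $p=5$, $\tfrac{20}{9}\cdot|G_{(5)}|^3$ reading, etc.), $\mathfrak{S}_4$ has the characteristic subgroup $V_4\cong\mumu_2^2$ of index $6$, and $\mathfrak{A}_4$ likewise has a characteristic $\mumu_2^2$ of index $3$. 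Matching these indices against~\eqref{eq:constant-PGL2} and against the refined constants $J_5(\PP^1)=2$, $J_3(\PP^1)=\tfrac{20}{9}$ gives exactly the listed exceptional cases for $p=5$ (namely $\mathfrak{S}_4,\mathfrak{A}_4,\mumu_2^2$, where one only gets a characteristic \emph{abelian} $\mumu_2^2$ of index $\le 6$) and for $p=3$ (only $\mumu_2^2$, abelian). For cases (3) and (4) with $G\cong\PSL_2(\mathbf{F}_{p^k})$ or $\PGL_2(\mathbf{F}_{p^k})$, one takes the trivial subgroup: it is characteristic, and Examples~\ref{example:PSL} and~\ref{example:PGL} give $|G|<|G_{(p)}|^3$, so index $\le 1\cdot|G_{(p)}|^3\le J_p(\PP^1)\cdot|G_{(p)}|^3$ for all $p$.

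The substantive case is (5): $G\cong\mumu_p^m\rtimes\mumu_n$ with $n$ coprime to $p$. Here I would apply Corollary~\ref{corollary:Darafsheh} with $R=G$: it produces the characteristic subgroup $L'=L\cap C_G(G_{(p)})$, which is cyclic of order coprime to $p$ and index at most $p^{2m}=|G_{(p)}|^2\le J_p(\PP^1)\cdot|G_{(p)}|^3$ (using $J_p(\PP^1)\ge 1$ and $|G_{(p)}|=p^m\ge 1$). This settles case (5) for every odd $p$, and also for $p=2$ since $J_2(\PP^1)=1$ and the bound is $|G_{(p)}|^2\le|G_{(p)}|^3$. One should double-check the $p=2$ line of~\eqref{eq:constant-PGL2} overall: in cases (1)--(2) the characteristic cyclic/abelian subgroup has order coprime to $2$ only when... actually $\mumu_2^2\subset\mathfrak{A}_4$ is \emph{not} of odd order, so for $p=2$ one must instead observe $|G|\le |G_{(2)}|^3=1\cdot|G_{(2)}|^3$ directly from the orders $|\mathfrak{A}_4|=12\le 8$? --- no, $12>8$, so for $p=2$ the argument needs the trivial subgroup only when it works, and otherwise a genuinely $2$-coprime characteristic subgroup; the cleanest route is to note that $J_2(\PP^1)=1$ forces us to use $|G|\le|G_{(2)}|^3$, which holds for $\mathfrak{A}_4$ ($12\le 64$), $\mathfrak{S}_4$ ($24\le 512$... wait $|G_{(2)}|=8$, $8^3=512$, fine), $\mathfrak{A}_5$ ($60\le 64$), $\DD_{2n}$, and case (5). \emph{I expect the main obstacle to be exactly this bookkeeping}: verifying that in every case the candidate characteristic subgroup simultaneously has order coprime to $p$ \emph{and} index bounded by the tabulated constant times $|G_{(p)}|^3$, and isolating precisely the exceptional groups for $p=3,5$ where one can only guarantee a characteristic \emph{abelian} (not cyclic) subgroup or must enlarge the constant --- this is a finite but fiddly case check against Examples~\ref{example:PGL}--\ref{example:A5} and the dihedral lemma.
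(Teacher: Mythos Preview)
Your overall strategy---case analysis via Theorem~\ref{theorem:ADE}, with Corollary~\ref{corollary:Darafsheh} handling type~(5) and Examples~\ref{example:PSL}, \ref{example:PGL} handling types~(3)/(4)---is exactly the paper's. There is, however, one recurring slip: the first assertion of the lemma asks for a characteristic \emph{cyclic} subgroup, and the witnesses you propose for $\mathfrak{S}_4$, $\mathfrak{A}_4$, and $\mumu_2^2$ are the Klein four-group $V_4$ (respectively, the group itself), which is abelian but not cyclic. As Examples~\ref{example:S4} and~\ref{example:A4} explicitly note, neither $\mathfrak{S}_4$ nor $\mathfrak{A}_4$ has a nontrivial characteristic cyclic subgroup, and the same is obvious for $\mumu_2^2$. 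So for the main claim one is forced to take the \emph{trivial} subgroup as witness in all three cases and check that $|G|\le J_p(\PP^1)\cdot|G_{(p)}|^3$. This is precisely what the paper does: for $\mumu_2^2$ one gets $|G|=4$, matching $J_3(\PP^1)=4$ on the nose (which is exactly why $J_3(\PP^1)$ cannot be lowered to $\tfrac{20}{9}$ without excluding $\mumu_2^2$); for $\mathfrak{S}_4$ one gets $|G|=24$, matching $J_5(\PP^1)=24$ on the nose (explaining why $\mathfrak{S}_4$ must be excluded to reach the refined $J_5(\PP^1)=2$); and $|\mathfrak{A}_4|=12$, $|\mathfrak{A}_5|=60$ fit under the tabulated constants via the $J'$ values recorded in those Examples.

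The $V_4$ subgroups you wrote down are the correct witnesses only for the \emph{second} assertion of the lemma, where ``cyclic'' is relaxed to ``abelian'' for the listed exceptions at $p=5$. Once you swap in the trivial subgroup for the first assertion, the bookkeeping you worry about at the end becomes routine, and your $p=2$ checks are then correct as written: $12\le 4^3$, $24\le 8^3$, $60\le 4^3$, the dihedral case gives index $2\le 2^3$, and type~(5) gives index at most $|G_{(2)}|^2\le|G_{(2)}|^3$.
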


\begin{proof}
The group $G$ is of one of
types (1)--(5) in the notation of Theorem~\ref{theorem:ADE}.

Suppose that $G$ is a dihedral group of order $2n$, where $n$ is coprime to~$p$.
If~\mbox{$n=2$} (so that $p\ge 3$), then $G\cong \mumu_2^2$, and the index
of the trivial group in~$G$ can be written as
$|G|=J\cdot |G_{(p)}|^3$ for $J=4$.
If $n\ge 3$, then $G$ contains a characteristic cyclic subgroup of order $n$,
see Lemma~\ref{lemma:dihedral-group-basic}. Its index in $G$ is
$$
2\le J\cdot |G_{(p)}|^3,
$$
where one can take
$$
J=\begin{cases}
2, &\text{if\ } p\ge 3,\\
1, &\text{if\ } p=2.
\end{cases}
$$

If $G$ is one of the groups $\mathfrak{A}_4$, $\mathfrak{S}_4$, or
$\mathfrak{A}_5$, then according to Examples~\ref{example:A4},
\ref{example:S4}, and~\ref{example:A5}
the index of the trivial subgroup in $G$ does not exceed
$J\cdot |G_{(p)}|^3$, where
$$
J=\begin{cases}
60, &\text{if\ } p\ge 7,\\
24, &\text{if\ } p=5,\\
\frac{20}{9}, &\text{if\ } p=3,\\
\frac{15}{16}, &\text{if\ } p=2.
\end{cases}
$$

If $G$ is one of the groups $\PSL_2(\mathbf{F}_{p^k})$ or
$\PGL_2(\mathbf{F}_{p^k})$, then the index of the trivial subgroup in $G$
does not exceed~\mbox{$J\cdot |G_{(p)}|^3$}, where
one can take $J=1$, see Examples~\ref{example:PSL}
and~\ref{example:PGL}.

If $G$ is a group of type~(5) in the notation of Theorem~\ref{theorem:ADE},
then $G$ contains a characteristic cyclic subgroup of
order coprime to $p$ and index at most $J\cdot |G_{(p)}|^2$,
where one can take $J=1$, see Corollary~\ref{corollary:Darafsheh}(iii).

In all the above cases, we see that
$J\le J_p(\PP^1)$, where $J_p(\PP^1)$ is given by~\eqref{eq:constant-PGL2}.
In other words, the group $G$ always contains a characteristic cyclic subgroup
of order coprime
to~$p$ and index at most~\mbox{$J_p(\PP^1)\cdot |G_{(p)}|^{3}$}.
This proves the first assertion of the lemma.

Finally, if $p=5$, then
we see from the above computations that
either $G$ contains a characteristic cyclic subgroup of order coprime to $5$ and index
at most~\mbox{$2\cdot |G_{(5)}|^3$},
or~$G$ is isomorphic to one of the groups $\mathfrak{S}_4$, $\mathfrak{A}_4$, or~$\mumu_2^2$.
Each of the latter groups contains a characteristic abelian subgroup $V_4\cong\mumu_2^2$.
If $p=3$, then the above computations show that
either $G$ contains a characteristic cyclic subgroup of order coprime to $3$ and index
at most~\mbox{$\frac{20}{9}\cdot |G_{(3)}|^3$},
or $G\cong\mumu_2^2$. This gives the second assertion of the lemma.
\end{proof}

\begin{corollary}\label{corollary:P1xP1}
Let $\Bbbk$ be a field of characteristic $p>0$, and let
$$
\hat{\Gamma}=\big(\PGL_2(\Bbbk)\times\PGL_2(\Bbbk)\big)\rtimes\mumu_2,
$$
where the non-trivial element of $\mumu_2$ acts by interchanging the factors.
Then every finite subgroup $G$ of $\hat{\Gamma}$ contains
a normal abelian subgroup of order coprime to $p$
and index at most~\mbox{$J_p(\PP^1\times\PP^1)\cdot |G_{(p)}|^{3}$},
where
\begin{equation}\label{eq:J-P1xP1}
J_p(\PP^1\times\PP^1)=\begin{cases}
7200, &\text{if\ } p\ge 7,\\
72, &\text{if\ } p=5,\\
10, &\text{if\ } p=3,\\
1, &\text{if\ } p=2.
\end{cases}
\end{equation}
\end{corollary}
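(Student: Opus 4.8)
The plan is to split according to whether a finite subgroup $G\subset\hat\Gamma$ is contained in $\PGL_2(\Bbbk)\times\PGL_2(\Bbbk)$ or not. Set $G'=G\cap\big(\PGL_2(\Bbbk)\times\PGL_2(\Bbbk)\big)$; it is normal in $G$ of index $1$ or $2$, and I write $\pi_1,\pi_2\colon G'\to\PGL_2(\Bbbk)$ for the two projections. The only substantial input is Lemma~\ref{lemma:P1}, which I would first restate uniformly: every finite subgroup $E\subset\PGL_2(\Bbbk)$ contains a characteristic abelian subgroup of order coprime to $p$ and index at most $\widetilde J_p\cdot|E_{(p)}|^{3}$, where $\widetilde J_p$ equals $60$, $6$, $\tfrac{20}{9}$, $1$ for $p\ge 7$, $p=5$, $p=3$, $p=2$ respectively. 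This follows from Lemma~\ref{lemma:P1} by taking the characteristic cyclic subgroup it produces in general, and the (canonical) normal subgroup isomorphic to $\mumu_2^2$ — or the whole group — for the finitely many small exceptional groups $\mathfrak{S}_4,\mathfrak{A}_4,\mumu_2^2$ at $p=5$ and $\mumu_2^2$ at $p=3$; in every case the chosen subgroup is defined by an isomorphism-invariant group-theoretic recipe, hence is carried to the corresponding subgroup by any isomorphism between finite subgroups of $\PGL_2(\Bbbk)$.

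If $G=G'$, I would simply apply Lemma~\ref{lemma:product} with $\Gamma_1=\Gamma_2=\PGL_2(\Bbbk)$, $e=3$ and $J_1=J_2=\widetilde J_p$ (the subgroups above being characteristic, in particular normal): this gives a normal abelian subgroup of $G$ of order coprime to $p$ and index at most $\widetilde J_p^{\,2}\cdot|G_{(p)}|^3$, and $\widetilde J_p^{\,2}\le J_p(\PP^1\times\PP^1)$ in each case ($3600\le 7200$, $36\le 72$, $\tfrac{400}{81}<10$, $1\le 1$).

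If $[G:G']=2$, fix $g\in G\setminus G'$. Replacing $G$ by its conjugate under a suitable element of $\PGL_2(\Bbbk)\times\PGL_2(\Bbbk)\subset\hat\Gamma$, I may assume $\pi_1(G')=\pi_2(G')=E$ for a single finite subgroup $E\subset\PGL_2(\Bbbk)$, and that conjugation by $g$ acts on $\PGL_2(\Bbbk)\times\PGL_2(\Bbbk)$ by $(y_1,y_2)\mapsto(y_2,c\,y_1c^{-1})$ for some $c$ in the normaliser of $E$. Let $B\trianglelefteq E$ be the canonical characteristic abelian subgroup from the first paragraph; then $cBc^{-1}=B$. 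Put $A=G'\cap(B\times B)$. This subgroup is abelian of order coprime to $p$; it is normal in $G'$ as the intersection of the normal subgroups $\{x\in G':\pi_1(x)\in B\}$ and $\{x\in G':\pi_2(x)\in B\}$ of $G'$; and it is preserved by conjugation by $g$, because that map sends $B\times B$ into itself (it permutes the two coordinates and conjugates one of them by $c$, which normalises $B$). Hence $A$ is normal in $G$. Granting the bound $[G':A]\le\widetilde J_p^{\,2}\cdot|G'_{(p)}|^3$, I would conclude as follows. For $p$ odd one has $|G_{(p)}|=|G'_{(p)}|$, so $[G:A]=2[G':A]\le 2\widetilde J_p^{\,2}\cdot|G_{(p)}|^3$, which equals $J_p(\PP^1\times\PP^1)\cdot|G_{(p)}|^3$ when $p\ge 5$ ($2\cdot 60^2=7200$, $2\cdot 6^2=72$) and is $\tfrac{800}{81}\cdot|G_{(3)}|^3<10\cdot|G_{(3)}|^3$ when $p=3$. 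For $p=2$ one has $|G_{(2)}|=2|G'_{(2)}|$, so $[G:A]=2[G':A]\le 2\widetilde J_2^{\,2}\cdot|G'_{(2)}|^3=\tfrac14\cdot|G_{(2)}|^3\le|G_{(2)}|^3$.

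The main obstacle is proving $[G':A]\le\widetilde J_p^{\,2}\cdot|G'_{(p)}|^3$ in the swap case. I would argue by the same bookkeeping as in the proof of Lemma~\ref{lemma:product}: writing $\widetilde B=\{x\in G':\pi_1(x)\in B\}$, $N=\ker(\pi_1)$ and $F=\pi_2(\widetilde B)\subseteq E$, one gets $[G':A]=[E:B]\cdot[F:B\cap F]$, together with $|F_{(p)}|=|N_{(p)}|$ and $|E_{(p)}|\cdot|N_{(p)}|=|G'_{(p)}|$, so it remains to show $[F:B\cap F]\le\widetilde J_p\cdot|F_{(p)}|^3$. The key structural fact to be brought in is that $N$ is isomorphic to a \emph{normal} subgroup of $E$; hence, by Goursat's lemma, $G'$ is a fibre product of two copies of $E$ over a common quotient of $E$, and it is this rigidity — not the crude estimate for an arbitrary subgroup of $E\times E$, which is far too weak — that keeps $[F:B\cap F]$ small enough. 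Since there is essentially no slack in the constants for $p=2$ and $p=3$, this is the one step that must be carried out with care; the rest is merely assembling the constants.
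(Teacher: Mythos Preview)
Your setup is clean and the case $G=G'$ is handled exactly as in the paper, via Lemma~\ref{lemma:product} with the constants $\widetilde J_p$ extracted from Lemma~\ref{lemma:P1}. The swap case, however, has a genuine gap: you explicitly grant the bound $[G':A]\le\widetilde J_p^{\,2}\cdot|G'_{(p)}|^3$, reduce it to $[F:B\cap F]\le\widetilde J_p\cdot|F_{(p)}|^3$, and then stop, offering only the observation that $N$ is normal in $E$ and an appeal to Goursat's lemma. That observation is correct and relevant, but it is not a proof. The group $F$ sits between the normal subgroup $\pi_2(N)\trianglelefteq E$ and $E$ itself, with $F/\pi_2(N)$ a quotient of $B$; turning this into the required inequality still requires knowing the normal subgroup lattice of $E$ and how $B$ meets each layer, which amounts to a case analysis on~$E$.

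The paper does precisely that case analysis, using Theorem~\ref{theorem:ADE} on $E=\pi_1(G')\cong\pi_2(G')$. For dihedral $E$ and for $E\in\{\mathfrak{A}_4,\mathfrak{S}_4\}$ it takes $A=G'\cap(B\times B)$ just as you propose and bounds the index directly. For $E=\mathfrak{A}_5$, $\PSL_2(\mathbf{F}_{p^k})$, or $\PGL_2(\mathbf{F}_{p^k})$ it exploits (near-)simplicity: the kernel $H=\ker\pi_1$ is forced to be trivial or all of $\PSL_2$, so $|G'|$ itself is small enough and one takes $A=1$. Finally, for $E$ of type~(5) the paper does \emph{not} use the characteristic cyclic subgroup of $E$ at all; it invokes Corollary~\ref{corollary:Darafsheh-product}, whose proof (Lemma~\ref{lemma:Darafsheh-product}) goes through Schur--Zassenhaus and Chermak--Delgado inside $G'$ rather than through any subgroup of $E$. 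Your abstract scheme does not obviously cover this last case: when the action of $\mumu_n$ on $\mumu_p^m$ is faithful one has $B=L'=1$, and your inequality becomes $|F|\le\widetilde J_p\cdot|F_{(p)}|^3$ for an arbitrary normal subgroup $F\trianglelefteq E$, which you have not established. So the missing step is not a formality; filling it forces you back to the same case split the paper carries out.
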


\begin{proof}
We know from Lemma~\ref{lemma:P1} that every finite group $F\subset\PGL_2(\Bbbk)$
contains a characteristic abelian subgroup of order coprime to $p$ and index
at most~\mbox{$J\cdot |F_{(p)}|^3$}, where
$J=J_p(\PP^1)$ is given by~\eqref{eq:constant-PGL2} for $p\ge 7$ and $p=2$,
while~\mbox{$J=6$} for~\mbox{$p=5$}, and $J=\frac{20}{9}$ for $p=3$.
Thus, if $G\subset \PGL_2(\Bbbk)\times\PGL_2(\Bbbk)$, then the required
assertion follows from
Lemma~\ref{lemma:product}.

Therefore, we assume that $G$ contains a subgroup
$G'$ of index $2$ such that
$$
G'\subset \PGL_2(\Bbbk)\times\PGL_2(\Bbbk),
$$
and an element of $G\setminus G'$ acts on $\PGL_2(\Bbbk)\times\PGL_2(\Bbbk)$ by a composition
of conjugation by some element of $G'$ with the non-trivial permutation of the factors.
Denote by
$$
\pi_1, \pi_2\colon G'\to \PGL_2(\Bbbk)
$$
the projections to the two factors.
Thus, $\pi_1(G')$ and $\pi_2(G')$ are isomorphic finite subgroups of $\PGL_2(\Bbbk)$,
and $G'\subset \pi_1(G')\times\pi_2(G')$. Observe that
if $N$ is a normal abelian subgroup in $\pi_1(G')\cong\pi_2(G')$
then the intersection $A=G'\cap (N\times N)$ is a normal abelian subgroup in $G$
whose index in $G$ is
$$
\frac{|G|}{|A|}=2\cdot\frac{|G'|}{|A|}\le 2\cdot \frac{|\pi_1(G')\times\pi_2(G')|}{|N\times N|}=
2\cdot \left(\frac{|\pi_1(G')|}{|N|}\right)^2.
$$
We check the possibilities for $\pi_1(G')\cong\pi_2(G')$
according to the classification of finite subgroups of $\PGL_2(\Bbbk)$
provided by Theorem~\ref{theorem:ADE}.

Suppose that $\pi_1(G')\cong\pi_2(G')\cong\DD_{2n}$ is the dihedral group of order $2n$, where~$n$ is  coprime to $p$. Then $\DD_{2n}$ contains a normal cyclic subgroup $C$ of order~$n$ and index $2$.
Thus, the intersection $A=G'\cap (C\times C)$
is a normal abelian subgroup in $G$ of order coprime to $p$
and index at most $2\cdot 2^2=8$. This does not exceed~$J_p(\PP^1\times\PP^1)$ if $p\ge 3$.
If $p=2$, we have
$$
8=|\pi_1(G')_{(2)}|^3\le |G'_{(2)}|^3<|G_{(2)}|^3.
$$

Suppose that $\pi_1(G')\cong\pi_2(G')$ is one of the groups $\mathfrak{A}_4$ or $\mathfrak{S}_4$.
Let $V_4$ be the normal abelian subgroup of order $4$ in $\pi_1(G')$.
If $p\neq 2$, then the intersection
$$
A=G'\cap (V_4\times V_4)
$$
is a normal abelian
subgroup in $G$ of order coprime to $p$ and index at most~\mbox{$2\cdot 6^2=72$}.
This does not exceed $J_p(\PP^1\times\PP^1)$ if $p\ge 5$; if $p=3$, we have
$$
72<10\cdot 27=10\cdot |\pi_1(G')_{(3)}|^3\le 10\cdot |G'_{(3)}|^3=10\cdot |G_{(3)}|^3.
$$
If $p=2$ and $\pi_1(G')\cong\mathfrak{A}_4$, then the index of the trivial group in $G$ is
$$
|G|\le 2\cdot 12^2=288<8^3=(2\cdot |\pi_1(G')_{(2)}|)^3\le (2\cdot |G'_{(2)}|)^3=|G_{(2)}|^3.
$$
If $p=2$ and $\pi_1(G')\cong\mathfrak{S}_4$, then the index of the trivial group in $G$ is
$$
|G|\le 2\cdot 24^2=1152<16^3=(2\cdot |\pi_1(G')_{(2)}|)^3\le (2\cdot |G'_{(2)}|)^3=|G_{(2)}|^3.
$$

Suppose that $\pi_1(G')\cong\pi_2(G')\cong\mathfrak{A}_5$.
The kernel $H$ of the projection $\pi_1$ is a normal subgroup of $G'$, and $\pi_2$ maps this subgroup
isomorphically to a normal subgroup of $\mathfrak{A}_5$.
Hence either $H$ is trivial, or isomorphic to $\mathfrak{A}_5$. In the former case
$G'\cong\mathfrak{A}_5$ and $|G|=120$; thus, the trivial subgroup of $G$ has index $120$,
which does not exceed $J_p(\PP^1\times\PP^1)\cdot |G_{(p)}|^3$.
In the latter case $G'\cong\mathfrak{A}_5\times\mathfrak{A}_5$ and~\mbox{$|G|=7200$};
thus, the trivial subgroup of $G$ has index $7200$,
which again does not exceed $J_p(\PP^1\times\PP^1)\cdot |G_{(p)}|^3$.

Suppose that $\pi_1(G')\cong\pi_2(G')\cong\PSL_2(\mathbf{F}_{p^k})$ for some $k\ge 1$,
where either~\mbox{$k\ge 2$}, or $p\ge 5$. Then the group $\PSL_2(\mathbf{F}_{p^k})$ is simple, see Theorem~\ref{theorem:PSL-PGL-basic}(i).
The kernel $H$ of the projection $\pi_1$ is a normal subgroup of $G'$, and $\pi_2$ maps this subgroup
isomorphically to a normal subgroup of $\PSL_2(\mathbf{F}_{p^k})$.
Hence $H$ is either trivial, or isomorphic to $\PSL_2(\mathbf{F}_{p^k})$.
If $H$ is trivial, then $G'\cong\PSL_2(\mathbf{F}_{p^k})$.
Thus, if $p\ge 3$, the trivial subgroup of $G$ has index
$$
|G|=2\cdot |\PSL_2(\mathbf{F}_{p^k})|=2\cdot \frac{|\SL_2(\mathbf{F}_{p^k})|}{2}=p^k(p^{2k}-1)
<p^{3k}=|G_{(p)}|^3,
$$
and if $p=2$, it has
index
$$
|G|=2\cdot |\PSL_2(\mathbf{F}_{2^k})|=2\cdot|\SL_2(\mathbf{F}_{2^k})|=2^{k+1}(2^{2k}-1)<2^{3(k+1)}=
|G_{(2)}|^3,
$$
cf. Example~\ref{example:PSL}.
If $H\cong \PSL_2(\mathbf{F}_{p^k})$, then $G'\cong \PSL_2(\mathbf{F}_{p^k})\times\PSL_2(\mathbf{F}_{p^k})$.
Thus, if $p\ge 3$, the trivial subgroup of $G$ has index
$$
|G|=2\cdot |\PSL_2(\mathbf{F}_{p^k})|^2=\frac{p^{2k}(p^{2k}-1)^2}{2}
<p^{6k}=|G_{(p)}|^3,
$$
and if $p=2$, it has
index
$$
|G|=2\cdot |\PSL_2(\mathbf{F}_{2^k})|^2=2^{2k+1}(2^{2k}-1)^2<2^{3(2k+1)}=
|G_{(2)}|^3.
$$

Suppose that $\pi_1(G')\cong\pi_2(G')\cong\PSL_2(\mathbf{F}_{p})$,
where $p\le 3$. Then $\pi_1(G')\cong \mathfrak{A}_4$ if $p=3$, and
$\pi_1(G')\cong \mathfrak{S}_3$ if $p=2$, see
Theorem~\ref{theorem:PSL-PGL-basic}(ii).
The former case was already considered above.
Thus, suppose that the latter case takes place.
If the kernel $H\subset G'$ of $\pi_1$ does not contain elements of order~$2$,
the trivial subgroup of $G$ has index
$$
|G|=2\cdot |G'|\le 2\cdot 3\cdot |\mathfrak{S}_3|=36<4^3=|G_{(2)}|^3.
$$
If $H$ contains elements of order $2$, then~\mbox{$H\cong\mathfrak{S}_3$}
and~\mbox{$G'\cong \mathfrak{S}_3\times\mathfrak{S}_3$}; thus
the trivial subgroup of $G$ has index
$$
|G|=72<8^3=|G_{(2)}|^3.
$$

Suppose that $\pi_1(G')\cong\pi_2(G')\cong\PGL_2(\mathbf{F}_{p^k})$ for some $k\ge 1$. Suppose also that either~\mbox{$k\ge 2$}, or $p\ge 5$.
Let $H\subset G'$ be the kernel of the projection~$\pi_1$, and set
$$
H_2=\pi_2(H)\cap \PSL_2(\mathbf{F}_{p^k})\subset\PGL_2(\mathbf{F}_{p^k}).
$$
Then $H_2$ is a normal subgroup of $\PSL_2(\mathbf{F}_{p^k})$. Since the latter group is simple, we see that either $H_2$ is trivial, or $H_2\cong \PSL_2(\mathbf{F}_{p^k})$.
If $H_2$ is trivial, then
$$
|H|\le \frac{|\PGL_2(\mathbf{F}_{p^k})|}{|\PSL_2(\mathbf{F}_{p^k})|}\le 2,
$$
and $|G'|\le 2\cdot |\PGL_2(\mathbf{F}_{p^k})|$.
Thus, if $p\ge 3$, the trivial subgroup of $G$ has index
$$
|G|=2\cdot |G'|\le 4\cdot |\PGL_2(\mathbf{F}_{p^k})|= 4p^{k}(p^{2k}-1)\le 4p^{3k}=4|\pi_1(G')_{(p)}|^3=
4|G_{(p)}|^3;
$$
if $p=2$, the trivial group has index
$$
|G|\le 4\cdot 2^{3k}=2^{3k+2}<2^{3(k+1)}=(2\cdot |\pi_1(G')_{(2)}|)^3=|G_{(2)}|^3.
$$
If $H_2\cong\PSL_2(\mathbf{F}_{p^k})$, then $|G'_{(p)}|=p^{2k}$, and the trivial subgroup of $G$ has index
$$
|G|\le 2\cdot |\PGL_2(\mathbf{F}_{p^k})|^2=2p^{2k}(p^{2k}-1)^2<2p^{6k}.
$$
If $p\ge 3$, then
$$
2p^{6k}=2\cdot |G'_{(p)}|^3=2\cdot |G_{(p)}|^3.
$$
If $p=2$, then
$$
2\cdot 2^{6k}<2^{3(2k+1)}=(2\cdot |G'_{(2)}|)^3=|G_{(2)}|^3.
$$

Suppose that $\pi_1(G')\cong\pi_2(G')\cong\PGL_2(\mathbf{F}_{p})$,
where $p\le 3$. Then $\pi_1(G')\cong \mathfrak{S}_4$ if $p=3$, and
$\pi_1(G')\cong \mathfrak{S}_3$ if $p=2$,
see Theorem~\ref{theorem:PSL-PGL-basic}(iii).
Both of these cases were already considered above.

Finally, if $R\cong\pi_1(G')\cong\pi_2(G')$
is a group of type~(5) in the notation of Theorem~\ref{theorem:ADE},
then the required assertion follows from Corollary~\ref{corollary:Darafsheh-product}.
\end{proof}

\section{Projective plane}
\label{section:P2}

In this section we give preliminary bounds for the indices of normal abelian subgroups of finite
groups acting on~$\PP^2$ over fields of odd characteristic.
Unfortunately, in this case a classification as fine as one provided by
Theorem~\ref{theorem:ADE} is not available, but in odd characteristic
one can describe finite subgroups of~\mbox{$\PGL_3(\Bbbk)$}
in terms of geometric configurations up to several explicit exceptions.
We will say that a subgroup $G\subset\PGL_3(\Bbbk)$ \emph{preserves a triangle}, if
it permutes three non-collinear points of $\PP^2$ defined over an algebraic closure of~$\Bbbk$.
Similarly, the group $G$ \emph{preserves a smooth conic}
if there exists a smooth $G$-invariant conic on~$\PP^2$ over the algebraic closure
of~$\Bbbk$.

\begin{theorem}[{see \cite{Mitchell} or \cite[Theorem~2.4]{King}}]
\label{theorem:Mitchell}
Let $p\neq 2$ be a prime number.
Let
$$
G\subset \PSL_3(\mathbf{F}_{p^n})
$$
be a subgroup.
Then either $G$ preserves a point, or a line, or a triangle, or a smooth conic, or~$G$ is isomorphic to one of the following groups:
\begin{itemize}
\item[(1)] $\PSL_3(\mathbf{F}_{p^k})$ for some $k\le n$;

\item[(2)] $\PGL_3(\mathbf{F}_{p^k})$ for some $k<n$;

\item[(3)] $\PSU_3(\mathbf{F}_{p^k})$ for some $k\le \frac{n}{2}$;

\item[(4)] $\PU_3(\mathbf{F}_{p^k})$  for some $k<\frac{n}{2}$;

\item[(5)] the Hessian group $\mumu_3^2\rtimes\SL_2(\mathbf{F}_3)$ of order $216$, or one of its subgroups containing~$\mumu_3^2$ (these cases are possible only if $p\neq 3)$;

\item[(6)] $\PSL_2(\mathbf{F}_7)$;

\item[(7)] $\mathfrak{A}_6$, or a group containing $\mathfrak{A}_6$ as a subgroup of index~$2$
(both possible only if $p=5$);

\item[(8)] $\mathfrak{A}_7$ (possible only if $p=5$).
\end{itemize}
\end{theorem}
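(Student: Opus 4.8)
\emph{Proof proposal.} The idea is to reduce the question to the structure theory of finite primitive linear groups of degree~$3$, and then to enumerate the almost simple groups that embed in $\PGL_3$ over a field of characteristic~$p$. Since the geometric objects in the statement are allowed to be defined over an algebraic closure, I would first replace $\mathbf{F}_{p^n}$ by $\Bbbk=\overline{\mathbf{F}_p}$ and lift~$G$ to a finite subgroup $\tilde{G}\subset\GL_3(\Bbbk)$: the preimage of~$G$ in $\GL_3(\Bbbk)$ is an extension of the finite group~$G$ by the torsion group~$\Bbbk^*$, hence is locally finite, so lifting a generating set of~$G$ produces such a~$\tilde{G}$. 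Put $V=\Bbbk^3$. If the $\tilde{G}$-module~$V$ is reducible, then~$G$ fixes a point or a line of~$\PP^2$. If~$V$ is irreducible but imprimitive, then --- the permutation action on the blocks being transitive and~$3$ prime --- $V$ splits as a sum $L_1\oplus L_2\oplus L_3$ of lines permuted by~$\tilde{G}$, and the non-collinear points $[L_i]$ constitute a $G$-invariant triangle. Hence we may assume that the representation of~$\tilde{G}$ on~$V$ is irreducible and primitive.

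Next I would examine the generalized Fitting subgroup $F^*(\tilde{G})=E(\tilde{G})\cdot F(\tilde{G})$. A nontrivial normal $p$-subgroup of~$\tilde{G}$ would be unipotent and would therefore have a proper nonzero fixed subspace, which would be $\tilde{G}$-invariant --- impossible; so $F(\tilde{G})$ is a nilpotent $p'$-group, and by Clifford theory it is either scalar (impossible, as then~$\tilde{G}$ would be abelian) or an extension of scalars by an extraspecial group of order~$27$, the latter occurring only when $p\neq 3$. In that case~$G$ normalizes the resulting subgroup $\mumu_3^2\subset\PGL_3(\Bbbk)$, hence lies in $N_{\PGL_3(\Bbbk)}(\mumu_3^2)$, which one identifies with the Hessian group $\mumu_3^2\rtimes\SL_2(\mathbf{F}_3)$ of order~$216$: this is case~(4). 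Otherwise $F(\tilde{G})$ is scalar, so $E(\tilde{G})\neq 1$ (as~$\tilde{G}$ is non-abelian), and since every component contributes a tensor factor of dimension at least~$2$ to~$V$ while $\dim V=3$, there is exactly one component~$L$; it is quasisimple and acts irreducibly on~$V$, its centralizer in~$\tilde{G}$ is scalar, so the simple group $\bar{L}=L/Z(L)$ is normal in~$G$ with $G\hookrightarrow\Aut(\bar{L})$. Thus~$G$ is almost simple with socle~$\bar{L}$, and~$\bar{L}$ has a faithful $3$-dimensional projective representation over~$\Bbbk$.

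It remains to run through the possibilities for~$\bar{L}$, and this is where the substantive work lies. In defining characteristic, Steinberg's tensor product theorem together with the dimensions of the fundamental Weyl modules leaves, up to a Frobenius twist, only $\mathrm{Sym}^2$ of the natural module of $\PSL_2$, the natural module of~$\PSL_3$, and the natural module of~$\PSU_3$; recording the subfield over which the representation is realized, and which field and diagonal automorphisms can be adjoined while staying inside $\PSL_3(\mathbf{F}_{p^n})$, produces the groups $\PSL_3(\mathbf{F}_{p^k})$, $\PGL_3(\mathbf{F}_{p^k})$, $\PSU_3(\mathbf{F}_{p^k})$ and $\PU_3(\mathbf{F}_{p^k})$ over the appropriate fields, whereas the $\PSL_2$-family is absorbed into the ``smooth conic'' alternative because $\mathrm{Sym}^2$ carries an invariant nondegenerate quadratic form. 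In cross characteristic one invokes the classification of simple groups with a faithful $3$-dimensional projective representation together with a reduction-modulo-$p$ analysis of the relevant quasisimple covers: the orthogonal group~$\mathfrak{A}_5$ again preserves a conic, $\PSL_2(\mathbf{F}_7)$ gives case~(5), and in characteristic~$5$ the exceptional $3$-dimensional representations of $3.\mathfrak{A}_6$ and $3.\mathfrak{A}_7$ account for cases~(6) and~(7). The main obstacle is exactly this bookkeeping: determining the precise fields of definition (the inequalities $k\le n$, $k<n$, $k\le\frac{n}{2}$), deciding which representations are self-dual and hence yield an invariant conic rather than a list entry, and verifying that the sporadic cross-characteristic embeddings occur inside $\PSL_3(\mathbf{F}_{p^n})$ exactly for the primes stated. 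By comparison, the reduction to the primitive case and the generalized-Fitting dichotomy are routine.
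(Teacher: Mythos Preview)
The paper does not prove this theorem at all: it is quoted as a classical result, with references to Mitchell's 1911 paper and King's survey, and is used as a black box in the proof of Lemma~\ref{lemma:P2-preliminary}. So there is no ``paper's own proof'' to compare against.

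That said, your sketch is a reasonable outline of the modern Aschbacher-style approach, which is genuinely different from Mitchell's original geometric argument. Mitchell works directly with homologies and elations in~$\PP^2$ and builds up the classification by hand, without any appeal to the structure theory of finite groups; your route instead reduces to the generalized Fitting subgroup and then invokes the list of simple groups with a faithful projective representation of degree~$3$. Your approach is cleaner conceptually, but it leans on the classification of finite simple groups (or at least on low-degree representation results that are usually proved using it), whereas Mitchell's is CFSG-free.

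One internal glitch: you write that $F(\tilde G)$ ``is either scalar (impossible, as then $\tilde G$ would be abelian) or an extension of scalars by an extraspecial group'', and then two sentences later say ``Otherwise $F(\tilde G)$ is scalar''. The parenthetical is misplaced: it is $F^*(\tilde G)$, not $F(\tilde G)$, whose being scalar forces $\tilde G$ to be abelian (via $C_{\tilde G}(F^*(\tilde G))\le F^*(\tilde G)$). The Fitting subgroup $F(\tilde G)$ can perfectly well be scalar in the almost-simple case; that is exactly when you need $E(\tilde G)\neq 1$. You clearly know this, since your subsequent analysis is correct, but the sentence as written is self-contradictory and should be fixed.
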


\begin{remark}
Theorem~\ref{theorem:Mitchell} does not claim that the groups from the list are indeed realized
as subgroups of $\PSL_3(\mathbf{F}_{p^n})$ for any given $p$ and $n$. For more detailed information
concerning this question we refer the reader to~\cite{Mitchell} and~\cite{King}.
\end{remark}

\begin{lemma}\label{lemma:triangle}
Let $\Bbbk$ be a field of characteristic $p>0$.
Let
$$
G\subset \Aut(\PP^2)\cong \PGL_3(\Bbbk)
$$
be a finite group.
Suppose that $G$ preserves a triangle.
Then $G$ contains a normal abelian subgroup
of order coprime
to $p$ and index at most~\mbox{$J\cdot |G_{(p)}|^{3}$},
where
\begin{equation*}
J=\begin{cases}
6, &\text{if\ } p\ge 5,\\
2, &\text{if\ } p=3,\\
3, &\text{if\ } p=2.
\end{cases}
\end{equation*}
\end{lemma}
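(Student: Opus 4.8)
The plan is to exploit the action of $G$ on the three vertices of the invariant triangle. Let $P_1,P_2,P_3$ be non-collinear points of $\PP^2$ over the algebraic closure $\bar{\Bbbk}$ of $\Bbbk$ such that $G$ permutes the set $\{P_1,P_2,P_3\}$. This gives a homomorphism $\phi\colon G\to\mathfrak{S}_3$ recording the action of $G$ on $\{P_1,P_2,P_3\}$; set $G_0=\ker\phi$. First I would check that $G_0$ is a normal abelian subgroup of $G$ of order coprime to $p$: after conjugating inside $\PGL_3(\bar{\Bbbk})$ so that $P_1,P_2,P_3$ become the three coordinate points, every element of $G_0$ is represented by a diagonal matrix, so $G_0$ is isomorphic to a subgroup of the diagonal torus of $\PGL_3(\bar{\Bbbk})$; hence $G_0$ is abelian, and since $\bar{\Bbbk}^*$ contains no nontrivial element of order $p$, the order of $G_0$ is coprime to $p$. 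By construction $[G:G_0]=|\phi(G)|\le|\mathfrak{S}_3|=6$, and $G_0$ is normal as the kernel of $\phi$.

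Next I would conclude. For $p\ge 5$ this is immediate: $[G:G_0]\le 6=J\le J\cdot|G_{(p)}|^3$, so $G_0$ is the desired subgroup. For $p\in\{2,3\}$ the crude bound $[G:G_0]\le 6$ may exceed $J$, so I would split into cases according to whether $p$ divides $|G|$. If $p\nmid|G|$, then $\phi(G)$ is a subgroup of $\mathfrak{S}_3$ of order coprime to $p$, hence has order at most $2$ when $p=3$ and at most $3$ when $p=2$; thus $[G:G_0]\le J$. If $p\mid|G|$, then $|G_{(p)}|\ge p$, so $J\cdot|G_{(p)}|^3\ge 2\cdot 3^3=54$ when $p=3$ and $J\cdot|G_{(p)}|^3\ge 3\cdot 2^3=24$ when $p=2$, and in either case this exceeds $6\ge[G:G_0]$. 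In all cases $G_0$ is a normal abelian subgroup of order coprime to $p$ whose index is at most $J\cdot|G_{(p)}|^3$.

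The argument presents essentially no obstacle: the structure of the stabilizer of three non-collinear points in $\PGL_3$ is elementary, and the only point needing a little attention is the case analysis for $p\in\{2,3\}$, forced by the fact that for these primes one cannot simply take $J=6$; it is resolved by the observation that a subgroup of $\mathfrak{S}_3$ whose order is prime to $p$ is small.
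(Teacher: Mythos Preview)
Your proof is correct and follows essentially the same approach as the paper: both use the permutation action on the three vertices to obtain an exact sequence $1\to G_0\to G\to\bar G\to 1$ with $G_0$ abelian of order coprime to $p$ and $\bar G\subset\mathfrak{S}_3$, and then bound the index. The only difference is cosmetic: the paper packages the numerical verification into the general Lemma~\ref{lemma:large-abelian} (checking $|\bar G|/|\bar G_{(p)}|^3\le J$ for each $|\bar G|\in\{1,2,3,6\}$), whereas you split directly on whether $p$ divides $|G|$; both arguments amount to the same inequality.
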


\begin{proof}
The group $G$ fits into an exact sequence
$$
1\to H\to G\to\bar{G}\to 1,
$$
where $H$ preserves three non-collinear points on $\PP^2$ over the algebraic closure~$\bar{\Bbbk}$
of~$\Bbbk$,
and $\bar{G}$ is a subgroup of $\mathfrak{S}_3$.
Therefore, we have
$$
|\bar{G}|\in\big\{1, 2, 3, 6\big\}.
$$
Hence
\begin{equation*}
\frac{|\bar{G}|}{|\bar{G}_{(p)}|^3}\le J.
\end{equation*}
On the other hand, the preimage of $H$ in $\GL_3\left(\bar{\Bbbk}\right)$
consists of the matrices which are diagonal in the basis corresponding to the three points
preserved by~$H$, which implies that $H$ is abelian.
Thus, the required assertion follows from Lemma~\ref{lemma:large-abelian}.
\end{proof}

\begin{lemma}\label{lemma:P2-preliminary}
Let $\Bbbk$ be a field of characteristic $p>2$.
Let
$$
G\subset \Aut(\PP^2)\cong \PGL_3(\Bbbk)
$$
be a finite group.
Suppose that $G$ preserves neither a point nor a line
on~$\PP^2$.
Then~$G$ contains a normal abelian subgroup
of order coprime
to $p$ and index at most~\mbox{$J\cdot |G_{(p)}|^{3}$},
where
\begin{equation*}
J=\begin{cases}
360, &\text{if\ } p\ge 7,\\
168, &\text{if\ } p=5,\\
7, &\text{if\ } p=3.\\
\end{cases}
\end{equation*}
\end{lemma}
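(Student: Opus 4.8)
The plan is to reduce to Theorem~\ref{theorem:Mitchell}. Since $G\subset\PGL_3(\Bbbk)$ is finite, it is conjugate to a subgroup defined over a finite subfield, so we may assume $G\subset\PGL_3(\mathbf{F}_{p^n})$ for some $n$. Replacing $G$ by its intersection with $\PSL_3(\mathbf{F}_{p^n})$ changes the index of any normal abelian subgroup by a factor dividing $\gcd(3,p^n-1)\le 3$, so it is harmless to absorb this into the constant (or, more carefully, to note $G\cap\PSL_3$ is normal of index at most $3$ in $G$ and argue on it first, then account for the factor $3$). Now Theorem~\ref{theorem:Mitchell} applies: since $G$ preserves neither a point nor a line, $G$ either preserves a triangle, or preserves a smooth conic, or is one of the exceptional groups in the list.

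First I would dispatch the two geometric cases. If $G$ preserves a triangle, Lemma~\ref{lemma:triangle} gives a normal abelian subgroup of index at most $6\cdot|G_{(p)}|^3$ (for $p\ge 5$; $2\cdot|G_{(p)}|^3$ for $p=3$), which is well within the claimed bounds. If $G$ preserves a smooth conic $C$, then $G$ acts faithfully on $C\cong\PP^1$, so $G$ embeds into $\PGL_2(\Bbbk)$ (here one uses $p\ne 2$ so that the conic is genuinely a rational curve with the expected automorphisms and the action is faithful). Then Lemma~\ref{lemma:P1} applies and yields a characteristic cyclic subgroup of order coprime to $p$ and index at most $J_p(\PP^1)\cdot|G_{(p)}|^3$, where $J_p(\PP^1)=60,24,4$ for $p\ge 7$, $p=5$, $p=3$. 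For $p\ge 7$ we get $60\le 360$; for $p=5$ we get $24\le 168$; for $p=3$ we get $4\le 7$. (One must remember to compensate for the $\PSL_3$-reduction: even after multiplying the index by $3$ these bounds still fit, e.g. $3\cdot 24=72\le 168$ and $3\cdot 4=12$ — wait, $12>7$.) So the cleaner route is to handle the factor $3$ more delicately: the subgroup of $G$ preserving the conic pointwise-orientation, or rather, note that $G\cap\PSL_3$ has index at most $3$, apply the $\PP^1$ bound to $G$ itself rather than to $G\cap\PSL_3$ — indeed Theorem~\ref{theorem:Mitchell} is stated for subgroups of $\PSL_3$, but a conic-preserving subgroup of $\PGL_3$ still acts on $\PP^1$ directly, so Lemma~\ref{lemma:P1} applies to all of $G$ with no loss. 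The same remark applies to the triangle case via Lemma~\ref{lemma:triangle}. So the reduction to $\PSL_3$ is only needed to invoke the classification of the non-geometric cases, and there the exceptional groups are so small that the extra factor of $3$ is absorbed by the constants.

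Next I would go through the exceptional groups of Theorem~\ref{theorem:Mitchell} one at a time, in each case exhibiting a normal abelian subgroup (often trivial) whose index fits the bound. For the Hessian group $\mumu_3^2\rtimes\SL_2(\mathbf{F}_3)$ and its subgroups containing $\mumu_3^2$ (possible only when $p\ne 3$): the bound needed is $360,168,\text{---}$ for $p\ge 7, p=5$; here Lemma~\ref{lemma:auxiliary-subgroups}(v) handles the group $\HHH_3\rtimes\SL_2(\mathbf{F}_3)$ of order $648$, which contains the Hessian group; alternatively, for a subgroup $K$ with $\mumu_3^2\subset K\subset\mumu_3^2\rtimes\SL_2(\mathbf{F}_3)$ one has an exact sequence $1\to\mumu_3^2\to K\to\bar K\to 1$ with $\bar K\subset\SL_2(\mathbf{F}_3)$, and Lemma~\ref{lemma:large-abelian} applies: for $p=5$, $|\bar K|\le 24\le 168$; for $p\ge 7$, $|\bar K|\le 24\le 360$; for $p=3$ this case does not occur. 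For $\PSL_2(\mathbf{F}_7)$ of order $168$: for $p\ge 7$, if $p\ne 7$ then the trivial subgroup has index $168\le 360$; if $p=7$ then $|G_{(7)}|=7$ and $168\le 360\cdot 7^3$; for $p=5$, $168=168\cdot|G_{(5)}|^3$ exactly; for $p=3$, $\PSL_2(\mathbf{F}_7)$ has $3$-Sylow of order $3$, and $168=7\cdot 24\le 7\cdot 27=7\cdot|G_{(3)}|^3$. For $\mathfrak{A}_6$ or a group containing it with index $2$ (only $p=5$): order $\le 720$; with $|G_{(5)}|=5$, $720\le 168\cdot 125$. For $\mathfrak{A}_7$ (only $p=5$): order $2520$; with $|G_{(5)}|=5$, $2520\le 168\cdot 125=21000$. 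For $\PSL_3(\mathbf{F}_{p^k})$, $\PGL_3(\mathbf{F}_{p^k})$, $\PSU_3(\mathbf{F}_{p^k})$, $\PU_3(\mathbf{F}_{p^k})$: these are (near-)simple groups with $|G|$ comparable to $p^{3k}\cdot(\text{small})$; the trivial subgroup has index $|G|$, and one checks $|G|\le|G_{(p)}|^3$ directly since $|G_{(p)}|=p^{3k}$ for the full linear and $p^{3k}$ for the unitary cases while $|\PGL_3(\mathbf{F}_{q})|=q^3(q^2-1)(q^3-1)<q^8<q^9=|G_{(p)}|^3$ — one just needs $|G|<|G_{(p)}|^3$ with a comfortable margin, and for the unitary groups the analogous estimate holds.

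The main obstacle I anticipate is the bookkeeping around the $\gcd(3,p^n-1)$ factor coming from the reduction $\PGL_3\rightsquigarrow\PSL_3$: the constant $J_3(\Cr_2)=10$ in the main theorem is tight, and $7$ here is not much room, so one must be careful that in the $p=3$ conic case one applies Lemma~\ref{lemma:P1} to $G$ directly (getting index $\le 4$) rather than to $G\cap\PSL_3$ (which would give $\le 12$ after multiplying back). Concretely, the resolution is that conic-preservation and triangle-preservation are properties that make sense for $G\subset\PGL_3$ and give actions of $G$ (not just $G\cap\PSL_3$) on $\PP^1$ or on a $3$-point set, so Lemmas~\ref{lemma:P1} and~\ref{lemma:triangle} apply to $G$ with no $\PSL_3$ detour; the classification theorem is invoked only to guarantee that $G\cap\PSL_3$ — and hence, one argues, $G$ itself, since the geometric properties descend and ascend appropriately — falls into one of the listed cases. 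A secondary nuisance is verifying $p\ne 2$ is genuinely used (it is: the hypothesis $p>2$ is needed both for Theorem~\ref{theorem:Mitchell} and to ensure a smooth conic behaves as expected), and double-checking the arithmetic $60\le 360$, $24\le 168$, $168=168\cdot 1$, etc., which is routine but must be done for every item on the list.
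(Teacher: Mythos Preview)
Your overall strategy---invoke Theorem~\ref{theorem:Mitchell} and run through the cases---matches the paper's. The case-by-case arithmetic for the exceptional groups is also essentially what the paper does (though your unitary-group estimate is only sketched; the paper computes it out and gets the constant $\tfrac{4}{3}$).

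The real issue is the detour through $G\cap\PSL_3(\mathbf{F}_{p^n})$. The paper avoids all of the factor-of-$3$ bookkeeping with one line: first pass to the algebraic closure $\bar{\Bbbk}$, where $\PGL_3(\bar{\Bbbk})\cong\PSL_3(\bar{\Bbbk})$ because every nonzero element has a cube root. Then $G$ itself sits in $\PSL_3(\mathbf{F}_{p^n})$ for some $n$, and Mitchell applies directly to $G$. No index-$3$ subgroup, no ``ascend/descend'' hand-waving.

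Without that simplification your argument has actual gaps, not just clutter. First, the hypothesis is that $G$ fixes no point or line, but this does not prevent $G'=G\cap\PSL_3$ from fixing one, so Mitchell applied to $G'$ may land you in a case you haven't discussed. Second, the assertion that if $G'$ preserves a conic or triangle then so does $G$ is not justified; $G$ only permutes the $G'$-invariant conics, and you would need uniqueness or a further argument. Third, the claim that the extra factor of $3$ is ``absorbed by the constants'' for the exceptional groups fails as stated: for $p=5$ and $G'\cong\PSL_2(\mathbf{F}_7)$ you would need $3\cdot 168\le 168$. (This particular case happens to be salvageable because any extension of $\PSL_2(\mathbf{F}_7)$ by $\mumu_3$ splits as a direct product, giving a normal $\mumu_3$ of index $168$---but you did not say this.) Incidentally, your specific worry ``$12>7$'' for $p=3$ is moot: when $p=3$ one has $\gcd(3,3^n-1)=1$, so $\PGL_3(\mathbf{F}_{3^n})=\PSL_3(\mathbf{F}_{3^n})$ and no factor of $3$ arises.

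In short: add the sentence ``we may assume $\Bbbk$ is algebraically closed, so $\PGL_3(\Bbbk)\cong\PSL_3(\Bbbk)$'' at the start, drop all mention of $G\cap\PSL_3$, and your proof becomes the paper's.
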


\begin{proof}
We may assume that the field $\Bbbk$ is algebraically closed.
Thus, one has
$$
\PGL_3(\Bbbk)\cong\PSL_3(\Bbbk),
$$
and hence $G$ is contained in
the group $\PSL_3(\mathbf{F}_{p^n})$ for some positive integer $n$.

If $G$ preserves a triangle, then the required assertion holds by Lemma~\ref{lemma:triangle}.
If $G$ preserves a smooth conic, then~\mbox{$G\subset\PGL_2(\Bbbk)$},
and the assertion holds by Lemma~\ref{lemma:P1}.
Otherwise, $G$ is a group of one of
types~\mbox{(1)--(8)} in the notation of Theorem~\ref{theorem:Mitchell}.

If $G$ is isomorphic to one of the groups
$\PSL_3(\mathbf{F}_{p^k})$ or
$\PGL_3(\mathbf{F}_{p^k})$, then~\mbox{$|G_{(p)}|=p^{3k}$}. Thus,
the index of the trivial subgroup in $G$ equals
$$
|G|\le |\PGL_3(\mathbf{F}_{p^k})|=p^{3k}(p^{3k}-1)(p^{2k}-1)<p^{9k}=|G_{(p)}|^3.
$$

If $G$ is isomorphic to one of the groups
$\PSU_3(\mathbf{F}_{p^k})$ or $\PU_3(\mathbf{F}_{p^k})$,
then~\mbox{$|G_{(p)}|=p^{3k}$}, while
$$
|G|\le p^{3k}(p^{3k}+1)(p^{2k}-1)(p^k+1),
$$
see e.g. \cite[\S3.6]{Wilson}. Thus,
the index of the trivial subgroup in $G$ equals
\begin{multline*}
|G|\le p^{3k}(p^{3k}+1)(p^{2k}-1)(p^k+1)=
p^{3k}(p^{6k}+p^{5k}-p^{4k}+p^{2k}-p^k-1)\\
\\ \le p^{9k}\cdot\left(1+\frac{1}{p^k}\right)\le
p^{9k}\cdot\left(1+\frac{1}{3}\right)=\frac{4}{3}\cdot |G_{(p)}|^3.
\end{multline*}

If $p\neq 3$, and $G$ is either the group $\mumu_3^2\rtimes\SL_2(\mathbf{F}_3)$, or one of
its subgroups containing~$\mumu_3^2$, then the index of the normal subgroup $\mumu_3^2$
in $G$ does not exceed
$$
\frac{|G|}{9}\le 24.
$$

If $G\cong\PSL_2(\mathbf{F}_7)$, then $|G|=168=2^3\cdot 3\cdot 7$. Thus, the index of the trivial subgroup
equals
$|G|=I \cdot |G_{(p)}|^3$, where $I$ does not exceed
$$
\begin{cases}
168, &\text{if\ } p\ge 5,\\
7, &\text{if\ } p=3.\\
\end{cases}
$$

If $p=5$, and $G$ is either isomorphic to $\mathfrak{A}_6$,
or contains $\mathfrak{A}_6$ as a subgroup of index $2$, then
$|G_{(5)}|=5$, and
the index of the trivial subgroup equals
$$
|G|\le 2\cdot |\mathfrak{A}_6|=720=\frac{720}{125}\cdot |G_{(5)}|^3<
6\cdot |G_{(5)}|^3
$$

Finally, if $p=5$ and $G\cong\mathfrak{A}_7$, then the index of the trivial subgroup equals
$$
|G|=2520=\frac{504}{25}\cdot |G_{(5)}|^3<21\cdot |G_{(5)}|^3.
$$
\end{proof}

\section{Conic bundles}
\label{section:CB}

In this section we bound the indices of normal abelian subgroups of finite
groups acting on conic bundles.

Let $S$ be a smooth geometrically irreducible projective surface,
and let~\mbox{$\phi\colon S\to C$}
be a surjective morphism to a smooth curve.
One says that~$\phi$ is a \emph{conic bundle},
if the fiber of $\phi$
over the scheme-theoretic generic point of $C$ is  smooth
and geometrically  irreducible, and the anticanonical line
bundle~$\omega^{-1}_S$ is very ample over $C$.

For the following fact, we refer the reader to~\mbox{\cite[Lemma~5.2]{Serre-2009}}
or~\mbox{\cite[Lemma~9.1]{ChenShramov}}.

\begin{lemma}\label{lemma:Serre}
Let $p$ be a prime number, and let
$\Bbbk$ be an algebraically closed field of characteristic $p$.
Let~\mbox{$H\subset\Aut(S)$} be a finite group,
and let $\phi\colon S\to C$
be an $H$-equivariant conic bundle.
Denote by $F$ the subgroup of $H$ which consists of the elements preserving every fiber of~$\phi$. Then for every element
$\lambda\in F$ of order coprime to $p$, and every element~\mbox{$\alpha\in H$}
which normalizes the cyclic group generated by $\lambda$, the element $\alpha^2$
commutes with~$\lambda$.
\end{lemma}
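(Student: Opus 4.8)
The plan is to reduce the statement to the geometry of the conic bundle fiberwise. The subgroup $F\subset H$ preserving every fiber acts on the base curve $C$ trivially, hence embeds into the automorphism group of the generic fiber, which is a smooth conic over the function field $\Bbbk(C)$; over the algebraic closure this conic is isomorphic to $\PP^1$. So in effect $F$ maps to a subgroup of $\PGL_2$ of the function field, and elements of $F$ of order coprime to $p$ act on each smooth fiber $\cong\PP^1$ with two fixed points (since a finite-order automorphism of $\PP^1$ in characteristic coprime to its order is diagonalizable). The key geometric input is: the element $\lambda\in F$ of order coprime to $p$ has a fixed locus in $S$ that maps finitely and surjectively onto $C$ through the fibers, and this fixed locus, being the union of the ``two sections'' of eigenlines fiberwise, is an unordered pair of irreducible curves (or one irreducible double cover of $C$) — in any case a well-defined $1$-dimensional subscheme $Z_\lambda\subset S$.

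First I would describe $Z_\lambda$ precisely: take $\lambda$ of order $m$ coprime to $p$, acting on a general fiber $\PP^1$ with eigenvalues giving two fixed points; the closure of the union of these fixed points over the generic fiber is a curve $Z_\lambda\subset S$ which is either irreducible and generically $2:1$ over $C$, or splits as $Z_\lambda^+\cup Z_\lambda^-$ with each component a section. Either way, $Z_\lambda$ is canonically determined by $\lambda$, and in fact by the cyclic group $\langle\lambda\rangle$, since all nontrivial powers of $\lambda$ have the same fixed points on each fiber. Next, since $\alpha$ normalizes $\langle\lambda\rangle$, the automorphism $\alpha$ preserves the fixed locus of $\langle\lambda\rangle$, hence $\alpha(Z_\lambda)=Z_\lambda$. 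If $Z_\lambda$ is irreducible and $2:1$ over $C$, then $\alpha$ either preserves or swaps the two branches; in either case $\alpha^2$ preserves each branch. If $Z_\lambda=Z_\lambda^+\cup Z_\lambda^-$, then again $\alpha$ either fixes or swaps the two sections, and $\alpha^2$ preserves each. So in all cases $\alpha^2$ preserves each of the two fiberwise eigenpoints of $\lambda$.

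Finally I would conclude as follows. On a general fiber $f\cong\PP^1$ the element $\alpha^2$ fixes the two points $f\cap Z_\lambda^{\pm}$ (or the two points of the irreducible $Z_\lambda$ lying over the corresponding point of $C$), and $\lambda|_f$ is a diagonal automorphism with exactly those two fixed points. Two elements of $\PGL_2$ that are diagonalizable in a common basis commute; since $\alpha^2|_f$ fixes both points of $\{f\cap Z_\lambda^+, f\cap Z_\lambda^-\}$ it is diagonal in the basis adapted to $\lambda|_f$, so $\alpha^2|_f$ and $\lambda|_f$ commute. As this holds on a dense open set of fibers, $\alpha^2$ and $\lambda$ commute as automorphisms of $S$.

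\textbf{Main obstacle.} The subtle point is handling the case where $\alpha$ \emph{swaps} the two eigensections of $\lambda$ (which can certainly happen for $\alpha$ itself, only $\alpha^2$ must behave): one must be careful that $Z_\lambda$ is well-defined as a scheme even at the finitely many singular (reducible) fibers of the conic bundle, and that ``preserving each branch of $Z_\lambda$'' genuinely forces the fiberwise diagonalization at a general fiber. Also one should make sure the fixed-point count is exactly $2$ on a general fiber — i.e. rule out $\lambda|_f=\mathrm{id}$ on a general fiber (which would make $\lambda$ act trivially on the generic fiber, hence $\lambda$ acts through the base, contradicting $\lambda\in F$ unless $\lambda=1$, in which case the claim is trivial). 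Both of these are standard once stated carefully, so the argument above should go through; the cleanest route is probably to pass to the generic fiber over $\Bbbk(C)$ and argue purely in $\PGL_2(\overline{\Bbbk(C)})$, avoiding the bad fibers entirely.
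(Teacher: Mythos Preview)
The paper does not give its own proof of this lemma; it simply refers to \cite[Lemma~5.2]{Serre-2009} and \cite[Lemma~9.1]{ChenShramov}. Your overall strategy --- study the fixed locus $Z_\lambda\subset S$, observe it is a degree-$2$ multisection that $\alpha$ must preserve, and deduce that $\alpha^2$ preserves each branch --- is exactly the argument found in those references.

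There is, however, a genuine gap in your final step. You write that ``on a general fiber $f\cong\PP^1$ the element $\alpha^2$ fixes the two points $f\cap Z_\lambda^{\pm}$'' and then treat $\alpha^2|_f$ as an element of $\PGL_2$. But $\alpha\in H$ need not lie in $F$: it may act nontrivially on the base $C$, so $\alpha^2$ need not send any closed fiber $f$ to itself, and the restriction $\alpha^2|_f$ is simply not defined. The commuting-diagonal-matrices argument therefore does not apply as stated. Your proposed alternative of passing to the generic fiber over $\Bbbk(C)$ has the same issue in disguise: $\alpha$ acts on $\PP^1_{\Bbbk(C)}$ only \emph{semilinearly} (via the automorphism of $\Bbbk(C)$ induced by $\bar\alpha$), not as an element of $\PGL_2(\Bbbk(C))$.

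The fix is short and stays within your framework. Since $\alpha$ normalizes $\langle\lambda\rangle$, write $\alpha\lambda\alpha^{-1}=\lambda^k$. The element $\lambda$ acts on the fiberwise normal direction along $Z_\lambda^{+}$ by a fixed primitive $m$-th root of unity $\zeta\in\Bbbk$ (locally constant, hence constant on the component), and along $Z_\lambda^{-}$ by $\zeta^{-1}$; in particular $Z_\lambda$ is genuinely reducible once $m\ge 3$. Conjugation by $\alpha$ transports this eigenvalue unchanged to the image point. Hence if $\alpha$ preserves $Z_\lambda^{+}$ one gets $\zeta^k=\zeta$, so $k\equiv 1\pmod m$; if $\alpha$ swaps the two components, $\zeta^k=\zeta^{-1}$, so $k\equiv -1$. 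Either way $\alpha^2\lambda\alpha^{-2}=\lambda^{k^2}=\lambda$. (Equivalently, in the generic-fiber picture the semilinear computation goes through precisely because $\zeta\in\Bbbk$ is fixed by the induced automorphism of $\Bbbk(C)$.)
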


The next assertion is a more precise version of~\mbox{\cite[Lemma~9.2]{ChenShramov}}.

\begin{proposition}\label{proposition:CB}
Let $\Bbbk$ be an algebraically closed field of characteristic $p>0$, and let
$S$ be a smooth rational projective surface over~$\Bbbk$.
Let~\mbox{$G\subset\Aut(S)$} be a finite group, and let
$$
\phi\colon S\to C
$$
be a $G$-equivariant conic bundle.
Then $G$ contains a normal abelian subgroup of order coprime
to $p$ and index at most~\mbox{$J_p^{\mathrm{cb}}\cdot |G_{(p)}|^{3}$},
where
\begin{equation}\label{eq:J-CB}
J_p^{\mathrm{cb}}=\begin{cases}
7200, &\text{if\ } p\ge 7,\\
144, &\text{if\ } p=5,\\
\frac{800}{81}, &\text{if\ } p=3,\\
2, &\text{if\ } p=2.
\end{cases}
\end{equation}
\end{proposition}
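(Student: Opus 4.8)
\emph{Plan of the proof.} Since $S$ is rational, so is the base curve $C$, hence $C\cong\PP^1$. The $G$-action on $C$ has kernel the subgroup $F\subset G$ of elements preserving every fiber of $\phi$, so there is an exact sequence
$$
1\to F\to G\to\bar G\to 1,\qquad \bar G\subset\Aut(C)\cong\PGL_2(\Bbbk).
$$
A general fiber of $\phi$ is isomorphic to $\PP^1$, and for each nontrivial $f\in F$ the fixed locus $\mathrm{Fix}(f)\subsetneq S$ has dimension at most $1$, hence contains only finitely many fibers; therefore a general fiber avoids $\bigcup_{f\in F\setminus\{1\}}\mathrm{Fix}(f)$, so $F$ acts faithfully on it and embeds into $\PGL_2(\Bbbk)$. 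Consequently $F$ is one of the groups listed in Theorem~\ref{theorem:ADE}, and $|G_{(p)}|=|F_{(p)}|\cdot|\bar G_{(p)}|$.

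First I would apply Lemma~\ref{lemma:P1} to $\bar G$ to obtain a characteristic cyclic subgroup $\bar A\subset\bar G$ of order coprime to $p$ and index at most $J_p(\PP^1)\cdot|\bar G_{(p)}|^3$, using the sharper values $J_5(\PP^1)=2$ and $J_3(\PP^1)=\tfrac{20}{9}$ whenever $\bar G$ is not one of the groups $\mathfrak S_4$, $\mathfrak A_4$, $\mumu_2^2$. Let $\tilde G\subset G$ be the preimage of $\bar A$; then $\tilde G$ is normal in $G$, contains $F$, and sits in an exact sequence $1\to F\to\tilde G\to\bar A\to 1$ with $\bar A$ cyclic of order coprime to $p$. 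Next I would apply to $\tilde G$ (in the role of the group $H$) the result of Section~\ref{section:extensions} appropriate to the type of $F$: Lemma~\ref{lemma:small-dihedral-group-extension} when $F\cong\mumu_2^2$, Lemma~\ref{lemma:dihedral-group-extension} when $F\cong\DD_{2n}$ with $n\ge 3$, Corollary~\ref{corollary:An-Sn-extension} when $F\in\{\mathfrak A_4,\mathfrak S_4,\mathfrak A_5\}$, Corollary~\ref{corollary:PSL-group-extension} when $F\cong\PSL_2(\mathbf{F}_{p^k})$ or $\PGL_2(\mathbf{F}_{p^k})$, and Lemma~\ref{lemma:Darafsheh-group-extension} when $F$ is of type~(5). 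The hypothesis in the latter three statements, that $\alpha^2$ commutes with the relevant element $\lambda\in F$ for each $\alpha\in\tilde G$ normalizing $\langle\lambda\rangle$, is supplied by Lemma~\ref{lemma:Serre} applied to $G\subset\Aut(S)$; in the dihedral case it is applied to a generator of the characteristic cyclic subgroup $F'\subset F$, which is normal in $\tilde G$ since $F$ is. Each of these statements produces an abelian subgroup $A\subset\tilde G$ of order coprime to $p$, of index at most $C_F\cdot|F_{(p)}|^3$ in $\tilde G$ for the corresponding explicit constant $C_F$, and preserved by $\Aut(\tilde G;F)$.

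Since conjugation by any element of $G$ is an automorphism of $\tilde G$ preserving $F$, it lies in $\Aut(\tilde G;F)$ and preserves $A$; hence $A$ is a normal abelian subgroup of $G$ of order coprime to $p$, of index
$$
\frac{|G|}{|A|}=[\bar G:\bar A]\cdot[\tilde G:A]\le J_p(\PP^1)\cdot C_F\cdot|\bar G_{(p)}|^3\cdot|F_{(p)}|^3=J_p(\PP^1)\cdot C_F\cdot|G_{(p)}|^3.
$$
It remains to compare $J_p(\PP^1)\cdot C_F$ with the constant $J_p^{\mathrm{cb}}$ of~\eqref{eq:J-CB}. For $p\ge 7$ the extreme value $C_F=120$, occurring for $F\cong\mathfrak A_5$, gives $60\cdot 120=7200$, and all other cases are smaller; for $p=2$ one verifies the bound $2\cdot|G_{(2)}|^3$ directly in each of the five cases. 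The delicate range is $p\in\{3,5\}$: when $\bar G$ is not one of the three exceptional groups the sharper value of $J_p(\PP^1)$ makes $J_p(\PP^1)\cdot C_F$ fit inside $J_p^{\mathrm{cb}}$, and when $\bar G\in\{\mathfrak S_4,\mathfrak A_4,\mumu_2^2\}$ one argues by hand using that $|\bar G|\le 24$ together with extra structure of $F$ — the characteristic subgroup $V_4\cong\mumu_2^2$ normal in $G$ when $F\in\{\mathfrak A_4,\mathfrak S_4\}$, the isomorphism $\mathfrak A_5\cong\PSL_2(\mathbf{F}_5)$ when $p=5$ (which lets one take $C_F=2$ in place of $48$), Theorem~\ref{theorem:normal-in-a-p-group} when $G$ is a $2$-group, or simply the trivial subgroup when $|G|$ is already small. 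I expect this last bit of constant bookkeeping for $p\in\{3,5\}$ to be the main obstacle; all the geometry is concentrated in the exact sequence above and in the input of Lemma~\ref{lemma:Serre}.
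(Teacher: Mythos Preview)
Your main case is correct and matches the paper's argument exactly: pull back a characteristic cyclic subgroup $\bar A\subset\bar G$ from Lemma~\ref{lemma:P1}, apply the appropriate extension lemma from Section~\ref{section:extensions} to the preimage $\tilde G$ (with Lemma~\ref{lemma:Serre} supplying the commutation hypothesis), and observe that $\Aut(\tilde G;F)$-invariance forces normality in $G$. The constant check $J_p(\PP^1)\cdot C_F\le J_p^{\mathrm{cb}}$ is also as in the paper.

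Where you diverge is the exceptional case $p\in\{3,5\}$ with $\bar G\in\{\mathfrak S_4,\mathfrak A_4,\mumu_2^2\}$. The paper's trick here is much simpler than your case-by-case sketch: instead of struggling with $\bar G$, apply Lemma~\ref{lemma:P1} to $F$. That lemma gives a \emph{characteristic} abelian subgroup $A\subset F$ of order coprime to $p$ and index at most $6\cdot|F_{(5)}|^3$ (for $p=5$) or $\frac{20}{9}\cdot|F_{(3)}|^3$ (for $p=3$). Being characteristic in the normal subgroup $F$, this $A$ is automatically normal in $G$, and since $|\bar G_{(p)}|=1$ one gets $[G:A]=|\bar G|\cdot[F:A]\le 24\cdot 6=144$ for $p=5$ and $\le 4\cdot\frac{20}{9}=\frac{80}{9}<\frac{800}{81}$ for $p=3$. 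No further case analysis on the type of $F$ is needed.

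One of your ad hoc suggestions is actually wrong: invoking Theorem~\ref{theorem:normal-in-a-p-group} when $G$ is a $2$-group does not give a uniform bound. For $p\in\{3,5\}$ and $\bar G\cong\mumu_2^2$, the fiber group $F$ can be $\DD_{2^k}$ or $\mumu_{2^k}$ for arbitrarily large $k$, so $|G|=2^{n}$ with $n$ unbounded; Theorem~\ref{theorem:normal-in-a-p-group} only guarantees a normal abelian subgroup of index roughly $2^{\,n-\sqrt{2n}}$, which exceeds $J_p^{\mathrm{cb}}$ once $n$ is large. The characteristic-subgroup-of-$F$ argument above handles these cases immediately (index $\le 8$ in $G$).
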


\begin{proof}
Since $S$ is rational, one has $C\cong\PP^1$.
There is an exact sequence of groups
$$
1\to F\to G\to \bar{G}\to 1,
$$
where the action of $F$ is fiberwise
with respect to $\phi$, while $\bar{G}$ acts faithfully on~$\PP^1$.

First, suppose that either $p\not\in \{3,5\}$, or $p=5$ and
$\bar{G}\not\in\{\mathfrak{S}_4,\mathfrak{A}_4, \mumu_2^2\}$, or~\mbox{$p=3$} and $\bar{G}\not\cong\mumu_2^2$.
By Lemma~\ref{lemma:P1} there exists a normal
cyclic subgroup~$\bar{H}$ in~$\bar{G}$
of order coprime to~$p$ whose index in $\bar{G}$ does not exceed
$J\cdot |\bar{G}_{(p)}|^3$, where~\mbox{$J=J_p(\PP^1)$}
is given by~\eqref{eq:constant-PGL2} if $p\not\in\{3, 5\}$,
one has $J=2$ if~\mbox{$p=5$}, and one has~\mbox{$J=\frac{20}{9}$} if~\mbox{$p=3$}.

Let $H$ be the preimage of $\bar{H}$ in $G$, so that there
is an exact sequence of groups
$$
1\to F\to H\to \bar{H}\to 1.
$$
In particular, $H$ is a normal subgroup of~$G$,
so that conjugation by any element of~$G$ gives an automorphism of~$H$.
Moreover, since $F$ is a normal subgroup of~$G$, such automorphisms of $H$ preserve $F$;
in other words, conjugation by any element of~$G$ gives an element of the group~\mbox{$\Aut(H;F)$}.
Thus, every subgroup of~$H$ which is preserved by~\mbox{$\Aut(H;F)$} is normal in~$G$.
Observe also that according to Lemma~\ref{lemma:Serre}, for every element
$\lambda\in F$ of order coprime to $p$, and every element~\mbox{$\alpha\in H$}
which normalizes the cyclic group generated by $\lambda$, the element~$\alpha^2$
commutes with~$\lambda$.

The group $F$ is of one of
types (1)--(5) in the notation of Theorem~\ref{theorem:ADE}.
If~$F$ is of type~(1)
and~\mbox{$F\cong\mumu_2^2$}, then $p\ge 3$, and
by Lemma~\ref{lemma:small-dihedral-group-extension} the group $H$ contains an abelian subgroup $A$ of order coprime to $p$ which is normal in $G$ and has index at most~$I$ in $H$, where $I=3$. In particular, the index of $A$ in $H$ does
not exceed~\mbox{$I\cdot |F_{(p)}|^3$}.
If $F\cong\DD_{2n}$ is a dihedral group of order $2n$ with $n\ge 3$,
then by Lemma~\ref{lemma:dihedral-group-extension} the group $H$ contains an abelian subgroup $A$ of order coprime to $p$ and index at most~$4$ which is normal in $G$.
Thus, the index of $A$ in $H$ does not exceed~\mbox{$I\cdot |F_{(p)}|^3$}, where
$$
I=\begin{cases}
4, &\text{if\ } p\ge 3,\\
1, &\text{if\ } p=2.
\end{cases}
$$
If $F$ is one of the groups
$\mathfrak{A}_4$, $\mathfrak{S}_4$, or $\mathfrak{A}_5$, then
by Corollary~\ref{corollary:An-Sn-extension} the group $H$ contains a cyclic subgroup $A$ of order coprime to $p$ and index at most $I\cdot |F_{(p)}|^3$ which is normal in $G$,
where
$$
I=\begin{cases}
120, &\text{if\ } p\ge 7,\\
48, &\text{if\ } p=5,\\
\frac{40}{9}, &\text{if\ } p=3,\\
2, &\text{if\ } p=2.
\end{cases}
$$
If $F$ is either one of the groups~\mbox{$\PSL_2(\mathbf{F}_{p^k})$}
or~\mbox{$\PGL_2(\mathbf{F}_{p^k})$}, or a group of type~(5) in the notation of
Theorem~\ref{theorem:ADE}, then by Corollary~\ref{corollary:PSL-group-extension}
and Lemma~\ref{lemma:Darafsheh-group-extension} the group $H$ contains an abelian subgroup $A$ of order coprime to $p$ and index at most $I\cdot |F_{(p)}|^3$ which is normal in $G$,
where $I=2$. In any case,
we see that
$$
I\cdot J\le J_p^{\mathrm{cb}}.
$$
Therefore, $A$ is a normal abelian subgroup of $G$ of order coprime to $p$ and
index at most
$$
I\cdot J\cdot |F_{(p)}|^3\cdot |\bar{G}_{(p)}|^3\le J_p^{\mathrm{cb}}\cdot |G_{(p)}|^3.
$$

Now suppose that $p=5$ and $\bar{G}\in\{\mathfrak{S}_4,\mathfrak{A}_4,\mumu_2^2\}$.
By Lemma~\ref{lemma:P1} there exists a characteristic abelian
subgroup $A$ in $F$ of order coprime to $5$
and index at most~\mbox{$6\cdot |F_{(5)}|^3$}. Therefore, $A$ is normal in
$G$, and its index in $G$ is
$$
\frac{|G|}{|A|}=
|\bar{G}|\cdot \frac{|F|}{|A|}\le
24\cdot 6\cdot |F_{(5)}|^3=
144\cdot  |G_{(5)}|^3=
J_5^{\mathrm{cb}}\cdot |G_{(5)}|^3.
$$

Finally,
suppose that $p=3$ and $\bar{G}\cong\mumu_2^2$.
By Lemma~\ref{lemma:P1} there exists a characteristic abelian
subgroup $A$ in $F$ of order coprime to $3$
and index at most~\mbox{$\frac{20}{9}\cdot |F_{(3)}|^3$}. Therefore, $A$ is normal in
$G$, and its index in $G$ is
$$
\frac{|G|}{|A|}=
|\bar{G}|\cdot \frac{|F|}{|A|}\le
4\cdot \frac{20}{9}\cdot |F_{(3)}|^3<J_3^{\mathrm{cb}}\cdot |G_{(3)}|^3.
$$
\end{proof}

\begin{remark}
The proof of~\mbox{\cite[Lemma~9.2]{ChenShramov}} followed roughly the same lines as the above proof of Proposition~\ref{proposition:CB}.
However, the abelian subgroup constructed in the former proof was claimed to be normal,
but its normality was not properly checked. Instead of filling this gap, we decided to present here a modified version of the proof based on a case by case study
of the possible subgroups~$F$ and the auxiliary assertions
provided in Section~\ref{section:extensions}.
\end{remark}

Proposition~\ref{proposition:CB} allows us to finalize our understanding
of finite groups acting on~$\PP^2$ in odd characteristic.

\begin{corollary}\label{corollary:P2}
Let $\Bbbk$ be a field of characteristic $p>2$.
Let
$$
G\subset \Aut(\PP^2)\cong \PGL_3(\Bbbk)
$$
be a finite group.
Then~$G$ contains a normal abelian subgroup
of order coprime
to $p$ and index at most~\mbox{$J\cdot |G_{(p)}|^{3}$},
where
\begin{equation}\label{eq:constant-PGL3-odd}
J=\begin{cases}
7200, &\text{if\ } p\ge 7,\\
168, &\text{if\ } p=5,\\
\frac{800}{81}, &\text{if\ } p=3.\\
\end{cases}
\end{equation}
\end{corollary}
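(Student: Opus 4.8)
The plan is to reduce to an algebraically closed field, apply the classification of Theorem~\ref{theorem:Mitchell}, and then glue together the two tools already at hand: Lemma~\ref{lemma:P2-preliminary} for the ``generic'' configurations and Proposition~\ref{proposition:CB} for the case of a $G$-fixed point or line. First I would note that the conclusion depends only on $G$ as an abstract group together with the prime $p$, so we may replace $\Bbbk$ by its algebraic closure; after doing so $\PGL_3(\Bbbk)\cong\PSL_3(\Bbbk)$, and since $G$ is finite it lies in $\PSL_3(\mathbf{F}_{p^n})$ for a suitable $n$, which makes Theorem~\ref{theorem:Mitchell} applicable.

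If $G$ preserves neither a point nor a line on $\PP^2$, then Lemma~\ref{lemma:P2-preliminary} already produces a normal abelian subgroup of order coprime to $p$ of index at most $360\cdot|G_{(p)}|^3$ for $p\ge 7$, at most $168\cdot|G_{(p)}|^3$ for $p=5$, and at most $7\cdot|G_{(p)}|^3$ for $p=3$; in each case this is no larger than the corresponding constant in~\eqref{eq:constant-PGL3-odd}, so this case needs no further work. (The remaining configuration subcases of Theorem~\ref{theorem:Mitchell}---$G$ preserving a triangle or a smooth conic, or $G$ being one of the exceptional groups---are all already subsumed in Lemma~\ref{lemma:P2-preliminary}, via Lemma~\ref{lemma:triangle}, Lemma~\ref{lemma:P1}, and the order estimates of Section~\ref{section:P2}, so nothing new is required.)

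The only remaining case is that $G$ fixes a point $P\in\PP^2$ or preserves a line $\ell\subset\PP^2$. These two subcases are interchanged by projective duality: passing from the given embedding $G\hookrightarrow\PGL_3(\Bbbk)\cong\Aut(\PP^2)$ to the contragredient one $G\hookrightarrow\PGL_3(\Bbbk)\cong\Aut\big((\PP^2)^\vee\big)$ carries a $G$-invariant line of $\PP^2$ to a $G$-fixed point of $(\PP^2)^\vee$ and does not change $G$ as an abstract group, so I may assume that $G$ fixes a point $P$. Blowing up $P$ yields a $G$-action on the rational surface $\mathrm{Bl}_P\PP^2\cong\mathbb F_1$, and projection away from $P$ realizes the ruling $\mathbb F_1\to\PP^1$ as a $G$-equivariant conic bundle (the anticanonical class restricts to $\mathcal O(2)$ on each fibre, so $\phi$ is a conic bundle in the sense of Section~\ref{section:CB}). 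Proposition~\ref{proposition:CB} then supplies a normal abelian subgroup of $G$ of order coprime to $p$ and index at most $J_p^{\mathrm{cb}}\cdot|G_{(p)}|^3$ with $J_p^{\mathrm{cb}}$ as in~\eqref{eq:J-CB}; since $J_p^{\mathrm{cb}}$ equals $7200$, $144$, $\frac{800}{81}$ for $p\ge 7$, $p=5$, $p=3$, and each of these is at most the corresponding value in~\eqref{eq:constant-PGL3-odd} (with equality for $p\ge 7$ and $p=3$), the proof is complete.

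The real content is carried by Lemma~\ref{lemma:P2-preliminary} and Proposition~\ref{proposition:CB}, and this corollary is essentially the glue; there is no serious obstacle. The one point requiring care is the uniform treatment of the point/line dichotomy: it is tempting to build a conic bundle directly from a $G$-invariant line, but a finite group acting on $\mathbb A^2$ need not have a fixed point, so the cleanest route is the duality reduction above. Note also that the constants in~\eqref{eq:constant-PGL3-odd} are dictated by the conic bundle estimate and cannot be improved within this argument.
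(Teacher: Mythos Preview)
Your argument is correct and follows essentially the same route as the paper: split into the case where $G$ fixes neither a point nor a line (handled by Lemma~\ref{lemma:P2-preliminary}) and the case where it does, reduce the line case to the point case by projective duality, and then blow up the fixed point to obtain a $G$-equivariant conic bundle on~$\mathbb{F}_1$ so that Proposition~\ref{proposition:CB} applies. One small inaccuracy in your closing commentary: for $p=5$ the constant $168$ in~\eqref{eq:constant-PGL3-odd} is not dictated by the conic bundle bound (which gives $144$) but by the $\PSL_2(\mathbf{F}_7)$ case inside Lemma~\ref{lemma:P2-preliminary}.
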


\begin{proof}
If $G$ preserves neither a point nor a line
on~$\PP^2$, then the required assertion holds by Lemma~\ref{lemma:P2-preliminary}.
If $G$ preserves a point $P\in \PP^2$, then $G$ acts on the blow up of $\PP^2$ at $P$, which
has a (unique) structure of a conic bundle over $\PP^1$. Therefore, $G$
contains a normal abelian subgroup
of order coprime
to $p$ and index at most~\mbox{$J_p^{\mathrm{cb}}\cdot |G_{(p)}|^{3}$},
where $J_p^{\mathrm{cb}}\le J$ is given by~\eqref{eq:J-CB}.
Finally, if $G$ preserves a line on $\PP^2$, then $G$ acts on the projectively dual plane $(\PP^2)^\vee\cong\PP^2$
with a fixed point, and the previous argument applies.
\end{proof}

\section{Del Pezzo surfaces}
\label{section:dP}

In this section we bound the indices of normal abelian subgroups of finite
groups acting on del Pezzo surfaces. Although a complete classification
of automorphism groups of del Pezzo surfaces over any algebraically closed field
is known (see~\cite{DD}, \cite{DM-odd}, and~\cite{DM-2}),
we avoid using it as much as possible. Instead, we rely only on the
most general results concerning del Pezzo surfaces and their automorphism groups
which are valid in arbitrary characteristic. In particular, we recall that
the automorphism group of any del Pezzo surface of degree~$5$, $4$, and~$3$ is a subgroup
of the Weyl group~$\mathrm{W}(\mathrm{A}_4)$, $\WD$, and~$\WE$, respectively, see
e.g.~\mbox{\cite[Corollary 8.2.40]{Dolgachev}} or~\mbox{\cite[\S2.2]{DM-odd}}.

Let us recall the following general result.

\begin{theorem}[{see \cite[Th\'eor\`eme 5.1 and Lemme 5.2]{Serre-2008}}]
\label{theorem:Serre-lift}
Let $S$ be a smooth geometrically rational projective
surface over a field of characteristic $p>0$, and let
$G\subset \Aut(S)$ be a finite group whose order is coprime to $p$.
Then there exists a smooth projective geometrically rational surface $\hat{S}$
over a field of characteristic~$0$ such that~\mbox{$G\subset\Aut(\hat{S})$}.
Furthermore, if $S$ is a del Pezzo surface, then for~$\hat{S}$ one can take a del Pezzo
surface of the same degree.
\end{theorem}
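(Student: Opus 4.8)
The plan is to reduce at once to the case of an algebraically closed base field: since we only need to produce $\hat S$ over \emph{some} field of characteristic zero, we may replace $\Bbbk$ by its algebraic closure, over which $G$ still acts on $S$. After that the idea is to lift the pair $(S,G)$ to mixed characteristic by $G$-equivariant deformation theory. Concretely, let $W=W(\Bbbk)$ be the ring of Witt vectors of $\Bbbk$, a complete discrete valuation ring with residue field $\Bbbk$ and fraction field $K$ of characteristic zero. I would build a smooth projective scheme $\mathcal S\to\Spec W$ carrying a compatible fibrewise $G$-action with special fibre $(S,G)$; then $\hat S=\mathcal S\times_W K$ is a smooth projective surface over the characteristic-zero field $K$ with $G\subset\Aut(\hat S)$, and it remains to check rationality and the del Pezzo refinement.

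The key steps are as follows. First, because $|G|$ is coprime to $p$, taking $G$-invariants of $\Bbbk[G]$-modules is exact, so the tangent and obstruction spaces of the $G$-equivariant deformation functor of $S$ are $H^1(S,T_S)^G$ and $H^2(S,T_S)^G$. Second, for a smooth rational surface one has $H^2(S,T_S)=0$: by Serre duality $H^2(S,T_S)\cong H^0(S,\Omega^1_S\otimes\omega_S)^\vee$, and the blow-up formula reduces this to the minimal models $\PP^2$ and the Hirzebruch surfaces, where the vanishing is immediate. Hence the equivariant deformation functor is unobstructed and there is a formal $G$-equivariant lift of $S$ over $W$. Third, I would lift a $G$-linearized ample line bundle on $S$ — the obstruction to lifting a line bundle lies in $H^2(S,\mathcal O_S)=0$, and a linearization can be arranged by averaging over $G$ since $|G|$ is invertible — so the formal scheme is algebraizable; a $G$-equivariant form of Grothendieck's existence theorem then produces the required projective $\mathcal S\to\Spec W$ with its $G$-action.

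For rationality of $\hat S$, the irregularity $q$ and the bigenus $P_2$ are upper semicontinuous in the flat family $\mathcal S\to\Spec W$ and vanish on the special fibre $S$, hence vanish on the geometric generic fibre; Castelnuovo's rationality criterion, valid in characteristic zero, shows that $\hat S$ is geometrically rational. For the del Pezzo case one simply takes the polarization above to be the anticanonical class: ampleness of $-K_S$ is an open condition, so $-K_{\hat S}$ is ample, and $K^2$ is locally constant in flat families, so $\hat S$ is a del Pezzo surface of the same degree as $S$.

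The main obstacle I expect is the equivariant algebraization step — passing from the formal lift with $G$-action to an honest projective scheme over $W$ carrying the action — which is exactly where the coprimality hypothesis and the vanishings $H^2(S,T_S)=H^2(S,\mathcal O_S)=0$ are used. (For del Pezzo surfaces of degree at least $3$ one could instead bypass deformation theory by presenting $S$ as a blow-up of $\PP^2$ or $\PP^1\times\PP^1$ at a $G$-stable configuration of points in general position and lifting the configuration together with the linear $G$-action, which is automatically liftable by ordinary representation theory over $W$; but handling degrees $1$ and $2$ uniformly makes the deformation-theoretic route preferable.) In any case, this is precisely what is carried out in \cite[Th\'eor\`eme 5.1 and Lemme 5.2]{Serre-2008}, so in the paper we simply invoke that result.
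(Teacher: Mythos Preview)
The paper does not give its own proof of this statement: it is quoted with the attribution ``see \cite[Th\'eor\`eme~5.1 and Lemme~5.2]{Serre-2008}'' and used as a black box. Your proposal correctly recognizes this, and the deformation-theoretic sketch you outline (lift over Witt vectors using unobstructedness from $H^2(S,T_S)=0$, algebraize via a lifted $G$-linearized ample bundle, then invoke semicontinuity and Castelnuovo's criterion) is indeed the argument behind Serre's result; your final sentence matches exactly what the paper does.
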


Now we consider automorphism groups of del Pezzo surfaces case by case,
depending on the degree.

\begin{lemma}\label{lemma:dP6}
Let $S$ be a del Pezzo surface of degree $6$ over an algebraically closed field
of characteristic $p>0$.
Then every group $G\subset\Aut(S)$ contains a normal abelian subgroup of order coprime to $p$ and index at most
$J\cdot |G_{(p)}|^3$, where
\begin{equation}\label{eq:dP6}
J=\begin{cases}
12, &\text{if\ } p\ge 5,\\
4, &\text{if\ } p=3,\\
3, &\text{if\ } p=2.
\end{cases}
\end{equation}
\end{lemma}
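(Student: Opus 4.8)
The plan is to use the classical description of the automorphism group of a del Pezzo surface of degree $6$ and then handle the ``tame'' and ``wild'' parts of $G$ separately. Recall that over an algebraically closed field the automorphism group of a del Pezzo surface $S$ of degree $6$ fits into an exact sequence
$$
1\to \mathbb{T}\to \Aut(S)\to \mathfrak{S}_2\times\mathfrak{S}_3\to 1,
$$
where $\mathbb{T}\cong(\mathbb{G}_m)^2$ is the torus acting on the open orbit, the $\mathfrak{S}_3$ permutes the three $(-1)$-curves meeting in a ``triangle'' (equivalently, acts on the hexagon of $(-1)$-curves), and the $\mathfrak{S}_2$ is the Bertini-type involution; in fact $\Aut(S)\cong \mathbb{T}\rtimes(\mathfrak{S}_2\times\mathfrak{S}_3)$. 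This holds in every characteristic (see e.g.~\cite[Corollary 8.5.6]{Dolgachev}).

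First I would intersect $G$ with the torus: set $H=G\cap\mathbb{T}$, which is a finite abelian subgroup of $(\mathbb{G}_m)^2(\Bbbk)$. Since $\mathbb{G}_m(\Bbbk)$ has no $p$-torsion, the group $H$ has order coprime to $p$, and $G$ fits into an exact sequence
$$
1\to H\to G\to \bar{G}\to 1
$$
with $\bar{G}\subset\mathfrak{S}_2\times\mathfrak{S}_3\cong\DD_{12}$, so $|\bar{G}|\in\{1,2,3,4,6,12\}$ and in particular $|\bar G|$ divides $12$. Here $H$ is abelian of order coprime to $p$, so it is in particular the product of its $q$-Sylow subgroups for $q\ne p$; thus $H$ is automatically the ``prime-to-$p$ part'' and Lemma~\ref{lemma:large-abelian} applies directly with $e=3$. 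Concretely, I would invoke Lemma~\ref{lemma:large-abelian}: it yields a normal abelian subgroup of order coprime to $p$ and index at most
$$
J'\cdot |G_{(p)}|^3,\qquad J'=\frac{|\bar G|}{|\bar G_{(p)}|^3}.
$$

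It remains to check the arithmetic: for each prime $p$ one has $J'\le J$ with $J$ as in~\eqref{eq:dP6}. If $p\ge 5$, then $\bar G$ has order coprime to $p$, so $J'=|\bar G|\le 12$. If $p=3$, then $|\bar G_{(3)}|\in\{1,3\}$ and a case check over $|\bar G|\in\{1,2,3,4,6,12\}$ gives $J'=\max\{1,2,1,4,2,4/9\}=4$ (the worst case is $|\bar G|=4$ with trivial $3$-part). If $p=2$, then $|\bar G_{(2)}|\in\{1,2,4\}$ and the same check gives $J'=\max\{1,1/4,3,1/2,3/4,12/64\}=3$ (the worst case is $|\bar G|=3$ with trivial $2$-part). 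In all cases $J'\le J$, and the lemma follows. The only potential subtlety is confirming that $\Aut(S)$ really is an extension of a subgroup of $\mathfrak{S}_2\times\mathfrak{S}_3$ by a subtorus in all positive characteristics, including the possibility that the surface is not the ``generic'' one --- but the action on the hexagon of $(-1)$-curves always gives a homomorphism to the dihedral group of order $12$ with abelian (toric) kernel, which is all that is needed here; no appeal to~\cite{DD}, \cite{DM-odd}, \cite{DM-2} is necessary, and Theorem~\ref{theorem:Serre-lift} is likewise not needed since the argument is uniform in $p$.
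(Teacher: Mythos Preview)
Your proof is correct and follows essentially the same approach as the paper's: both use the exact sequence $1\to(\Bbbk^*)^2\to\Aut(S)\to\mathfrak{S}_3\times\mumu_2\to 1$, set up the short exact sequence for $G$, and apply Lemma~\ref{lemma:large-abelian} after checking that $|\bar G|/|\bar G_{(p)}|^3\le J$. A couple of the intermediate values in your case checks are mis-listed (e.g.\ for $p=3$, $|\bar G|=3$ gives $J'=1/9$ rather than $1$, and for $p=2$, $|\bar G|=4$ gives $J'=1/16$ rather than $1/2$), but these slips are harmless since the maxima $4$ and $3$ are unaffected.
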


\begin{proof}
The automorphism group of $S$ fits into an exact sequence
$$
1\to (\Bbbk^*)^2\to\Aut(S)\to \mathfrak{S}_3\times\mumu_2\to 1,
$$
see for instance \cite[Theorem~8.4.2]{Dolgachev} or \cite[3.1.4]{DM-odd}.
Thus, any finite subgroup $G\subset\Aut(S)$
fits into an exact sequence
$$
1\to H\to G\to\bar{G}\to 1,
$$
where $H$ is abelian, and $\bar{G}\subset \mathfrak{S}_3\times\mumu_2$.
We have
$$
|\bar{G}|\in\big\{1,2,3,4,6,12\big\}.
$$
Hence
$$
\frac{|\bar{G}|}{|\bar{G}_{(p)}|^3}\le J.
$$
Now the assertion follows from Lemma~\ref{lemma:large-abelian}.
\end{proof}

\begin{lemma}\label{lemma:dP5}
Let $S$ be a del Pezzo surface of degree $5$ over a field
of characteristic $p>0$.
Then every group $G\subset\Aut(S)$ contains a normal abelian subgroup of order
coprime to $p$
and index at most $J\cdot |G_{(p)}|^3$, where $J$ is given by~\eqref{eq:auxiliary-subgroups}.
\end{lemma}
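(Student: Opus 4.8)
The plan is to recall that a del Pezzo surface $S$ of degree $5$ has $\Aut(S)\cong\mathfrak{S}_5$ (over any algebraically closed field; see e.g. the references cited in the excerpt), acting on the surface realized as the blow-up of $\PP^2$ at four points in general position, with $\mathfrak{S}_5$ acting on the Picard lattice as the Weyl group $\mathrm{W}(\mathrm{A}_4)\cong\mathfrak{S}_5$. First I would reduce to the algebraically closed case: a finite subgroup $G\subset\Aut(S)$ over an arbitrary field embeds into $\Aut(S_{\bar\Bbbk})$ for the base change to an algebraic closure, and the base change of a del Pezzo surface of degree $5$ is again a del Pezzo surface of degree $5$; neither the statement's conclusion nor the constant $J$ depends on the field, so it suffices to prove the claim when $\Bbbk$ is algebraically closed. (Alternatively, one invokes Theorem~\ref{theorem:Serre-lift} together with Theorem~\ref{theorem:BrauerFeit} for the $p$-part, but the direct base-change argument is cleaner here since the full automorphism group is already so small.)

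Next, with $\Bbbk$ algebraically closed, I would quote the structural fact that $\Aut(S)\cong\mathfrak{S}_5$, so that every finite subgroup $G\subset\Aut(S)$ is a subgroup of $\mathfrak{S}_5$. At this point the statement is an immediate consequence of Lemma~\ref{lemma:auxiliary-subgroups}(i): that lemma gives a normal abelian subgroup of order coprime to $p$ and index at most $J\cdot|G_{(p)}|^3$ with $J$ exactly as in~\eqref{eq:auxiliary-subgroups}, which is what is claimed. So the bulk of the proof is just the reduction plus the citation of the two prior results.

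The one point that requires care — and which I expect to be the main (minor) obstacle — is pinning down precisely which reference guarantees $\Aut(S)\cong\mathfrak{S}_5$ in \emph{positive} characteristic, including $p=2,3,5$. This is where the excerpt's remark about using ``only the most general results which are valid in arbitrary characteristic'' matters: the identification of $\Aut(S)$ with $\mathrm{W}(\mathrm{A}_4)\cong\mathfrak{S}_5$ for degree $5$ holds in all characteristics (it is classical that a degree-$5$ del Pezzo surface is unique and its automorphism group is the full Weyl group $\mathrm{W}(\mathrm{A}_4)$), and it is recorded in the sources cited just above in the excerpt (Dolgachev's book, and~\cite{DM-odd}, \cite{DD}). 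I would cite those for the isomorphism $\Aut(S)\cong\mathfrak{S}_5$ and then apply Lemma~\ref{lemma:auxiliary-subgroups}(i) to conclude. One should also note explicitly that the uniqueness of the degree-$5$ del Pezzo surface (again valid in every characteristic) makes the base-change step harmless: $S_{\bar\Bbbk}$ is \emph{the} degree-$5$ del Pezzo surface over $\bar\Bbbk$, so its automorphism group is $\mathfrak{S}_5$ regardless of $p$.

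\begin{proof}
We may assume that the field $\Bbbk$ is algebraically closed: indeed, a finite subgroup $G\subset\Aut(S)$ acts on the base change $S_{\bar\Bbbk}$, which is again a del Pezzo surface of degree $5$, and the assertion to be proved as well as the constant $J$ do not depend on the field.

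It is classical that over an algebraically closed field of arbitrary characteristic there is a unique del Pezzo surface of degree $5$, and its automorphism group is the Weyl group $\mathrm{W}(\mathrm{A}_4)\cong\mathfrak{S}_5$; see for instance~\mbox{\cite[\S8.5]{Dolgachev}} or~\mbox{\cite[\S2.2]{DM-odd}}. Thus $G$ is isomorphic to a subgroup of $\mathfrak{S}_5$.

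Now the required assertion is exactly Lemma~\ref{lemma:auxiliary-subgroups}(i): the group $G$ contains a normal abelian subgroup of order coprime to $p$ and index at most $J\cdot|G_{(p)}|^3$, where $J$ is given by~\eqref{eq:auxiliary-subgroups}.
\end{proof}
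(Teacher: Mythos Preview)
Your proof is correct and follows essentially the same approach as the paper: reduce to $G\subset\mathfrak{S}_5$ and apply Lemma~\ref{lemma:auxiliary-subgroups}(i). The paper is slightly more economical in that it invokes only the general fact (recalled just before the lemma) that $\Aut(S)$ embeds into the Weyl group $\mathrm{W}(\mathrm{A}_4)\cong\mathfrak{S}_5$ over any field, so neither the base-change step nor the stronger claim $\Aut(S)\cong\mathfrak{S}_5$ is needed.
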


\begin{proof}
We know that $\Aut(S)$ is isomorphic to a subgroup of the Weyl group
$$
\mathrm{W}(\mathrm{A}_4)\cong \mathfrak{S}_5.
$$
Therefore, the required assertion follows from Lemma~\ref{lemma:auxiliary-subgroups}(i).
\end{proof}

\begin{lemma}\label{lemma:dP4-forbidden}
Let $S$ be a del Pezzo surface of degree $4$ over a field of characteristic $3$.
Then the automorphism group $\Aut(S)$ does not contain subgroups isomorphic to
$\mumu_2^4\rtimes(\mumu_5\rtimes\mumu_4)$.
\end{lemma}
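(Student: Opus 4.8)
The plan is to combine the well-known structure of the automorphism group of a degree~$4$ del Pezzo surface with the classification of finite subgroups of $\PGL_2$ in Theorem~\ref{theorem:ADE}. First I would reduce to the case of an algebraically closed field: base change gives an embedding $\Aut(S)\hookrightarrow\Aut\big(S\times_{\Bbbk}\bar{\Bbbk}\big)$, and $S\times_{\Bbbk}\bar{\Bbbk}$ is again a del Pezzo surface of degree~$4$, so it suffices to treat the case $\Bbbk=\bar{\Bbbk}$ of characteristic~$3$. In this case $S$ is a smooth complete intersection of two quadrics in $\PP^4$, and, since the characteristic is not~$2$, in suitable coordinates the pencil of quadrics through $S$ is spanned by $\sum_{i=0}^{4}x_i^2$ and $\sum_{i=0}^{4}\lambda_ix_i^2$ with pairwise distinct $\lambda_i$, the distinctness being equivalent to smoothness of $S$. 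The group $\Aut(S)$ acts on this pencil, a copy of $\PP^1$, permuting its five singular members; this produces a homomorphism $\pi\colon\Aut(S)\to\PGL_2(\Bbbk)$ whose kernel $K$ consists of the sign changes $\mathrm{diag}(\pm1,\dots,\pm1)$ taken modulo $\pm\mathrm{id}$, so $K\cong\mumu_2^4$ (see e.g.~\cite{Dolgachev} or~\cite{DM-odd}). Consequently $W:=\Aut(S)/K$ is isomorphic to a finite subgroup of $\PGL_2(\Bbbk)$.

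The group-theoretic heart of the argument is the following claim about $G=\mumu_2^4\rtimes R$, where $R=\mumu_5\rtimes\mumu_4$ is the Frobenius group of order~$20$ and $R$ acts on $\mumu_2^4$ through the permutation representation inherited from $R\subset\mathfrak{S}_5$: the only normal $2$-subgroups of $G$ are $\{1\}$ and the distinguished subgroup $\mumu_2^4$. To prove it I would identify $\mumu_2^4$ with the additive group of $\mathbf{F}_{16}$ so that a generator of $\mumu_5\subseteq R$ acts by multiplication by a primitive fifth root of unity; this is possible because the restriction of $\mumu_2^4$ to $\mumu_5$ is the augmentation ideal of $\mathbf{F}_2[\mumu_5]$, which is $\mathbf{F}_2[x]/(x^4+x^3+x^2+x+1)$, and $x^4+x^3+x^2+x+1$ is irreducible over $\mathbf{F}_2$ since $2$ has order~$4$ modulo~$5$. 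Hence $\mumu_2^4$ is an irreducible and faithful $\mathbf{F}_2[R]$-module, so its only $R$-invariant subgroups are $\{1\}$ and $\mumu_2^4$, and $C_G(\mumu_2^4)=\mumu_2^4$. Now if $N\trianglelefteq G$ is a $2$-subgroup, then $N\cap\mumu_2^4$ is $R$-invariant, hence $\{1\}$ or $\mumu_2^4$; in the first case $[N,\mumu_2^4]\subseteq N\cap\mumu_2^4=\{1\}$ gives $N\subseteq C_G(\mumu_2^4)=\mumu_2^4$, so $N=\{1\}$, while in the second case $N\supseteq\mumu_2^4$ and $N/\mumu_2^4$ is a normal $2$-subgroup of $R$, necessarily trivial since the proper nontrivial normal subgroups of the Frobenius group $R$ are $\mumu_5$ and $\DD_{10}$; thus $N=\mumu_2^4$.

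Finally, assume $\iota\colon G\hookrightarrow\Aut(S)$ is an embedding. Then $\iota^{-1}(K)$ is a normal $2$-subgroup of $G$ (as $K$ is a normal $2$-subgroup of $\Aut(S)$), so by the claim it equals $\{1\}$ or $\mumu_2^4$. If $\iota^{-1}(K)=\{1\}$, then $\pi\circ\iota$ embeds $G$, and in particular $\mumu_2^4$, into $\PGL_2(\Bbbk)$; but by Theorem~\ref{theorem:ADE} every finite subgroup of $\PGL_2(\Bbbk)$ in characteristic~$3$ is isomorphic to one of the groups listed there, and $\mumu_2^4$ is isomorphic to none of them (it is abelian of order~$16$, which excludes types~(2),(3),(4) and the dihedral groups $\DD_{2n}$ with $n\ge 3$, while $\DD_4\cong\mumu_2^2$ has order~$4$ and the groups of type~(5) of order~$16$ are cyclic), a contradiction. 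If $\iota^{-1}(K)=\mumu_2^4$, then $\iota(\mumu_2^4)\subseteq K$ with $|\iota(\mumu_2^4)|=16=|K|$, so $\iota(\mumu_2^4)=K$; hence $K\subseteq\iota(G)$ and $\pi(\iota(G))\cong\iota(G)/K\cong G/\mumu_2^4\cong R$, so $\mumu_5\rtimes\mumu_4$ embeds into $\PGL_2(\Bbbk)$, which is again impossible by Theorem~\ref{theorem:ADE} (a non-abelian group of order~$20$ possessing an element of order~$4$ is not dihedral, has the wrong order for types~(2),(3),(4), and is non-cyclic, excluding type~(5)). Either way we reach a contradiction, so the lemma follows. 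I expect the delicate point to be the description of $\pi$ and the identification $K\cong\mumu_2^4$ in characteristic~$3$ — which I would take from the references on automorphism groups of del Pezzo surfaces valid in all characteristics $\neq2$ — together with the module computation in the second paragraph; matching $\mumu_2^4$ and $R$ against Theorem~\ref{theorem:ADE} is then routine bookkeeping.
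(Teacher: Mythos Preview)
Your proof is correct and follows the same strategy as the paper's primary argument: use the structure $\Aut(S)/\mumu_2^4\hookrightarrow\PGL_2(\Bbbk)$ coming from the pencil of quadrics, then invoke Theorem~\ref{theorem:ADE} to rule out $\mumu_5\rtimes\mumu_4\subset\PGL_2(\Bbbk)$ in characteristic~$3$; the paper compresses your structural and group-theoretic analysis (the identification of $K$ and the dichotomy on $\iota^{-1}(K)$) into a one-line citation of~\cite[\S3]{DD}. The paper also records an alternative proof you might note: since $|G|=320$ is coprime to~$3$, Theorem~\ref{theorem:Serre-lift} lifts the action to a degree-$4$ del Pezzo surface in characteristic~$0$, where the classification in~\cite[\S8.6.4]{Dolgachev} already excludes this group.
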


\begin{proof}
Suppose that $\Aut(S)$ contains a subgroup $G\cong\mumu_2^4\rtimes(\mumu_5\rtimes\mumu_4)$.
Then there is a faithful action of $\mumu_5\rtimes\mumu_4$ on $\PP^1$, see for instance~\mbox{\cite[\S3]{DD}}.
However, the latter is impossible by Theorem~\ref{theorem:ADE}.

Alternatively, one can apply Theorem~\ref{theorem:Serre-lift} to obtain a del Pezzo
surface $\hat{S}$ of degree $4$ over a field of characteristic $0$
such that $G\subset\Aut(\hat{S})$, and get a contradiction
with the classification~\mbox{\cite[\S8.6.4]{Dolgachev}}.
\end{proof}

\begin{corollary}\label{corollary:dP4}
Let $S$ be a del Pezzo surface of degree $4$ over a field
of characteristic $p>0$.
Then every group $G\subset\Aut(S)$ contains a normal abelian subgroup of order
coprime to $p$
and index at most $J\cdot |G_{(p)}|^3$, where $J$ is given by~\eqref{eq:auxiliary-subgroups}.
\end{corollary}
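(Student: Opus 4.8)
The plan is to reduce this immediately to a purely group-theoretic statement and then quote Lemma~\ref{lemma:auxiliary-subgroups}(ii). As recalled at the beginning of Section~\ref{section:dP}, the automorphism group of a del Pezzo surface of degree~$4$ is isomorphic to a subgroup of the Weyl group~$\WD$; since $G\subset\Aut(S)$ embeds into $\Aut(S_{\bar{\Bbbk}})$, this holds whether or not $\Bbbk$ is algebraically closed, so $G$ may be regarded as a subgroup of $\WD$.

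Next I would identify $\WD\cong\mathrm{W}(\mathrm{D}_5)$ with the semi-direct product $\mumu_2^4\rtimes\mathfrak{S}_5$, where the normal subgroup $\mumu_2^4$ is the group of even sign changes and $\mathfrak{S}_5$ permutes the five coordinates; equivalently, $\mumu_2^4$ is the sum-zero subspace of the $5$-dimensional permutation module over $\mathbb{F}_2$, which is precisely the module appearing in the hypothesis of Lemma~\ref{lemma:auxiliary-subgroups}(ii). Applying that lemma to $G\subset\mumu_2^4\rtimes\mathfrak{S}_5$ then yields a normal abelian subgroup of $G$ of order coprime to $p$ and index at most $J\cdot|G_{(p)}|^{3}$ in every case except the single exceptional possibility $p=3$ and $G\cong\mumu_2^4\rtimes(\mumu_5\rtimes\mumu_4)$.

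To complete the argument I would rule out that exception: Lemma~\ref{lemma:dP4-forbidden} states exactly that the automorphism group of a del Pezzo surface of degree~$4$ in characteristic~$3$ contains no subgroup isomorphic to $\mumu_2^4\rtimes(\mumu_5\rtimes\mumu_4)$, so this case cannot occur, and the proof is finished. The only delicate point --- and essentially the sole place where something could go wrong --- is checking that the $\mathfrak{S}_5$-action on $\mumu_2^4$ inside $\WD$ is genuinely the permutation action demanded by Lemma~\ref{lemma:auxiliary-subgroups}(ii) rather than some other action; this follows from the standard structure theorem $\mathrm{W}(\mathrm{D}_n)\cong\mumu_2^{n-1}\rtimes\mathfrak{S}_n$, so in practice this presents no real obstacle and the corollary is a short deduction from the two cited lemmas.
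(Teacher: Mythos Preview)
Your proof is correct and follows essentially the same approach as the paper: identify $\Aut(S)$ with a subgroup of $\WD\cong\mumu_2^4\rtimes\mathfrak{S}_5$, apply Lemma~\ref{lemma:auxiliary-subgroups}(ii), and eliminate the exceptional case $p=3$, $G\cong\mumu_2^4\rtimes(\mumu_5\rtimes\mumu_4)$ via Lemma~\ref{lemma:dP4-forbidden}. Your extra remark verifying that the $\mathfrak{S}_5$-action on $\mumu_2^4$ in $\WD$ is the permutation action is a reasonable point of care that the paper leaves implicit.
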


\begin{proof}
We know that $\Aut(S)$ is isomorphic to a subgroup of the Weyl group
$$
\WD\cong\mumu_2^4\rtimes\mathfrak{S}_5.
$$
Therefore, by Lemma~\ref{lemma:auxiliary-subgroups}(ii) the group $G$ contains a normal abelian subgroup of order coprime to $p$
and index at most $J\cdot |G_{(p)}|^3$, unless $p=3$
and~\mbox{$G\cong\mumu_2^4\rtimes(\mumu_5\rtimes\mumu_4)$}. The latter case is impossible
by Lemma~\ref{lemma:dP4-forbidden}.
\end{proof}

Next, we discuss the groups acting on del Pezzo surfaces of degree~$3$ or, in other
words, smooth cubic surfaces.

\begin{lemma}\label{lemma:dP3-forbidden}
Let $S$ be a del Pezzo surface of degree $3$ over a field $\Bbbk$ of characteristic $p>0$.
The following assertions hold.
\begin{itemize}
\item[(i)] If $p\neq 2$, then $\Aut(S)$ does not contain subgroups isomorphic
to $\mumu_2^4$.

\item[(ii)] If $p\ge 5$, then $\Aut(S)$ does not contain subgroups isomorphic
to~\mbox{$\HHH_3\rtimes\SL_2(\mathbf{F}_3)$},
and subgroups of order $192$, $288$, and $576$.

\item[(iii)] If $p\ge 5$, and $G\subset \Aut(S)$ is a subgroup of order $162$, then
$G$ is isomorphic to a subgroup of $ \mumu_3^3\rtimes\mathfrak{S}_4$.
\end{itemize}
\end{lemma}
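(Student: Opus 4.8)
The plan is to reduce all three assertions to the classification of automorphism groups of smooth cubic surfaces over an algebraically closed field of characteristic zero, using the lifting result of Theorem~\ref{theorem:Serre-lift}. First note that it is enough to treat the case when $\Bbbk$ is algebraically closed, since $\Aut(S)$ embeds into the automorphism group of the cubic surface $S\times_{\Bbbk}\bar{\Bbbk}$. The key elementary observation is that every finite group mentioned in the statement has order coprime to $p$ in the relevant range: $|\mumu_2^4|=16$ is coprime to every odd prime, while $|\HHH_3\rtimes\SL_2(\mathbf{F}_3)|=648=2^3\cdot 3^4$, and the integers $192$, $288$, $576$, $162$ are all of the form $2^a3^b$, hence coprime to every prime $p\ge 5$. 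Consequently, if a group $G$ as in~(i), (ii), or~(iii) were contained in $\Aut(S)$, then by Theorem~\ref{theorem:Serre-lift} it would embed into $\Aut(\hat{S})$ for some smooth cubic surface $\hat{S}$ over a field of characteristic zero, and hence into the automorphism group of $\hat{S}$ over an algebraic closure of that field.

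Next I would invoke the classical description of automorphism groups of smooth cubic surfaces over an algebraically closed field of characteristic zero (see e.g.~\cite[\S9.5]{Dolgachev}); only two consequences of it are needed. First, the order of the automorphism group of any such surface has $2$-part at most $2^3$. Second, if $3^4$ divides this order, then the automorphism group is isomorphic to the automorphism group $\mumu_3^3\rtimes\mathfrak{S}_4$ of the Fermat cubic, which has order $648$. Assertion~(i) is then immediate: if $\mumu_2^4\subset\Aut(\hat{S})$, then $16$ would divide $|\Aut(\hat{S})|$, contradicting the bound on its $2$-part; the same bound rules out subgroups of $\Aut(\hat{S})$ of orders $192$, $288$, and $576$ as well, since each of these is divisible by $16$, which settles those parts of~(ii). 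For the remaining part of~(ii), suppose that $\HHH_3\rtimes\SL_2(\mathbf{F}_3)$ is a subgroup of $\Aut(\hat{S})$; then $648$ divides $|\Aut(\hat{S})|$, so $\Aut(\hat{S})\cong\mumu_3^3\rtimes\mathfrak{S}_4$, and comparing orders we would obtain $\HHH_3\rtimes\SL_2(\mathbf{F}_3)\cong\mumu_3^3\rtimes\mathfrak{S}_4$. This is impossible: since $|\HHH_3|$ and $|\mumu_3^3|$ are odd, a Sylow $2$-subgroup of $\HHH_3\rtimes\SL_2(\mathbf{F}_3)$ is isomorphic to a Sylow $2$-subgroup of $\SL_2(\mathbf{F}_3)$, namely the quaternion group $Q_8$, whereas a Sylow $2$-subgroup of $\mumu_3^3\rtimes\mathfrak{S}_4$ is isomorphic to a Sylow $2$-subgroup of $\mathfrak{S}_4$, namely $\DD_8$, and $Q_8$ is not isomorphic to $\DD_8$. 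Finally, for~(iii): if $|G|=162=2\cdot 3^4$, then $3^4$ divides $|\Aut(\hat{S})|$, so again $\Aut(\hat{S})\cong\mumu_3^3\rtimes\mathfrak{S}_4$, and therefore $G$ is isomorphic to a subgroup of $\mumu_3^3\rtimes\mathfrak{S}_4$, as claimed.

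No step in this argument is particularly deep. The points that require care are essentially organizational: checking that the surface $\hat{S}$ produced by Theorem~\ref{theorem:Serre-lift} is again a del Pezzo surface of degree $3$, so that the characteristic-zero classification of cubic surface automorphisms applies; verifying that the relevant finite subgroup is unaffected by base change to the algebraic closure; and extracting from the classification exactly the two numerical facts used above. The only genuine (and very short) group-theoretic computation is the distinction, via their Sylow $2$-subgroups, between the two abstract groups of order $648$ that could a priori act on $\hat{S}$.
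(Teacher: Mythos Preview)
Your proof is correct and follows essentially the same approach as the paper: lift to characteristic zero via Theorem~\ref{theorem:Serre-lift} (using that the orders in question are coprime to $p$), then appeal to the classification of automorphism groups of smooth cubic surfaces over algebraically closed fields of characteristic zero. The paper packages the classification slightly differently (``all orders are at most $120$ except for $\mumu_3^3\rtimes\mathfrak{S}_4$ of order $648$'') rather than your $2$-part and $3$-part statements, but the content is the same; your Sylow $2$-subgroup argument distinguishing $\HHH_3\rtimes\SL_2(\mathbf{F}_3)$ from $\mumu_3^3\rtimes\mathfrak{S}_4$ is in fact more explicit than the paper's bare assertion that they are non-isomorphic.
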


\begin{proof}
Suppose that $p\neq 2$, and $\Aut(S)$ contains a subgroup $G\cong\mumu_2^4$.
By Theorem~\ref{theorem:Serre-lift}, there exists a
del Pezzo surface of degree $3$ over a field of characteristic $0$ with an action of $G$.
The latter is impossible, see for instance~\mbox{\cite[Table~9.6]{Dolgachev}}.
This proves assertion~(i). Alternatively, one can observe that $G$ acts on
$H^0(S, -K_S)\cong\Bbbk^4$. Since $p\neq 2$, this action can be diagonalized, and
now it is easy to check that any cubic polynomial which is preserved by $G$ up to
a scalar is reducible.

Now suppose that $p\ge 5$, and $G\subset \Aut(S)$ is a group whose order is coprime to~$p$.
Again by Theorem~\ref{theorem:Serre-lift}, there exists a
del Pezzo surface of degree $3$ over a field of characteristic $0$ with an action of $G$,
so that $G$ must be a subgroup of one of the groups listed in~\mbox{\cite[Table~9.6]{Dolgachev}}.
However, we see that the orders of all the groups in this table do not exceed $120$,
with the only exception of the group~\mbox{$F=\mumu_3^3\rtimes\mathfrak{S}_4$} of order $648$.
Since $192$, $288$, and $576$ do not divide~$648$, and~$F$ is not isomorphic to $\HHH_3\rtimes\SL_2(\mathbf{F}_3)$, this proves assertions~(ii) and~(iii).
\end{proof}

\begin{corollary}\label{corollary:dP3}
Let $S$ be a del Pezzo surface of degree $3$ over a field
of characteristic $p>0$.
Then every group $G\subset\Aut(S)$ contains a normal abelian subgroup of order
coprime to $p$
and index at most $J\cdot |G_{(p)}|^3$, where $J$ is given by~\eqref{eq:auxiliary-subgroups}.
\end{corollary}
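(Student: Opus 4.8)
The plan is to use the inclusion $\Aut(S)\subset\WE$ together with the known list of maximal subgroups of $\WE$, reducing each case to Lemma~\ref{lemma:auxiliary-subgroups} and invoking the geometric restrictions of Lemma~\ref{lemma:dP3-forbidden} to dispose of the exceptional configurations that appear there. First I would record the relevant group theory. The commutator subgroup of $\WE$ is a simple group $\WE^+\cong\PSU_4(\mathbf{F}_2)$ of index~$2$, whose maximal subgroups are, up to conjugacy, exactly the five groups appearing in parts~(iii)--(vii) of Lemma~\ref{lemma:auxiliary-subgroups}: the group $\mumu_2^4\rtimes\mathfrak{A}_5$ of order~$960$, the group~$\mathfrak{S}_6$, the groups $\HHH_3\rtimes\SL_2(\mathbf{F}_3)$ and $\mumu_3^3\rtimes\mathfrak{S}_4$ of order~$648$, and a group of order~$576$. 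The maximal subgroups of $\WE$ itself are $\WE^+$ together with $\WD\cong\mumu_2^4\rtimes\mathfrak{S}_5$ of order~$1920$, the group $\mumu_2\times\mathfrak{S}_6$ of order~$1440$, two subgroups of order~$1296$ each containing one of the two order-$648$ groups above with index~$2$, and a subgroup of order~$1152$ containing the order-$576$ group with index~$2$.

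Given a finite subgroup $G\subset\Aut(S)\subset\WE$, I would argue according to the smallest maximal subgroup of $\WE$ containing $G$, treating $G=\WE$ separately. Suppose first that $G\subset\WE^+$. Then either $G=\WE^+$, or $G$ lies in one of the five maximal subgroups of $\WE^+$ listed above, and the desired bound for $G$ is then furnished directly by the corresponding part of Lemma~\ref{lemma:auxiliary-subgroups}; its only exceptional subcases occur for $p=5$, where a subgroup isomorphic to $\HHH_3\rtimes\SL_2(\mathbf{F}_3)$, or of order $192$, $288$ or $576$, is impossible inside $\Aut(S)$ by Lemma~\ref{lemma:dP3-forbidden}(ii), while a subgroup of order $162$ embeds into $\mumu_3^3\rtimes\mathfrak{S}_4$ by Lemma~\ref{lemma:dP3-forbidden}(iii) and is covered by part~(vi). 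When $G=\WE^+$ or $G=\WE$ the trivial subgroup already works: for $p\ge 5$ this is excluded by Lemma~\ref{lemma:dP3-forbidden}(ii), since $\WE^+$ contains a subgroup of order~$576$, and for $p\in\{2,3\}$ one has $|\WE|=2^7\cdot 3^4\cdot 5\le J\cdot|\WE_{(p)}|^{3}$.

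It remains to treat $G$ contained in $\WD$, in $\mumu_2\times\mathfrak{S}_6$, or in a maximal subgroup $M$ of order~$1152$ or~$1296$. For $G\subset\WD$ the bound is Lemma~\ref{lemma:auxiliary-subgroups}(ii), and its only exception, $p=3$ and $G\cong\mumu_2^4\rtimes(\mumu_5\rtimes\mumu_4)$, cannot occur because such a $G$ contains a copy of $\mumu_2^4$, contradicting Lemma~\ref{lemma:dP3-forbidden}(i); this reproduces Corollary~\ref{corollary:dP4}. For $G\subset\mumu_2\times\mathfrak{S}_6$ I would apply Lemma~\ref{lemma:product} with $\Gamma_1=\mathfrak{S}_6$ (using part~(iv), which produces \emph{normal} abelian subgroups) and $\Gamma_2=\mumu_2$, which is its own characteristic abelian subgroup; the factor contributed by $\mumu_2$ is~$1$, so the resulting constant is at most the~$J$ of~\eqref{eq:auxiliary-subgroups}. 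For $G\subset M$ I would pass to $G^+=G\cap\WE^+$, a normal subgroup of index at most~$2$ in $G$ lying in one of the order-$576$ or order-$648$ subgroups of~$\WE^+$: for $p=5$ the divisibility constraints force $G$, or a subgroup of it mapping into $\WE^+$, to have order in $\{162,192,288,576,648,1152\}$, and each such order is either ruled out by Lemma~\ref{lemma:dP3-forbidden}(ii) or reduces to part~(vi) through Lemma~\ref{lemma:dP3-forbidden}(iii); for $p\in\{2,3\}$ the orders $|G|\mid|M|$ are small enough that the bound follows from elementary estimates, Theorem~\ref{theorem:normal-in-a-p-group} applied to the $p$-core of $G$, and Corollary~\ref{corollary:Chermak-Delgado}.

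The main obstacle is precisely this last family. Unlike every earlier del Pezzo degree, here one must descend through an index-$2$ subgroup $G^+\trianglelefteq G$, and an abelian subgroup that Lemma~\ref{lemma:auxiliary-subgroups} yields as merely normal in $G^+$ need not be normal in $G$. The resolution is twofold: the possibilities surviving for $p=5$ are eliminated outright by Lemma~\ref{lemma:dP3-forbidden}, and in the remaining small-order cases for $p\in\{2,3\}$ one takes the abelian subgroup to be \emph{characteristic} in a suitable normal subgroup of $G$ — a $p$-core, or the centre of one — so that its normality in $G$ is automatic. Checking that this combination of exclusions and $p$-group estimates is genuinely exhaustive, leaving no gap between the maximal-subgroup list, the constraints of Lemma~\ref{lemma:dP3-forbidden}, and the cases handled by Lemma~\ref{lemma:auxiliary-subgroups}, is the laborious core of the proof.
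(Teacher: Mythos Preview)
Your plan is workable in outline but misses a structural shortcut the paper exploits. The paper uses the isomorphism $\WE\cong\mumu_2\times\PSU_4(\mathbf{F}_2)$ as a \emph{direct} product: any subgroup of $\mumu_2\times H$ is isomorphic either to a subgroup of $H$ or to $\mumu_2\times G'$ with $G'\subset H$, so one reduces at once to the case $G\subset\PSU_4(\mathbf{F}_2)$. This dissolves your entire index-$2$ descent --- the passage to $G^+=G\cap\WE^+$, the concern about whether a subgroup normal in $G^+$ remains normal in $G$, and the separate handling of the maximal subgroups of $\WE$ of orders $1152$, $1296$, $1440$, $1920$. After the reduction the paper's argument is short: if $G=\PSU_4(\mathbf{F}_2)$ then $p=2$ (for $p\neq 2$ one gets $\mumu_2^4\subset G$, contradicting Lemma~\ref{lemma:dP3-forbidden}(i)) and the trivial subgroup suffices; otherwise $G$ lies in one of the five maximal subgroups of $\PSU_4(\mathbf{F}_2)$ you listed, and Lemma~\ref{lemma:auxiliary-subgroups}(iii)--(vii) applies directly, with Lemma~\ref{lemma:dP3-forbidden}(ii),(iii) disposing of the $p=5$ exceptions just as you outline.

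As written, your proposal has a real gap precisely where you flag the difficulty. In the order-$1296$ case with $p=5$ and $|G^+|=162$, invoking Lemma~\ref{lemma:dP3-forbidden}(iii) and Lemma~\ref{lemma:auxiliary-subgroups}(vi) produces an abelian subgroup normal only in $G^+$; the subgroup obtained (the intersection with $\mumu_3^3$, of order~$27$) is strictly smaller than the $3$-Sylow of $G^+$ and is not obviously characteristic, so its normality in $G$ is unproven. For $p\in\{2,3\}$ the appeal to ``elementary estimates, Theorem~\ref{theorem:normal-in-a-p-group}, and Corollary~\ref{corollary:Chermak-Delgado}'' is a placeholder rather than an argument --- and note that Corollary~\ref{corollary:Chermak-Delgado} squares the index and adds a factor $|G_{(p)}|$, which is too crude to recover the constants in~\eqref{eq:auxiliary-subgroups}. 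These issues can probably be patched with further casework, but the direct-product reduction makes them vanish.
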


\begin{proof}
We know that $\Aut(S)$ is isomorphic to a subgroup of the Weyl group
$$
\WE\cong\mumu_2\times\PSU_4(\mathbf{F}_2).
$$
Hence $G$ is isomorphic either to a subgroup of $\PSU_4(\mathbf{F}_2)$, or
to $\mumu_2\times G'$, where~$G'$ is a subgroup of $\PSU_4(\mathbf{F}_2)$.
Therefore, it is enough to prove the assertion in the case when
$G\subset\PSU_4(\mathbf{F}_2)$.
If $p=2$ and $G=\PSU_4(\mathbf{F}_2)$, then
$$
|G|=25920=2^6\cdot 3^4\cdot 5< 2^{18}=|G_{(2)}|^3.
$$
If $p\neq 2$, then $G\neq \PSU_4(\mathbf{F}_2)$ by Lemma~\ref{lemma:dP3-forbidden}(i).

Therefore, we may suppose that $G\subsetneq\PSU_4(\mathbf{F}_2)$.
According to~\mbox{\cite[p.~26]{Atlas}},
this implies that $G$ is either a subgroup of
$\mumu_2^4\rtimes\mathfrak{A}_5$, where
the action of $\mathfrak{S}_5$ on~$\mumu_2^4$ comes from the permutation representation;
or of~$\mathfrak{S}_6$; or a subgroup of
$\HHH_3\rtimes\SL_2(\mathbf{F}_3)$; or a subgroup of
$\mumu_3\rtimes\mathfrak{S}_4$; or a subgroup of
a group of order $576$.
Thus, we know from assertions (iii)--(vii)
of Lemma~\ref{lemma:auxiliary-subgroups} that $G$ contains a normal abelian subgroup of order
coprime to $p$
and index at most $J\cdot |G_{(p)}|^3$, unless $p=5$ and either
$G\cong \HHH_3\rtimes\SL_2(\mathbf{F}_3)$, or~\mbox{$|G|=162$}, or $|G|\in\{192, 288, 576\}$.
According to Lemma~\ref{lemma:dP3-forbidden}(ii), the first and the third
of these cases are impossible, so that we can assume $|G|=162$.
Then by Lemma~\ref{lemma:dP3-forbidden}(iii)
we have that $G$ is isomorphic to a subgroup of
$ \mumu_3^3\rtimes\mathfrak{S}_4.
$
In this case $G$ contains a normal abelian subgroup of order
coprime to $p$ and index at most $J\cdot |G_{(p)}|^3$
by Lemma~\ref{lemma:auxiliary-subgroups}(vi).
\end{proof}

\begin{remark}
Over an algebraically closed field of characteristic $2$ (and actually over the finite
field~$\mathbb{F}_4$) there exists a del Pezzo surface of degree~$3$
with an action of the group $\PSU_4(\mathbf{F}_2)$. This is the Fermat cubic surface, i.e.
the cubic surface defined by equation
$$
x^3+y^3+z^3+t^3=0
$$
in the projective space $\PP^3$ with homogeneous coordinates $x$, $y$, $z$, and $t$.
We refer the reader to \cite[Lemma~5.1]{DD} for more details.
\end{remark}

\begin{lemma}\label{lemma:dP2}
Let $S$ be a del Pezzo surface of degree $2$ over a field
of characteristic $p>0$.
Let $J_p(\PP^2)$ be a constant such that every finite group~\mbox{$F\subset\PGL_3(\Bbbk)$}
contains a normal abelian subgroup of order
coprime to $p$ and index at most~\mbox{$J_p(\PP^2)\cdot |F_{(p)}|^3$},
cf.~\eqref{eq:constant-PGL3-odd}.
Then every group $G\subset\Aut(S)$ contains a normal abelian subgroup of order
coprime to $p$
and index at most $J_p(\PP^2)\cdot |G_{(p)}|^3$.
\end{lemma}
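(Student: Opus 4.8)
The plan is to use the anticanonical double cover $S\to\PP^2$ to transfer the bound from $\PP^2$ to $S$; the only subtle point will be that the resulting central extension must be shown to be abelian.

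We may assume that $\Bbbk$ is algebraically closed. Since $S$ is a del Pezzo surface of degree $2$, the linear system $|-K_S|$ is base point free and defines a finite degree $2$ morphism $\pi\colon S\to\PP^2$; in odd characteristic $\pi$ is separable, so it is a genuine double cover whose deck transformation is the Geiser involution $\iota\in\Aut(S)$. The element $\iota$ is central in $\Aut(S)$, and the kernel of the induced homomorphism $\Aut(S)\to\Aut(\PP^2)\cong\PGL_3(\Bbbk)$ equals $\langle\iota\rangle$ (see e.g.~\cite[\S8.7]{Dolgachev}). Given a finite subgroup $G\subset\Aut(S)$, put $Z=G\cap\langle\iota\rangle$; this is a central subgroup of $G$ of order at most $2$, and $\bar G=G/Z$ is isomorphic to a finite subgroup of $\PGL_3(\Bbbk)$. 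By the assumption on $J_p(\PP^2)$, the group $\bar G$ contains a normal abelian subgroup $\bar A$ of order coprime to $p$ and index at most $J_p(\PP^2)\cdot|\bar G_{(p)}|^3$. Let $\tilde A\subset G$ be the preimage of $\bar A$; it is normal in $G$, and since $Z$ is central it fits into a central extension $1\to Z\to\tilde A\to\bar A\to 1$. If $Z$ is trivial, then $G$ itself embeds into $\PGL_3(\Bbbk)$ and we are done at once; so from now on we may assume $Z\cong\mumu_2$.

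The key step is to show that $\tilde A$ is abelian. The group $\Aut(S)$ acts by graded automorphisms on the anticanonical ring $R=\bigoplus_{n\ge 0}H^0(S,-nK_S)$, which in odd characteristic is the quotient of $\Bbbk[x_0,x_1,x_2,w]$, with $\deg x_i=1$ and $\deg w=2$, by a relation $w^2=f_4(x_0,x_1,x_2)$, where $f_4$ cuts out the smooth branch quartic of $\pi$. The action on $R_1=H^0(S,-K_S)$ is a genuine linear representation $\rho\colon\tilde A\to\GL_3(\Bbbk)$ whose kernel is exactly the set of automorphisms acting trivially on $|-K_S|\cong\PP^2$, i.e.\ $\ker\rho=\langle\iota\rangle$. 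Since $\bar A$ is abelian we have $[\tilde A,\tilde A]\subset\langle\iota\rangle=\ker\rho$, so $\rho(\tilde A)$ is abelian. Moreover, as $f_4$ is not a perfect square and $\Bbbk$ has odd characteristic, each $g\in\tilde A$ sends $w$ to a scalar multiple $\epsilon(g)\cdot w$, and $\epsilon\colon\tilde A\to\Bbbk^*$ is a homomorphism with $\epsilon(\iota)=-1$. The homomorphism $(\rho,\epsilon)\colon\tilde A\to\GL_3(\Bbbk)\times\Bbbk^*$ is injective, because an element of its kernel lies in $\ker\rho=\langle\iota\rangle$ and is annihilated by $\epsilon$, hence is trivial; its image is contained in the abelian subgroup $\rho(\tilde A)\times\epsilon(\tilde A)$. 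Therefore $\tilde A$ is abelian.

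It then remains to collect the estimates. The subgroup $\tilde A$ is abelian and normal in $G$, and its order equals $|Z|\cdot|\bar A|$, which is coprime to $p$ because $p$ is odd (so $|Z|\le 2$ is coprime to $p$) and $|\bar A|$ is coprime to $p$ by construction. Since $|Z|$ is coprime to $p$ one has $|G_{(p)}|=|\bar G_{(p)}|$, so the index of $\tilde A$ in $G$ is $[\bar G:\bar A]\le J_p(\PP^2)\cdot|\bar G_{(p)}|^3=J_p(\PP^2)\cdot|G_{(p)}|^3$, as desired. I expect the step proving that $\tilde A$ is abelian to be the only non-routine one: a central extension of an abelian group by $\mumu_2$ need not be abelian, and what rescues us is precisely the geometry of the double cover, encoded in the two homomorphisms $\rho$ and $\epsilon$ coming from the degree $1$ and degree $2$ parts of the anticanonical ring, which together embed $\tilde A$ into a product of abelian groups.
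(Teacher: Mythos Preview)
Your argument is correct for odd $p$, which is the only case you treat and the only case the paper actually uses. The paper's proof, however, takes a shorter route: it invokes the structural fact that the anticanonical double cover yields a global splitting $\Aut(S)\cong\Gamma\times\mumu_2$ with $\Gamma$ a finite subgroup of $\PGL_3(\Bbbk)$ (this holds in every positive characteristic by the references to \cite{DM-2} and \cite{DM-odd}). Given that splitting, any $G\subset\Gamma\times\mumu_2$ is either isomorphic to some $F\subset\Gamma$ (when the projection to $\mumu_2$ is trivial or when $(1,\iota)\notin G$, so that the first projection is injective) or to $F\times\mumu_2$; in either case the bound for $F$ transfers immediately.

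Your approach avoids citing the splitting and instead proves directly that the preimage $\tilde A$ of the normal abelian $\bar A\subset\bar G$ is abelian, via the injective homomorphism $(\rho,\epsilon)\colon\tilde A\to\GL_3(\Bbbk)\times\Bbbk^*$ coming from the degree-$1$ and degree-$2$ pieces of the anticanonical ring. This is in effect a local reconstruction of the splitting: the same pair $(\rho,\epsilon)$, applied to all of $\Aut(S)$, gives exactly the isomorphism $\Aut(S)\cong\Gamma\times\mumu_2$ (with $\Gamma=\ker\epsilon$). So what you do is more self-contained but reproves the relevant piece of a known structural result. The paper's version is shorter and, since the splitting is valid in characteristic $2$ as well, it covers the full range $p>0$ stated in the lemma; your reliance on $\epsilon(\iota)=-1\neq 1$ restricts you to odd $p$, though as you implicitly recognize this is all that is needed downstream.
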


\begin{proof}
The anticanonical linear system $|-K_S|$ provides a (separable) double cover~\mbox{$S\to\PP^2$},
see e.g.~\mbox{\cite[Proposition~3.1]{DM-2}} and~\mbox{\cite[\S2.1]{DM-odd}}.
This gives an isomorphism $\Aut(S)\cong\Gamma\times \mumu_2$, where $\Gamma$ is a finite
subgroup of $\Aut(\PP^2)$.
On the other hand, every subgroup $G\subset\Gamma\times\mumu_2$ is either isomorphic to
a subgroup $F$ of $\Gamma$, or is isomorphic to $F\times\mumu_2$, where $F$ is a subgroup of $\Gamma$.
By assumption, $F$
contains a normal abelian subgroup of order
coprime to $p$ and index
at most~\mbox{$J_p(\PP^2)\cdot |F_{(p)}|^3$}.
This immediately implies the required assertion.
\end{proof}

To work with the automorphism groups of del Pezzo surfaces of degree~$1$,
we recall a well-known classification of automorphism groups
of elliptic curve. It can be found in
\cite[Corollary~III.10.2]{Silverman}, \cite[Proposition~A.1.2(c)]{Silverman},
and~\mbox{\cite[Exercise A.1]{Silverman}}.

\begin{theorem}
\label{theorem:elliptic}
Let $C$ be an elliptic curve over a field
of characteristic $p>0$, and let $P\in C$ be a point. Then the stabilizer
of $P$ in $\Aut(C)$ is a subgroup of
\begin{equation}\label{eq:dP1-groups}
\begin{cases}
\mumu_6 \text{\ or\ } \mumu_4, &\text{if\ } p\ge 5,\\
\mumu_3\rtimes\mumu_4, &\text{if\ } p=3.
\end{cases}
\end{equation}
Here in the case $p=3$ the action of $\mumu_4$ on $\mumu_3$ is the (unique) non-trivial one.
\end{theorem}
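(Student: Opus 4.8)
The plan is to reduce the statement to the classical description of the automorphism group of an elliptic curve, which one may either quote from \cite{Silverman} or recover by a short Weierstrass-equation computation. Since automorphisms defined over $\Bbbk$ inject into automorphisms defined over a larger field, and the latter can only enlarge the stabilizer of $P$, I would first pass to the algebraic closure and assume $\Bbbk$ is algebraically closed, so that in particular $P$ is a $\Bbbk$-point. Fixing $P$ as the neutral element turns $C$ into an elliptic curve $(C,P)$, and by rigidity every automorphism of the curve $C$ fixing $P$ is a group automorphism of $(C,P)$; thus the stabilizer of $P$ in $\Aut(C)$ is exactly the automorphism group of the elliptic curve $(C,P)$, and it suffices to bound the latter.

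For $p\ge 5$ I would put $(C,P)$ in short Weierstrass form $y^2=x^3+ax+b$. Every automorphism fixing $P$ has the shape $(x,y)\mapsto(u^2x,\,u^3y)$ with $u\in\Bbbk^*$, and compatibility with the equation forces $u^4a=a$ and $u^6b=b$. If $ab\ne 0$ then $u^2=1$, so the stabilizer is $\mumu_2\subset\mumu_4$; if $b=0$, equivalently $j(C)=1728$, then $u^4=1$, so the stabilizer lies in $\mumu_4$; if $a=0$, equivalently $j(C)=0$, then $u^6=1$, so the stabilizer lies in $\mumu_6$. In every case it embeds into $\mumu_6$ or $\mumu_4$, as claimed.

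For $p=3$ the short Weierstrass form is unavailable, so I would work with $y^2=x^3+a_2x^2+a_4x+a_6$ and coordinate changes $(x,y)\mapsto(u^2x+r,\,u^3y)$ fixing $P$. When $j(C)\ne 0$ one normalizes to $y^2=x^3+x^2+a_6$; then only $u^2=1$ and $r=0$ survive, and the stabilizer is $\mumu_2$. When $j(C)=0$ one normalizes to $y^2=x^3+x$; here $u^4=1$ is admissible, together with the translations $x\mapsto x+r$ with $r^3+r=0$, which form a subgroup isomorphic to $\mumu_3$ inside the stabilizer. The resulting twelve automorphisms make up a group of order $12$, and a direct computation shows that a generator of the $\mumu_4$-part conjugates a generator of this $\mumu_3$ to its inverse, while the central element of order $2$ acts trivially on $\mumu_3$; hence the stabilizer is $\mumu_3\rtimes\mumu_4$ with the unique non-trivial action. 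This reproduces the content of \cite[Corollary~III.10.2]{Silverman} together with \cite[Proposition~A.1.2(c)]{Silverman} and \cite[Exercise~A.1]{Silverman}, to which one may simply refer.

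The only genuine obstacle is the characteristic-$3$ analysis: without the short Weierstrass form one must keep track of the translation part $r$ of the coordinate change, verify that the admissible values of $r$ form a subgroup isomorphic to $\mumu_3$ rather than a mere subset, and check both the semidirect-product structure and the non-triviality of the $\mumu_4$-action. Everything else is an elementary computation, and the whole statement is in any case contained in the references cited above.
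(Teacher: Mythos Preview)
Your proposal is correct. Note that the paper does not actually prove this theorem: it states the result and refers to \cite[Corollary~III.10.2]{Silverman}, \cite[Proposition~A.1.2(c)]{Silverman}, and \cite[Exercise~A.1]{Silverman} for the proof. Your sketch via Weierstrass models is precisely the argument carried out in those references, so there is no substantive difference in approach---you have simply written out what the paper leaves to the citation. One small remark: in your characteristic-$3$, $j=0$ computation with the model $y^2=x^3+x$, the solutions of $r^3+r=0$ do not lie in $\mathbf{F}_3$ but in $\mathbf{F}_9$; this is harmless since you have already passed to the algebraic closure, but it is worth saying explicitly when you verify that these three values form a subgroup.
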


We will need the following assertion on automorphisms of the last finite group
from Theorem~\ref{theorem:elliptic}.

\begin{lemma}[{see e.g. \cite{GroupNames}}]
\label{lemma:Aut-SL23}
If $\mumu_3\rtimes\mumu_4$ is the non-trivial semi-direct product, then
$$
\Aut(\mumu_3\rtimes\mumu_4)\cong\DD_{12}.
$$
\end{lemma}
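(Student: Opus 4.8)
The group $G=\mumu_3\rtimes\mumu_4$ has order $12$, with a unique Sylow $3$-subgroup $P\cong\mumu_3$ (which is therefore characteristic), and with the $\mumu_4$ acting on $P$ by inversion. Since $P$ is characteristic, restriction gives a homomorphism $\Aut(G)\to\Aut(P)\cong\mumu_2$ together with an action on the quotient $G/P\cong\mumu_4$, but it is cleaner to work directly. First I would pin down the elements: write $G=\langle a\rangle\rtimes\langle b\rangle$ with $a^3=b^4=1$ and $bab^{-1}=a^{-1}$. The elements of order $3$ are $a,a^2$; the elements of order $4$ are $b,b^3,ab,a^2b,ab^3,a^2b^3$ (six of them); the unique element of order $2$ is $b^2$ (it is central, as $b^2ab^{-2}=a$); and there are two more elements $ab^2,a^2b^2$ of order $6$. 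So $P=\langle a\rangle$ and $Z(G)=\langle b^2\rangle$ are both characteristic.

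**Key steps.** Any $\varphi\in\Aut(G)$ must send $a\mapsto a^{\pm1}$ and must send $b$ to one of the six elements of order $4$. I would check that for each choice of image of $b$ among the six order-$4$ elements and each of the two choices for the image of $a$, the assignment extends to a (necessarily unique) automorphism — this amounts to verifying that the defining relations are preserved, and the only nontrivial one is $\varphi(b)\varphi(a)\varphi(b)^{-1}=\varphi(a)^{-1}$, which holds because every order-$4$ element lies in a complement $\langle a^i b\rangle$ conjugate to $\langle b\rangle$ and hence inverts $\langle a\rangle$ in the same way. This gives $|\Aut(G)|=2\cdot 6=12$. To identify the group, I would exhibit it as generated by an element of order $6$ and an element of order $2$ that inverts it: let $\sigma$ be the automorphism fixing $a$ and sending $b\mapsto ab$ — one computes $\sigma^k(b)=a^{1+2+\cdots}\,b$, and tracking the exponent mod $3$ shows $\sigma$ has order $6$ (it cycles $b\mapsto ab\mapsto b\mapsto\cdots$? one must compute carefully: $\sigma^2(b)=\sigma(ab)=a\cdot ab=a^2b$, $\sigma^3(b)=a\cdot a^2 b=b$, so in fact $\sigma$ has order $3$ on $b$; combined with nothing on $a$ that makes $\sigma$ of order $3$). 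So instead I would take $\sigma$ to be the automorphism $a\mapsto a^{-1}$, $b\mapsto ab$, compute its order, and take $\tau$ the automorphism $a\mapsto a^{-1}$, $b\mapsto b$ (inner, conjugation by... actually $b\mapsto b$, $a\mapsto a^{-1}$ is conjugation by $b$), and check $\tau\sigma\tau^{-1}=\sigma^{-1}$.

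**The main obstacle.** The genuine content is entirely in the bookkeeping: confirming that all $12$ candidate assignments really are automorphisms (equivalently, that $\Aut(G)$ acts simply transitively on the $12$ pairs (generator of $P$, order-$4$ element)), and then correctly identifying the resulting order-$12$ group as $\DD_{12}$ rather than $\mumu_{12}$, $\mumu_6\times\mumu_2$, $\mathfrak{A}_4$, or $\DD_6\times\mumu_2$. The cleanest finish: note $\Aut(G)$ contains the inner automorphism group $\mathrm{Inn}(G)\cong G/Z(G)\cong\mathfrak{S}_3$, so $|\Out(G)|=12/6=2$; an order-$12$ group with a normal $\mathfrak{S}_3$ and an outer $\mumu_2$ is either $\DD_{12}\cong\mathfrak{S}_3\times\mumu_2$ or $\mathfrak{S}_3\rtimes\mumu_2$ with the $\mumu_2$ acting as an outer automorphism of $\mathfrak{S}_3$ — but $\mathfrak{S}_3$ has no outer automorphisms, so the extension is a direct product and $\Aut(G)\cong\mathfrak{S}_3\times\mumu_2\cong\DD_{12}$. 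For the central $\mumu_2$ factor one can take the automorphism $a\mapsto a^{-1}$, $b\mapsto b^{-1}$, and verify it is central in $\Aut(G)$ by checking it commutes with the generators of $\mathrm{Inn}(G)$ and with $\sigma$.
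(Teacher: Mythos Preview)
Your main argument is correct and complete: the count $|\Aut(G)|=2\cdot 6=12$ via the images of $a$ and $b$ works (every order-$4$ element inverts $\langle a\rangle$, so the relation is automatically preserved), the identification $\mathrm{Inn}(G)\cong G/Z(G)\cong\mathfrak{S}_3$ is right, and the finishing step --- $\Out(\mathfrak{S}_3)=1$ forces the index-$2$ extension to split as $\mathfrak{S}_3\times\mumu_2\cong\DD_{12}$ --- is clean and sufficient. The paper simply records the proof as ``Straightforward'', so your write-up is a full expansion of what the authors considered routine; there is no alternative approach to compare.

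One slip in your final remark: the automorphism $a\mapsto a^{-1}$, $b\mapsto b^{-1}$ is \emph{not} central in $\Aut(G)$. A central element $\psi$ must satisfy $g^{-1}\psi(g)\in Z(G)=\langle b^2\rangle$ for every $g\in G$, and since $a^{-1}\cdot a^{-1}=a$ has order $3$ this fails for $g=a$. (Concretely, your $\psi$ does not commute with conjugation by $a$: one sends $b\mapsto ab^3$, the other $b\mapsto a^2b^3$.) The correct generator of the central $\mumu_2$ is $a\mapsto a$, $b\mapsto b^{-1}$. This does not affect the proof, since your abstract argument via $\Out(\mathfrak{S}_3)=1$ already settles the isomorphism type without needing the explicit element.
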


\begin{proof}
Straightforward.
\end{proof}

Now we deal with the case of del Pezzo surfaces of degree $1$.

\begin{lemma}\label{lemma:dP1}
Let $S$ be a del Pezzo surface of degree $1$ over a field
of odd characteristic $p$.
Then every group $G\subset\Aut(S)$ contains a normal abelian subgroup of order
coprime to $p$ and index at most
$2J_p(\mathbb{P}^1)\cdot |G_{(p)}|^3$, where
$J_p(\PP^1)$ is given by~\eqref{eq:constant-PGL2}.
\end{lemma}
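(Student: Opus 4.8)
The plan is to exploit the elliptic fibration coming from the anticanonical pencil of $S$. Since a finite subgroup of $\Aut(S)$ embeds into the automorphism group of the base change $S\times_\Bbbk\bar\Bbbk$, which is again a del Pezzo surface of degree $1$, I may assume $\Bbbk$ is algebraically closed. The linear system $|-K_S|$ is a pencil with a single base point $P_0$, which is therefore fixed by $\Aut(S)$; blowing it up produces a surface $\tilde S$ together with an $\Aut(S)$-equivariant morphism $\pi\colon\tilde S\to\PP^1$ whose fibres are the strict transforms of the anticanonical curves, and a section $E$ (the exceptional curve of the blow-up), so that $\pi|_E\colon E\to\PP^1$ is an equivariant isomorphism. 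Since $p$ is odd, a general member of $|-K_S|$ is a smooth curve of genus $1$ and $\pi$ is a genuine elliptic fibration (not a quasi-elliptic one); concretely, the anticanonical model of $S$ is a sextic in $\PP(1,1,2,3)$ whose fibres over $\PP^1$ are plane cubics in Weierstrass form. From $\pi$ one obtains an exact sequence
\[
1\to F\to G\to \bar G\to 1,
\]
where $\bar G\subset\Aut(\PP^1)\cong\PGL_2(\Bbbk)$ is the image of the action of $G$ on the base and $F$ is the subgroup acting fibrewise.

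Next I would bound $F$. For $g\in F$ with $g\ne 1$ the fixed locus of $g$ in $\tilde S$ is a proper closed subset, hence of dimension at most $1$, so it contains only finitely many fibres of $\pi$; choosing a general smooth fibre $C$ avoiding all these loci for $g\in F\setminus\{1\}$, we see that $F$ acts faithfully on $C$. Moreover $F$ preserves the $G$-invariant curve $E$ and the fibre $C$, hence fixes the point $O=E\cap C$, and $(C,O)$ is an elliptic curve. By Theorem~\ref{theorem:elliptic}, $F$ is therefore isomorphic to a subgroup of $\mumu_6$ or $\mumu_4$ when $p\ge 5$, and to a subgroup of $\mumu_3\rtimes\mumu_4$ when $p=3$. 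In all cases $F$ is of the form $\mumu_p^m\rtimes\mumu_n$ with $m\in\{0,1\}$ and $n$ coprime to $p$, every cyclic group occurring here has automorphism group of order at most $2$, and $|F_{(p)}|\le 3$, with $|F_{(p)}|=1$ unless $p=3$.

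The group-theoretic endgame then runs exactly as in the proof of Proposition~\ref{proposition:CB}. Note that $F=\ker(G\to\bar G)$ is normal in $G$, so conjugation by any element of $G$ restricts to an element of $\Aut(H;F)$ for every subgroup $H$ with $F\subset H\trianglelefteq G$. Apply Lemma~\ref{lemma:P1} to $\bar G$: it contains a characteristic cyclic subgroup $\bar A$ of order coprime to $p$ and index at most $J_p(\PP^1)\cdot|\bar G_{(p)}|^3$, with $J_p(\PP^1)$ given by~\eqref{eq:constant-PGL2}. Let $H\subset G$ be the preimage of $\bar A$; it is normal in $G$ and fits into $1\to F\to H\to\bar A\to 1$ with $\bar A$ cyclic of order coprime to $p$. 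The hypothesis of Lemma~\ref{lemma:Darafsheh-group-extension} holds automatically: if $\lambda$ lies in the cyclic complement of $F$ and $\alpha\in H$ normalizes $\langle\lambda\rangle$, then conjugation by $\alpha$ lies in $\Aut(\langle\lambda\rangle)$, a group of order at most $2$, so $\alpha^2$ commutes with $\lambda$. Hence $H$ contains an abelian subgroup $A$ of order coprime to $p$ and index at most $2\cdot|F_{(p)}|^3$ in $H$ which is preserved by $\Aut(H;F)$, and is therefore normal in $G$. Its index in $G$ is at most
\[
2\cdot|F_{(p)}|^3\cdot J_p(\PP^1)\cdot|\bar G_{(p)}|^3=2\,J_p(\PP^1)\cdot|G_{(p)}|^3 ,
\]
which is the required bound. (When $m=0$ one may bypass Lemma~\ref{lemma:Darafsheh-group-extension} and take $A=\langle F,\alpha^{2}\rangle$ for a suitable $\alpha\in H$ of order coprime to $p$, the full preimage of a subgroup of index at most $2$ in the cyclic group $\bar A$.)

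The main point requiring care is the geometric input: that $|-K_S|$ has a unique base point fixed by all of $\Aut(S)$, that $\pi$ is an honest elliptic fibration with a section rather than a quasi-elliptic one — this is exactly where oddness of $p$ enters — and that $F$ acts faithfully on a general smooth fibre, so that Theorem~\ref{theorem:elliptic} can be applied to pin $F$ down as a subgroup of $\mumu_3\rtimes\mumu_4$ (respectively $\mumu_6$ or $\mumu_4$). Once this is done, the remaining estimates are a routine application of the machinery of Sections~\ref{section:extensions} and~\ref{section:P1}, and one checks in each residue class of $p$ that the resulting constant is precisely $2J_p(\PP^1)$ (with equality attained, for instance, when $p=3$ and $3\mid|F|$).
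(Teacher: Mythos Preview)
Your proof is correct and the geometric setup---the anticanonical pencil, the blow-up at the unique base point, the elliptic (not quasi-elliptic) fibration with section, and the application of Theorem~\ref{theorem:elliptic} to identify the fibrewise group $F$---is the same as in the paper. The difference lies in the group-theoretic endgame. The paper treats $p\ge 5$ and $p=3$ separately: for $p\ge 5$ it argues directly that $F$ is cyclic with $|\Aut(F)|\le 2$ and takes $A=\langle F,\alpha^2\rangle$; for $p=3$ it splits further on whether $\mumu_3\subset F$, and in the hard subcase builds an intermediate normal subgroup $N=\langle F,\alpha^2\rangle$ and then invokes the Chermak--Delgado bound (Corollary~\ref{corollary:Chermak-Delgado}) to extract a characteristic abelian subgroup of $N$. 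You instead observe once and for all that every possible $F$ here has the shape $\mumu_p^m\rtimes\mumu_n$ with $m\in\{0,1\}$ and $n\in\{1,2,3,4,6\}$, so $|\Aut(\langle\lambda\rangle)|\le 2$ for every $\lambda$ in the cyclic complement, and then apply Lemma~\ref{lemma:Darafsheh-group-extension} uniformly. This reuses machinery the paper built for conic bundles and avoids the case split and the appeal to Chermak--Delgado; the paper's route is more self-contained for this particular lemma but longer.
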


\begin{proof}
The anticanonical linear system $|-K_S|$ has a unique base point~\mbox{$P\in S$}.
In particular, $P$ is $G$-invariant (and defined over the field of definition of~$S$).
Let $\tilde{S}\to S$ be the blow up of $P$.
Then $G$ acts on $\tilde{S}$, and there is a
$G$-equivariant fibration $\pi\colon \tilde{S}\to\PP^1$ whose
fibers are isomorphic to anticanonical divisors on~$S$.
Observe that $\pi$ cannot be a quasi-elliptic fibration, even if $p=3$,
see~\mbox{\cite[Lemma~3.3(3)]{BLRT}}.
Thus, $\pi$ is an elliptic fibration.
Therefore, the group $G$ fits into an exact sequence
$$
1\to H\to G\to \bar{G}\to 1,
$$
where $\bar{G}\subset\Aut(\PP^1)$, and $H$ acts on the scheme-theoretic generic fiber
$C$ of~$\pi$.
By Lemma~\ref{lemma:P1} the group $\bar{G}$ contains a normal cyclic subgroup $\bar{A}$ of order
coprime to $p$ and index at most $J_p(\PP^1)\cdot |\bar{G}_{(p)}|^3$, where $J_p(\PP^1)$ is given by~\eqref{eq:constant-PGL2}. Let $\alpha$ be some preimage of a generator of $\bar{A}$ in $G$.
Since the order of $\bar{A}$ is coprime to~$p$, we may replace $\alpha$ by its appropriate
power and assume that its order is coprime to $p$ as well.
Note that $\alpha$ acts by an automorphism on the group~$H$.
Furthermore, $H$ fixes the point on $C$ which corresponds
to the $G$-invariant section of $\pi$
contracted to the point $P$. Therefore,
$H$ is a subgroup of one of the groups~\eqref{eq:dP1-groups}
by Theorem~\ref{theorem:elliptic}.

Suppose that $p\ge 5$. Then $H$ is a subgroup of $\mumu_6$ or $\mumu_4$.
In particular, $H$ is a cyclic group, and $|\Aut(H)|\le 2$. Hence $\alpha^2$
commutes with all the elements of~$H$. Let $A$ be the group generated
by $\alpha^2$ and $H$. Then $A$ is abelian. Furthermore, the group generated by the image of $\alpha^2$ is characteristic
in $\bar{A}$, because $\bar{A}$ is cyclic; thus, this group is normal in $\bar{G}$.
This implies that $A$ is normal in $G$. The order of $A$ is coprime to~$p$, and
its index in $G$ is at most
$$
2\cdot\frac{|\bar{G}|}{|\bar{A}|}\le
2J_p(\PP^1)\cdot |\bar{G}_{(p)}|^3=2J_p(\PP^1)\cdot |G_{(p)}|^3.
$$

Now suppose that $p=3$. Then $H\subset \mumu_3\rtimes\mumu_4$.
If $H$ does not contain~\mbox{$\mumu_3$}, then~\mbox{$H\subset\mumu_4$}, and so
$|\Aut(H)|\le 2$. Hence $\alpha^2$
commutes with all the elements of~$H$. In this case the group $A$
generated by $\alpha^2$ and $H$ is abelian,
normal in $G$, has order coprime to~$3$, and its index in $G$
is at most
$$
2\cdot\frac{|\bar{G}|}{|\bar{A}|}\le
2J_3(\PP^1)\cdot |\bar{G}_{(3)}|^3=2J_3(\PP^1)\cdot |G_{(3)}|^3.
$$
Thus, we can suppose that $H\supset\mumu_3$.
Then $H$ is one of the groups $\mumu_3$, $\mumu_6$, or~\mbox{$\mumu_3\rtimes\mumu_4$}.
In the former two cases, one has $\Aut(H)\cong\mumu_2$; in the latter case,
one has $\Aut(H)\cong\DD_{12}$ by Lemma~\ref{lemma:Aut-SL23}.
Observe that the maximal order of an element of~$\DD_{12}$ which is coprime to~$3$ equals~$2$.
Therefore, in any case~$\alpha^2$ commutes with all the elements of $H$.

Let $N$ be the group
generated by~$\alpha^2$ and $H$; note that the order of $N$ is not
coprime to~$3$, and $N$ is not abelian if $H\cong\mumu_3\rtimes\mumu_4$.
The image $\bar{N}$ of~$N$ in $\bar{G}$ is characteristic in $\bar{A}$
and thus normal in $\bar{G}$. Hence $N$ is normal in $G$.

Denote by $H'\subset H$ the trivial subgroup if $H\cong\mumu_3$,
the subgroup isomorphic to $\mumu_2$ if $H\cong\mumu_6$,
and a subgroup isomorphic to $\mumu_4$ if $H\cong\mumu_3\rtimes\mumu_4$.
Let $A\subset N$ be the subgroup generated by $\alpha^2$ and $H'$.
Then $A$ is abelian.
Its index in $N$ does not exceed
$$
\frac{|H|}{|H'|}=3=3\cdot |N_{(3)}|^0.
$$
Therefore, it follows from Corollary~\ref{corollary:Chermak-Delgado} that
$N$ contains a characteristic abelian subgroup $A'$ of order coprime to $3$
and index at most
$$
3^2\cdot |N_{(3)}|^{2\cdot 0+1}=9\cdot |N_{(3)}|
=9\cdot |H_{(3)}|=|H_{(3)}|^3.
$$
Since $A'$ is characteristic in $N$, it is normal in $G$.
Finally, we observe that the index of~$A'$ in~$G$ equals
$$
\frac{|G|}{|N|}\cdot \frac{|N|}{|A'|}=
\frac{|\bar{G}|}{|\bar{N}|}\cdot \frac{|N|}{|A'|}\le
2\cdot \frac{|\bar{G}|}{|\bar{A}|}\cdot \frac{|N|}{|A'|}\le
2\cdot J_3(\PP^1)\cdot |\bar{G}_{(3)}|^3\cdot |H_{(3)}|^3=
2J_3(\PP^1)\cdot |G_{(3)}|^3.
$$
\end{proof}

Next, we summarize the bounds available for del Pezzo surfaces over fields of arbitrary
positive characteristic.

\begin{proposition}\label{proposition:dP}
Let $S$ be a del Pezzo surface of degree $d\not\in\{1, 2, 9\}$ over a field
of characteristic~\mbox{$p>0$}.
Then every group $G\subset\Aut(S)$ contains a normal abelian subgroup of order coprime to $p$
and index at most
$J\cdot |G_{(p)}|^3$, where
\begin{equation}\label{eq:J-dP}
J=\begin{cases}
7200, &\text{if\ } p\ge 7,\\
144, &\text{if\ } p=5,\\
10, &\text{if\ } p=3,\\
3, &\text{if\ } p=2.
\end{cases}
\end{equation}
\end{proposition}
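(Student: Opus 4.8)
The plan is to reduce to a case-by-case analysis over the degree $d\in\{3,4,5,6\}$ and the special degrees $7,8$, invoking the lemmas already established in this section together with the conic bundle bound from Proposition~\ref{proposition:CB}. Since $d\notin\{1,2,9\}$, the possibilities are $d\in\{3,4,5,6,7,8\}$. We may and do assume that $\Bbbk$ is algebraically closed: indeed if $S$ is a del Pezzo surface of degree $d$ over an arbitrary field of characteristic $p$ and $G\subset\Aut(S)$ is finite, then $G$ still acts on $S_{\bar\Bbbk}$, so a bound valid over $\bar\Bbbk$ transfers. One checks directly that the constants $J$ appearing in Lemmas~\ref{lemma:dP6}, \ref{lemma:dP5}, Corollary~\ref{corollary:dP4}, and Corollary~\ref{corollary:dP3} are all $\le J$ as given by~\eqref{eq:J-dP}: for degrees $5$, $4$, and $3$ the relevant constant is the one in~\eqref{eq:auxiliary-subgroups}, namely $720,144,10,3$ for $p\ge 7,5,3,2$ respectively, which is $\le$ the constant in~\eqref{eq:J-dP} only for $p=2$ — wait, $720>7200$ is false, so $720\le 7200$; and $144\le 144$, $10\le 10$, $3\le 3$ — so indeed all four cases are covered. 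For degree $6$ the constant in~\eqref{eq:dP6} is $12,4,3$ for $p\ge 5, p=3, p=2$, which is again $\le$ the constant in~\eqref{eq:J-dP}.

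First I would dispose of degrees $3$, $4$, $5$, and $6$: these follow immediately from Corollary~\ref{corollary:dP3}, Corollary~\ref{corollary:dP4}, Lemma~\ref{lemma:dP5}, and Lemma~\ref{lemma:dP6} respectively, together with the numerical comparison of constants noted above. Then I would treat degrees $7$ and $8$. A del Pezzo surface of degree $7$ is unique (the blow-up of $\PP^2$ at two points, or of $\PP^1\times\PP^1$ at one point), and its automorphism group fits into an exact sequence with abelian kernel $(\Bbbk^*)^2$ and quotient a subgroup of $\mumu_2$, so any finite $G\subset\Aut(S)$ fits into $1\to H\to G\to\bar G\to 1$ with $H$ abelian and $|\bar G|\le 2$; then Lemma~\ref{lemma:large-abelian} with $J=\frac{|\bar G|}{|\bar G_{(p)}|^e}\le 2\le J$ finishes this case. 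For degree $8$ there are two surfaces: $\PP^1\times\PP^1$, handled by Corollary~\ref{corollary:P1xP1} (whose constants $7200,72,10,1$ for $p\ge 7,5,3,2$ are $\le$ the constant in~\eqref{eq:J-dP}), and the Hirzebruch surface $\mathbb{F}_1$, whose automorphism group again fits into an extension of $\mumu_2$ or of the trivial group by an abelian group (in fact $\Aut(\mathbb{F}_1)$ has an abelian-by-small structure, or alternatively $\mathbb{F}_1$ is the blow-up of $\PP^2$ at a point, so $\Aut(\mathbb{F}_1)$ preserves a point of $\PP^2$ and Corollary~\ref{corollary:P2}-style reasoning via Proposition~\ref{proposition:CB} applies since $\mathbb{F}_1$ carries a $G$-equivariant conic bundle). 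Actually the cleanest uniform treatment of $d=7$ and the non-product degree-$8$ surface is to note that each admits a $G$-equivariant conic bundle structure over $\PP^1$, so Proposition~\ref{proposition:CB} applies and gives the bound with constant $J_p^{\mathrm{cb}}$ from~\eqref{eq:J-CB}, namely $7200,144,\frac{800}{81},2$ — all $\le$ the constant in~\eqref{eq:J-dP}.

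The main obstacle I anticipate is purely bookkeeping: verifying, for each degree and each prime $p\in\{2,3,5\}$ and $p\ge 7$, that the constant produced by the relevant lemma does not exceed the target constant in~\eqref{eq:J-dP}. The degree-$3$ case is the tightest, since there the bound comes from Corollary~\ref{corollary:dP3} with constant exactly $10$ at $p=3$ and $144$ at $p=5$ — matching~\eqref{eq:J-dP} on the nose — so nothing is lost there, and this is presumably why the proposition excludes degrees $1$ and $2$ (where $\PGL_3$-type bounds with the larger constant $\frac{800}{81}$ at $p=3$, cf.~\eqref{eq:constant-PGL3-odd}, or even larger behaviour at $p=5$, would intrude). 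I would organize the proof as a short paragraph per degree, in each case citing the lemma, reducing to $\bar\Bbbk$ if needed, and recording the constant comparison; no genuinely new argument is required beyond what is already in the section.

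\begin{proof}
We may assume that $\Bbbk$ is algebraically closed, since a finite subgroup of $\Aut(S)$ acts on $S_{\bar\Bbbk}$ and any bound valid over $\bar\Bbbk$ transfers. Since $d\notin\{1,2,9\}$, we have $d\in\{3,4,5,6,7,8\}$, and in each case it suffices to check that the constant provided by the results of this section does not exceed the constant $J$ in~\eqref{eq:J-dP}.

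If $d=6$, the assertion follows from Lemma~\ref{lemma:dP6}, whose constant~\eqref{eq:dP6} equals $12$, $4$, $3$ for $p\ge 5$, $p=3$, $p=2$ respectively, and this is at most the constant in~\eqref{eq:J-dP} in all cases. If $d=5$, the assertion follows from Lemma~\ref{lemma:dP5}, whose constant is~\eqref{eq:auxiliary-subgroups}, equal to $720$, $144$, $10$, $3$ for $p\ge 7$, $p=5$, $p=3$, $p=2$; this is at most the constant in~\eqref{eq:J-dP}. If $d=4$, the assertion follows from Corollary~\ref{corollary:dP4}, again with constant~\eqref{eq:auxiliary-subgroups}, which is at most the constant in~\eqref{eq:J-dP}. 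If $d=3$, the assertion follows from Corollary~\ref{corollary:dP3}, with constant~\eqref{eq:auxiliary-subgroups}; here the comparison with~\eqref{eq:J-dP} is an equality for $p\in\{2,3,5\}$ and a strict inequality $720<7200$ for $p\ge 7$.

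It remains to treat $d\in\{7,8\}$. A del Pezzo surface of degree $7$ carries a $G$-equivariant conic bundle structure over $\PP^1$ (coming from one of the two rulings obtained by contracting a $(-1)$-curve), so by Proposition~\ref{proposition:CB} the group $G$ contains a normal abelian subgroup of order coprime to $p$ and index at most $J_p^{\mathrm{cb}}\cdot|G_{(p)}|^3$, where $J_p^{\mathrm{cb}}$ is given by~\eqref{eq:J-CB}, equal to $7200$, $144$, $\frac{800}{81}$, $2$ for $p\ge 7$, $p=5$, $p=3$, $p=2$; this is at most the constant in~\eqref{eq:J-dP}. If $d=8$ and $S\cong\PP^1\times\PP^1$, then $\Aut(S)\cong\big(\PGL_2(\Bbbk)\times\PGL_2(\Bbbk)\big)\rtimes\mumu_2$, so the assertion follows from Corollary~\ref{corollary:P1xP1}, whose constant~\eqref{eq:J-P1xP1} equals $7200$, $72$, $10$, $1$ for $p\ge 7$, $p=5$, $p=3$, $p=2$, again at most the constant in~\eqref{eq:J-dP}. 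If $d=8$ and $S\not\cong\PP^1\times\PP^1$, then $S\cong\mathbb{F}_1$ is the blow-up of $\PP^2$ at one point, which carries a $G$-equivariant conic bundle structure over $\PP^1$, so Proposition~\ref{proposition:CB} again gives the bound with constant $J_p^{\mathrm{cb}}\le J$. This covers all cases.
\end{proof}
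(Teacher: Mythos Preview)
Your proof is essentially the paper's proof, with one exception: the treatment of $d=7$ contains a genuine gap.

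You claim that a del Pezzo surface $S$ of degree~$7$ carries a $G$-equivariant conic bundle. It does not, in general. Over $\bar\Bbbk$, such an $S$ is the blow-up of $\PP^2$ at two points $P_1,P_2$; the three $(-1)$-curves are the exceptional divisors $E_1,E_2$ and the strict transform $L$ of the line through $P_1,P_2$. The only two conic bundle classes are $h-E_1$ and $h-E_2$, and the involution of $\PP^2$ swapping $P_1\leftrightarrow P_2$ lifts to an automorphism of $S$ swapping these two classes. Hence for any finite $G\subset\Aut(S)$ containing this swap, neither conic bundle is $G$-equivariant, and Proposition~\ref{proposition:CB} does not apply directly. (Your preliminary alternative, with ``abelian kernel $(\Bbbk^*)^2$'', is also off: the connected component $\Aut^0(S)$ is the $4$-dimensional non-abelian group of automorphisms of $\PP^2$ fixing both $P_i$.)

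The fix is the paper's: the curve $L$ is the unique $(-1)$-curve meeting both of the others, so it is $\Aut(S)$-invariant; contracting $L$ gives an $\Aut(S)$-equivariant birational morphism $S\to\PP^1\times\PP^1$, so $G$ embeds in $\Aut(\PP^1\times\PP^1)$ and Corollary~\ref{corollary:P1xP1} applies, with the same constant comparison you already made for the $d=8$ product case. With this one-line replacement your argument is correct and matches the paper's.
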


\begin{proof}
It is well known that $1\le d\le 9$.

If $S\cong\PP^1\times\PP^1$, then $G$
contains a normal abelian subgroup of order coprime to $p$
and index at most
$J_p(\PP^1\times\PP^1)\cdot |G_{(p)}|^3$, where~\mbox{$J_p(\PP^1\times\PP^1)$} is given by~\eqref{eq:J-P1xP1}.

If $d=8$ and $S\not\cong\PP^1\times\PP^1$,
then there is an $\Aut(S)$-equivariant conic bundle structure on $S$.
Thus by Proposition~\ref{proposition:CB} the group
$G$ contains a normal abelian subgroup of order coprime to $p$ and index at most
$J_p^{\mathrm{cb}}\cdot |G_{(p)}|^3$, where $J_p^{\mathrm{cb}}$ is given by~\eqref{eq:J-CB}.

If $d=7$, then
there is an $\Aut(S)$-equivariant birational morphism~\mbox{$S\to\PP^1\times\PP^1$}.
This implies that the bound for $\PP^1\times\PP^1$
applies to this case as well.

If $d=6$, then by Lemma~\ref{lemma:dP6} the group
$G$ contains a normal abelian subgroup of order coprime to $p$
and index at most
$J\cdot |G_{(p)}|^3$, where $J$ is given by~\eqref{eq:dP6}.

If $d=5$, $d=4$, or $d=3$, then
$G$ contains a normal abelian subgroup of order
coprime to $p$ and index at most $J\cdot |G_{(p)}|^3$, where $J$ is given by~\eqref{eq:auxiliary-subgroups}, see Lemma~\ref{lemma:dP5} and
Corollaries~\ref{corollary:dP4} and~\ref{corollary:dP3}.

Taking the maximum of the above constants, we obtain
the bound~\eqref{eq:J-dP}.
\end{proof}

Finally, we prove the main result of this section.

\begin{proposition}\label{proposition:dP-odd}
Let $S$ be a del Pezzo surface over a field
of odd characteristic~$p$.
Then every group $G\subset\Aut(S)$ contains a normal abelian subgroup of order coprime to $p$
and index at most
$J_p^{\mathrm{dP}}\cdot |G_{(p)}|^3$, where
\begin{equation}\label{eq:J-dP-odd}
J_p^{\mathrm{dP}}=
\begin{cases}
7200, &\text{if\ } p\ge 7,\\
168, &\text{if\ } p=5,\\
10, &\text{if\ } p=3.
\end{cases}
\end{equation}
\end{proposition}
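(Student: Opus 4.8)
The plan is to run a case analysis on the degree $d=K_S^2$, which satisfies $1\le d\le 9$, invoking for each value of $d$ the corresponding bound already proved in this section and checking that the resulting constant never exceeds $J_p^{\mathrm{dP}}$.

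The bulk of the degrees is handled in one stroke: if $d\notin\{1,2,9\}$, then Proposition~\ref{proposition:dP} gives a normal abelian subgroup of $G$ of order coprime to $p$ and index at most $J\cdot|G_{(p)}|^3$ with $J$ as in~\eqref{eq:J-dP}, and one checks $J\le J_p^{\mathrm{dP}}$ for each odd $p$ (the only comparison that is not an equality is $144\le 168$ for $p=5$). For $d=2$ I would apply Lemma~\ref{lemma:dP2} with $J_p(\PP^2)$ taken to be the constant $J$ of Corollary~\ref{corollary:P2} from~\eqref{eq:constant-PGL3-odd}; this yields index at most $J_p(\PP^2)\cdot|G_{(p)}|^3$, and $J_p(\PP^2)$ equals $7200$, $168$, $\tfrac{800}{81}$ for $p\ge 7$, $p=5$, $p=3$ respectively, each at most $J_p^{\mathrm{dP}}$. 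For $d=1$ I would use Lemma~\ref{lemma:dP1}, which produces index at most $2J_p(\PP^1)\cdot|G_{(p)}|^3$ with $J_p(\PP^1)$ as in~\eqref{eq:constant-PGL2}; since $2J_p(\PP^1)$ is $120$, $48$, $8$ for $p\ge 7$, $p=5$, $p=3$, this is comfortably within the bound.

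The only case that needs an extra word is $d=9$. A del Pezzo surface of degree $9$ is a form of the projective plane: over an algebraic closure $\overline{\Bbbk}$ of the base field it becomes $\PP^2_{\overline{\Bbbk}}$, so the restriction map realizes $\Aut(S)$ as a subgroup of $\Aut(\PP^2_{\overline{\Bbbk}})\cong\PGL_3(\overline{\Bbbk})$. Since $\overline{\Bbbk}$ is again a field of odd characteristic $p$, Corollary~\ref{corollary:P2} applied over $\overline{\Bbbk}$ shows that the finite subgroup $G\subset\PGL_3(\overline{\Bbbk})$ contains a normal abelian subgroup of order coprime to $p$ and index at most $J\cdot|G_{(p)}|^3$ with $J$ as in~\eqref{eq:constant-PGL3-odd}, exactly as in the degree~$2$ case. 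Taking the maximum of the constants obtained over all $d$ with $1\le d\le 9$ gives precisely the values $J_p^{\mathrm{dP}}$ recorded in~\eqref{eq:J-dP-odd}.

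I do not expect a genuine obstacle: Proposition~\ref{proposition:dP-odd} is essentially an assembly of the per-degree bounds, and the only points that require attention are bookkeeping ones — namely that the value $168$ at $p=5$ is forced by the planar cases $d\in\{2,9\}$ (rather than by $d\le 8$), and that for $d=9$ one must first pass to the algebraic closure before invoking Corollary~\ref{corollary:P2}.
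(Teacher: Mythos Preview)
Your proposal is correct and follows essentially the same route as the paper: a case split on the degree $d$, invoking Proposition~\ref{proposition:dP} for $d\notin\{1,2,9\}$, Corollary~\ref{corollary:P2} for $d=9$, Lemma~\ref{lemma:dP2} for $d=2$, and Lemma~\ref{lemma:dP1} for $d=1$, then taking the maximum of the resulting constants. Your treatment is in fact slightly more careful than the paper's in two spots: you correctly exclude $d=1$ from the appeal to Proposition~\ref{proposition:dP} (the paper's proof writes $d\notin\{2,9\}$ there, which is a minor slip), and for $d=9$ you pass to the algebraic closure to view $G$ inside $\PGL_3(\overline{\Bbbk})$ rather than asserting $S\cong\PP^2$ outright.
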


\begin{proof}
Let $d=K_S^2$ be the degree of $S$.
If $d\not\in\{1,2,9\}$, then the assertion is given by Proposition~\ref{proposition:dP}.
If $d=9$, then $S\cong \PP^2$, and
the assertion is given by Corollary~\ref{corollary:P2}.
If $d=2$, then the same bound as in the case of~$\PP^2$ applies
by Lemma~\ref{lemma:dP2}.
Finally, if $d=1$, then the assertion holds by Lemma~\ref{lemma:dP1}.
\end{proof}

\section{Proof of the main result}
\label{section:proof}

In this section we prove Theorem~\ref{theorem:main}, which is the main result of the paper.

\begin{proof}[Proof of Theorem~\ref{theorem:main}]
Let $G$ be a finite subgroup of $\Cr_2(\Bbbk)$.
Regularizing the birational action of~$G$ on $\PP^2$ and
taking a $G$-equivariant resolution
of singularities, we obtain a smooth projective rational surface $S'$ over $\Bbbk$
with an action of~$G$, see e.g. \cite[Lemma~14.1.1]{P:G-MMP} or~\mbox{\cite[Lemma~3.6]{ChenShramov}}.
Running a $G$-Minimal Model Program on $S'$, we arrive to a rational $G$-minimal surface $S$.
Thus, $S$ is either a del Pezzo surface, or it has a structure of a $G$-equivariant conic bundle, see~\mbox{\cite[Theorem~1G]{Iskovskikh80}}.
In the former case we know from Proposition~\ref{proposition:dP-odd}
that $G$ contains a normal abelian subgroup of order coprime to $p$
and index at most
$J_p^{\mathrm{dP}}\cdot |G_{(p)}|^3$, where $J_p^{\mathrm{dP}}$ is given by~\eqref{eq:J-dP-odd}.
In the latter case we know from Proposition~\ref{proposition:CB}
that $G$ contains a normal abelian subgroup of order coprime to $p$
and index at most
$J_p^{\mathrm{cb}}\cdot |G_{(p)}|^3$, where $J_p^{\mathrm{cb}}$ is given by~\eqref{eq:J-CB}.
Therefore, $G$ always contains a normal abelian subgroup of order coprime to $p$
and index at most~\mbox{$J_p(\Cr_2)\cdot |G_{(p)}|^{3}$},
where
$$
J_p(\Cr_2)=\max\{J_p^{\mathrm{dP}}, J_p^{\mathrm{cb}}\}=J_p^{\mathrm{dP}}=
\begin{cases}
7200, &\text{if\ } p\ge 7,\\
168, &\text{if\ } p=5,\\
10, &\text{if\ } p=3.
\end{cases}
$$

Now let us show that this bound is sharp.
Let $\Bbbk$ be an algebraically closed field of characteristic $p>0$.
Then the group $\Cr_2(\Bbbk)$ contains a subgroup $\PGL_2(\mathbf{F}_{p^k})$
for any $k\ge 1$. Therefore, by Example~\ref{example:PGL}
one cannot choose a constant $J$ and a constant $e<3$
such that for any finite subgroup
$G\subset \Cr_2(\Bbbk)$ there exists a normal abelian
subgroup in $G$ whose index is at most $J\cdot |G_{(p)}|^e$.

Furthermore, if $p\ge 7$, note that
$\Cr_2(\Bbbk)\supset\Aut(\PP^1\times\PP^1)$, and the latter group contains
a subgroup $G=(\mathfrak{A}_5\times\mathfrak{A}_5)\rtimes\mumu_2$,
where the non-trivial element of $\mumu_2$ acts by interchanging the factors.
The group $G$ does not contain
non-trivial normal abelian subgroups. In other words, it does not contain normal
abelian subgroups of index less than
$$
|G|=7200=7200\cdot |G_{(p)}|^3.
$$

If $p=5$, observe that a simple non-abelian
group $G=\PSL_2(\mathbf{F}_7)$ acts on $\PP^2$; the action can be constructed using the same formulas as in characteristic~$0$, see for instance~\mbox{\cite[\S6.5.3]{Dolgachev}}.
Its only normal abelian subgroup is trivial; in other words, $G$ does not contain
normal abelian subgroups of index less than
$$
|G|=168=168\cdot |G_{(5)}|^3.
$$

Finally, if $p=3$, consider the dihedral group $\bar{G}=\DD_{10}$ acting on $\PP^1$.
Let~\mbox{$\Sigma\subset\PP^1$} be a $\bar{G}$-orbit of length $5$. Let
$v_2\colon \PP^1\hookrightarrow\PP^2$ be the Veronese embedding,
and let~\mbox{$S\to \PP^2$} be the blow up of the five points
of $v_2(\Sigma)$. Then $S$ is a del Pezzo surface of degree $4$. Observe that $\Aut(S)$ contains
the group
$$
G\cong\mumu_2^4\rtimes \bar{G},
$$
where the action of $\bar{G}$ on $\mumu_2^4$ comes from
the permutation representation (see e.g.~\mbox{\cite[\S3]{DD}}).
Thus, one has $G\subset\Cr_2(\Bbbk)$.
It is easy to see that $G$ does not contain normal abelian subgroups of
order less than
$$
|\bar{G}|=10=10\cdot |G_{(3)}|^3.
$$
\end{proof}

\begin{remark}\label{remark:future}
If $\Bbbk$ is a field of characteristic $2$, we are unable to prove
an analog of Theorem~\ref{theorem:main} yet, mostly due to our lack of
good understanding of finite subgroups in $\PGL_3(\Bbbk)$. For instance, the partial
classification of such groups provided in~\cite{Hartley} or~\mbox{\cite[Theorem~2.7]{King}}
is rather similar to Theorem~\ref{theorem:Mitchell} but lists only maximal finite groups,
which requires further group-theoretic work to obtain the estimates
(cf. Examples~\ref{example:constant-subgroup-1}
and~\ref{example:constant-subgroup-2}).
Note however that our bounds for indices of normal subgroups
in automorphism groups of conic bundles
(see Proposition~\ref{proposition:CB}) and
del Pezzo surfaces of degree~\mbox{$3\le d\le 8$}
(see Proposition~\ref{proposition:dP})
include the case of characteristic~$2$.
Based on these bounds and other preliminary computations,
we believe that over a field $\Bbbk$ of characteristic $2$
every finite subgroup~$G$ of~\mbox{$\Cr_2(\Bbbk)$}
contains a normal abelian subgroup of order coprime
to $2$ and index at most~\mbox{$3\cdot |G_{(2)}|^{3}$}.
Also, one can see that there does not exist a stronger bound
when $\Bbbk$ is algebraically closed.
Indeed, the group $G=\mumu_7\rtimes\mumu_3$ acts on $\PP^2$
and does not contain normal abelian subgroups of index
less than $3=3\cdot |G_{(2)}|^3$.
\end{remark}

\end{document}